\numberwithin{equation}{section}
\numberwithin{figure}{section}
\numberwithin{table}{section}
\theoremstyle{remark} \newtheorem{remark}{Remark}[section]}
\newtheorem{definition}{Definition}[section]
\newtheorem{proposition}{Proposition}[section]
\newtheorem{theorem}{Theorem}[section]
\newtheorem{corollary}{Corollary}[section]
\newtheorem{lemma}{Lemma}[section]
{\theoremstyle{remark} \newtheorem{example}{Example}[section]}
\newtheorem{algorithm}{Algorithm}[section]
\algrenewcommand\algorithmicrequire{\textbf{Input:}}
\algrenewcommand\algorithmicensure{\textbf{Output:}}
\newcommand{\Wzop}{ \mbox{ \raisebox{7.2pt} {\tiny$\circ$} \kern-10.7pt} {W}^1_p }
\newcommand{\Wzoinfty}{\mbox{ \raisebox{7.2pt} {\tiny$\circ$} \kern-10.7pt} {W}^1_\infty }
\newcommand{\Laps}{(-\Delta)^s}
\newcommand{\REFINE}{\textsf{REFINE}}
\newcommand{\GREEDY}{\textsf{GREEDY}}
\def\calM{\mathcal{M}}
\def\calT{\mathcal{T}}
\def\calF{\mathcal{F}}
\def\calC{\mathcal{C}}
\def\calP{\mathcal{P}}
\newcommand{\Th}{{\mathcal {T}_h}}
\newcommand{\x}{\texttt{x}}
\newcommand{\R}{{\mathbb{R}}}
\newcommand{\pp}{\partial}
\newcommand{\Rd}{{\mathbb{R}^d}}
\newcommand{\vn}{\textbf{n}}
\newcommand{\tHs}{\widetilde H^s(\Omega)}
\newcommand{\eps}{\varepsilon}
\newcommand{\phii}{\varphi}
\newcommand{\vA}{\mathbf{A}}
\newcommand{\vB}{\mathbf{B}}
\newcommand{\vK}{\mathbf{K}}
\newcommand{\vU}{\mathbf{U}}
\newcommand{\vF}{\mathbf{F}}
\newcommand{\vW}{\mathbf{W}}
\newcommand{\sob}{\mbox{Sob\,}}
\newcommand{\iii}[1]{{\left\vert\kern-0.25ex\left\vert\kern-0.25ex\left\vert #1 
    \right\vert\kern-0.25ex\right\vert\kern-0.25ex\right\vert}}
\newcommand{\scalardelta}[1]{{( \kern-0.25ex ( #1 )\kern-0.25ex )}}
\newcommand{\supp}{\textrm{supp~}}
\newcommand{\dist}{\textrm{dist}}
\newcommand{\wt}{\widetilde}
\par\begin{samepage}%
\begin{document}
\title[Fractional Elliptic Problems: Regularity and Approximation]{Fractional Elliptic Problems on Lipschitz Domains: \\ Regularity and Approximation}

\author[J.P.~Borthagaray]{Juan Pablo~Borthagaray}
\address[J.P.~Borthagaray]{Departamento de Matem\'atica y Estad\'istica del Litoral, Universidad de la Rep\'ublica, Salto, Uruguay, and Centro de Matem\'atica, Facultad de Ciencias, Universidad de la Rep\'ublica, Montevideo, Uruguay}
\email{jpb@cmat.edu.uy}
\thanks{JPB has been supported in part by Fondo Vaz Ferreira grant 2019-068.}

\author[W.~Li]{Wenbo~Li}
\address[W.~Li]{Department of Mathematics, University of Tennessee, Knoxville, TN 37996, USA.}
\email{wli50@utk.edu}
\thanks{WL has been supported in part by NSF grant DMS-2111228.}

\author[R.H.~Nochetto]{Ricardo H.~Nochetto}
\address[R.H.~Nochetto]{Department of Mathematics and Institute for
Physical Science and Technology, University of Maryland, College
Park, MD 20742, USA}
\email{rhn@math.umd.edu}
\thanks{RHN has been supported in part by NSF grant DMS-1908267.}
 
\begin{abstract}
This survey hinges on the interplay between regularity and approximation for linear and
  quasi-linear fractional elliptic problems on Lipschitz domains. For the linear Dirichlet integral Laplacian,
  after briefly recalling H\"older regularity and applications, 
  we discuss novel optimal shift theorems in Besov spaces and their Sobolev
  counterparts. These results extend to problems with finite horizon and are instrumental for the
  subsequent error analysis. Moreover,
  we dwell on extensions of Besov regularity to the fractional $p$-Laplacian, and review
  the regularity of fractional minimal graphs and stickiness. We
  discretize these problems using continuous piecewise linear finite elements and derive global
  and local error estimates for linear problems, thereby improving some existing error
  estimates for both quasi-uniform and
  graded meshes. We also present a BPX preconditioner which turns out to be robust with respect to
  both the fractional order and the number of levels. We conclude with the discretization of fractional quasi-linear
  problems and their error analysis. We illustrate the theory with several illuminating numerical experiments.
\end{abstract}
 
\maketitle

\section{Introduction and motivation}\label{sec:introduction}

Let $s \in (0,1)$ and $u \colon \Rd \to \R$ be a sufficiently smooth function. The {\it integral fractional Laplacian} of order $s$ of $u$, which we will denote by $(-\Delta)^s$, is given by
\begin{equation} \label{eq:def_Laps}
\Laps v (x) := C_{d,s} \, \mbox{p.v.} \int_\Rd \frac{v(x)-v(y)}{|x-y|^{d+2s}} \; dy , \quad C_{d,s} := \frac{2^{2s} s \Gamma(s+\frac{d}{2})}{\pi^{d/2} \Gamma(1-s)} .
\end{equation}
This is a nonlocal operator: evaluation of $\Laps v$ at some point $x$ involves a weighted and regularized average of the values of $v$ over the whole space $\Rd$. Equivalently, the fractional Laplacian is the pseudodifferential operator with symbol $|\xi|^{2s}$: for sufficiently smooth $v \colon \Rd \to \R$, it holds that
\[
\calF(\Laps v) (\xi) = |\xi|^{2s} \calF v (\xi) \quad \forall \xi \in \Rd.
\]
This characterization makes apparent the fact that $\Laps$ approaches the classical Laplacian (resp. identity operator) as $s \to 1$ (resp. $s \to 0$); we point out that the asymptotic behavior of the constant in \eqref{eq:def_Laps} $C_{d,s} \simeq s (1-s)$ is crucial for this to hold.

This work is a survey of theoretical and computational aspects of boundary value problems involving the fractional Laplacian \eqref{eq:def_Laps} and related fractional-order operators on bounded domains. Our main goal is to emphasize the interplay between regularity and approximation with continuous piecewise linear finite element methods (FEMs). The presence of algebraic boundary layers in the solution of both linear and nonlinear fractional elliptic PDEs --and even of discontinuities in minimal graph problems (stickiness)-- limits the convergence rates achievable by the FEMs discussed below. We shall not dwell on the {\em spectral} fractional Laplacian; we refer to the surveys \cite{BBNOS18,Lischke_et_al18, DElia20} for a comparison between such an operator and the integral fractional Laplacian \eqref{eq:def_Laps} and a review of other discretization techniques.

After a thorough discussion on regularity and approximation of linear equations, we turn to the regularity and numerical treatment of certain quasilinear problems. Concretely, we focus on nonlocal minimal graph problems and on the Dirichlet problem for the $(p,s)$-Laplacian. Further discussion on these and related nonlinear, nonlocal equations can be found in \cite{BucurValdinoci16,Vazquez17, BoLiNo19}.

\subsection{\bf Examples}
%
We present three key examples that constitute the basis for our discussion below.

\subsubsection*{\bf Random walk with long jumps} 
As a first motivation for the operator \eqref{eq:def_Laps}, we follow \cite{BucurValdinoci16}. Given $s \in (0,1)$, we consider the probability mass function on the nonzero natural numbers 
\[
\mbox{P}(k) := \frac{c_s}{|k|^{1+2s}}, \qquad c_s := \left( \sum_{k=1}^\infty \frac1{|k|^{1+2s}} \right)^{-1}.
\]
We consider a particle moving in the space $\Rd$ according to a time- and space-discrete process. Namely, every $\tau >0$ units of time (here, $\tau$ is a time step), the particle draws a direction $\vn \in \mathbb{S}^{d-1}$ according to a uniform probability distribution and a positive integer number $k$ according to $\mbox{P}$. Then, the particle takes a step $k h \vn$ (here, $h>0$ is a length scale): if at time $t$ the particle is at a point $x\in\Rd$, then at time $t + \tau$ it jumps to $x +k h \vn\in\Rd$; the step is discrete but $x$ is continuous.

Let $u(x,t)$ be the probability density of the particle to be at the location $x\in\Rd$ at time $t>0$. At time $t+\tau$, such a probability equals the sum of the probabilities of finding the particle somewhere else, say at $y = x-kh\vn \in\Rd$, times the probability of jumping from $y$ to $x$:
\[
u(x,t+\tau) = \frac{c_s}{\omega_{d-1}} \sum_{k = 1}^\infty \int_{\mathbb{S}^{d-1}} \frac{u(x-kh\vn, t)}{|k|^{1+2s}} \, d\sigma(\vn) ,
\]
where $\omega_{d-1} = |\mathbb{S}^{d-1}|$.
At this point, it is important that $\tau$ and $h$ have the correct scaling, namely $\tau = h^{2s}$. Subtracting $u(x,t)$ on both sides of the identity above and dividing by $\tau = h^{2s}$, we obtain
\[
\frac{u(x,t+\tau)-u(x,t)}{\tau} = \frac{c_s h }{\omega_{d-1}} \sum_{k = 1}^\infty \int_{\mathbb{S}^{d-1}} \frac{u(x-kh\vn, t)-u(x,t)}{|h k|^{1+2s}} \, d\sigma(\vn).
\]
Since the right-hand side is a Riemann sum corresponding to an integral in polar coordinates over $\Rd$, taking the limit $\tau \to 0$ (or, equivalently, $h \to 0$) we formally obtain
\[
u_t(x,t) = \frac{c_s}{\omega_{d-1}}\int_{\Rd} \frac{u(y, t)-u(x,t)}{|y-x|^{d+2s}} \, dy = C \Laps u (x,t) .
\]
We observe that a {\it fractional heat equation} arises as a formal limit of the random walk with jumps.

There are extensive reports in the literature of natural phenomena being successfully modeled by processes of this type. For instance, in biology, a ``hit-and-run'' hunting strategy consists in the following \cite{Humphries, Ramos, Sims}: a predator moves according to the random walk we discussed above, searches prey in its surroundings and then goes on. For non-destructive foraging, that is, assuming the prey is distributed in patches and being only temporarily depleted, reference \cite{viswanathan} shows that, whenever no a priori information about the surroundings is available, the value $s=1/2$ delivers an optimal searching strategy. 

In contrast, for destructive foraging (namely, in the case the target resource is depleted once found), the optimal search pattern corresponds to the limit $s\to 0$. This is consistent with data gathered from in-situ observations; reference \cite{Sims} compares the behavior of diverse marine vertebrates, and shows that the best fitting for the conduct of such species corresponds to values of $s$ between $0.35$ and $0.7$.

From a mathematical viewpoint, \cite{Kao} studies a model of two competing species that have the same population dynamics but different dispersal strategies: one species moves according to a classical random walk, while the other adopts a nonlocal dispersal strategy. 
Supported both by a local stability analysis and numerical simulations, the authors conjecture that nonlocal dispersal is always preferred over random dispersal.

In nature, it may be unrealistic to allow arbitrarily long jumps. Instead, there may well be a maximal jump length $\delta$. For a given function $\phi \colon [0,\infty) \to [0,\infty)$ supported in the unit interval $[0,1]$, we can consider a {\it linear fractional diffusion operator with finite horizon $\delta$},
\begin{equation} \label{eq:def-finite-horizon}
\mathcal A_{\delta,s} v (x) := \int_\Rd \phi\left( \frac{|x-y|}{\delta} \right) \frac{(v(x)-v(y))}{|x-y|^{d+2s}} \, d y.
\end{equation}
This operator localizes the interactions built in \eqref{eq:def_Laps} to a ball of radius $\delta$ centered at $x\in\Omega$. Formally, the operator above recovers (up to a constant) the fractional Laplacian in the limit $\delta \to \infty$. Operators of this type become {\em local} in either limit $s \to 1$ or $\delta \to 0$. The behavior of operators of this type in the limit $\delta \to 0$ (with a suitable scaling with respect to $\delta$) is explored in \cite{TianDu14,Du19}, for example.

\subsubsection*{\bf Fractional perimeters and fractional minimal graphs}
Suppose we want to measure the perimeter of a region $E$ by using an image of it. In particular, let us assume the figure $\Omega$ is composed by square pixels with length $h$, and the region is a tilted square with sides at $45^\circ$ with respect to the orientation of the pixels. Then, independently of the image resolution (namely, of the pixel sizes), the approximation of the perimeter of $E$ by the perimeter of $E_h$ always produces an error by a factor of $\sqrt{2}$, see Figure \ref{fig:tilted_square}.

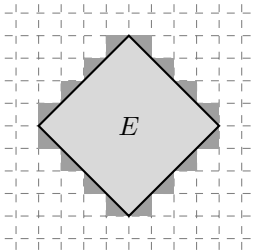
\begin{figure}[htbp] \label{fig:tilted_square}
\begin{tikzpicture}[scale=0.6]
\draw[step=0.5, gray, thin, dashed] (-2.75,-2.75) grid (2.75,2.75);
\draw[fill=gray!75, draw=none] (0,1.5) rectangle (0.5, 2);
\draw[fill=gray!75, draw=none] (0.5,1) rectangle (1, 1.5);
\draw[fill=gray!75, draw=none] (1,0.5) rectangle (1.5, 1);
\draw[fill=gray!75, draw=none] (1.5,0) rectangle (2, 0.5);

\draw[fill=gray!75, draw=none] (0,1.5) rectangle (-0.5, 2);
\draw[fill=gray!75, draw=none] (-0.5,1) rectangle (-1, 1.5);
\draw[fill=gray!75, draw=none] (-1,0.5) rectangle (-1.5, 1);
\draw[fill=gray!75, draw=none] (-1.5,0) rectangle (-2, 0.5);

\draw[fill=gray!75, draw=none] (0,-1.5) rectangle (-0.5, -2);
\draw[fill=gray!75, draw=none] (-0.5,-1) rectangle (-1, -1.5);
\draw[fill=gray!75, draw=none] (-1,-0.5) rectangle (-1.5, -1);
\draw[fill=gray!75, draw=none] (-1.5,0) rectangle (-2, -0.5);

\draw[fill=gray!75, draw=none] (0,-1.5) rectangle (0.5, -2);
\draw[fill=gray!75, draw=none] (0.5,-1) rectangle (1, -1.5);
\draw[fill=gray!75, draw=none] (1,-0.5) rectangle (1.5, -1);
\draw[fill=gray!75, draw=none] (1.5,0) rectangle (2, -0.5);

\draw[thick,rotate around={45:(0,0)}, fill=gray!30] (-2^.5,-2^.5) rectangle (2^.5, 2^.5);
\node at (0, 0) {$E$};
\end{tikzpicture}
\caption{Discrepancy between the perimeter of a tilted square $E$ and its pixel representation. Independently of the grid size $h$, the perimeter of the dark gray approximation $E_h$ is off by a factor of $\sqrt{2}$.}
\end{figure}

As an alternative, one can consider the fractional $s$-perimeter of $E$ in $\Omega$ (see Definition \ref{def:s-perimeter} below),
\begin{equation} \label{eq:s-perimeter}
P_s(E; \Omega) := | \chi_E |_{\widetilde W^{2s}_1(\Omega)}.
\end{equation}
Estimation of the $s$-perimeter of $E$ in $\Omega$ by using $E_h$ has an error of the order of $h^{1-2s}$. Moreover, because fractional $s$-perimeters recover the classical perimeter as $s \to 1/2$ (after a suitable normalization), they can provide a consistent mean to estimate the perimeter of $E$.

A classical result \cite{ModicaMortola77} states that classical minimal graphs arise as $\Gamma$-limits of certain Ginzburg-Landau energies. More precisely, given $\Omega \subset \Rd$ a bounded domain with Lipschitz boundary and $\eps >0$, we consider the double-well potential $W(t) = \frac12 (1-t^2)^2$ and the energy 
\[
E_{1,\eps}[v; \Omega] = \frac{\eps^2}2 |v|_{H^1(\Omega)}^2 + \int_\Omega W(v) .
\]
Then, for every sequence $\{u_\eps\}$ of minimizers of the rescaled energy $\eps^{-1} E_{1,\eps}[\cdot; \Omega]$ with uniformly bounded energies, there exists a subsequence that converges in $L^1(\Omega)$ to $\chi_E - \chi_{E^c}$, where $E$ is a set of minimal perimeter in $\Omega$.

Instead, when one considers the following fractional energy
\[
E_{s,\eps}[v; \Omega] = \frac{\eps^{2s}}{2} \iint_{Q_\Omega} \frac{|v(x)-v(y)|^2}{|x-y|^{d+2s}} \, dy dx  + \int_\Omega W(v) , \quad s \in (0,1),
\]
defined in $Q_\Omega:=(\Rd \times \Rd) \setminus (\Omega^c \times \Omega^c)$ with $\Omega^c := \Rd \setminus \overline\Omega$, and the $\Gamma$-limit of the corresponding rescaled energy $\gamma_\eps E_{s,\eps}[\cdot; \Omega]$ with scaling parameter
\[
\gamma_\eps = \begin{cases}
\eps^{-2s} & \mbox{ if } s < 1/2 , \\
\eps^{-1} | \log \eps|^{-1} & \mbox { if } s = 1/2 , \\
\eps^{-1} & \mbox { if } s > 1/2,
\end{cases}
\]
then a remarkable phenomenon arises \cite{SaVa12Gamma}. Indeed, in the limit with $\eps \to  0$ one also obtains convergence in $L^1(\Omega)$ to $\chi_E - \chi_{E^c}$; the set $E \subset \Omega$ has minimal perimeter if $s \ge 1/2$, but it is a minimizer of the $s$-perimeter $P_s(\cdot; \Omega)$ (cf. \eqref{eq:s-perimeter}) in case $s < 1/2$.

Let us now consider fractional minimal sets over cylinders of the form $\Omega \times \R \subset \R^{d+1}$ that, outside the cylinder correspond to the subgraph of a given bounded function $g$. In that case, the solution of the Plateau problem of finding a set $E$ that minimizes $P_s(\cdot ; \Omega \times \R)$ among all sets that coincide with the subgraph of $g$ in $\Omega^c \times \R$ is in turn the subgraph of a function, say $u$. After some technical considerations \cite{Lombardini-thesis}, one can show that such a function $u$ can be found by minimizing the energy
\begin{equation*} \label{eq:minimal-graph-energy}
I_s[v] = \iint_{Q_\Omega} F_s \left( \frac{v(x)-v(y)}{|x-y|} \right) \frac{1}{|x-y|^{d+2s-1}} \, dy dx,
\end{equation*}
where $F_s \colon \R \to \R$ is a suitable convex and nonnegative function; see \eqref{E:def_Fs} below. This energy is a fractional-order analogue of the classical graph area funcional
\[
I_1[v] = \int_\Omega \sqrt{1 + |\nabla v|^2} .
\]
The Euler-Lagrange equation corresponding to a minimizer of $I_s$ turns out to be quasilinear and elliptic, although not {\em uniformly} elliptic.

\subsubsection*{\bf Quasilinear operators}
The third class of fractional operators is given by energies of the form
\begin{equation*} \label{eq:def-energy-quasilinear}
\calF (u) = \iint_{Q_\Omega} G \left( x, y, \frac{u(x)-u(y)}{|x-y|^s} \right) \frac1{|x-y|^{d}} \, dy dx - \int_\Omega f u,
\end{equation*}
where the function $G \colon \Rd \times \Rd \times \R \to \R$ is required to satisfy suitable assumptions described in detail in \cite{BoLiNo22}. For the sake of clarity, in this paper we shall restrict our attention to the case $G(x,y,\rho) = \frac{C_{d,s,p}}{2p}|\rho|^p$ with $1<p<\infty$ and $C_{d,s,p}$ a suitable normalization constant; see \eqref{eq:Cdsp} below. The resulting energy is $\frac{1}{p} |v|_{\widetilde{W}^s_p(\Omega)}^p$ and gives rise to the {\it fractional $(p,s)$-Laplacian operator}:
\[
(-\Delta)^s_p u (x) := C_{d,s,p} \int_\Rd \frac{|u(x)-u(y)|^{p-2} (u(x)-u(y))}{|x-y|^{d+sp}} \, d y,
\] 
This generalizes \eqref{eq:def_Laps} to $1<p<\infty$. We can rewrite the operator above as
\[
	(-\Delta)^s_p u (x) = C_{d,s,p} \int_{\Rd} \left(\frac{|u(x)-u(y)|}{|x-y|^s}\right)^{p-2} \frac{(u(x)-u(y))}{|x-y|^{d+2s}} \, dy.
\]
which suggests that, heuristically, one can understand the fractional $(p,s)$-Laplacian as a weighted fractional Laplacian of order $s$, with a weight $ \left( \frac{|u(x)-u(y)|}{|x-y|^s} \right)^{p-2}$. This is analogous to the local case, for which the $p$-Laplacian $(-\Delta)_p u = \mbox{div}(|\nabla u|^{p-2} \nabla u)$ can be regarded as a Laplacian with weight $|\nabla u|^{p-2}$. The Dirichlet problem for the local $p$-Laplacian arises in a number of models of physical processes, including non-Newtonian fluids \cite{Atkinson:74} and turbulent flows in porous media \cite{Diaz:94}.

\section{Linear problems}\label{sec:linear}

This section deals with the homogeneous Dirichlet problem for the fractional Laplacian \eqref{eq:def_Laps}. We shall make use of fractional-order Sobolev spaces and Besov spaces. We employ the notation from \cite{BoNo21} and refer to that work for elementary properties of these spaces.
 
Given $f \in H^{-s}(\Omega)$, we set $\Omega^c :=\Rd\setminus\overline\Omega$ and look for a function $u\in\tHs$, the space of functions in $H^s(\Rd)$ that vanish in $\Omega^c$, such that
\begin{equation}\label{eq:Dirichlet}
\left\lbrace\begin{array}{rl}
\Laps u = f &\mbox{ in }\Omega, \\
u = 0 &\mbox{ in }\Omega^c.
\end{array} \right.
\end{equation}
To formulate \eqref{eq:Dirichlet} weakly, we introduce the following inner product in $\tHs$,
\begin{equation} \label{eq:defofinnerprod}
  (v,w)_s := \frac{C_{d,s}}2 \iint_{Q_\Omega} \frac{(v(x)-v(y))(w(x) - w(y))}{|x-y|^{d+2s}} \; dx \; dy, \quad \|v\|_{\tHs} := (v,v)_s^{1/2},
\end{equation}
and recall the notation $Q_\Omega = (\Rd\times\Rd) \setminus(\Omega^c\times\Omega^c)$.
We denote by $\langle \cdot , \cdot \rangle$ the duality pairing between $\tHs$ and its dual $H^{-s}(\Omega)$.
The constant $C_{d,s}$, defined in \eqref{eq:def_Laps}, enforces consistency between \eqref{eq:defofinnerprod} and the definition of fractional Laplacian via Fourier transform
\[
(v,w)_s = \langle (-\Delta)^s v, w \rangle = \int_{\Rd} |\xi|^{2s} \calF[v](\xi) \overline{\calF[w]}(\xi) d\xi.
\]
The weak formulation of \eqref{eq:Dirichlet} reads: find $u \in \tHs$ such that
\begin{equation} \label{eq:weak_linear}
(u, v)_s = \langle f, v \rangle \quad \forall v \in \tHs.
\end{equation}
Moreover, the following Poincar\'e inequality is valid with a constant $C(\Omega)$ uniform in $s\in(0,1)$
\begin{equation}\label{eq:Poincare}
\| v \|_{L^2(\Omega)} \le C(\Omega) \|v\|_{\tHs} \quad \forall v \in \tHs,
\end{equation}
and implies that $(\cdot,\cdot)_s$ is indeed an inner product.
By the Lax-Milgram Theorem, one immediately deduces that for all $f \in H^{-s}(\Omega)$ there exists a unique $u \in \tHs$ satisfying \eqref{eq:weak_linear} and that the solution map is continuous,
\begin{equation}\label{eq:solution-map}
\| u \|_{\tHs} \le \| f\|_{H^{-s}(\Omega)}.
\end{equation}

\subsection{Regularity}
We now review some results regarding regularity of solutions to problem \eqref{eq:weak_linear}. It is natural to expect that, if the right hand-side $f$ is smoother than $H^{-s}(\Omega)$, then some additional regularity is inherited by the weak solution $u$, which a priori is only known to belong to $\tHs$. 

By using the definition of $(-\Delta)^s$ as a pseudodifferential operator of order $2s$ and the characterization of fractional Sobolev spaces as Bessel potential spaces for $p=2$, it is clear that if $u \in L^2(\Rd)$ is such that $\Laps u \in H^t(\Rd)$, then $u \in H^{t+2s}(\Rd)$. A much less trivial question is whether this result can be used to derive interior regularity estimates for problems posed on a bounded domain. In this regard, we point out to interesting estimates in \cite{BiWaZu17,Cozzi17, Faustmann:20}.

Let us next discuss regularity of solutions to \eqref{eq:Dirichlet} up to the boundary of the domain $\Omega$. For problems posed on one-dimensional domains, one can take advantage of explicit expression of $\dist(x,\pp\Omega)$ to obtain sharp regularity results for linear operators. Indeed, for the fractional Laplacian \eqref{eq:def_Laps}, a certain kind of Jacobi polynomials, the {\em Gegenbauer polynomials}, play a crucial role in the analysis  of \eqref{eq:Dirichlet} \cite{Mao:16, ABBM18, Ervin:18}. On the interval $[-1,1]$, these polynomials are $L^2$-orthogonal with respect to the weight $\omega(x) = (1-x^2)^s$. Special polynomials are also a crucial tool in the analysis of problems with advection and/or reaction terms \cite{Hao:20,Ervin:21} and are instrumental in the development of spectral methods for these problems in 1d.

For problems based on domains in $\Rd$ with $d \ge 1$, techniques based on Fourier analysis, such as the ones employed by Vi\v{s}ik and \`Eskin \cite{Eskin, VishikEskin} or Grubb \cite{Grubb15}, allow for a full characterization of mapping properties of the integral fractional Laplacian of functions supported in $\Omega$. However, such arguments typically require the domain $\Omega$ to be $C^\infty$; the recent work by Abels and Grubb \cite{AbelsGrubb} introduces a method to handle nonsmooth coordinate changes that leads to regularity results for domains with $C^{1+\beta}$ boundary with $\beta > 2s$. The maximal regularity reads \cite{Eskin, VishikEskin, Grubb15, AbelsGrubb}
\begin{equation}\label{eq:max-reg}
  u \in H^{s+ \frac12 -\eps} (\Omega),
\end{equation}    
for any $0<\eps< s+ \frac12$ regardless of the regularity of $f$.
The following exact solution $u$ of \eqref{eq:Dirichlet} in $\Omega = B(0,1) \subset \Rd$ with $f \equiv 1$ exhibits this extreme behavior:
\begin{equation} \label{eq:getoor}
u(x) = C_s  ( 1- |x|^2)^s_+,
\end{equation}
where $t_+ =\max\{t,0\}$ and $C_s=\frac{\Gamma(\frac{d}{2})}{2^{2s}\Gamma(\frac{d+2s}{2})\Gamma(1+s)}$; in fact $u\notin H^{s+ \frac12} (\Omega)$, see \Cref{ex:solution-profile}. Therefore, it stems from this example that the boundary behavior of the solution $u$ to \eqref{eq:Dirichlet} is expected to be
\begin{equation} \label{eq:boundary_behavior}
u (x) \approx d(x, \pp\Omega)^s,
\end{equation}
for every $x$ close to $\pp\Omega$ irrespective of the regularity of $\pp\Omega$. This is in striking contrast with the boundary behavior of the classical Laplacian that inherits the regularity of $\pp\Omega$.

\subsubsection{\bf H\"older regularity}\label{sec:Holder-regularity}
%
Using potential theory tools, references \cite{AbatangeloRosOton,RosOtonSerra} studied problem \eqref{eq:Dirichlet} and related problems for integral operators with translation-invariant kernels. These references obtain a fine characterization of boundary H\"older regularity of solutions. More precisely, \cite[Proposition 1.1]{RosOtonSerra} proves that the solution $u$ satisfies
\begin{equation} \label{eq:Holder-regularity}
\| u \|_{C^s(\Rd)} \le C(\Omega) \| f \|_{L^\infty(\Omega)} .
\end{equation}
provided $\Omega$ is a Lipschitz domain that satisfies an {\it exterior ball condition} property. This estimate is also a consequence of \cite[Theorem 1.4]{AbatangeloRosOton} in case $\Omega$ is of class $C^{1,\beta}$ and $f\in C^{\beta-s}(\overline{\Omega})$ for $s<\beta<1$. Estimate \eqref{eq:Holder-regularity} does not seem to be valid uniformly in $s$ for polygonal domains in $2d$ with reentrant corners \cite{Gimperlein:19,MelenkSchwab_2021}. Once the $s$-H\"older continuity \eqref{eq:Holder-regularity} of $u$ is established, \cite{AbatangeloRosOton,RosOtonSerra} derive higher-order estimates in case $f$ possesses certain additional regularity. In fact, if
\[
\delta(x,y) := \min \left\{ \dist(x, \pp \Omega), \dist(y, \pp \Omega) \right\}
\quad\forall \, x,y\in\overline{\Omega},
\]
then a typical weighted estimate from \cite{AbatangeloRosOton,RosOtonSerra} reads
\begin{equation}\label{eq:holder}
\sup_{x,y\in\overline{\Omega}} \left\{\delta(x,y)^{\beta+s} \frac{|u(x)-u(y)|}{|x-y|^{\beta+2s}} \right\} \le C(f,u),
\end{equation}
where $\beta>0$ and $C(f,u)$ depends of suitable regularity of $f$ and \eqref{eq:Holder-regularity}. 

Estimates such as \eqref{eq:holder} are not immediately useful in the analysis of finite element methods, which rely on Sobolev estimates.
To capture the boundary behavior of $u$ in terms of Sobolev norms, \cite{AcBo17} converts estimates such as \eqref{eq:Holder-regularity} into (weighted) Sobolev estimates for $u$ in terms of H\"older norms of $f$. To make this precise, given $\ell = k + s$, with $k \in \mathbb{N}$ and $s \in (0,1)$, and $\kappa \ge 0$, we define the weighted norm
\[
\| v \|_{H^{\ell}_\kappa (\Omega)}^2 := \| v \|_{H^k (\Omega)}^2 + \sum_{|\beta| = k } 
\iint_{\Omega\times\Omega} \frac{|D^\beta v(x)-D^\beta v(y)|^2}{|x-y|^{d+2s}} \, \delta(x,y)^{2\kappa} \, dy \, dx
\]
and the associated weighted space
\begin{equation} \label{eq:weighted_sobolev}
H^\ell_\kappa (\Omega) :=  \left\{ v \in H^\ell(\Omega) \colon \| v \|_{H^\ell_\kappa (\Omega)} < \infty \right\} .
\end{equation}

The regularity estimate in the weighted Sobolev scale \eqref{eq:weighted_sobolev} for $d=2$ reads as follows \cite{AcBo17}.

\begin{theorem}[weighted Sobolev estimate] \label{T:weighted_regularity}
Let $\Omega$ be a bounded Lipschitz domain in $\R^2$, let the solution $u$ of \eqref{eq:weak_linear} satisfy \eqref{eq:Holder-regularity} and $f \in C^{1-s}(\overline\Omega)$. Then, for all $\eps>0$ sufficiently small, $u \in \widetilde H^{1+s-2\eps}_{1/2-\eps}(\Omega)$ satisfies the estimate
\[
\|u\|_{\widetilde{H}^{1+s-2\eps}_{1/2-\eps}(\Omega)} \le \frac{C(\Omega,s)}{\eps} \|f\|_{C^{1-s}(\overline{\Omega})}.
\]
\end{theorem}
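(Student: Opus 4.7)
The strategy is to transform the pointwise weighted H\"older bound \eqref{eq:holder} into the Sobolev-type estimate by integration, with the boundary weight $\delta(x,y)^{1-2\eps}$ absorbing the algebraic blow-up of $\nabla u$ near $\pp\Omega$ predicted by the solution profile \eqref{eq:boundary_behavior}. The $\eps$-shifts of the differentiability index and of the weight exponent are calibrated so that the relevant integrals barely converge, and the $1/\eps$ factor on the right-hand side will arise as a pair of logarithmic divergences at $\eps = 0$.

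First, I would apply \eqref{eq:holder} with $\beta = 1-s$, which is legitimate precisely because $f \in C^{1-s}(\overline\Omega)$, together with the interior estimates from \cite{AbatangeloRosOton,RosOtonSerra}. A scaling argument on the half-interior ball $B(x_0, r/2)$ with $r := \dist(x_0, \pp\Omega)$ then produces the two pointwise bounds
\[
|\nabla u(x)| \lesssim \dist(x,\pp\Omega)^{s-1}\|f\|_{C^{1-s}(\overline\Omega)}, \qquad |\nabla u(x)-\nabla u(y)| \lesssim \delta(x,y)^{-1}|x-y|^{s}\|f\|_{C^{1-s}(\overline\Omega)},
\]
the latter being valid whenever $|x-y|\le \delta(x,y)/2$, in which regime $\dist(x,\pp\Omega)$ and $\dist(y,\pp\Omega)$ are comparable to $\delta(x,y)$. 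In two dimensions the first bound yields $u\in H^1(\Omega)$ whenever $s>1/2$; the remaining case is absorbed through the weighted part of the norm.

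Next, I would split the weighted seminorm as
\[
I := \iint_{\Omega\times\Omega} \frac{|\nabla u(x)-\nabla u(y)|^2}{|x-y|^{2+2s-4\eps}}\,\delta(x,y)^{1-2\eps}\,dy\,dx = I_{R_1} + I_{R_2},
\]
with $R_1 := \{(x,y)\in\Omega\times\Omega : |x-y|\le \delta(x,y)/2\}$ and $R_2 := (\Omega\times\Omega)\setminus R_1$. On $R_1$ the interior H\"older bound on $\nabla u$ applies; substituting it and passing to polar coordinates centered at $x$ reduces the radial variable to
\[
\int_0^{\dist(x,\pp\Omega)/2} r^{-1+4\eps}\,dr \lesssim \eps^{-1}\dist(x,\pp\Omega)^{4\eps},
\]
after which the remaining outer integrand behaves like $\dist(x,\pp\Omega)^{-1+2\eps}$, which is integrable in $2$d. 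On $R_2$ I use the triangle inequality and the first pointwise bound to reduce, by symmetry, to
\[
\int_\Omega \dist(x,\pp\Omega)^{2s-2}\int_{|x-y|>\dist(x,\pp\Omega)/2} \frac{\delta(x,y)^{1-2\eps}}{|x-y|^{2+2s-4\eps}}\,dy\,dx,
\]
and after bounding $\delta(x,y)\le \dist(x,\pp\Omega)$ and computing the $y$-integral, the outer integrand again reduces to $\dist(x,\pp\Omega)^{-1+2\eps}$, producing the second $\eps^{-1}$ contribution.

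The main obstacle is the careful bookkeeping of exponents: one must verify that the H\"older exponent $\beta = 1-s$ is exactly the choice that matches the boundary-blow-up rate of $\nabla u$ to the weight $\delta(x,y)^{1-2\eps}$ and the kernel $|x-y|^{-2-2(s-2\eps)}$, that all four exponents arising from the splitting yield convergent integrals (which uses $d=2$ in an essential way for the integrability of $\dist(\cdot,\pp\Omega)^{-1+2\eps}$), and that the only divergence at $\eps = 0$ is the logarithmic one that produces the factor $\eps^{-1}$. A subtle point is that the pointwise gradient estimate does not yield $u\in H^1(\Omega)$ when $s\le 1/2$, and one must interpret \emph{a priori} membership in the weighted space through an approximation or density argument before integrating.
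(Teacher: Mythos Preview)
The paper does not supply a proof of this theorem; it is quoted from \cite{AcBo17}. Your proposal follows precisely the strategy of that reference: transplant the weighted pointwise H\"older estimates of \cite{RosOtonSerra} on $\nabla u$ into the weighted double integral defining the $H^{1+s-2\eps}_{1/2-\eps}$-seminorm, split according to whether $|x-y|$ is small or large relative to $\delta(x,y)$, and integrate out. The choice $\beta=1-s$ and the splitting are exactly right, and the two near-critical integrals $\int_0^{\dist(x,\partial\Omega)} r^{-1+4\eps}\,dr$ and $\int_\Omega \dist(x,\partial\Omega)^{-1+2\eps}\,dx$ in two dimensions each contribute a factor $\eps^{-1}$, so the squared seminorm is of order $\eps^{-2}$ and the norm of order $\eps^{-1}$, as claimed.

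Two small points of care. First, your $\eps$-accounting in the text is slightly ambiguous: both factors of $\eps^{-1}$ already arise in the region $R_1$ (the inner radial integral \emph{and} the outer integral of $\dist(\cdot,\partial\Omega)^{-1+2\eps}$), so $I_{R_1}\lesssim \eps^{-2}\|f\|^2$ is the dominant term; the contribution $I_{R_2}\lesssim \eps^{-1}\|f\|^2$ is lower order. Second, the concern you flag about $u\notin H^1(\Omega)$ when $s\le 1/2$ is real for the norm as written in the survey (which includes the unweighted $\|u\|_{H^1(\Omega)}$ in the definition). In \cite{AcBo17} this is handled because the weighted scale there carries a weight on the $L^2$-norm of $\nabla u$ as well, so that the bound $|\nabla u(x)|\lesssim \dist(x,\partial\Omega)^{s-1}$ suffices; the survey's definition is a slight simplification. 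Your instinct to resolve this via the weighted part of the norm (or a density argument) is the correct direction, but the fix lies in the definition of the space rather than in an additional estimate.
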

Compared with \eqref{eq:max-reg}, we observe a gain of about $1/2$-order differentiability at the expense of about $1/2$-order weight.

We explore next an alternative to $L^2$-based weighted estimates that consists of reducing the integrability index \cite{BoNo21Constructive}. We consider $\Omega=(0,\infty)$ to be the half line in $1d$, and interpret the behavior \eqref{eq:boundary_behavior} as $u(x)=x_+^s$. We then wonder under what conditions this function belongs to a Sobolev space with differentiability index $r$ and integrability index $p$. For that purpose, let us compute (Riemann-Liouville) derivatives of order $r>0$ of the function $v : \R \to \R$, $v(x) =  x_+^s$:
\begin{equation} \label{eq:RL-derivative}
\partial^r v (x) \simeq x_+^{s-r}, \quad r > 0.
\end{equation}
We realize that $\partial^r v$ is $L^p$-integrable near $x=0$ if and only if $p(s-r) > - 1$, namely, if $r<s+\frac1p$. This heuristic discussion illustrates the natural interplay between the differentiability order $r$ and integrability index $p$ for membership of solutions to \eqref{eq:Dirichlet} in the class $W^r_p$, at least for dimension $d=1$. It turns out that the restriction $r<s+\frac1p$ is needed irrespective of $d$ \cite[Theorem 3.7]{BoNo21Constructive}. 

To figure out the optimal choice of indices $r,p$ compatible with nonlinear approximation theory for $d=2$, we inspect the DeVore diagram; see Figure \ref{fig:DeVore}. Recall the definition of Sobolev number $\sob (W^r_p):=r-\frac{d}{p}$ and the Sobolev line corresponding to the nonlinear approximation scale of $H^s$,
\[
\left\{ \left(\frac{1}{p}, r\right) \colon \sob(W^r_p) = \sob (H^s) \right\} = \left\{ \left(\frac{1}{p}, r\right) \colon r = s - 1 + \frac{2}{p} \right\}.
\]
In order to have a compact embedding $W^r_p(\Omega) \subset H^s(\Omega)$, we require that $\sob(W^r_p) > \sob (H^s)$ or equivalently that $(1/p,r)$ lies above the Sobolev line, as well as $r>s$. 
In addition, this line intersects the regularity line $r=s+\frac1p$ at $(1,1+s)$, which is not an admissible pair
(see Figure \ref{fig:DeVore}).
\begin{figure}[htbp]
\begin{center}
  \includegraphics[width=0.65\linewidth]{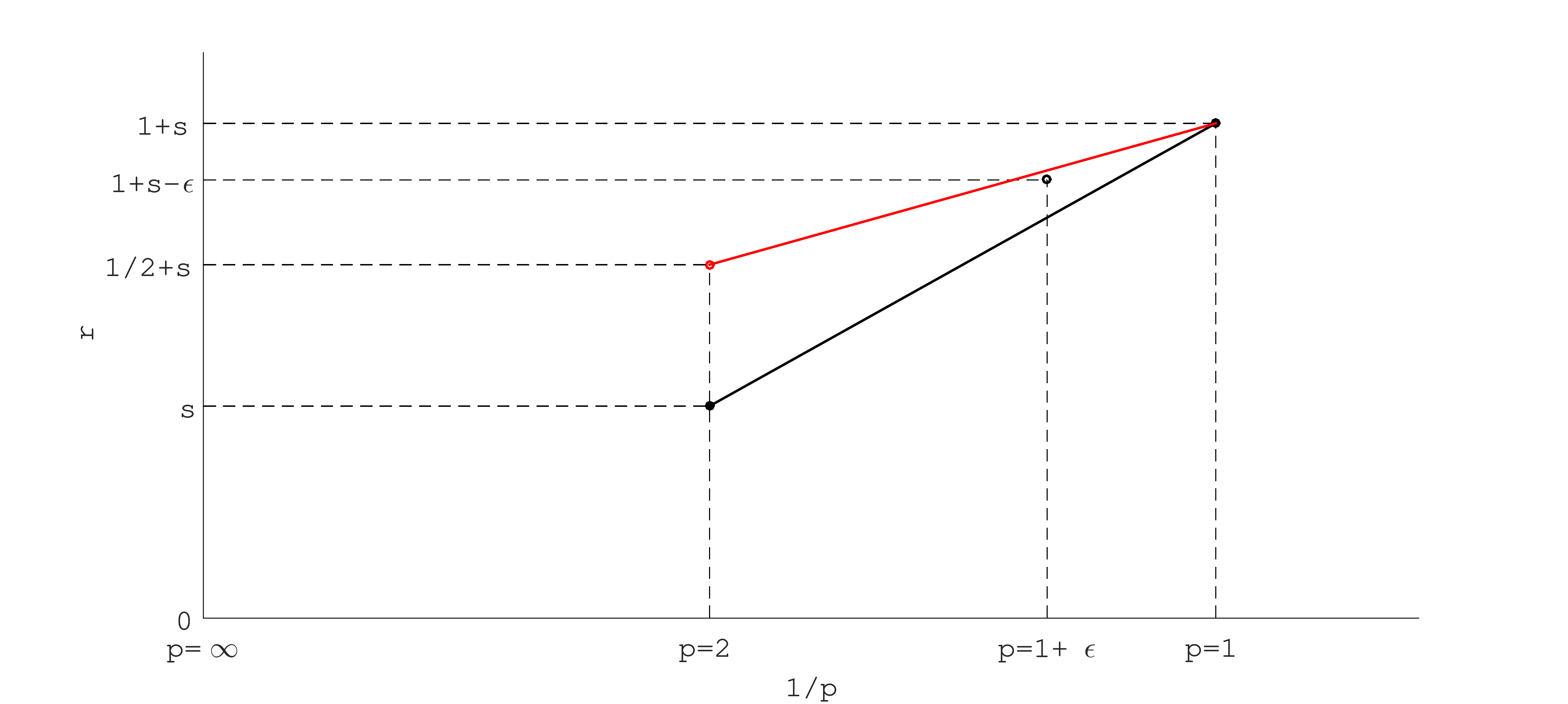}
\end{center}
\vspace{-0.5cm}
\caption{DeVore diagram for $d = 2$. Solutions to \eqref{eq:Dirichlet} are of class $W^r_p$, where $(1/p,r)$ must be below the regularity line $ \{ (1/p, r) \colon r = s + \frac{1}{p} \}$ (red). The Sobolev line $\{ (1/p, r) \colon r = s - 1 + \frac{2}{p} \}$ (black) connects the spaces $W^r_p$ and $H^s$ and corresponds to the nonlinear approximation scale of $H^s$. Both lines intersect at $(1,1+s)$.}
\label{fig:DeVore}
\end{figure}
Letting $p=1+\eps$ and $r=s+1-\eps$ with $\eps > 0$ arbitrarily small, we have
\[
\sob(W^{s+1-\eps}_{1+\eps}) =  s - 1 + \eps  \left(\frac{1-\eps}{1+\eps} \right) > \sob(H^s),
\]
while $\frac{1}{p}=\frac{1}{1+\eps} > 1 - \eps=r-s$ is also satisfied. This is thus an optimal choice of parameters for $d=2$. The H\"older regularity of \cite{AbatangeloRosOton,RosOtonSerra} leads to the following Sobolev estimate \cite[Corollary 3.8]{BoNo21Constructive}.

\begin{theorem}[differentiability vs integrability] \label{T:higher-regularity-Lp}
Let $\Omega$ be a bounded Lipschitz domain in $\R^2$, let the solution $u$ of \eqref{eq:weak_linear} satisfy \eqref{eq:Holder-regularity}, and $f\in C^{1-s}(\overline \Omega)$. Then, for $\eps$ sufficiently small, the function $u \in \widetilde{W}^{s+1-\eps}_{1+\eps}(\Omega)$ satisfies the a priori bound
\begin{equation}\label{eq:higher-regularity}
\|u\|_{\widetilde W^{s+1-\eps}_{1+\eps}(\Omega)} \leq \frac{C(\Omega, s)}{\eps^3} \| f \|_{C^{1-s}(\overline \Omega)}.
\end{equation}
\end{theorem}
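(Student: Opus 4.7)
The plan is to convert the (weighted) Hölder regularity of Ros-Oton-Serra and Abatangelo-Ros-Oton into a Sobolev-Slobodeckij estimate via a direct double-integral computation, exploiting the fact that the target pair $(r,p) = (s+1-\eps, 1+\eps)$ sits infinitesimally below the boundary-regularity line $r = s + \tfrac{1}{p}$ predicted by the heuristic \eqref{eq:RL-derivative}. My starting point is a higher-order version of the weighted estimate \eqref{eq:holder} applied to $\nabla u$, of the schematic form
\begin{equation*}
|\nabla u(x) - \nabla u(y)| \le C(\Omega,s)\,\|f\|_{C^{1-s}(\overline{\Omega})}\, |x-y|^{s-\eps'}\, \delta(x,y)^{-(1-\eps')}
\qquad \forall\, x,y\in\overline{\Omega},
\end{equation*}
with $\eps'$ a small parameter tied to $\eps$. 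Such a bound is available from \cite{AbatangeloRosOton, RosOtonSerra} by taking the exponent $\beta$ in \eqref{eq:holder} of order $1-s$; the hypothesis $f\in C^{1-s}(\overline{\Omega})$ is precisely what is needed to feed into this second-order weighted estimate and to control its constant in terms of $\|f\|_{C^{1-s}(\overline{\Omega})}$ via \eqref{eq:Holder-regularity}.

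Next, since $\lfloor s+1-\eps \rfloor = 1$ for $\eps$ sufficiently small, the Gagliardo seminorm of interest reduces to
\begin{equation*}
|u|_{W^{s+1-\eps}_{1+\eps}(\Omega)}^{1+\eps} = \iint_{\Omega\times\Omega} \frac{|\nabla u(x) - \nabla u(y)|^{1+\eps}}{|x-y|^{2 + (s-\eps)(1+\eps)}} \, dx \, dy.
\end{equation*}
Substituting the pointwise bound produces an integrand of the schematic shape $|x-y|^{-2+\eta(\eps)}\, \delta(x,y)^{-(1-\eps')(1+\eps)}$, where $\eta(\eps) = O(\eps)$ is strictly positive. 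I decompose $\Omega$ into dyadic shells $\{x : \dist(x,\partial\Omega) \sim 2^{-k}\}$ and flatten $\partial\Omega$ locally by bi-Lipschitz charts (this is where the Lipschitz hypothesis on $\Omega$ is used) to evaluate the contribution of each shell by geometric series. The additional contribution from $\widetilde{W}^{s+1-\eps}_{1+\eps}$ over $\widetilde{W}^{s+1-\eps}_{1+\eps}$, namely the pieces with $y \in \Omega^c$, is handled identically since $u\equiv 0$ there and $\delta$ extends naturally.

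The main technical obstacle is tracking the $\eps$-dependence of the constants precisely enough to recover the stated $\eps^{-3}$ blow-up. Each of three borderline integrations contributes a factor of order $\eps^{-1}$: one from the Gagliardo diagonal $|x-y|\to 0$, one from the summation over dyadic shells in the normal direction to $\partial\Omega$ (where the $\delta$-singularity is integrated), and a third from the slack between the $|x-y|$ exponent and the differentiability threshold $s-\eps$ after raising the pointwise bound to the $(1+\eps)$-th power. Verifying finiteness of each of these integrals amounts exactly to the translation of the DeVore-diagram conditions $\sob(W^{s+1-\eps}_{1+\eps}) > \sob(H^s)$ and $r < s + 1/p$ discussed immediately before the theorem statement, so no room is lost in the choice of $(r,p)$.
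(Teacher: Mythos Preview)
The paper does not prove this theorem; it quotes it as \cite[Corollary 3.8]{BoNo21Constructive}. Your proposal follows precisely the strategy used there (and in the earlier weighted-$L^2$ analogue \cite{AcBo17}): take the weighted H\"older bounds of \cite{RosOtonSerra,AbatangeloRosOton} at level $\beta$ near $1-s$, insert them into the Gagliardo double integral for $\nabla u$, and integrate using the boundary layer structure. So your approach is the same as the one the paper relies on.

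One point to tighten. The single pointwise bound you write,
\[
|\nabla u(x) - \nabla u(y)| \lesssim |x-y|^{s-\eps'}\, \delta(x,y)^{-(1-\eps')},
\]
is not by itself enough to make the double integral converge cleanly: after raising to the $(1+\eps)$-th power, the $\delta$-exponent and the $|x-y|$-exponent pull in opposite directions as you vary $\eps'$, and you are forced into a narrow window $\eps/(1+\eps) < \eps' < \eps$. The argument in \cite{BoNo21Constructive,AcBo17} avoids this tension by also using the zeroth-order weighted bound $|\nabla u(x)| \lesssim \delta(x)^{s-1}$ and splitting the integration domain according to whether $|x-y| \lesssim \delta(x,y)$ or $|x-y| \gtrsim \delta(x,y)$; in the far regime one uses the pointwise bound rather than the difference bound. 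With that splitting the three borderline integrations you identify are decoupled and each contributes its own factor of $\eps^{-1}$, giving the $\eps^{-3}$ directly. Your ``dyadic shells'' remark suggests you have this in mind, but it should be made explicit.
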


We recall that \eqref{eq:Holder-regularity} hinges on the exterior ball condition \cite{AbatangeloRosOton,RosOtonSerra}. In finite element applications, where $\Omega$ is usually a polygon for $d=2$, such a condition implies $\Omega$ must be convex, which is too restrictive in practice. We next discuss regularity estimates that solely require $\Omega$ to be Lipschitz.

\subsubsection{\bf Besov regularity}\label{sec:Besov-regularity} 
We follow \cite{BoNo21,BoLiNo22}, which are in turn inspired in \cite{Savare98} for second order linear elliptic problems. This is a functional analysis approach to regularity and starts with the observation that the solution $u$ of \eqref{eq:Dirichlet} is the minimizer of the quadratic functional $v\mapsto \calF_2(v) - \calF_1(v)$, where
\begin{equation*}\label{eq:quadratic}
\calF_1(v) := \int_\Omega fv, \quad
\calF_2(v) := \frac12 \|v\|_{\widetilde H^s(\Omega)}^2 \quad\forall \, v \in \wt{H}^s(\Omega),
\end{equation*}
whence it satisfies the stationarity condition
\begin{equation}\label{eq:stationarity}
\frac12 \|u-v\|_{\widetilde H^s(\Omega)}^2 = \big( \calF_2(v) - \calF_2(u) \big) - \big( \calF_1(v) - \calF_1(u) \big)
  \quad\forall \, v\in \wt{H}^s(\Omega).
\end{equation}
If we now think of $v$ as a suitable translation of $u$, then \eqref{eq:stationarity} reveals that further regularity of $u$ beyond $\wt{H}^s(\Omega)$ could be inferred from regularity of the functionals $\calF_1,\calF_2$. However, to carry out this program we face two important difficulties: first, we need to localize the translations because global translations are not admissible for bounded domains; second, instead of all possible directions. we need to deal with a convex cone of directions dictated by the Lipschitz regularity of $\partial\Omega$. We briefly describe both issues now and refer to \cite{BoNo21,Savare98} for details.

We start with the definition of Besov spaces by real interpolation. Given a pair of compatible Banach spaces $(X_0,X_1)$, $u \in X_0 + X_1$, and $t>0$, we set the $K$-functional \cite{mclean2000strongly}
\begin{equation}\label{eq:K-functional}
K(t, u) := \inf \left\{ \big(\| u_0 \|_{X_0}^2 + t^2 \| u_1 \|_{X_1}^2 \big)^{1/2} \colon u = u_0 + u_1, \ u_0 \in X_0, \ u_1 \in X_1 \right\}. 
\end{equation}
For $\theta \in (0,1)$ and $q \in [1, \infty]$, let us define interpolation spaces
\[
\big[X_0, X_1\big]_{\theta, q} := \{ u \in X_0 + X_1 \colon \| u \|_{(X_0, X_1)_{\theta, q}} < \infty \},
\]
where
\begin{equation}\label{eq:Besov-norm}
\| u \|_{[X_0, X_1]_{\theta, q}} =
\begin{cases}
\Big[ q \theta (1-\theta) \int_0^\infty t^{-(1+\theta q)} |K(t,u)|^q \, dt \Big]^{1/q} & \mbox{if } 1 \le q < \infty, \\
\sup_{t > 0} \ t^{-\theta} |K(t,u)|  & \mbox{if } q = \infty.
\end{cases}
\end{equation}
The normalization factor $q \theta (1-\theta)$ in the norm \eqref{eq:Besov-norm} guarantees the correct scalings in the limits $\theta \to 0$, $\theta \to 1$ and $q \to \infty$ for norm continuity; see \cite[Appendix B]{mclean2000strongly}.
In our setting, we let $X_0:=L^2(\Omega)$, $X_1:=H^2(\Omega)$, $\sigma\in(0,2)$ and $q\in [1,\infty]$ to obtain
\[
B^\sigma_{2,q} (\Omega) := \big[L^2(\Omega), H^2(\Omega)\big]_{\sigma/2,q},\quad
\dot{B}^\sigma_{2,q} (\Omega) := \{v\in B^\sigma_{2,q} (\Omega): \supp v \subset\overline{\Omega}\}.
\]
In particular, we have that $B^\sigma_{2,2} (\Omega) = H^\sigma(\Omega)$ for all $\sigma \in (0,2)$.
If $\sigma \in (0,1)$, then we can also set $\dot{B}^\sigma_{2,q} (\Omega) := [L^2(\Omega), H^1_0(\Omega)]_{\sigma,q}$ and point out that the spaces coincide but the corresponding norms exhibit a different scaling as $\sigma\to1$; we will quantify this discrepancy in \eqref{eq:Marchaud} below. 
Besov spaces can also be characterized in terms of first and second difference operators $\delta_1(h)$ and $\delta_2(h)$:
\[
\delta_1(h) v(x) := v(x+h) - v(x),
\qquad
\delta_2(h) v(x) := v(x+h) + v(x-h) - 2v(x),
\]
for $v\in L^2(\Omega)$ and $x \in \Omega_{|h|} := \{y\in\Omega: d(y,\partial\Omega)>|h|\}$. In fact, if $D=D_\rho(0)$ is the ball in $\R^d$ of radius $\rho$ centered at the origin and $\sigma\in(0,2)$, we define the seminorms for $q\in [1,\infty)$
\begin{equation}\label{eq:q<infty}
|v|_{B^\sigma_{2,q}(\Omega)} := \left( q \sigma (2-\sigma) \int_{D} 
\frac{\| \delta_2(h) v \|^q_{L^2(\Omega_{|h|})}}{|h|^{d+ q \sigma}} dh,
\right)^{1/q} ,
\end{equation}
and $q = \infty$
\begin{equation}\label{eq:q=infty}
|v|_{B^\sigma_{2,\infty}(\Omega)} := \sup_{h \in D} 
\frac{\| \delta_2(h) v \|_{L^2(\Omega_{|h|})}}{|h|^{\sigma}},
\end{equation}
and observe that $\|v\|_{L^2(\Omega)} + |v|_{B^\sigma_{2,q}(\Omega)}$ is equivalent to the norm induced by interpolation and is robust with respect to $\sigma,q$ and $\rho$ \cite[Theorem 7.47]{AdamsFournier:2003}. It turns out that $q=\infty$ is the most significant index for us, whence we focus on it and state the {\it Marchaud inequality} for $\sigma\in(0,1)$ (cf. \cite{Ditzian88})
\begin{equation}\label{eq:Marchaud}
  \sup_{h\in D} \frac{\| \delta_1(h) v \|_{L^2(\Omega_{|h|})}}{|h|^{\sigma}} \lesssim
  \|v\|_{L^2(\Omega)} + 
  \frac{1}{\sqrt{1-\sigma}} \sup_{h\in D} \frac{\| \delta_2(h) v \|_{L^2(\Omega_{|h|})}}{|h|^{\sigma}}
  \qquad\forall v\in \dot{B}^\sigma_{2,\infty}(\Omega),
\end{equation}
that quantifies the precise blow-up on first differences relative to second differences as $\sigma\to1$. This behavior is sharp as we discuss next with an explicit example.

\begin{example}[regularity of explicit solution] \label{ex:solution-profile}
Our goal now is to justify that $q=\infty$ is the most adequate choice in the present setting. To this end, we examine first and second differences for the 1d-function $v(x) = x_+^s$ for $x \in [0,1]$, $v(x) = 0$ for $x \in (-1,0)$ and $v(x) = 1$ for $x \in (1,2)$, which possesses a profile at $x=0$ consistent with \eqref{eq:getoor} and \eqref{eq:boundary_behavior}. For $h\in(0,\rho)$, we decompose $\| \delta_1(h) v \|_{L^2(\Omega_h)}^2$ into integrals over $(-h,0)$, $(0,h)$, $(h,1-h)$ and $(1-h,1)$ which are all of order $h^{1+2s}$ for $s\in(0,1/2)$. To estimate the most delicate integral over the interval $(h,1-h)$, we use a dyadic partition $(2^k h , 2^{k+1} h)$ with $k \le K < \frac{|\log h|}{\log 2}$
\[\begin{split}
\int_h^{1-h} \big[ (x+h)^s - x^s \big]^2 dx & \approx \sum_{k=0}^K \int_{2^k h}^{2^{k+1} h} x^{2s} \big[ (1 + 2^{-k})^s - 1 \big]^2 dx \\
& \approx s^2 h^{1+2s} \sum_{k=0}^K 2^{k(2s-1)} \approx (1-2s)^{-1} h^{1+2s}.
\end{split}\]
This implies
\begin{equation} \label{eq:blow-up-q}
\int_0^\rho \frac{\| \delta_1(h) v \|_{L^2(\Omega_{h})}^q}{h^{1+q(s+1/2)}} dh \approx \int_0^\rho \frac{dh}{h} = \infty,
\end{equation}
for any $q \in [1,\infty)$ whereas for $q = \infty$
\[
\sup_{h\in (0,\rho)} \frac{\| \delta_1(h) v \|_{L^2(\Omega_h)}}{h^{s+1/2}} \approx \frac{1}{\sqrt{1-2s}},
\]
which shows a blow-up as $s\to\frac12$.
Moreover, if $s=1/2$, the calculation above becomes
\[
\|\delta_1(h) v\|_{L^2(\Omega_h)}^2 \approx
\sum_{k=0}^K \int_{2^k h}^{2^{k+1} h} x \big[ (1 + 2^{-k})^{\frac12} - 1 \big]^2  dx
\approx h^2 \sum_{k=1}^K 1 = h^2 K \approx h^2 |\log h|.
\]
This shows that first differences are not adequate for $s=\frac12$. In contrast, utilizing second differences yields a critical term of the form
\[
\int_h^{1-h} \big[ (x+h)^s + (x-h)^s - 2 x^s \big]^2 dx \approx h^{1+2s}
\quad\Rightarrow\quad
\sup_{h\in(0,\rho)} \frac{\|\delta_2(h) v\|_{L^2(\Omega_h)}}{h^{s+1/2}} \approx 1
\]
for all $s\in(0,1/2]$ with a uniform constant as $s\to\frac12$. This derivation reveals that the constant $(1-\sigma)^{-\frac12}$ in \eqref{eq:Marchaud} is sharp. Moreover,
if $s\in (1/2,1)$, then $v \in H^1(\Omega)$, whence computing the first weak
derivative $v'(x) = s x_+^{s-1}$ and repeating the previous calculation yields
\[
\|\delta_1(h) v' \|_{L^2(\Omega_h)} \approx \frac{h^{s-1/2}}{\sqrt{2s-1}},
\qquad
\|\delta_2(h) v' \|_{L^2(\Omega_h)} \approx h^{s-\frac12}.
\]
In view of our definition \eqref{eq:q=infty} involving second differences, we deduce that $v(x) = x_+^s$ satisfies
\begin{equation*}\label{eq:optimal-regularity}
  v \in B^{s+\frac12}_{2,\infty} (\Omega)
  \quad \forall s\in (0,1),
\end{equation*}
and motivates the use of the Besov space $\dot{B}^{s+\frac12}_{2,\infty} (\Omega)$ in our regularity theory for Lipschitz domains. In contrast, \eqref{eq:blow-up-q} implies that $v \notin B^{s+\frac12}_{2,q} (\Omega)$ for any $1\le q < \infty$, and in particular $v \notin H^{s+\frac12}(\Omega)$.
\end{example}

In light of Example \ref{ex:solution-profile}, we next discuss the key steps leading to the {\it optimal shift property}
\begin{equation*} \label{eq:shift_property}
\| u \|_{\dot B^{s+1/2}_{2,\infty}(\Omega)} \le C(\Omega,d) \|f\|_{B^{-s+1/2}_{2,1}(\Omega)}.
\end{equation*}

To guarantee that local translations of $u$ are admissible test functions, we need to restrict the set of {\it admissible directions} $h$ to a convex cone related to the local Lipschitz structure of $\partial\Omega$: there exist $\rho>0$, $\theta \in (0,\pi]$ and a map $\vn:\Omega\to\mathbb{S}^{d-1}$ such that for all $x\in\Omega$ the cone $\calC_\rho(\vn(x),\theta)$ with height $\rho$, aperture $\theta$, apex $0$ and axis $\vn(x)$ gives admissible outward vectors in the sense that
\begin{equation}\label{eq:admissible-cone}
h \in \calC_\rho(\vn(x),\theta)
\qquad\Rightarrow\qquad
D_{3\rho}(x)\cap\Omega^c + th \subset \Omega^c \quad \forall t\in [0,1].
\end{equation}
Such cone $D =  \calC_\rho(\vn(x),\theta)$ has the property that generates $\R^d$ in the sense that for all $h\in D_\rho(0)$, there exist $\{h_j\}_{j=1}^d \subset D \cup (-D)$ and a constant $c>0$ only depending on $\theta$ such that
\[
h = \sum_{j=1}^d h_j, \qquad
\sum_{j=1}^d |h_j| \le c |h|.
\]
It thus follows that restricting the ball $D_\rho(0)$ to the cone $D$ in the definitions of Besov seminorms \eqref{eq:q<infty} and \eqref{eq:q=infty} yields equivalent seminorms \cite[Proposition 2.2]{BoNo21}.

Given $x_0\in\Omega$, let $\phi\in C_0^\infty(D_{2\rho}(x_0))$ be a cut-off function satisfying $0\le\phi(x)\le 1$ for all $x\in D_{2\rho}(x_0)$ and $\phi(x)=1$ for all $x\in D_{\rho}(x_0)$. Given an admissible direction $h\in D=\calC_\rho(\vn(x_0),\theta)$, we define the {\it localized translation operator} $T_h:\wt{H}^s(\Omega) \to \wt{H}^s(\Omega)$ to be
\begin{equation}\label{eq:translation}
T_h v(x) := v \big(x+h\phi(x)\big) \quad\forall \, x\in\Omega.
\end{equation}
The operator $T_h$ translates $v$ along the direction $h\in D$ and coincides with the identity in $D_{2\rho}(x_0)^c$. Moreover, if $v\in\wt{H}^s(\Omega)$ then $T_hv\in \wt{H}^s(\Omega)$ is an admissible test function to insert in \eqref{eq:stationarity} and probe the behavior of the functionals $\calF_1$ and $\calF_2$. Combining a reiteration property of Besov seminorms (cf. \cite[Theorem 7.21]{AdamsFournier:2003},\cite[Proposition 2.1]{BoNo21}) with \eqref{eq:stationarity} yields for $\sigma \in (0,2)$,
\begin{align*}
  |u|^2_{B^{s+\sigma/2}_{2,\infty} (D_\rho(x_0))} & \lesssim \sup_{h\in D} \frac{|\delta_1(h) u|^2_{H^s(D_\rho(x_0))}}{|h|^\sigma}
  \\
  & \lesssim \sup_{h\in D} \frac{\|T_h u - u\|^2_{\wt{H}^s(\Omega)}}{|h|^\sigma}
  \\
  & \lesssim \sup_{h\in D} \frac{|\calF_1(T_h u) - \calF_1(u) |}{|h|^\sigma}
  + \sup_{h\in D} \frac{|\calF_2(T_h u) - \calF_2(u) |}{|h|^\sigma},
\end{align*}
and shows that bounding the right hand side induces further local regularity of $u$.
The following estimates for $\calF_1$ and $\calF_2$ are derived in \cite{BoLiNo22}: for $\sigma \in (0,1]$ and $t \in (-1,1+\sigma)$, we have
\begin{gather} \label{eq:regularity-F1}
  \sup_{h\in D}
  \frac{|\calF_1(T_hv) - \calF_1(v)|}{|h|^\sigma}
  \lesssim \|f\|_{B_{2,1}^{t}(D_{2\rho}(x_0) \cap\Omega)} |v|_{B^{\sigma-t}_{2,\infty}(D_{2\rho}(x_0))}
  \qquad\forall v \in \dot{B}^{\sigma-t}_{2,\infty}(D_{2\rho}(x_0));
\\
 \label{eq:regularity-F2}
  \sup_{h\in D}
  \frac{|\calF_2(T_hv) - \calF_2(v)|}{|h|^\sigma}
  \lesssim \int\int_{Q_{D_{2\rho}(x_0)}} \frac{|v(x)-v(y)|^2}{|x-y|^{d+2s}} dydx  \qquad \forall v \in \wt{H}^s(\Omega).
   \end{gather}
These estimates improve upon \cite[Propositions 3.1 and 3.2]{BoNo21} due to the special structure of the translation operator $T_h$ in \eqref{eq:translation}. We illustrate this point with the first estimate, which is the trickiest and most insightful. We first observe that the estimate
\begin{equation}\label{eq:translation-Besov}
  \| v - T_h v \|_{B^{r}_{2,q}(D_{2\rho} (x_0))} \lesssim
  |h|^\sigma \|v\|_{B^{r+\sigma}_{2,q}(D_{2\rho} (x_0))}
  \quad\forall \, v\in B^{r+\sigma}_{2,q}(D_{2\rho} (x_0))
\end{equation}
is valid for $r \in (-1,1)$ with constants independent of $\sigma\in[0,1]$ and $q\in[1,\infty]$, with possible blow up as $r\to\pm1$; we will come back to this below. Applying \eqref{eq:translation-Besov} for $q=\infty$ readily implies
\[
\big| \calF_1(T_h v) - \calF_1 (v) \big| \lesssim
|h|^\sigma \|f\|_{B^{-r}_{2,1}(\Omega\cap D_{2\rho}(x_0))} \|v\|_{B^{r+\sigma}_{2,\infty}(D_{2\rho} (x_0))}.
\]
We next rewrite $T_hv$ in \eqref{eq:translation} as $T_hv=v\circ S_h$ with $S_h=I+h\phi$, whence
$\int_\Omega f T_h v = \int_{S_h(\Omega)} f \circ S_h^{-1} v |J|$ with $J= \det\nabla S_h^{-1}$.
Exploiting this along with the fact that $S_h$ is close to the identity, and using \eqref{eq:translation-Besov}
with $q=1$, yields
\[
\big| \calF_1(T_h v) - \calF_1 (v) \big| \lesssim
|h|^\sigma \|f\|_{B^{r+\sigma}_{2,1}(\Omega\cap D_{2\rho}(x_0))} \|v\|_{B^{-r}_{2,\infty}(D_{2\rho} (x_0))}.
\]
Since the map $(v,f)\mapsto  \calF_1(T_h v) - \calF_1 (v)$ is bilinear, operator interpolation theory gives the asserted
estimate \eqref{eq:regularity-F1}. Consequently, by this estimate and \eqref{eq:regularity-F2}, we arrive at the local estimate for a generic point $x_0 \in \Omega$,
\[
|u|_{B^{s+\sigma/2}_{2,\infty}(D_\rho(x_0))}^2 \lesssim
\|f\|_{B_{2,1}^{t}(D_{2\rho}(x_0) \cap\Omega)} |u|_{B^{\sigma-t}_{2,\infty}(D_{2\rho}(x_0))}
+ \iint_{Q_{D_{2\rho}(x_0)}} \frac{|u(x)-u(y)|^2}{|x-y|^{d+2s}} dydx.
\]

To apply this estimate, we consider a finite covering of $\{x\in\Rd: \dist(x,\Omega)<\rho/2\}$ with balls $D_j=D(x_j,\rho)$ centered at $x_j\in\Omega$ and radius $\rho$, $j\le J$, and recall the {\it norm localization property} 
\begin{equation}\label{eq:localization}
|v|^2_{\dot{B}^r_{2,q}(\Omega)} \approx \sum_{j=1}^J |v|^2_{B^r_{2,q}(D_j)}
\quad \forall \, r > 0, \, q\in[1,\infty].
\end{equation}
This, in conjunction with the previous estimate and \eqref{eq:solution-map},  gives the {\it fundamental recursion formula}
\begin{equation}\label{eq:regularity-recursion}
  \|u\|_{\dot{B}_{2,\infty}^{s+\sigma/2}(\Omega)}^2 \lesssim \|f\|_{B^{t}_{2,1}(\Omega)} \|u\|_{\dot{B}_{2,\infty}^{\sigma-t}(\Omega)}
  + \|f\|_{H^{-s}(\Omega)}^2
  \qquad \forall \, \sigma\in(0,1], \, t \in (-1,1+\sigma).
\end{equation}

We finally discuss two direct consequences of \eqref{eq:regularity-recursion}. The next result is proven in \cite{BoLiNo22}.

\begin{theorem}[optimal shift property]\label{T:Besov_regularity}
Let $\Omega$ be a bounded Lipschitz domain and $f \in B^{-s+1/2}_{2,1}(\Omega)$. Then, the
solution $u$ to \eqref{eq:Dirichlet} belongs to the Besov space $\dot B^{s+1/2}_{2,\infty}(\Omega)$ and satisfies
\begin{equation} \label{eq:Besov_regularity}
\| u \|_{\dot B^{s+1/2}_{2,\infty}(\Omega)} \le C(\Omega,d) \|f\|_{B^{-s+1/2}_{2,1}(\Omega)},
\end{equation}
with a constant $C(\Omega,d)$ uniform with respect to $s\in(0,1)$.
\end{theorem}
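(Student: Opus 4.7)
The plan is to read the bound off directly from the \emph{fundamental recursion} \eqref{eq:regularity-recursion} via the parameter choice $\sigma = 1$ and $t = \tfrac12 - s$, which lies in $(-1, 2)$ and is therefore admissible. With these values both $s + \sigma/2$ on the left and $\sigma - t$ on the right equal $s + 1/2$, so \eqref{eq:regularity-recursion} collapses to
\[
\|u\|_{\dot{B}^{s+1/2}_{2,\infty}(\Omega)}^2 \lesssim \|f\|_{B^{1/2-s}_{2,1}(\Omega)}\, \|u\|_{\dot{B}^{s+1/2}_{2,\infty}(\Omega)} + \|f\|_{H^{-s}(\Omega)}^2.
\]
If the left-hand side is known to be finite \emph{a priori}, Young's inequality absorbs $\|u\|_{\dot{B}^{s+1/2}_{2,\infty}(\Omega)}$ into the left, and the embedding $B^{1/2-s}_{2,1}(\Omega) \hookrightarrow H^{-s}(\Omega)$ (valid since $1/2 - s > -s$) yields \eqref{eq:Besov_regularity} at once.

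The core obstacle is therefore the \emph{a priori} finiteness of the target norm, which I would establish by a bootstrap on the regularity index. Starting from $r_0 := s$, for which $\|u\|_{\dot{B}^{s}_{2,\infty}(\Omega)} \lesssim \|u\|_{\wt H^s(\Omega)} \le \|f\|_{H^{-s}(\Omega)}$ by \eqref{eq:solution-map}, at step $k$ I apply \eqref{eq:regularity-recursion} with $t = \tfrac12 - s$ and $\sigma_k := r_k + \tfrac12 - s \in (0,1]$, which forces $\sigma_k - t = r_k$ and produces the new index
\[
r_{k+1} := s + \tfrac{\sigma_k}{2} = \tfrac{r_k + s}{2} + \tfrac{1}{4},
\]
an affine recurrence whose iterates satisfy $r_k = s + \tfrac12 - 2^{-(k+1)} \to s + \tfrac12$, with $\sigma_k \to 1^-$.

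Setting $A_k := \|u\|_{\dot{B}^{r_k}_{2,\infty}(\Omega)}^2$ and $B := \|f\|_{B^{1/2-s}_{2,1}(\Omega)}^2$, the recursion reads $A_{k+1} \lesssim \sqrt{B}\sqrt{A_k} + B$ after invoking $\|f\|_{H^{-s}(\Omega)}^2 \lesssim B$. A Young step with small parameter $\varepsilon$ converts this into the contractive inequality $A_{k+1} \le \varepsilon A_k + C_\varepsilon B$, which combined with $A_0 \lesssim B$ yields the $k$-uniform bound $A_k \le C B$. Via the sup characterization \eqref{eq:q=infty}, this reads
\[
\|\delta_2(h) u\|_{L^2(\Omega_{|h|})} \le C\, |h|^{r_k}\, \|f\|_{B^{1/2-s}_{2,1}(\Omega)} \quad \text{for every admissible } h \text{ and every } k.
\]
Fixing $h$ and letting $k \to \infty$ improves the exponent to $s + \tfrac12$; a subsequent supremum over $h$ gives the seminorm bound and completes \eqref{eq:Besov_regularity}.

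The hardest point I anticipate is keeping every implicit constant uniform in both $s \in (0,1)$ and $k$. Because $\sigma_k$ accumulates at the endpoint $1$, one brushes against the regime in which the building blocks of \eqref{eq:regularity-recursion} (in particular \eqref{eq:translation-Besov} and the Marchaud-type inequality \eqref{eq:Marchaud}) are known to degenerate. My expectation is that the normalization factor $q\theta(1-\theta)$ built into the Besov norm \eqref{eq:Besov-norm} and the asserted uniformity of \eqref{eq:regularity-F1}--\eqref{eq:regularity-F2} are precisely calibrated to cancel these apparent blow-ups, so that a single choice of Young parameter $\varepsilon$ preserves both the geometric contraction of the bootstrap and the $s$-uniformity of the final constant $C(\Omega,d)$.
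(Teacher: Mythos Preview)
Your proposal is correct and follows essentially the same route as the paper: both fix $t=\tfrac12-s$ in the fundamental recursion \eqref{eq:regularity-recursion}, iterate from the baseline $r_0=s$ (which the paper calls $\sigma_0$) to produce the identical sequence $r_k=s+\tfrac12-2^{-(k+1)}$, bound the resulting Besov norms uniformly in $k$, and pass to the limit. Your Young-inequality contraction $A_{k+1}\le\varepsilon A_k+C_\varepsilon B$ and the paper's direct induction $\Lambda_{k+1}^2=C_1+C_2\Lambda_k\le M^2$ are two phrasings of the same estimate; your explicit pointwise-in-$h$ passage to the limit via \eqref{eq:q=infty} is exactly what the paper's ``take $k\to\infty$'' tacitly relies on.
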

\begin{proof}
We proceed by iteration of \eqref{eq:regularity-recursion}. We fix $t=\frac12-s$ and note that, according to \eqref{eq:solution-map},
\begin{equation}\label{eq:bound-Hs}
\|u\|_{\dot{B}_{2,\infty}^s(\Omega)} \lesssim \|u\|_{\wt{H}^s(\Omega)} \le \|f\|_{H^{-s}(\Omega)} \lesssim \|f\|_{B^{-s+1/2}_{2,1}(\Omega)}
\ \ \Rightarrow\ \
\|u\|_{\dot{B}_{2,\infty}^s(\Omega)} \le \Lambda_0 \|f\|_{B^{-s+1/2}_{2,1}(\Omega)},
\end{equation}
where $\Lambda_0$ depends on $\Omega,d$ and $s$. We rewrite \eqref{eq:regularity-recursion} as
\begin{equation}\label{eq:iteration}
\|u\|_{\dot{B}_{2,\infty}^{\sigma_{k+1}}(\Omega)}^2 \le \Big( C_1 \|f\|_{B^{-s+1/2}_{2,1}(\Omega)} + C_2 \|u\|_{\dot{B}_{2,\infty}^{\sigma_k}(\Omega)}   \Big) \|f\|_{B^{-s+1/2}_{2,1}(\Omega)},
\end{equation}
with constants $C_1,C_2>0$ depending on $\Omega, d$ and $s$, and parameters
\[
\sigma_{k+1} := s+\sigma/2, \quad
\sigma_k := \sigma - t = \sigma - \frac12 + s,
\]
for $k\ge 0$.
We now set $\sigma_0=s$ and prove by induction that $\sigma_k = s + \frac12 \big( 1-\frac{1}{2^k} \big)$ and 
\begin{equation}\label{eq:induction}
\|u\|_{\dot{B}_{2,\infty}^{\sigma_k}(\Omega)} \le \Lambda_k \|f\|_{B^{-s+1/2}_{2,1}(\Omega)},
\end{equation}
with uniformly bounded constants $\Lambda_k \le M := \max\big\{\Lambda_0,\big( 2C_1 + C_2^2 \big)^{\frac12} \big\}$. We first observe that $\sigma = 1 - \frac{1}{2^{k+1}} \in (0,1]$ and $-1 < t = \frac12-s < 1+\sigma$ are within the range of validity of \eqref{eq:regularity-recursion}.
For $k=0$, formula \eqref{eq:induction} is true in view of \eqref{eq:bound-Hs} and $\sigma_0=s$, whereas for $k\ge0$ the expression for $\sigma_{k+1}$ is easy to verify and \eqref{eq:iteration} gives
\[
\|u\|_{\dot{B}_{2,\infty}^{\sigma_{k+1}}(\Omega)}^2 \le \big(C_1 + C_2\Lambda_k \big) \|f\|_{B^{-s+1/2}_{2,1}(\Omega)}^2
= \Lambda_{k+1}^2 \|f\|_{B^{-s+1/2}_{2,1}(\Omega)}^2.
\]
It remains to show that $\Lambda_k \le M$ for all $k \ge 0$.  This is clearly true for $k=0$,  and for $k \ge 0$ we see that if $\Lambda_k \le M$, then
\[
\Lambda_{k+1}^2 = C_1 + C_2 \Lambda_k \le C_1 + C_2 M \le \frac12 \big( 2C_1+C_2^2 + M^2  \big) \le M^2.
\]
We finally replace $\Lambda_k$ by $M$ and take $k\to\infty$ in \eqref{eq:induction} to obtain \eqref{eq:Besov_regularity}.
\end{proof}

\begin{remark}[intermediate estimate]\label{R:intermediate}
Interpolating between \eqref{eq:Besov_regularity} and \eqref{eq:bound-Hs} yields
\begin{equation*} \label{eq:shift-r}
\| u \|_{\dot B^{s+r/2}_{2,\infty}(\Omega)} \le C(\Omega,d,s) \|f\|_{B^{-s+r/2}_{2,1}(\Omega)}
\end{equation*}
for all $r \in (0,1)$, with a constant independent of $r$. This estimate will be useful in Proposition \ref{prop:negativeSob}.
\end{remark}

We now state the second consequence of \eqref{eq:regularity-recursion}, which was originally derived in \cite{BoNo21}.

\begin{theorem}[regularity with $L^2$-data]\label{T:L2-data}
Let $\Omega$ be a bounded Lipschitz domain and $f \in L^2(\Omega)$. If $s\ne\frac12$, then the
solution $u$ to \eqref{eq:Dirichlet} belongs to the Besov space $\dot B^{s+\alpha}_{2,\infty}(\Omega)$
with $\alpha=\min\{s,1/2\}$ and satisfies
\begin{equation} \label{eq:Besov_regularityL2}
\| u \|_{\dot B^{s+\alpha}_{2,\infty}(\Omega)} \le \frac{C(\Omega,d)}{\sqrt{|1-2s|}} \|f\|_{L^2(\Omega)}.
\end{equation}
Instead, if $s=\frac12$, then for any $\eps>0$ sufficiently small there holds
\begin{equation} \label{eq:Besov_regularityL2_s=1/2}
\| u \|_{\dot B^{1-\eps}_{2,\infty}(\Omega)} \le \frac{C(\Omega,d)}{\sqrt{\eps}} \|f\|_{L^2(\Omega)}.
\end{equation}
\end{theorem}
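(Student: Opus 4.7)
The plan is to adapt the iteration argument of Theorem \ref{T:Besov_regularity} to the $L^2$ setting by applying the recursion \eqref{eq:regularity-recursion} with a parameter $t$ tailored to the available data. Since $L^2(\Omega)=B^0_{2,2}(\Omega)$ does not embed into $B^0_{2,1}(\Omega)$, one must use instead the embedding $L^2(\Omega)\hookrightarrow B^t_{2,1}(\Omega)$ valid for any $t<0$, whose constant degenerates as $t\to 0^-$. A quantitative bound on this embedding constant, obtainable from the $K$-functional characterization \eqref{eq:K-functional} and the normalization in \eqref{eq:Besov-norm}, is the main new technical input and will be responsible for the prefactor $|1-2s|^{-1/2}$.

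For $s>\tfrac12$, I would set $t=\tfrac12-s<0$ and $\sigma=1$ in \eqref{eq:regularity-recursion}. With these choices, the index $s+\tfrac{\sigma}{2}=s+\tfrac12$ on the left coincides with $\sigma-t=s+\tfrac12$ on the right, so the recursion becomes self-referential at the target regularity level. Bounding $\|f\|_{B^{1/2-s}_{2,1}}$ by a constant of order $|1-2s|^{-1/2}\|f\|_{L^2}$ via the embedding, iterating from $\sigma_0=s$ (where $\|u\|_{\dot B^s_{2,\infty}}\lesssim \|f\|_{L^2}$ by Poincar\'e and \eqref{eq:solution-map}), and applying the Young-type absorption used in \eqref{eq:iteration} gives a uniform bound on the constants $\Lambda_k$, which yields \eqref{eq:Besov_regularityL2}.

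For $s=\tfrac12$ the same scheme with $t=-\eps$ and $\sigma=1-\eps$ produces a fixed point $\sigma^*=1-\eps$ and the bound \eqref{eq:Besov_regularityL2_s=1/2}. For $s<\tfrac12$, I would iterate with $t=-\eta$ and $\eta\sim|1-2s|$, reaching $u\in\dot B^{2s-\eta}_{2,\infty}$ with constant $\lesssim|1-2s|^{-1/2}\|f\|_{L^2}$, and then recover the target $\dot B^{2s}_{2,\infty}$ by exploiting that the seminorm \eqref{eq:q=infty} is a supremum over a fixed range of $h$: the uniform second-difference bound obtained for $r=2s-\eta$ passes to the limit in the differentiability index without loss of constant.

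The hard part will be the sharp tracking of constants: both the embedding $L^2\hookrightarrow B^t_{2,1}$ and the Marchaud-type factor $1/\sqrt{1-\sigma}$ in \eqref{eq:Marchaud} contribute blow-ups that must combine into the single prefactor $|1-2s|^{-1/2}$. A secondary subtlety is the $s<\tfrac12$ case, where the recursion's fixed point $\sigma^*=2s+t$ cannot equal $2s$ for any admissible $t<0$, which is what forces the indirect passage to the limit through the supremum structure of the weak Besov norm.
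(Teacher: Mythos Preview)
Your plan for $s>\tfrac12$ is essentially the paper's: the paper invokes Theorem~\ref{T:Besov_regularity} (itself proved by the iteration you describe with $t=\tfrac12-s$) and then establishes the embedding bound $\|f\|_{B^{-s+1/2}_{2,1}(\Omega)}\lesssim (s-\tfrac12)^{-1/2}\|f\|_{L^2(\Omega)}$ via a $K$-functional computation on the pair $(H^{-1},H^1_0)$. Your treatment of $s=\tfrac12$ with $t=-\eps$ also lands at the right target $\dot B^{1-\eps}_{2,\infty}$ with the right constant.

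The case $s<\tfrac12$, however, has a genuine gap. With $t=-\eta$ the fixed point of the recursion is $2s-\eta$, not $2s$, and the embedding $L^2\hookrightarrow B^{-\eta}_{2,1}$ carries a constant $\sim\eta^{-1/2}$. Your ``passage to the limit'' would require sending $\eta\to 0$ while keeping the constant bounded, but the constant blows up; a family of bounds $\|u\|_{\dot B^{2s-\eta}_{2,\infty}}\le C\eta^{-1/2}\|f\|_{L^2}$ says nothing about $\|u\|_{\dot B^{2s}_{2,\infty}}$. Fixing $\eta\sim(1-2s)$ gives the right constant but the wrong space.

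The paper avoids the embedding entirely for $s\le\tfrac12$. It does not apply \eqref{eq:regularity-recursion} as stated; instead it returns to the estimate for $\calF_1$ and replaces \eqref{eq:regularity-F1} by the cruder first-difference bound
\[
\big|\calF_1(T_hv)-\calF_1(v)\big|\lesssim \|f\|_{L^2}\,\|T_hv-v\|_{L^2}\lesssim \frac{|h|^\sigma}{\sqrt{1-\sigma}}\,\|f\|_{L^2}\,\|v\|_{B^\sigma_{2,\infty}},
\]
where the factor $(1-\sigma)^{-1/2}$ comes from the Marchaud inequality \eqref{eq:Marchaud}. This is effectively $t=0$ in the recursion, so the iteration $\sigma_{k+1}=s+\sigma_k/2$ has fixed point exactly $2s$, and the Marchaud factor tends to $(1-2s)^{-1/2}$, which is precisely the prefactor in \eqref{eq:Besov_regularityL2}. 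The point is that using $\|f\|_{L^2}$ directly, rather than through $B^t_{2,1}$ for $t<0$, trades the embedding blow-up for the (finite, since $2s<1$) Marchaud blow-up.
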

\begin{proof}
We proceed in two steps. We consider first the case $s\le\frac12$. To account for the Marchaud inequality \eqref{eq:Marchaud}, we let $\sigma\in(0,1)$ and modify \eqref{eq:translation-Besov} as follows:
\[  
\| v - T_h v \|_{L^2(D_{2\rho}(x_0))} \lesssim \frac{|h|^\sigma}{\sqrt{1-\sigma}} \|u\|_{B^\sigma_{2,\infty}(D_{2\rho}(x_0))}.
\]
Correspondingly, we change the estimate of the functional $\calF_1$ to read
\[
\big| \calF_1(T_hv) - \calF_1(v) \big| \lesssim  \frac{|h|^\sigma}{\sqrt{1-\sigma}}
\|f\|_{L^2(D_{2\rho}(x_0))} \|u\|_{B^\sigma_{2,\infty}(D_{2\rho}(x_0))}.
\]
The bound \eqref{eq:regularity-recursion} changes accordingly and becomes
\[
\| u \|_{\dot{B}^{s+\sigma/2}_{2,\infty}(\Omega)}^2 \lesssim \frac{1}{\sqrt{1-\sigma}}
\|f\|_{L^2(\Omega)} \|u\|_{B^\sigma_{2,\infty}(\Omega)} + \|f\|_{L^2(\Omega)}^2,
\]
We now set $\sigma_0=s$ and convert this expression into the recursion formula
\[
\| u \|_{\dot{B}^{\sigma_{k+1}}_{2,\infty}(\Omega)}^2 \le \Big( \frac{C_1}{\sqrt{1-\sigma_k}}
\|u\|_{B^{\sigma_k}_{2,\infty}(\Omega)} + C_2 \|f\|_{L^2(\Omega)} \Big) \|f\|_{L^2(\Omega)},
\]
with $\sigma_{k+1} = s + \frac{\sigma_k}{2}$. We again proceed by induction to show that
$\sigma_k = 2s\big(1-2^{-(k+1)}\big)$, $\Lambda_k\le M \big(1-2s\big)^{-1/2}$ with $M$ depending on $\Omega$
and $d$, and
\begin{equation}\label{eq:iterationL2}
\| u \|_{\dot{B}^{\sigma_k}_{2,\infty}(\Omega)} \le \Lambda_k \|f\|_{L^2(\Omega)}.
\end{equation}
This is valid for $k=0$, and for $k\ge0$ can be derived along the lines of the proof of Theorem \ref{T:Besov_regularity}. For $s<\frac12$, we realize that $\sigma_k\to 2s$ and \eqref{eq:Besov_regularityL2} follows immediately. For $s=\frac12$ and $0<\eps<\frac12$, let $k_\eps\in\mathbb{N}$ satisfy $2^{-k_\eps} < \eps \le 2^{-(k_\eps-1)}$. Using \eqref{eq:iterationL2} for this $k_\eps$ yields \eqref{eq:Besov_regularityL2_s=1/2}.

It remains to tackle the case $s>\frac12$. In view of\eqref{eq:Besov_regularity}, we only need to worry about the dependence in $s$ of the constant in the inequality $\|f\|_{B^{-s+1/2}_{2,1}(\Omega)} \lesssim \|f\|_{L^2(\Omega)}$ as $s \to 1/2$. We aim to prove that such a constant scales as $(s-1/2)^{-1/2}$. By duality, it suffices to show
\[
\|v\|_{L^2(\Omega)} \lesssim \frac{1}{\sqrt{2s-1}} \|v\|_{B^{s-1/2}_{2,\infty}(\Omega)}
\quad\forall \, v\in B^{s-1/2}_{2,\infty}(\Omega).
\]
This follows from a careful manipulation of the $K$-functional \eqref{eq:K-functional}. To do this, we regard $L^2(\Omega)$ and $B^{s-1/2}_{2,\infty}(\Omega)$ as interpolation spaces between $X_0=H^{-1}(\Omega)$ and $X_1=H^1_0(\Omega)$ with 
\[
L^2(\Omega) = \big[ H^{-1}(\Omega),H^1_0(\Omega) \big]_{1/2,2},
\quad
B^{s-1/2}_{2,\infty}(\Omega) = \big[ H^{-1}(\Omega),H^1_0(\Omega) \big]_{\theta,\infty},
\]
where $\theta = \frac12 \big(s+\frac12\big)$. This choice of spaces $(X_0,X_1)$ guarantees that $\theta$ is uniformly far from $0,1$ and the norms in \eqref{eq:Besov-norm} are robust. Using \eqref{eq:Besov-norm} for $q=2$ we deduce that for any $N\ge1$ to be found
\[
\|v\|_{L^2(\Omega)}^2 \lesssim \int_0^N t^{-2} \big| K(t,v) \big|^2 dt + \int_N^\infty t^{-2} \big| K(t,v) \big|^2 dt.
\]
Moreover, exploiting again \eqref{eq:Besov-norm} but now for $q=\infty$ yields
\[
\int_0^N t^{-2} \big| K(t,v) \big|^2 dt \le \sup_{t>0} \Big( t^{-2\theta} \big| K(t,v) \big|^2 \Big)
\int_0^N t^{2\theta-2}dt = \frac{N^{s-\frac12}}{s-\frac12} \|v\|_{B^{s-1/2}_{2,\infty}(\Omega)}.
\]
On the other hand, since $\Omega$ is Lipschitz, combining a duality argument with the standard Poincar\'e inequality in $H^1_0(\Omega)$ gives $\big| K(t,v) \big| \le \|v\|_{H^{-1}(\Omega)} \le C(\Omega) \|v\|_{L^2(\Omega)}$ and
\[
\int_N^\infty t^{-2} \big| K(t,v) \big|^2 dt \le C(\Omega)^2 \|v\|_{L^2(\Omega)}^2 \int_N^\infty t^{-2} dt
= \frac{C(\Omega)^2}{N} \|v\|_{L^2(\Omega)}^2.
\]
Finally, choosing $N=2C(\Omega)^2$ leads to the desired estimate and concludes the proof.
\end{proof}

We are now in a position to compare the estimate \eqref{eq:Besov_regularity} for $s=\frac12$ and \eqref{eq:Besov_regularityL2_s=1/2}. We see again the special role played by the Besov space $B^0_{2,1}(\Omega)$ which is a strict subspace of $L^2(\Omega)$.

We next convert the Besov estimates in Theorems \ref{T:Besov_regularity} and \ref{T:L2-data} into Sobolev estimates. Such estimates can be compared with those in the literature and, more importantly, used to improve the existing the error analyses in Sections \ref{S:global-error-estimates} and \ref{S:local-error-estimates}.

\begin{corollary}[Sobolev estimates]\label{C:Sobolev-estimates}
Let $\Omega$ be a bounded Lipschitz domain, $\alpha=\min\{s,\frac12\}$, and $u$ be the solution to \eqref{eq:Dirichlet}. For any
$\eps>0$ sufficiently small, the following estimates are valid
\begin{align}\label{eq:regularity-SobBesov}
\| u \|_{\wt{H}^{s+1/2-\eps}(\Omega)} &\le \frac{C(\Omega,d)}{\sqrt\eps} \|f\|_{{B}^{-s+1/2}_{2,1}(\Omega)}
\quad s\in (0,1),
 \\
 \label{eq:regularity-SobL2}
 \| u \|_{\wt{H}^{s+\alpha-\eps}(\Omega)} &\le \frac{C(\Omega,d)}{\sqrt{\eps|1-2s|}} \|f\|_{L^2(\Omega)}\quad
 s\ne\frac12,
 \\
 \label{eq:regularity-SobL2-s=1/2}
 \| u \|_{\wt{H}^{1-\eps}(\Omega)} &\le \frac{C(\Omega,d)}{\eps} \|f\|_{L^2(\Omega)}\quad
 s = \frac12.
\end{align}
\end{corollary}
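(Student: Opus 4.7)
The plan is to deduce all three Sobolev estimates from the Besov regularity already furnished by Theorems \ref{T:Besov_regularity} and \ref{T:L2-data}, by means of a single quantified embedding of the form
\[
\dot{B}^{\sigma}_{2,\infty}(\Omega) \hookrightarrow \widetilde{H}^{\sigma-\eps}(\Omega),
\qquad
\|v\|_{\widetilde{H}^{\sigma-\eps}(\Omega)} \le \frac{C(\Omega,d)}{\sqrt{\eps}}\, \|v\|_{\dot{B}^{\sigma}_{2,\infty}(\Omega)}.
\]
The point of the proof is that the embedding constant blows up precisely like $\eps^{-1/2}$, and this explains the $1/\sqrt{\eps}$ (resp.\ $1/\eps$) factors in \eqref{eq:regularity-SobBesov}--\eqref{eq:regularity-SobL2-s=1/2}. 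The main technical step is therefore to quantify this embedding sharply in $\eps$; everything else is bookkeeping.

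\textbf{Step 1 (the embedding).} I would use the second-difference characterization \eqref{eq:q<infty} with $q=2$ together with the robust equivalence $H^{\sigma-\eps}(\mathbb{R}^d)=B^{\sigma-\eps}_{2,2}(\mathbb{R}^d)$. Writing
\[
|v|_{B^{\sigma-\eps}_{2,2}(\Omega)}^2 \lesssim (\sigma-\eps)(2-\sigma+\eps) \int_D \frac{\|\delta_2(h)v\|_{L^2(\Omega_{|h|})}^2}{|h|^{d+2(\sigma-\eps)}}\,dh,
\]
and bounding $\|\delta_2(h)v\|_{L^2(\Omega_{|h|})} \le |h|^{\sigma}\,|v|_{B^{\sigma}_{2,\infty}(\Omega)}$ from \eqref{eq:q=infty}, the remaining integral in polar coordinates becomes $\int_0^\rho r^{2\eps-1}\,dr = \rho^{2\eps}/(2\eps)$, yielding the claimed $\eps^{-1/2}$ constant. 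Since $v\in \dot{B}^{\sigma}_{2,\infty}(\Omega)$ is supported in $\overline\Omega$, this bound on the mainland part of the Gagliardo seminorm, combined with the standard control of the cross-term by a weighted $L^2$ norm and a Poincaré-type estimate, delivers the full $\widetilde{H}^{\sigma-\eps}(\Omega)$ norm.

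\textbf{Step 2 (assembly).} With the embedding in hand, the three estimates follow directly:
\begin{enumerate}[(i)]
\item Applying the embedding with $\sigma=s+\tfrac12$ to \eqref{eq:Besov_regularity} of Theorem \ref{T:Besov_regularity} gives \eqref{eq:regularity-SobBesov}, with constant uniform in $s\in(0,1)$ since the constant of Theorem \ref{T:Besov_regularity} is.
\item Applying it with $\sigma=s+\alpha$ to \eqref{eq:Besov_regularityL2} in Theorem \ref{T:L2-data} yields \eqref{eq:regularity-SobL2}; the $|1-2s|^{-1/2}$ factor comes from that theorem and combines multiplicatively with the $\eps^{-1/2}$ from the embedding.
\item For the critical case $s=\tfrac12$, I would apply \eqref{eq:Besov_regularityL2_s=1/2} with parameter $\eps'=\eps/2$, then use the embedding with gap $\eps/2$; both steps contribute a factor $\sqrt{2/\eps}$, and the product gives the $1/\eps$ of \eqref{eq:regularity-SobL2-s=1/2}.
\end{enumerate}

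\textbf{Main obstacle.} The only delicate point is tracking constants in Step 1: one must use the \emph{unnormalized} second-difference integral so that the prefactor $(\sigma-\eps)(2-\sigma+\eps)$ does not cancel the $\eps^{-1}$ arising from $\int_0^\rho r^{2\eps-1}dr$, and one must verify that the weighted $L^2$ boundary contribution to $\|\cdot\|_{\widetilde H^{\sigma-\eps}(\Omega)}$ inherits a constant that is bounded uniformly in $\eps$ small (so that the $\eps^{-1/2}$ from the seminorm is not worsened). The bilinear $s$-dependence in (ii) above is then automatic; no further interpolation argument is needed beyond Theorems \ref{T:Besov_regularity} and \ref{T:L2-data}.
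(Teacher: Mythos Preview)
Your overall strategy matches the paper's exactly: reduce to a single quantified embedding $\dot{B}^{\sigma}_{2,\infty}(\Omega)\hookrightarrow \wt{H}^{\sigma-\eps}(\Omega)$ with constant $\eps^{-1/2}$, and then combine with Theorems~\ref{T:Besov_regularity} and~\ref{T:L2-data}; your handling of the critical case (iii) by splitting $\eps$ in half is also implicitly what the paper does. The only difference is in how the embedding itself is proved. You argue via the second-difference characterizations \eqref{eq:q<infty}--\eqref{eq:q=infty}, bounding $\|\delta_2(h)v\|_{L^2}\le |h|^{\sigma}|v|_{B^{\sigma}_{2,\infty}}$ and integrating radially to pick up the $\eps^{-1}$. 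The paper instead works at the level of the $K$-functional for the pair $\big(L^2(\Omega),H^2(\Omega)\big)$: writing $\|v\|_{H^{\sigma-\eps}}^2\lesssim \int_0^\infty t^{-1-\sigma+\eps}|K(t,v)|^2\,dt$, splitting at $t=1$, and using $t^{-\sigma}|K(t,v)|^2\le \|v\|_{B^{\sigma}_{2,\infty}}^2$ on $(0,1)$ to obtain $\int_0^1 t^{-1+\eps}dt=\eps^{-1}$. Both routes are legitimate; the $K$-functional argument has the advantage of producing the full norm directly without having to separately treat the $\Omega\times\Omega^c$ cross-term you flag in your ``main obstacle,'' whereas your difference-based argument is more concrete but leaves that piece to be checked. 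Incidentally, your concern about the prefactor $(\sigma-\eps)(2-\sigma+\eps)$ possibly cancelling the $\eps^{-1}$ is unfounded: that factor tends to $\sigma(2-\sigma)$ as $\eps\to 0$ and never vanishes for $\sigma\in(0,2)$, so no special care is needed there.
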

\begin{proof}
The three estimates follow by combining respectively  \eqref{eq:Besov_regularity}, \eqref{eq:Besov_regularityL2}, or \eqref{eq:Besov_regularityL2_s=1/2}, with the embedding $\dot{B}^\sigma_{2,\infty}(\Omega) \subset \wt{H}^{\sigma-\eps}(\Omega)$ for any $\sigma\in (0,2)$ and the $\eps$-dependent estimate for $\eps>0$ small
\begin{equation}\label{eq:Besov-Sobolev-emb}
  \|v\|_{\wt{H}^{\sigma-\eps}(\Omega)} \lesssim \frac{1}{\sqrt{\eps}} \|v\|_{B^\sigma_{2,\infty}(\Omega)}
  \quad\forall \, v\in B^\sigma_{2,\infty}(\Omega).
\end{equation}
To derive \eqref{eq:Besov-Sobolev-emb}, we proceed as in the proof of Theorem \ref{T:L2-data} and consider 
the $\mbox{K}$-functional \eqref{eq:K-functional} for the pair of spaces $X_0=L^2(\Omega)$ and $X_1=H^2(\Omega)$
and $\theta = \frac{\sigma-\eps}{2}$. We see that
\[
\|v\|_{H^{\sigma-\eps}(\Omega)}^2 \lesssim \int_0^\infty t^{-1-\sigma+\eps} \big| K(t,v) \big|^2 dt
\le \frac{1}{\eps} \sup_{t>0} \Big( t^{-\sigma} \big| K(t,v) \big|^2 \Big) + \frac{1}{\sigma-\eps} \|v\|_{L^2(\Omega)}^2
\lesssim \frac{1}{\eps} \|v\|_{B^\sigma_{2,\infty}(\Omega)}^2
\]
is the asserted estimate.
\end{proof}

The estimates \eqref{eq:regularity-SobBesov} and \eqref{eq:regularity-SobL2-s=1/2} are consistent with the maximal regularity \eqref{eq:max-reg} derived in \cite{Eskin, VishikEskin, Grubb15, AbelsGrubb} under assumptions on $\partial\Omega$ stronger than Lipschitz continuity.

\subsubsection{\bf Linear problems with finite horizon.}
%
We now extend the preceding results to operators of the form \eqref{eq:def-finite-horizon}. Indeed, if $\phi$ is bounded then we immediately have that the energy norm
\[
\iii{v} := \left(\frac{C_{d,s}}{2} \iint_{\Rd\times\Rd} \phi\left( \frac{|x-y|}{\delta} \right) \frac{|v(x)-v(y)|^2}{|x-y|^{d+2s}} \, d y dx \right)^{\frac12}
\]
with $C_{d,s} \approx s(1-s)$ defined in \eqref{eq:def_Laps} satisfies
\begin{equation} \label{eq:FH-upper-bound}
\iii{v} \le \| \phi \|_{L^\infty(\Rd)}^{\frac12} \|v\|_{\tHs} \quad \forall v \in \tHs,
\end{equation}
for all $s\in(0,1)$.
Additionally, if $\phi \ge \phi_0 > 0$ on an interval $[0,r]$ for some $r\in(0,1)$, then the localization estimate (cf. for example \cite[Lemma 7]{DyKa13})
\begin{equation*} \label{eq:FH-chain-bound}
  \iint_{D_R(0) \times D_R(0)} \frac{|v(x)-v(y)|^2}{|x-y|^{d+2s}} \, d y dx \le \left(\frac{3R}{r \delta} \right)^{2(1-s)} \iint_{D_R(0) \times D_R(0)} \frac{|v(x)-v(y)|^2}{|x-y|^{d+2s}} \, \chi_{ \{|x-y| \le r \delta\} } \, d y dx,
\end{equation*}
valid for all $R > r \delta > 0$ and $s \in (0,1)$, implies
\begin{equation*}
  \iint_{D_R(0) \times D_R(0)} \frac{|v(x)-v(y)|^2}{|x-y|^{d+2s}} \, d y dx
  \le \frac{2(3R)^{2(1-s)}}{C_{d,s}(r \delta)^{2(1-s)} \phi_0 }\iii{v}^2
\end{equation*}
regardless of whether or not the support of $\phi$ is compact.
Given any $v \in \tHs$, we use this bound with $R>0$ sufficiently large so that $\Omega \subset D_R(0)$ and $\mbox{dist}(\Omega, \partial D_R(0)) \ge \frac{R}2$, exploit the fact that $|x-y| \ge  \frac{R}2$ for all $x \in \Omega$ and $y \in D_R(0)^c$ and integrate in polar coordinates to get
\[ \begin{split}
\|v\|_{\tHs}^2 & = \frac{C_{d,s}}{2}\iint_{D_R(0) \times D_R(0)} \frac{|v(x)-v(y)|^2}{|x-y|^{d+2s}} \, d y dx
    + C_{d,s} \iint_{D_R(0) \times D_R(0)^c} \frac{|v(x)|^2}{|x-y|^{d+2s}} \, d y dx
  \\
& \le \frac{(3R)^{2(1-s)}}{(r \delta)^{2(1-s)} \phi_0 } \iii{v}^2 +  \frac{2^{2s-1} C_{d,s} \omega_{d-1} }{s \, R^{2s}} \| v \|_{L^2(\Omega)}^2,
\end{split} \]
where $\omega_{d-1}=|\mathbb{S}^{d-1}|$ denotes the $(d-1)$-dimensional measure of the unit sphere $\mathbb{S}^{d-1}=\partial D_1(0)$ in $\Rd$. We next resort to the Poincar\'e inequality \eqref{eq:Poincare}
and fix $R>0$ in such a way that
\[
\frac{2^{2s-1} C_{d,s} \omega_{d-1} }{s \, R^{2s}} \| v \|_{L^2(\Omega)}^2 \le \frac12 \|v\|_{\tHs}^2 \quad \forall v \in \tHs,
\]
to  obtain a constant $C(d,\Omega,r, \phi_0, \delta)$ uniform in $s\in(0,1)$ such that
\begin{equation} \label{eq:FH-lower-bound}
\|v\|_{\tHs} \le C(d,\Omega,r, \phi_0, \delta) \, \iii{v}  \quad \forall v \in \tHs. 
\end{equation}

By combining \eqref{eq:FH-upper-bound} and \eqref{eq:FH-lower-bound}, we deduce that the energy norm $\iii{\cdot}$ is equivalent to the $\|\cdot\|_{\tHs}$-norm. Consequently, the Dirichlet problem for the operator $\mathcal A_{\delta,s}$ defined in \eqref{eq:def-finite-horizon} but scaled by $C_{d,s}$ is well-posed in $\tHs$ uniformly in $s$: if $f \in H^{-s}(\Omega)$, then there exists a unique $u \in \tHs$ so that
\begin{equation}\label{eq:weak-delta}
\scalardelta{u,v}_{\delta,s} = \langle \mathcal A_{\delta,s} u , v \rangle = \langle f , v \rangle \quad \forall v \in \tHs,
\end{equation}
and it satisfies $\| u \|_{\tHs} \lesssim \|f\|_{H^{-s}(\Omega)} $; hereafter $\scalardelta{\cdot,\cdot}_{\delta,s}$ is the scalar product associated with $\iii{\cdot}$.

Another consequence of the equivalence between the energy norm $\iii{\cdot}$ and the $\tHs$ norm is that we can adapt the technique employed in the proof of \Cref{T:Besov_regularity} (optimal shift property) to this finite-horizon problem. We refer to \cite{BoLiNo22} for further details.

\begin{corollary}[optimal shift property for finite-horizon operators]\label{C:regularity-finite-horizon}
Let  $\phi \colon [0,\infty) \to [0,\infty)$ be a bounded function of class $C^\beta$ on a neighborhood of the origin, for some $\beta \in (0,1]$, and satisfy $\phi(0) > 0$.
Let $\Omega$ be a bounded Lipschitz domain, $s \in (0,1)$, and $f \in B^{-s+\beta/2}_{2,1}(\Omega)$. Then, the unique function $u \in \tHs$ that solves \eqref{eq:weak-delta} belongs to the Besov space $\dot B^{s+\beta/2}_{2,\infty}(\Omega)$ and
\begin{equation} \label{eq:Besov_regularity_delta}
\| u \|_{\dot B^{s+\beta/2}_{2,\infty}(\Omega)} \le C(\Omega,d) \|f\|_{B^{-s+\beta/2}_{2,1}(\Omega)}.
\end{equation}
\end{corollary}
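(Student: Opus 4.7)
The plan is to mimic the proof of Theorem~\ref{T:Besov_regularity}, replacing the bilinear form $(\cdot,\cdot)_s$ by $\scalardelta{\cdot,\cdot}_{\delta,s}$ and tracking how the H\"older continuity of $\phi$ limits the order of translation that can be absorbed. The base estimate $\|u\|_{\tHs}\lesssim\|f\|_{H^{-s}(\Omega)}$ is available from \eqref{eq:weak-delta} and the norm equivalence $\iii{\cdot}\approx\|\cdot\|_{\tHs}$ established through \eqref{eq:FH-upper-bound}--\eqref{eq:FH-lower-bound}. I would recast $u$ as the minimizer of $\calF_{\delta,2}(v)-\calF_1(v)$ with $\calF_{\delta,2}(v):=\tfrac12\iii{v}^2$ and $\calF_1(v):=\int_\Omega fv$, write the corresponding stationarity identity, and test with the localized translation operator $T_h$ from \eqref{eq:translation} along admissible directions in the cone $\calC_\rho(\vn(x_0),\theta)$. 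Using the reiteration property and the norm equivalence as in Section~\ref{sec:Besov-regularity}, this reduces the question to bounding $\sup_{h\in D}|h|^{-\sigma}|\calF_j(T_hu)-\calF_j(u)|$ for $j=1,2$.

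The linear estimate \eqref{eq:regularity-F1} is independent of the operator and transfers verbatim. The new work is to prove the analogue of \eqref{eq:regularity-F2} for $\calF_{\delta,2}$, now restricted to $\sigma\in(0,\beta]$. After the change of variables $x\mapsto S_h(x):=x+h\phi(x)$ (with the cutoff from \eqref{eq:translation}, still denoted $\phi$) that underlies the original argument, the increment $\calF_{\delta,2}(T_hu)-\calF_{\delta,2}(u)$ becomes a double integral of $|u(x)-u(y)|^2$ against the kernel difference
\begin{equation*}
K_h(x,y) := \frac{\phi\big(|S_h^{-1}(x)-S_h^{-1}(y)|/\delta\big)\,|J_h(x)\,J_h(y)|}{|S_h^{-1}(x)-S_h^{-1}(y)|^{d+2s}}-\frac{\phi(|x-y|/\delta)}{|x-y|^{d+2s}},
\end{equation*}
with $J_h:=\det\nabla S_h^{-1}$. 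I would split $K_h=K_h^{(1)}+K_h^{(2)}$, where $K_h^{(1)}$ keeps the factor $\phi(|x-y|/\delta)$ and collects the Jacobian--distance correction, and $K_h^{(2)}$ isolates the $\phi$-increment $\phi(|S_h^{-1}(x)-S_h^{-1}(y)|/\delta)-\phi(|x-y|/\delta)$. The piece $K_h^{(1)}$ satisfies $|K_h^{(1)}|\lesssim|h|\phi(|x-y|/\delta)/|x-y|^{d+2s}$ exactly as in the proof of \eqref{eq:regularity-F2}. For $K_h^{(2)}$, the elementary identity $|S_h^{-1}(x)-S_h^{-1}(y)-(x-y)|\lesssim|h|\,|x-y|$ combined with the $C^\beta$-regularity of $\phi$ near the origin yields $|K_h^{(2)}|\lesssim|h|^\beta|x-y|^\beta/|x-y|^{d+2s}$; choosing the admissible radius $\rho$ small enough that $4\rho/\delta$ lies inside the $C^\beta$-neighborhood of $0$ (which is permissible, as $\rho$ can always be shrunk without affecting the cone condition) ensures that this local estimate holds throughout the relevant integration region. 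Absorbing $|x-y|^\beta$ on the bounded patch $D_{2\rho}(x_0)$ and using $|h|\le|h|^\beta$ for $|h|\le 1$, both pieces combine into
\begin{equation*}
\sup_{h\in D}\frac{|\calF_{\delta,2}(T_hv)-\calF_{\delta,2}(v)|}{|h|^\sigma}\lesssim \iint_{Q_{D_{2\rho}(x_0)}}\frac{|v(x)-v(y)|^2}{|x-y|^{d+2s}}\,dy\,dx,\qquad\sigma\in(0,\beta],
\end{equation*}
which is the finite-horizon analogue of \eqref{eq:regularity-F2}.

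Combining these local bounds with the norm localization \eqref{eq:localization} and the base estimate produces the recursion
\begin{equation*}
\|u\|_{\dot B^{s+\sigma/2}_{2,\infty}(\Omega)}^2\lesssim\|f\|_{B^t_{2,1}(\Omega)}\|u\|_{\dot B^{\sigma-t}_{2,\infty}(\Omega)}+\|f\|_{H^{-s}(\Omega)}^2,\qquad\sigma\in(0,\beta],\ t\in(-1,1+\sigma).
\end{equation*}
The conclusion then follows by the same induction as in Theorem~\ref{T:Besov_regularity}, but fixing $t=\beta/2-s\in(-1,\tfrac12]$ and starting from $\sigma_0=s$; the recurrence $\sigma_{k+1}=s/2+\sigma_k/2+\beta/4$ yields $\sigma_k=s+\tfrac\beta2(1-2^{-k})\nearrow s+\beta/2$ with uniformly bounded constants $\Lambda_k$, which delivers \eqref{eq:Besov_regularity_delta}. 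The main obstacle is precisely the estimate on $K_h^{(2)}$: extracting a clean $|h|^\beta$ modulus from the increment of the $\phi$-factor without sacrificing $s$-uniformity requires both the H\"older regularity $\phi\in C^\beta$ near $0$ (to quantify the modulus) and the positivity $\phi(0)>0$ (through which the norm equivalence enters), and this bottleneck is what caps the differentiability gain at the H\"older exponent $\beta/2$.
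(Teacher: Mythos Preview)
Your proposal is correct and follows essentially the same route as the paper's proof: both arguments reduce to observing that the analogue of \eqref{eq:regularity-F2} for $\calF_{\delta,2}$ holds only for $\sigma\in(0,\beta]$, then run the iteration of Theorem~\ref{T:Besov_regularity} with $t=\beta/2-s$, producing $\sigma_k=s+\tfrac{\beta}{2}(1-2^{-k})$ and the same bounded-constant induction. The paper's proof is terser---it simply asserts the $\sigma\le\beta$ restriction and points to the choice of $\rho$ small enough that $\phi\in C^\beta([0,4\rho])$ and $\phi\ge\phi_0>0$ there---whereas you supply the underlying kernel decomposition $K_h=K_h^{(1)}+K_h^{(2)}$ that explains \emph{why} the restriction arises; this is a welcome elaboration rather than a different strategy. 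One cosmetic point: you reuse the symbol $\phi$ for the cutoff in $T_h$, which collides with the diffusivity; renaming one of them would avoid ambiguity.
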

\begin{proof}
  It suffices to realize that the proof of \Cref{T:Besov_regularity} hinges on the action of the localized translation operator $T_h$ on generic balls $D_{2\rho}(x_0)$ of radius $2\rho$ and center $x_0$ via \eqref{eq:regularity-F1} and \eqref{eq:regularity-F2}; the only change is that \eqref{eq:regularity-F2} is now valid for $\sigma\in(0,\beta]$ due the presence of $\phi$. Consequently, if $\rho$ is sufficiently small so that $\phi\in C^\beta([0,4\rho])$ and $\phi\ge\phi_0>0$ on $[0,4\rho]$, then we end up with the following variant of \eqref{eq:iteration} regardless of the regularity of $\phi$ outside $[0,4\rho]$ and the size of its support:
\[
\|u\|_{\dot{B}_{2,\infty}^{\sigma_{k+1}}(\Omega)}^2 \le \Big( C_1 \|f\|_{B^t_{2,1}(\Omega)} + C_2 \|u\|_{\dot{B}_{2,\infty}^{\sigma_k}(\Omega)}   \Big) \|f\|_{B^t_{2,1}(\Omega)},
\]
with
\[
\sigma_{k+1} := s+\sigma/2, \quad
\sigma_k := \sigma - t, \quad
t := \frac{\beta}{2} - s.
\]
If $\sigma_0=s$, then we can prove by induction that $\sigma_k = s + \frac{\beta}{2} \big(1-\frac{1}{2^k} \big)$ and
$\sigma = \beta \big( 1 - \frac{1}{2^{k+1}} \big) \in (0,\beta]$. The rest of the proof follows that of \Cref{T:Besov_regularity}. 
\end{proof}

\begin{remark}[Sobolev estimates for finite-horizon operators]\label{R:Sobolev-delta}
In the same fashion as in \Cref{C:Sobolev-estimates} (Sobolev estimates), but with $\alpha:=\{s,\frac{\beta}{2}\}$, we can convert \eqref{eq:Besov_regularity_delta} into the Sobolev regularity estimates for all $\eps>0$ sufficiently small
\begin{equation}\label{eq:BesovSobolev-delta}
\| u \|_{\widetilde H^{s+\beta/2-\eps}(\Omega)} \le \frac{C(\Omega,d)}{\sqrt{\eps}} \|f\|_{B^{-s+\beta/2}_{2,1}(\Omega)},
\end{equation}
and
\begin{equation}\label{eq:SobolevL2-delta}
\| u \|_{\widetilde H^{s+\alpha-\eps}(\Omega)} \le \frac{C(\Omega,d,s)}{\eps^\kappa} \|f\|_{L^2(\Omega)};
\end{equation}
here $\kappa=\frac12$ for $s\ne\frac12$ and $\kappa=1$ for $s=\frac12$ and $C(\Omega,d,s)\approx|1-2s|^{-\frac12}$ for $s\ne\frac12$ provided $\beta=1$, whereas $\kappa=\frac12$ and the constant does not blow up in $s$ provided $\beta<1$. The latter is a consequence of $\sigma\le\beta<1$ in the proof of \Cref{T:L2-data} due to the functional $\calF_2$. These estimates are instrumental for the finite element error analysis of linear problems with finite horizon.
\end{remark}

\begin{remark}[assumptions on the diffusivity]
We stress that \eqref{eq:Besov_regularity_delta}, \eqref{eq:BesovSobolev-delta} and \eqref{eq:SobolevL2-delta} do not require global H\"older-$\beta$ regularity of the diffusivity function $\phi$ but just in a neighborhood of the origin. Thus, this result applies to the case of {\it truncated Laplacians} (that correspond to $\phi(\rho) = \chi_{[0,1]}(\rho)$) and thereby extends the estimates from \cite{Burkovska:19} to Lipschitz domains. To the best of our knowledge, these are the first Besov or Sobolev regularity estimates for finite-horizon operators with non-constant diffusivity $\phi$, whose support can even extend beyond $[0,1]$ provided $\phi$ is globally bounded, and valid on Lipschitz domains.
\end{remark}
  
\subsection{Direct finite element discretization}  \label{sec:FE}

We next consider a direct finite element discretizations of \eqref{eq:weak_linear} by using piecewise linear continuous functions.
Given $h_0 > 0$, for $h \in (0, h_0]$, we let $\mathcal{T}_h$ denote a triangulation of $\Omega$, i.e., $\calT = \{T\}$ is a conforming partition of $\Omega$ into simplices $T$ of diameter $h_T$. 
We assume the family $\{\calT \}$ to be shape-regular, namely,
\[
\sigma := \sup_{\calT} \max_{T \in \calT} \frac{h_T}{\rho_T} <\infty,
\]
where $\rho_T $ is the diameter of the largest ball contained in $T$ \cite{BrennerScott08}. As usual, the parameter $h$ denotes the mesh size, $h = \max_{T \in \calT} h_T$; moreover, we take elements to be closed sets.

Let $\mathcal{N}_h$ be the set of interior vertices of $\calT$, $N$ be its cardinality , and $\{ \varphi_i \}_{i=1}^N$ be the standard piecewise linear Lagrangian basis, with $\phii_i$ associated to the node $\x_i \in \mathcal{N}_h$ and star $S_i=\supp\phii_i$. The finite element space is the set of continuous piecewise linear functions over $\calT_h$,
\begin{equation} \label{eq:FE_space}
\mathbb{V}_h :=  \left\{ v \in C_0(\Omega) \colon v = \sum_{i=1}^N v_i \varphi_i \right\}.
\end{equation}
It is clear that $\mathbb{V}_h \subset \tHs$ for all $s \in (0,1)$ and therefore we have a conforming discretization.

With the notation described above, the discrete weak formulation reads: find $u_h \in \mathbb{V}_h$ such that
\begin{equation} \label{eq:weak_linear_discrete}
(u_h, v_h)_s = \langle f, v_h \rangle \quad \forall v_h \in \mathbb{V}_h.
\end{equation}
We now briefly describe some practical aspects of the implementation and solution process of \eqref{eq:weak_linear_discrete}. In view of \eqref{eq:FE_space}, the linear system associated with \eqref{eq:weak_linear_discrete} can be expressed as $\vA \vU = \vF$, where $\vU = (u_i)_{i=1}^N$ and $u_h = \sum_{i=1}^N u_i \varphi_i$, and the entries of the stiffness matrix $\vA=(a_{ij})_{ij=1}^N$ and right-hand side vector $\vF=(f_i)_{i=1}^N$ read
\[
a_{ij} = (\phii_i, \phii_j)_s, \quad f_i = \langle f, \phii_i \rangle.
\]

\subsubsection*{\bf Computation of the stiffness matrix}
There are two issues in dealing with the entries $\vA_{ij}$ ($i,j = 1 , \ldots , N$). The first one is that the integration domain in the bilinear form $(\phii_i, \phii_j)_s$ is unbounded if $S_i \cap S_j \neq \emptyset$. Namely, one has to compute integrals of the form
\[
\int_{S_i \cap S_j} \phii_i(x) \phii_j(x) \int_{S_i^c} \frac{dx}{|x-y|^{d+2s}} dy dx.
\]  
In the experiments we display below, we have used auxiliary exterior domains and Dirichlet data truncation as proposed in \cite{ABB}. We point out that --at least for homogeneous problems-- an efficient alternative to compute the integrals over $\Omega\times\Omega^c$ is to transform them into integrals over $\Omega\times\pp\Omega$ by means of the Divergence Theorem \cite{AiGl17}.

The second issue is that the singular (non-integrable) kernel $|x-y|^{-d-2s}$ offers significant difficulties when computing stiffness matrix entries corresponding to nodal basis functions with supports close to each other. Enforcing a degree of cancellation compatible with the accuracy of \eqref{eq:weak_linear_discrete} requires suitable quadrature rules to compute such entries. Reference \cite{ABB} adopts techniques from the boundary element method \cite{Chernov:11, SauterSchwab}.

\subsubsection*{\bf Matrix compression}
The finite element spaces \eqref{eq:FE_space} give rise to {\em full} stiffness matrices regardless of the value of $s \in (0,1)$. Indeed, if $i,j$ are such that $S_i \cap S_j = \emptyset$, then
\[
a_{ij} =  - \frac{C_{d,s}}2 \iint_{S_i \times S_j} \frac{\phii_i(x) \; \phii_j(y)}{|x-y|^{d+2s}} \; dy \; dx < 0.
\]
Thus, in a naive implementation most of the matrix assembly time is devoted to the computation of elements $a_{ij}$ for $\x_i$ and $\x_j$ far away from one another. However, these matrix elements should be significantly smaller than the ones that involve neighboring nodes. Therefore, in an efficient implementation of the finite element method, these far field contributions can be replaced by computationally cheaper low-rank blocks. The cluster paneling method from the boundary element literature has been considered in \cite{AiGl17, zhao2017adaptive}, resulting in a data-sparse representation with $\mathcal{O}(N \log^\alpha N)$ complexity for some $\alpha \ge 0$; see also \cite{Bauer:20}. We finally remark that in \cite{karkulik2018mathcal} it is shown that the inverse of $\vA$ can be represented using the same block structure as employed to compress $\vA$.

\subsubsection*{\bf Preconditioning}
The use of matrix factorization techniques to solve a dense matrix equation $\vA \vU = \vF$ has complexity $\mathcal{O}(N^3)$. A common alternative is the use of the conjugate gradient method, for which the number of iterations needed for a fixed tolerance scales like $\sqrt{\kappa(\vA)}$, where $\kappa(\vA)$ is the condition number of $\vA$. Reference \cite{AiMcTr99} shows that
\begin{equation} \label{eq:conditioning}
\kappa(\vA) = \mathcal{O}\left( N^{2s/d} \left( \frac{h_{max}}{h_{min}} \right)^{d-2s} \right), 
\end{equation}
where $h_{max}$ and $h_{min}$ denote the maximum and minimum element sizes, respectively. Therefore, on quasi-uniform meshes we have $\kappa(\vA) = \mathcal{O}(h^{-2s})$, where $h \simeq N^{-1/d}$ is a mean element diameter.
As we discuss in Section \ref{sec:graded}, the use of graded meshes yields a higher convergence rate with respect to the number of degrees of freedom. However, as \eqref{eq:conditioning} illustrates, such an improvement comes at the expense of poorer conditioning; a simple diagonal rescaling restores the same condition number as for uniform meshes \cite{AiMcTr99}.

There have been some recent progress in the development of preconditioners for fractional elliptic problems. Reference \cite{AiGl17} mentions the use of multigrid preconditioners, while \cite{gimperlein2019optimal} analyzes an operator preconditioner based on an explicit representation of the Green's function for the integral fractional Laplacian on a ball (Boggio's formula). 

Additive Schwarz preconditioners of BPX-type have been considered in \cite{faustmann2021stability,fractionalbpx}. Reference \cite{faustmann2021stability} develops a local multilevel diagonal preconditioner, while \cite{fractionalbpx} introduces a scaling for coarse spaces that yields condition numbers uniformly bounded with respect to both the number of levels and the order $s \in (0,1)$. We briefly discuss the latter next.

Since the $s$-Laplacian \eqref{eq:def_Laps} tends to the classical Laplacian as $s\to1$ and to the identity as $s\to0$, due to the suitable $s$-depending scaling built into $C_{d,s}$, the challenge is to design a preconditioner $\vB$ such that $\kappa (\vB\vA) \le C$ uniformly in $s$ as well as in the number of levels $J$. The standard BPX preconditioners cannot do this because $\vA$ is not a good approximation of the identity. The key idea of \cite{fractionalbpx} is an $s$-dependent scaling that captures the transition to the identity as $s\to0$. Let $V=\mathbb{V}_h$ and consider the space decomposition $\{V_j\}_{j=0}^J$ of $V$ with nested spaces $V_j$, $V=\sum_{j=1}^J V_j$, and the $L^2$-projection operators $Q_j:V\to V_j$. The following {\it $s$-uniform decomposition} is valid for all $\gamma,\wt{\gamma} \in (0,1)$, $s>0$, and for every $v\in V$:
\begin{equation}\label{eq:s-uniform}
\sum_{j=0}^J \gamma^{-2sj}\|(Q_j - Q_{j-1})v\|_0^2 =
\inf_{\substack{v_j \in V_j \\
\sum_{j=0}^J v_j = v}} \bigg( \gamma^{-2sJ}\|v_J\|_0^2 +  
\sum_{j = 0}^{J-1} \frac{\gamma^{-2sj}}{1 - \wt\gamma^{2s}}\|v_j\|_0^2
\bigg).
\end{equation}
The critical role of the weight $1 - \wt\gamma^{2s}$ is documented in Table \ref{T:comparison} for a simple example $\Omega = (0,1)^2$ and $f=1$ with $s=10^{-1}, 10^{-2}$: the number of iterations is not uniform for $\wt\gamma=0$!
\begin{table}[!ht]
\centering
\begin{tabular}{|c||c|c||c|c|}
  \hline
  \multicolumn{5}{|c|}{Uniform grids} \\ \hline
 \multirow{2}{*}{DOFs} 
  & \multicolumn{2}{c||}{$s=10^{-1}$}
  & \multicolumn{2}{c|}{$s=10^{-2}$} \\ \cline{2-5}
  &  $\tilde{\gamma} = 0$ & $\tilde{\gamma} = \frac12$  &
  $\tilde{\gamma} = 0$  & $\tilde{\gamma} = \frac12$ \\ \hline
 225 & 14 & 10 & 16 & 10 \\ 
961 & 17 & 10 & 18 & 10 \\ 
3969 & 19 & 10 & 21 & 10 \\ 
16129 & 20 & 10 & 23 & 9 \\ 
\hline  
\end{tabular}
\qquad
\begin{tabular}{|c||c|c||c|c|}
  \hline
  \multicolumn{5}{|c|}{Graded bisection grids} \\ \hline
 \multirow{2}{*}{DOFs} 
  & \multicolumn{2}{c||}{$s=10^{-1}$}
  & \multicolumn{2}{c|}{$s=10^{-2}$} \\ \cline{2-5}
  &  $\tilde{\gamma} = 0$ & $\tilde{\gamma} = \frac12$  &
  $\tilde{\gamma} = 0$  & $\tilde{\gamma} = \frac12$ \\ \hline
161 & 13 & 10 & 15 & 11 \\ 
853 & 17 & 12 & 19 & 13 \\ 
2265 & 20 & 12 & 22 & 14 \\ 
9397 & 22 & 13 & 25 & 14 \\ 
\hline  
\end{tabular}
\medskip
\caption{Number of iterations needed when using a PCG method with BPX
  preconditioner without ($\wt{\gamma} = 0$) and with
  ($\wt{\gamma} = \frac12$) a correction factor. We display results
  on a family of uniformly refined grids (left panel), and on a
  sequence of suitably graded bisection grids (right panel).}
\label{T:comparison}
\end{table}

Consider first the case of {\it quasi-uniform meshes} $\calT_j$ with meshsize $h_j$ and corresponding spaces $V_j$. If $I_j:V_j \to V$ is the injection operator, so that $Q_j=I_j^T$, then the additive preconditioner $B$ in operator form reads
\begin{equation}\label{eq:BPX-uniform}
B := I_{J} h_{J}^{2s} Q_{J} + (1-\wt{\gamma}^{s})\sum_{j=0}^{J - 1} I_j
h_j^{2s} Q_j.
\end{equation}
The performance of $B$, showing robustness with respect to $J$ and $s$, is displayed in Table \ref{T:BPX-uniform}.
\begin{table}[!ht]
\centering
\begin{tabular}{|m{0.3cm}|m{0.52cm}|m{1.0cm}||m{0.75cm}|
m{0.75cm}||m{0.75cm}|m{0.75cm}||m{0.75cm}|m{0.75cm}|}
  \hline
  \multirow{2}{*}{$J$} & \multirow{2}{*}{$h_{J}$} &
  \multirow{2}{*}{DOFs} 
  & \multicolumn{2}{c||}{$s=0.9$}
  & \multicolumn{2}{c||}{$s=0.5$} 
  & \multicolumn{2}{c|}{$s=0.1$} \\ \cline{4-9}
  &&  & CG  & PCG  & CG & PCG  & CG  & PCG 
  \\ \hline
  1 & $2^{-1}$ & 9   & 4 & 4 
                     & 4 & 4 
                     & 4 & 4 \\ \hline
  2 & $2^{-2}$ & 49  & 12 & 12 
                     & 8 & 8 
                     & 8 & 9 \\ \hline
  3 & $2^{-3}$ & 225  & 25 & 16 
                     & 11 & 10 
                     & 8 & 10 \\ \hline
  4 & $2^{-4}$ & 961  & 46 & 19 
                     & 17 & 11 
                     & 8 & 10 \\ \hline
  5 & $2^{-5}$ & 3969 & 84 & 21 
                     & 24 & 12 
                     & 8 & 10 \\ \hline
  6 & $2^{-6}$ & 16129 & 157 & 22 
                     & 32 & 13
                     & 8 & 10 \\ \hline
\end{tabular}
\medskip
\caption{Quasi-uniform meshes: Iterations of CG, and PCG with BPX
  preconditioner \eqref{eq:BPX-uniform}, parameter $\tilde{\gamma} = 0.5$, and fractional order $s=0.9, 0.5, 0.1$.}
\label{T:BPX-uniform}
\end{table}
\vskip-0.3cm
\begin{table}[!htbp]
\centering
\begin{tabular}{|m{0.32cm}|m{1.cm}||m{0.75cm}|m{0.75cm}
||m{0.75cm}|m{0.75cm}||m{0.75cm}|m{0.75cm}|}
  \hline
  \multirow{2}{*}{$\bar{J}$} & \multirow{2}{*}{DOFs} 
  & \multicolumn{2}{c||}{$s=0.9$}
  & \multicolumn{2}{c||}{$s=0.5$} 
  & \multicolumn{2}{c|}{$s=0.1$}\\ \cline{3-8}
  & & CG  & PCG  & CG & PCG & CG & PCG \\ \hline
  7&  61 & 10 & {10} & 10 & {7} 
  & 13 & {8}\\ \hline
  8 &  153 &15 &{13} & 15 & {9} 
  & 21 & {10}\\ \hline
  9 &  161 & 15 &{14} & 15 & {9} 
  & 21 & {10}\\ \hline
  10 &  369 & 20 &{16} & 20 & {11} 
  & 34 & {11}\\ \hline
  11 & 405  & 21 &{16} & 19 & {11} 
  & 31 & {12}\\ \hline
  12 & 853 & 26 &{18} & 26 & {12} 
  & 48 & {12}\\ \hline
  13 & 973 & 30 &{19} & 26 & {12} 
  & 47 & {12}\\ \hline
  14 & 1921 & 34 &{20} & 33 & {13} 
  & 72 & {12}\\ \hline
  15 & 2265 & 40 &{21} & 32 & {13} 
  & 65 & {12}\\ \hline
  16 & 4269 & 46 &{22} & 39 & {14} 
  & 97 & {13}\\ \hline
   17 & 5157 & 55 &{22} & 40 & {14} 
  & 92 & {12}\\ \hline
   18 & 9397 & 64 &{24} & 48 & {14} 
  & 135 & {13}\\ \hline
\end{tabular}
\medskip
\caption{Graded bisection meshes: Iterations of CG and PCG with BPX preconditioner
  \eqref{eq:BPX-graded}, parameter $\tilde{\gamma} = \sqrt{2}/2$, and fractional orders $s=0.9, 0.5, 0.1$.}
\label{T:PBX-graded} 
\end{table}

\vskip-0.1cm
Consider now {\it graded bisection meshes} $\calT_j=\calT_{j-1} + b_j = \calT_0 + \{b_1,\cdots,b_j\}$ obtained from an initial mesh $\calT_0$ by successive compatible bisections $b_j$. Associated with each $b_j$ there is a triplet of nodes, namely the bisection node $x_j$ and the parents nodes of $x_j$ at the end of the bisection edge containing $x_j$; $h_j$ is the local meshsize. The local space $V_j$ is the span of the three hat functions corresponding to this triplet on the mesh $\calT_j$. Let $\calP$ be the set of interior nodes of the finest graded mesh $\calT_J$ and let $V_p$ be the one dimensional space spanned by the hat function $\phi_p\in V$ associated with $p\in\calP$ and local meshsize $h_p$. Given the space decomposition
\[
V = \sum_{p\in\calP} V_p + \sum_{j=1}^J V_j,
\]
the additive preconditioner $B$ has a similar structure to \eqref{eq:BPX-uniform} and reads
\begin{equation}\label{eq:BPX-graded}
B = \sum_{p\in \calP} I_p h_p^{2s} Q_p + (1-\wt{\gamma}^s) \sum_{j=0}^J I_j h_j^{2s} Q_j.
\end{equation}
Table \ref{T:PBX-graded} documents the robust performance of $B$ with respect to $J$ and $s$.

The robust performance of $B$ is supported by theory. The following result is shown in \cite{fractionalbpx}. The proof for quasi-uniform meshes $\calT_j$ hinges on \eqref{eq:s-uniform} as well as $s$-uniform interpolation and local inverse estimates. However, for graded meshes there is no global notion of scale and the subspaces $V_j$ are local and non-nested, so \eqref{eq:s-uniform} cannot be used directly. The proof relies on the geometric structure of bisection grids.

\begin{theorem}[uniform preconditioning]\label{T:BPX}
The BPX preconditioner of \eqref{eq:BPX-uniform} and \eqref{eq:BPX-graded} satisfies $\kappa(\vB \vA) \le C$, where the constant $C$ is independent of the number of levels $J$ and fractional order $s$.
\end{theorem}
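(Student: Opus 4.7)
The plan is to invoke the abstract additive Schwarz framework to reduce $\kappa(\vB\vA)\le C$ to a norm equivalence of multilevel type, and then verify that equivalence uniformly in $J$ and $s$. By standard theory, the preconditioner $B$ in \eqref{eq:BPX-uniform} satisfies
\[
(B^{-1}v,v) \;=\; \inf_{\substack{v_j\in V_j \\ \sum v_j = v}} \bigg( h_J^{-2s}\|v_J\|_0^2 + \frac{1}{1-\wt\gamma^s}\sum_{j=0}^{J-1} h_j^{-2s}\|v_j\|_0^2 \bigg),
\]
since the local solvers are the scalar operators $h_j^{2s}I$. Applying the $s$-uniform decomposition identity \eqref{eq:s-uniform} with $\gamma$ chosen so that $h_j\simeq \gamma^j$ (and after a harmless replacement of $\wt\gamma^{2s}$ by $\wt\gamma^s$ via a change of parameter) transforms this infimum into the level-wise expression
\[
(B^{-1}v,v) \;\simeq\; \sum_{j=0}^J h_j^{-2s}\|(Q_j-Q_{j-1})v\|_0^2,
\]
with constants independent of $J$ and $s$. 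The theorem therefore reduces to proving the $J$- and $s$-uniform norm equivalence
\[
\sum_{j=0}^J h_j^{-2s}\|(Q_j-Q_{j-1})v\|_0^2 \;\simeq\; \|v\|_{\wt{H}^s(\Omega)}^2 \qquad \forall v\in V.
\]

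For quasi-uniform meshes, I would establish this equivalence by combining two $s$-uniform estimates on each level. First, an $s$-uniform Jackson-type approximation bound $\|(I-Q_j)v\|_0 \lesssim h_j^s\,\|v\|_{\wt{H}^s(\Omega)}$ and, second, an $s$-uniform Bernstein-type inverse estimate $\|v_j\|_{\wt{H}^s(\Omega)} \lesssim h_j^{-s}\|v_j\|_0$ for $v_j\in V_j$. Both can be derived from the fractional seminorm definition by splitting integrals at scale $h_j$, using the scaling of $C_{d,s}\simeq s(1-s)$ and $L^2$-stability of $Q_j$, and tracking constants so they do not degenerate at $s=0$ or $s=1$. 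Combining these with a standard telescoping argument on the sequence $(Q_j-Q_{j-1})v$ and exploiting the geometric decay of $h_j$ gives the upper bound, while the lower bound follows by duality, i.e., by testing the stable decomposition $v=\sum_j (Q_j-Q_{j-1})v$ against elements of $\wt H^s$. The critical role of the factor $1-\wt\gamma^s$ in \eqref{eq:BPX-uniform} is to cancel the divergence of $\sum_{j\ge 0}\gamma^{-2sj}$ as $s\to 0$: without it, the infimum characterization blows up and the preconditioner fails to approximate the identity in the $s\to 0$ limit.

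For graded bisection meshes, the same abstract reduction applies to $B$ in \eqref{eq:BPX-graded}, but the identity \eqref{eq:s-uniform} cannot be used globally because the subspaces $V_j$ attached to individual bisections $b_j$ are local, three-dimensional, and non-nested. The plan is to use the tree structure of compatible bisections: organize the bisections into generations, so that within each generation the associated subspaces have finite overlap and shape-regular local scales $h_j$, and use the refinement tree to transport a decomposition from generation to generation. Locally on each three-node patch one can then replay the quasi-uniform argument, and shape regularity of bisection grids ensures that the transition between neighboring generations contributes bounded multiplicative factors. The final $s$-uniform multilevel decomposition is assembled by summing the local contributions over $p\in\calP$ and $j\le J$, with the weight $1-\wt\gamma^s$ again guaranteeing non-degeneracy at $s=0$.

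The main obstacle is not the standard BPX machinery, which in both geometries handles the $J$-uniformity, but the genuine $s$-uniformity: constants that look innocuous for $s$ bounded away from $0$ and $1$ can blow up in both limits, and all Jackson/Bernstein, interpolation, and telescoping estimates must be carried out with explicit tracking of the $s$-dependence. In the graded case, this obstacle is compounded by the non-nestedness of the $V_j$, which forces the stable decomposition to be built inductively along the bisection tree rather than read off from successive $L^2$-projections.
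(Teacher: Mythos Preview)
The paper does not actually prove this theorem in the text; it only states that the result is shown in \cite{fractionalbpx} and gives a two-sentence sketch: the quasi-uniform case ``hinges on \eqref{eq:s-uniform} as well as $s$-uniform interpolation and local inverse estimates,'' while for graded meshes ``there is no global notion of scale and the subspaces $V_j$ are local and non-nested, so \eqref{eq:s-uniform} cannot be used directly'' and ``the proof relies on the geometric structure of bisection grids.'' Your proposal is a faithful and more detailed expansion of exactly this sketch: you correctly reduce to a multilevel norm equivalence via the additive Schwarz formula for $(B^{-1}v,v)$, invoke \eqref{eq:s-uniform} to pass to level increments, and then appeal to $s$-uniform Jackson and Bernstein (inverse) estimates. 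You also correctly identify that the graded case cannot use \eqref{eq:s-uniform} directly and must exploit the combinatorial structure of bisection refinement, and you correctly explain the role of the weight $1-\wt\gamma^s$ in curing the $s\to 0$ degeneration.

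One small point: your remark that the discrepancy between $1-\wt\gamma^{2s}$ in \eqref{eq:s-uniform} and $1-\wt\gamma^{s}$ in \eqref{eq:BPX-uniform} is handled by ``a change of parameter'' is fine at the level of this sketch, but be aware that the actual constants in the norm equivalence must still be tracked to remain $s$-uniform. Beyond that, what you have written is a plan rather than a proof: the genuinely hard content---deriving Jackson and Bernstein inequalities whose constants are bounded uniformly as $s\to 0$ and $s\to 1$, and constructing the stable decomposition on non-nested local bisection spaces---is precisely what \cite{fractionalbpx} carries out, and your outline does not supply those arguments. Since the paper itself defers to that reference, your proposal matches the paper's treatment in both scope and approach.
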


\subsection{Global error estimates}\label{S:global-error-estimates}
In this section, we review global error estimates for the finite element discretization \eqref{eq:weak_linear_discrete} in $\wt{H}^s(\Omega)$, $L^2(\Omega)$ and negative-order Sobolev norms.

The {\it energy norm} in $\wt{H}^s(\Omega)$ satisfies the best approximation property,
\begin{equation} \label{eq:best_approximation_linear}
\|u - u_h \|_{\tHs} = \min_{v_h \in \mathbb{V}_h} \|u - v_h \|_{\tHs},
\end{equation}
because $u_h$ is the projection of $u$ onto $\mathbb{V}_h$ with respect to such a norm. Therefore, we must account for nonlocality of the fractional norm in $\wt{H}^s(\Omega)$ as well as the regularity of $u$. We present error estimates for quasi-uniform meshes and suitably graded meshes that compensate for the singular boundary behavior of $u$. The rates of convergence improve upon previous results because they hinge on the new regularity estimates of Theorem \eqref{T:Besov_regularity} (optimal shift property).

\subsubsection{\bf Localization and interpolation estimates}
A fundamental tool in the error analysis is the use of interpolation estimates. In order to obtain estimates valid for arbitrary meshes, a typical approach is to derive them elementwise (or patchwise), thereby giving rise to local interpolation estimates. Since the energy norm is {\em nonlocal,} a localization procedure is thus required.

We start by definining the star (or patch) of a subset $A \subset \Omega$ by
\[
  S_A^1 := \bigcup \big\{ T \in \calT \colon T \cap A \neq \emptyset \big\}.
\]
Given $T \in \calT$, the star $S_T^1$ of $T$ is the first ring of $T$ and the star $S_T^2$ of $S_T^1$
is the second ring of $T$. References \cite{Faermann2, Faermann} derive the following localized estimate:
\begin{equation} \label{eq:localization-F}
  |v|_{H^s(\Omega)}^2 \leq \sum_{T \in \calT} \left[ \int_T \int_{S_T^1} \frac{|v (x) - v (y)|^2}{|x-y |^{d+2s}} \; dy \; dx + \frac{C(d,\sigma)}{s h_T^{2s}} \| v \|^2_{L^2(T)} \right] \quad \forall v \in H^s(\Omega).
\end{equation}
Therefore, to estimate an $H^s(\Omega)$-seminorm, it suffices to compute local contributions on patches of the form $\{T \times S_T^1 \}_{T \in \calT }$ and elementwise $L^2$-contributions. The energy norm in our problem is not exactly the $H^s(\Omega)$ one but the $\tHs$ one instead. The latter involves integration over the space $\Rd$ even if the functions are supported in $\Omega$, and thus we need to make some slight modifications on \eqref{eq:localization-F}. Following \cite{BoNo21Constructive}, given $T \in \calT$ we denote its barycenter by $x_T$ and consider a ball $B_T=B(x_T,Ch_T)$ with center $x_T$ and radius $Ch_T$, where $C=C(\sigma)$ is a shape regularity dependent constant such that $S_T^1\subset B_T$. Then, we define the {\it extended} stars
\[
\widetilde{S}_T^1 :=
\left\lbrace \begin{array}{rl}
  S_T^1   & \textrm{if } T \cap \partial\Omega = \emptyset,
  \\
  B_T    & \textrm{otherwise,}
\end{array}\right.
\]
and the extended second ring $\widetilde{S}_T^2$,
\[
\widetilde{S}_T^2 := \bigcup \big\{ \widetilde{S}_{T'}^1 \colon T' \in \calT, \mathring{T}' \cap S_T^1 \neq \emptyset \big\}.
\]
With these modifications, \cite{BoNo21Constructive} proves the following estimate:
\begin{equation} \label{eq:localization-F-tilde}
  \|v\|_{\tHs}^2 \leq \sum_{T \in \calT} \left[ \int_T \int_{\widetilde S_T^1} \frac{|v (x) - v (y)|^2}{|x-y |^{d+2s}} \; dy \; dx + \frac{C(d,\sigma)}{s h_T^{2s}} \| v \|^2_{L^2(T)} \right] \quad \forall v \in \tHs.
\end{equation}

Let us now mention some standard local quasi-interpolation estimates, involving a suitable quasi-interpolation operator that we shall denote by $\Pi$. Examples of possible choices for $\Pi$ include the Scott-Zhang and the Cl\'ement interpolation operators \cite{BrennerScott08,CiarletJr}. One can prove the following local estimates (see, for example, \cite{AcBo17,BoNoSa18,CiarletJr}) in either standard or weighted Sobolev spaces.

\begin{lemma}[local interpolation error estimates]\label{L:interpolation-error}
Let $T \in \calT$, $s \in (0,1)$, $r \in (s, 2]$, and $\Pi$ be a suitable local quasi-interpolation operator. If $v \in W^r_p (\widetilde{S}_T^2)$, then
\begin{equation} \label{eq:interpolation}
  \int_T \int_{\widetilde S_T^1} \frac{|(v-\Pi v) (x) - (v-\Pi v) (y)|^2}{|x-y|^{d+2s}} \, d y \, d x \le C \, h_T^{2\l}
  |v|_{W^r_p(\widetilde{S}_T^2)}^2,
\end{equation}
where $\l = \sob(W^r_p) - \sob(H^s) = r -s - d \big(\frac{1}{p} - \frac{1}{2} \big) > 0$
and $C = C(\Omega,d,s,\sigma,r)$.

Moreover, considering the weighted Sobolev scale \eqref{eq:weighted_sobolev}, it holds that for all $v \in H^r_\gamma (\widetilde S_{T}^2)$,
\begin{equation} \label{eq:weighted_interpolation}
 \int_T \int_{\widetilde S_T^1} \frac{|(v-\Pi v) (x) - (v-\Pi v) (y)|^2}{|x-y|^{d+2s}} \, d y \, d x \le C
   h_T^{2(r-s-\gamma)} |v|_{H^r_\gamma(\widetilde S_{T}^2)}^2.
\end{equation}
\end{lemma}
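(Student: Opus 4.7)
The plan is a scaling argument combined with Bramble--Hilbert and a Sobolev embedding on a reference patch; no new ingredients beyond what is already standard for local interpolation estimates. Set $e:=v-\Pi v$ and observe that by shape regularity both $\widetilde S^1_T$ and $\widetilde S^2_T$ are contained in balls whose diameters are comparable to $h_T$. After the affine dilation $x = x_T + h_T\hat x$, which maps $\widetilde S^1_T$ onto a reference patch $\hat D$ of unit diameter, a direct change of variables yields
$$
\int_{\widetilde S^1_T}\int_{\widetilde S^1_T}\frac{|e(x)-e(y)|^2}{|x-y|^{d+2s}}\,dy\,dx \;=\; h_T^{d-2s}\,|\hat e|_{H^s(\hat D)}^2,
$$
together with the usual scalings $\|\hat e\|_{L^p(\hat D)} = h_T^{-d/p}\|e\|_{L^p(\widetilde S^1_T)}$ and $|\hat e|_{W^r_p(\hat D)}=h_T^{r-d/p}|e|_{W^r_p(\widetilde S^1_T)}$.

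For the unweighted estimate \eqref{eq:interpolation}, the hypothesis $\ell>0$ guarantees the Sobolev embedding $W^r_p(\hat D)\hookrightarrow H^s(\hat D)$ on the fixed reference patch, with a constant depending only on $d,s,r,p$ and shape regularity; hence $|\hat e|_{H^s(\hat D)}\lesssim \|\hat e\|_{W^r_p(\hat D)}$. Inserting the standard Bramble--Hilbert bound $\|e\|_{L^p(\widetilde S^1_T)}\lesssim h_T^r|v|_{W^r_p(\widetilde S^2_T)}$ and the $W^r_p$-stability $|e|_{W^r_p(\widetilde S^1_T)}\lesssim|v|_{W^r_p(\widetilde S^2_T)}$ of the quasi-interpolant $\Pi$ (valid for Scott--Zhang or Cl\'ement-type operators in both integer and fractional Sobolev scales), and undoing the scaling, produces
$$
|e|_{H^s(\widetilde S^1_T)}^2\;\lesssim\; h_T^{d-2s+2(r-d/p)}\,|v|_{W^r_p(\widetilde S^2_T)}^2\;=\;h_T^{2\ell}\,|v|_{W^r_p(\widetilde S^2_T)}^2,
$$
which controls the integral in \eqref{eq:interpolation} from above. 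The weighted version \eqref{eq:weighted_interpolation} follows by the same rescaling, specialized to $p=2$: on $\widetilde S^2_T$ one has $\delta(x,y)\le Ch_T$, so the weight $\delta(x,y)^{2\gamma}$ in the right-hand seminorm $|v|_{H^r_\gamma(\widetilde S^2_T)}$ contributes an extra factor $h_T^{2\gamma}$ and the exponent $2(r-s)$ of the unweighted $p=2$ case is reduced to $2(r-s-\gamma)$, as claimed.

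The only delicate point, and the one I expect to require the most care, is the treatment of boundary elements, where $\widetilde S^1_T = B_T$ overflows $\Omega$. There one extends $e$ by zero to $B_T\setminus\Omega$: this is legitimate because both $v\in\tHs$ and $\Pi v\in\mathbb V_h$ vanish on $\Omega^c$, and it is exactly what motivated the enlarged stars $\widetilde S^1_T,\widetilde S^2_T$ in \eqref{eq:localization-F-tilde}. One must then verify that Bramble--Hilbert and the Sobolev embedding are applied to this zero extension on the reference patch with constants depending only on $d,s,r,p$ and shape regularity $\sigma$, giving the stated dependence $C=C(\Omega,d,s,\sigma,r)$. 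Everything else is a direct bookkeeping of the scaling exponents.
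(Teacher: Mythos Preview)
The paper does not give its own proof of this lemma; it is stated with references to \cite{AcBo17,BoNoSa18,CiarletJr}. Your argument for the unweighted estimate \eqref{eq:interpolation} is the standard one and is correct: the scaling to a unit reference patch, the Sobolev embedding $W^r_p(\hat D)\hookrightarrow H^s(\hat D)$ (valid precisely because $\ell>0$), and the Bramble--Hilbert/stability bounds for $\Pi$ combine to give the stated power $h_T^{2\ell}$. The bookkeeping of exponents you display is accurate.

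Your derivation of the weighted estimate \eqref{eq:weighted_interpolation}, however, has a genuine gap. You assert that $\delta(x,y)\le Ch_T$ on $\widetilde S^2_T$; this is false for interior elements, where $\delta(x,y)\approx\dist(T,\partial\Omega)$ can be arbitrarily larger than $h_T$. More importantly, even on a boundary element where $\delta\le Ch_T$ does hold, it points the \emph{wrong way}: it yields $|v|_{H^r_\gamma(\widetilde S^2_T)}^2\le (Ch_T)^{2\gamma}|v|_{H^r(\widetilde S^2_T)}^2$, whereas substituting into the unweighted bound would require the reverse inequality $|v|_{H^r(\widetilde S^2_T)}^2\lesssim h_T^{-2\gamma}|v|_{H^r_\gamma(\widetilde S^2_T)}^2$, i.e.\ a \emph{lower} bound $\delta\gtrsim h_T$. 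On boundary elements $\delta$ vanishes, so $|v|_{H^r(\widetilde S^2_T)}$ may well be infinite---this is exactly the situation for the generic boundary layer $v(x)\approx\dist(x,\partial\Omega)^s$---and \eqref{eq:weighted_interpolation} cannot be obtained by post-processing \eqref{eq:interpolation}. The cited references close this gap either by an embedding of the weighted space into a lower-order unweighted one (a Hardy-type inequality $H^r_\gamma\hookrightarrow H^{r-\gamma}$, after which one applies the unweighted estimate at order $r-\gamma$), or by carrying out the Bramble--Hilbert/scaling argument directly in the weighted scale; in either case an additional ingredient is required beyond what you have written.
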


An important feature of either \eqref{eq:localization-F} and \eqref{eq:localization-F-tilde} is that, when applied to an interpolation error --that typically has zero mean over elements--, the scaled $L^2$-terms can be converted into $H^s$ terms by means of a Poincar\'e inequality. This means that, when measuring interpolation errors, the $H^s(\Omega)$-seminorm effectively localizes as the sum of patchwise $H^s$-seminorms.

We finally point out that \eqref{eq:localization-F-tilde} avoids the need to use the Hardy inequality. This is particularly important in the case $s=1/2$, because such a step gives rise to an additional logarithmic factor
\cite{BoLeNo21,BoNoSa18}.

\subsubsection{\bf Quasi-uniform meshes}\label{S:quasi-uniform}
Combining the best approximation property \eqref{eq:best_approximation_linear} with regularity, localization and local interpolation estimates, one can derive the following quasi-optimal a priori estimates in the energy norm. The first estimate is new and the second one improves upon \cite{BoLeNo21} the power of the logarithmic factor for all $s\in(0,1)$.

\begin{theorem}[energy error estimates] \label{T:conv_linear_uniform}
Let $\Omega \subset \R^d$ be a bounded Lipschitz domain, $0<s<1$, and let $u$ denote the solution of \eqref{eq:weak_linear} and $u_h \in \mathbb{V}_h$ the solution of the discrete problem \eqref{eq:weak_linear_discrete}, computed over a quasi-uniform mesh $\calT_h$, where $h = \max_{T \in \calT} h_T$. If $f \in B^{1/2-s}_{2,1}(\Omega)$, then there exists a constant $C=C(\Omega,d,\sigma)$ such that for all $s\in(0,1)$
\begin{equation} \label{eq:conv_Hs-Besov}
\|u - u_h \|_{\tHs}  \le C \,  h^{\frac12} |\log h|^{\frac12} \, \|f\|_{B^{1/2-s}_{2,1}(\Omega)}.
\end{equation}
If instead $f \in L^2(\Omega)$ and $\alpha = \min \{s, \frac{1}{2} \}$, then for $\kappa=\frac12$ if $s\ne\frac12$
and $\kappa=1$ if $s=\frac12$ we have
\begin{equation} \label{eq:conv_Hs-L2}
\|u - u_h \|_{\tHs}  \le C \,  h^\alpha |\log h|^\kappa \, \|f\|_{L^2(\Omega)}.
\end{equation}
The constant $C=C(\Omega,d,s,\sigma)$ blows up as $s\to\frac12$ as $|1-2s|^{-\frac12}$.
\end{theorem}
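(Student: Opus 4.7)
The plan is to combine the Galerkin best approximation property \eqref{eq:best_approximation_linear} with the localization estimate \eqref{eq:localization-F-tilde}, the local interpolation estimate \eqref{eq:interpolation}, and the Besov regularity results of Theorems \ref{T:Besov_regularity} and \ref{T:L2-data}, closed by the embedding \eqref{eq:Besov-Sobolev-emb} and a logarithmic balancing of the parameter $\eps$. First, I would pick a quasi-interpolation operator $\Pi$ (Scott--Zhang or Cl\'ement) mapping into $\mathbb{V}_h$, so that by \eqref{eq:best_approximation_linear}
\[
\|u - u_h\|_{\widetilde H^s(\Omega)} \le \|u - \Pi u\|_{\widetilde H^s(\Omega)}.
\]
Applying \eqref{eq:localization-F-tilde} to $v = u - \Pi u$ gives a sum of patch contributions plus scaled $L^2$-terms; the latter are absorbed via a Poincar\'e inequality on each element, since $u - \Pi u$ has vanishing local averages, reducing the bound to a sum of the first integrals.

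Next, for each $T\in\calT_h$ I would apply \eqref{eq:interpolation} with $p=2$ and $r = s + \tfrac12 - \eps$, $\l = \tfrac12 - \eps$, obtaining
\[
\|u-\Pi u\|_{\widetilde H^s(\Omega)}^2 \;\lesssim\; \sum_{T\in\calT_h} h_T^{1-2\eps}\, |u|_{H^{s+1/2-\eps}(\widetilde S_T^2)}^2 \;\lesssim\; h^{1-2\eps}\, \|u\|_{\widetilde H^{s+1/2-\eps}(\Omega)}^2,
\]
where the last step uses quasi-uniformity ($h_T\approx h$) and the finite overlap of the extended second rings. For the Besov-data case I then invoke Theorem \ref{T:Besov_regularity} together with the Besov--Sobolev embedding \eqref{eq:Besov-Sobolev-emb} to get
\[
\|u\|_{\widetilde H^{s+1/2-\eps}(\Omega)} \;\lesssim\; \eps^{-1/2}\, \|u\|_{\dot B^{s+1/2}_{2,\infty}(\Omega)} \;\lesssim\; \eps^{-1/2}\, \|f\|_{B^{1/2-s}_{2,1}(\Omega)}.
\]
Choosing $\eps = 1/|\log h|$ so that $h^{-2\eps} = e^2$ is a harmless constant, and collecting the factor $\eps^{-1} = |\log h|$, yields exactly \eqref{eq:conv_Hs-Besov}.

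For the $L^2$-data case I would follow the same template but apply \eqref{eq:interpolation} with $r = s + \alpha - \eps$ and replace the regularity step by Theorem \ref{T:L2-data}. When $s\neq \tfrac12$, combining \eqref{eq:Besov_regularityL2} with \eqref{eq:Besov-Sobolev-emb} produces
\[
\|u\|_{\widetilde H^{s+\alpha-\eps}(\Omega)} \;\lesssim\; \frac{1}{\sqrt{\eps\,|1-2s|}}\, \|f\|_{L^2(\Omega)},
\]
and the same logarithmic choice $\eps = 1/|\log h|$ gives the $h^\alpha |\log h|^{1/2}$ rate with the indicated blow-up $|1-2s|^{-1/2}$. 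For $s = \tfrac12$, I would instead use \eqref{eq:Besov_regularityL2_s=1/2}, which itself already carries an $\eps^{-1/2}$ factor; composing it with the embedding \eqref{eq:Besov-Sobolev-emb} produces a total $\eps^{-1}$ prefactor, and balancing $\eps = 1/|\log h|$ yields the single $|\log h|$ factor recorded by $\kappa = 1$.

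The main obstacle I expect is not algebraic but structural: ensuring that the localization \eqref{eq:localization-F-tilde} transfers faithfully to a global $\widetilde H^{s+1/2-\eps}$ norm, since the extended stars $\widetilde S_T^2$ may include balls centered at boundary elements. This is handled by the shape-regularity-based finite overlap of these patches, which keeps the implicit constants uniform in $s$ and $h$. A secondary technical point is confirming that the constants in \eqref{eq:interpolation} and \eqref{eq:Besov-Sobolev-emb} are stable as $\eps \to 0$ with only the explicit $\eps^{-1/2}$ blow-up; this is where the careful use of second-order differences \eqref{eq:q=infty} and the Marchaud inequality \eqref{eq:Marchaud} pay off, and it is what allows the $|\log h|^{1/2}$ (respectively $|\log h|$) power to be sharp.
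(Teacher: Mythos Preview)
Your proposal is correct and follows essentially the same approach as the paper: best approximation \eqref{eq:best_approximation_linear}, localization \eqref{eq:localization-F-tilde} plus local interpolation \eqref{eq:interpolation}, then the Sobolev regularity of $u$ with the explicit $\eps^{-1/2}$ (or $\eps^{-1}$ when $s=\tfrac12$) blow-up, closed by the choice $\eps=|\log h|^{-1}$. The only cosmetic difference is that you invoke Theorems \ref{T:Besov_regularity} and \ref{T:L2-data} together with the embedding \eqref{eq:Besov-Sobolev-emb} directly, whereas the paper cites the already-packaged Sobolev estimates \eqref{eq:regularity-SobBesov}, \eqref{eq:regularity-SobL2}, \eqref{eq:regularity-SobL2-s=1/2} from Corollary \ref{C:Sobolev-estimates}; these are precisely the combination you describe.
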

\begin{proof}
Employing \eqref{eq:best_approximation_linear} in conjunction with the interpolation estimates \eqref{eq:localization-F-tilde} and \eqref{eq:interpolation} and the Sobolev regularity estimate \eqref{eq:regularity-SobBesov} yields
\[
\|u - u_h \|_{\tHs}  \le C h^{\frac12-\eps}  \|u\|_{\wt{H}^{s+\frac12-\eps}(\Omega)}
\le C h^{\frac12} \frac{h^{-\eps}}{\eps^{\frac12}} \|f\|_{B^{1/2-s}_{2,1}(\Omega)}.
\]
Taking $\eps=|\log h|^{-1}$ gives the desired estimate \eqref{eq:conv_Hs-Besov}. Replacing \eqref{eq:regularity-SobBesov} by \eqref{eq:regularity-SobL2} if $s\ne\frac12$ or by \eqref{eq:regularity-SobL2-s=1/2} if $s=\frac12$ leads to \eqref{eq:conv_Hs-L2}, and completes the proof.
\end{proof}

With error estimates in $\widetilde{H}^s(\Omega)$ at hand, one can perform an Aubin-Nitsche duality argument to derive convergence rates in weaker norms. To see this, let $g\in L^2(\Omega)$ or $g\in H^{s-\frac12}(\Omega)$ in case $s\in(0,\frac12)$. Let $u_g \in \wt{H}^s(\Omega)$ solve $(v,u_g)_s = \langle v, g\rangle$ for all $v\in\wt{H}^s(\Omega)$. We then have
\begin{equation} \label{eq:Aubin-Nitsche}
\langle u-u_h,g \rangle = (u-u_h,u_g)_s = (u-u_h,u_g-\Pi u_g)_s \le \|u-u_h\|_{\wt{H}^s(\Omega)}
\|u_g-\Pi u_g\|_{\wt{H}^s(\Omega)}.
\end{equation}

\begin{proposition}[$L^2$-error estimate]\label{prop:Aubin-Nitsche}
Let $\Omega$ be a bounded Lipschitz domain, $s\in(0,1)$, and $\alpha = \min \{s, \frac{1}{2} \}$. If $f \in B^{1/2-s}_{2,1}(\Omega)$, then for $\kappa$ defined in Theorem \ref{T:conv_linear_uniform} we have
\begin{equation*}\label{eq:L2Besov}
\|u - u_h\|_{L^2(\Omega)} \le C h^{\frac12+\alpha} | \log h |^{\frac12+\kappa} \, \| f \|_{B^{1/2-s}_{2,1}(\Omega)}.
\end{equation*}
If $f \in L^2(\Omega)$ instead, then we have
\begin{equation*} \label{eq:L2L2}
\|u - u_h\|_{L^2(\Omega)} \le C h^{2\alpha} | \log h |^{2\kappa} \, \| f \|_{L^2(\Omega)}.
\end{equation*}
\end{proposition}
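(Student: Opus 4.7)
The plan is to execute the Aubin--Nitsche duality argument outlined in \eqref{eq:Aubin-Nitsche}, combining the two energy estimates from Theorem \ref{T:conv_linear_uniform}: one applied to the primal solution $u$ (with data $f$) and one applied to the dual solution $u_g$ (with data $g \in L^2(\Omega)$). Since the bilinear form $(\cdot,\cdot)_s$ is symmetric, the dual problem is the same Dirichlet problem for $(-\Delta)^s$ with right-hand side $g$, so \emph{the same} Sobolev regularity and finite element convergence theory applies to $u_g$.

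First I would start from the $L^2$ duality characterization
\[
\| u - u_h \|_{L^2(\Omega)} = \sup_{\substack{g \in L^2(\Omega) \\ \|g\|_{L^2(\Omega)} \le 1}} \langle u-u_h, g\rangle,
\]
and, for each such $g$, introduce the dual solution $u_g\in\wt{H}^s(\Omega)$ solving $(v,u_g)_s=\langle v,g\rangle$ for all $v\in\wt{H}^s(\Omega)$. Galerkin orthogonality $(u-u_h,v_h)_s=0$ for all $v_h\in\mathbb{V}_h$ then yields, for any quasi-interpolant $\Pi u_g\in\mathbb{V}_h$ as in Lemma \ref{L:interpolation-error},
\[
\langle u-u_h, g\rangle = (u-u_h, u_g - \Pi u_g)_s \le \|u-u_h\|_{\wt{H}^s(\Omega)}\,\|u_g - \Pi u_g\|_{\wt{H}^s(\Omega)},
\]
which is exactly \eqref{eq:Aubin-Nitsche}.

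Next I would bound the dual factor. Since $g\in L^2(\Omega)$, applying Theorem \ref{T:conv_linear_uniform}, estimate \eqref{eq:conv_Hs-L2}, to $u_g$ (more precisely, the localization/interpolation argument leading to it, applied to the best approximation of $u_g$ by $\Pi u_g$) gives
\[
\|u_g - \Pi u_g\|_{\wt{H}^s(\Omega)} \le C\, h^\alpha |\log h|^\kappa \, \|g\|_{L^2(\Omega)},
\]
with $\alpha=\min\{s,\tfrac12\}$ and $\kappa$ as in the statement. Taking the supremum over $g$, the $L^2$-error is then bounded by $C h^\alpha |\log h|^\kappa \|u-u_h\|_{\wt{H}^s(\Omega)}$.

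Finally, I would insert the primal energy estimates. For $f\in B^{1/2-s}_{2,1}(\Omega)$, using \eqref{eq:conv_Hs-Besov} gives $\|u-u_h\|_{\wt{H}^s(\Omega)} \le C h^{1/2} |\log h|^{1/2} \|f\|_{B^{1/2-s}_{2,1}(\Omega)}$, and multiplying yields the first bound with rate $h^{1/2+\alpha}|\log h|^{1/2+\kappa}$. For $f\in L^2(\Omega)$, using \eqref{eq:conv_Hs-L2} instead gives $\|u-u_h\|_{\wt{H}^s(\Omega)} \le C h^{\alpha}|\log h|^{\kappa} \|f\|_{L^2(\Omega)}$, whose product with the dual factor produces $h^{2\alpha}|\log h|^{2\kappa}$. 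No step is genuinely delicate: the only small point to monitor is that, at the critical value $s=\tfrac12$, the logarithmic factor $\kappa=1$ coming from \eqref{eq:regularity-SobL2-s=1/2} must be used on both the primal and the dual side, which accounts for the doubled exponent $2\kappa$ in the $L^2$-data case (and the $\tfrac12+\kappa$ in the Besov-data case, where the primal contributes only the generic $h^{1/2}|\log h|^{1/2}$ factor independent of $s$).
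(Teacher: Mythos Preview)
Your proposal is correct and follows essentially the same Aubin--Nitsche duality argument as the paper: the paper also picks $g\in L^2(\Omega)$ in \eqref{eq:Aubin-Nitsche}, bounds the dual interpolation error $\|u_g-\Pi u_g\|_{\wt{H}^s(\Omega)}\lesssim h^\alpha|\log h|^\kappa\|g\|_{L^2(\Omega)}$ via the Sobolev regularity estimates \eqref{eq:regularity-SobL2}--\eqref{eq:regularity-SobL2-s=1/2}, and then combines with the primal energy bounds \eqref{eq:conv_Hs-Besov} or \eqref{eq:conv_Hs-L2}. The only cosmetic difference is that the paper cites the regularity estimates from Corollary~\ref{C:Sobolev-estimates} directly for the dual factor, whereas you invoke the proof of Theorem~\ref{T:conv_linear_uniform}; since the latter is precisely regularity plus localization/interpolation, the two are equivalent.
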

\begin{proof}
Choose $g\in L^2(\Omega)$ in \eqref{eq:Aubin-Nitsche} and use either \eqref{eq:regularity-SobL2} or \eqref{eq:regularity-SobL2-s=1/2} for $u_g$ to get
\[
\|u_g-\Pi u_g\|_{\wt{H}^s(\Omega)} \lesssim h^\alpha |\log h|^\kappa \|g\|_{L^2(\Omega)}.
\]
Combining this estimate with either \eqref{eq:conv_Hs-Besov} or \eqref{eq:conv_Hs-L2} gives the asserted error bounds.
\end{proof}

For classical second order problems, the pivot space is $H^1_0(\Omega)$ and the pick-up regularity for piecewise linear elements is just $1$. Therefore, the accessible regularity is $H^2(\Omega)$ and by duality the lowest order space is just $L^2(\Omega)$. Getting higher-order estimates in negative Sobolev spaces entails increasing the polynomial degree beyond $1$. For the fractional Laplacian the situation is different: the pivot space is $\wt{H}^s(\Omega)$ and the pick-up regularity is $\frac12$. Therefore, the accessible regularity is $H^{s+\frac12-\eps}(\Omega)$ and the lowest order space accessible by duality is $H^{s-\frac12}(\Omega)$, which turns out to be a negative Sobolev space if $s<\frac12$. The following convergence rates improve upon Proposition \ref{prop:Aubin-Nitsche}.

\begin{proposition}[error estimate in negative Sobolev spaces]\label{prop:negativeSob}
Let $\Omega$ be a bounded Lipschitz domain and $s\in(0,\frac12)$. If $f \in B^{1/2-s}_{2,1}(\Omega)$, then we have
\begin{equation*}
\|u - u_h\|_{H^{s-\frac12}(\Omega)} \le C h | \log h |^{\frac32} \, \| f \|_{B^{1/2-s}_{2,1}(\Omega)}.
\end{equation*}
If $f \in L^2(\Omega)$ instead, then we have
\begin{equation*}
\|u - u_h\|_{{H}^{s-\frac12}(\Omega)} \le C h^{\frac12+s} | \log h |^{\frac32} \, \| f \|_{L^2(\Omega)}.
\end{equation*}
\end{proposition}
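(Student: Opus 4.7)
The proof follows the Aubin--Nitsche duality pattern used in Proposition \ref{prop:Aubin-Nitsche}, but exploits the extra smoothness of the dual test function to gain half an order in $h$. Since for $s\in(0,\tfrac12)$ the dual of $H^{s-\frac12}(\Omega)$ is $\widetilde H^{1/2-s}(\Omega)$ (which coincides with $H^{1/2-s}(\Omega)$ as $1/2-s\in(0,1/2)$), I would write
\[
\|u-u_h\|_{H^{s-1/2}(\Omega)} = \sup_{g\in\widetilde H^{1/2-s}(\Omega)} \frac{\langle u-u_h,\, g\rangle}{\|g\|_{\widetilde H^{1/2-s}(\Omega)}},
\]
and introduce the dual solution $u_g\in\widetilde H^s(\Omega)$ defined by $(v,u_g)_s=\langle v,g\rangle$ for all $v\in\widetilde H^s(\Omega)$. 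Galerkin orthogonality gives $\langle u-u_h,g\rangle = (u-u_h,u_g - \Pi u_g)_s$ for any quasi-interpolant $\Pi u_g\in\mathbb V_h$, whence Cauchy--Schwarz yields
\[
\|u-u_h\|_{H^{s-1/2}(\Omega)} \le \|u-u_h\|_{\widetilde H^s(\Omega)} \cdot \sup_{g} \frac{\|u_g - \Pi u_g\|_{\widetilde H^s(\Omega)}}{\|g\|_{\widetilde H^{1/2-s}(\Omega)}}.
\]
Both proposed estimates then follow from Theorem \ref{T:conv_linear_uniform} (which contributes $h^{1/2}|\log h|^{1/2}\|f\|_{B^{1/2-s}_{2,1}}$ in the first case and $h^s|\log h|^{1/2}\|f\|_{L^2}$ in the second, using $\alpha=s$ since $s<\frac12$) provided I can show that the dual interpolation quotient is controlled by $C\,h^{1/2}|\log h|$.

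To bound $\|u_g - \Pi u_g\|_{\widetilde H^s(\Omega)}$, the optimal shift Theorem \ref{T:Besov_regularity} is not directly applicable, since it would require $g\in B^{1/2-s}_{2,1}(\Omega)$ rather than only $g\in\widetilde H^{1/2-s}(\Omega)=B^{1/2-s}_{2,2}(\Omega)$. Instead, I would invoke the intermediate shift estimate of Remark \ref{R:intermediate} with exponent $r=1-2\eps$, giving
\[
\|u_g\|_{\dot B^{s+1/2-\eps}_{2,\infty}(\Omega)} \le C \|g\|_{B^{1/2-s-\eps}_{2,1}(\Omega)},
\]
and then chain this with the quantitative embedding $\widetilde H^{1/2-s}(\Omega)\hookrightarrow B^{1/2-s-\eps}_{2,1}(\Omega)$ with constant of order $\eps^{-1/2}$, the Besov-to-Sobolev bound \eqref{eq:Besov-Sobolev-emb} (contributing a further $\eps^{-1/2}$), and the local interpolation estimate of Lemma \ref{L:interpolation-error} together with the patchwise localization \eqref{eq:localization-F-tilde}. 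This produces
\[
\|u_g-\Pi u_g\|_{\widetilde H^s(\Omega)} \le \frac{C\, h^{1/2-2\eps}}{\eps}\,\|g\|_{\widetilde H^{1/2-s}(\Omega)},
\]
and choosing $\eps=|\log h|^{-1}$ absorbs the factor $h^{-2\eps}$ into a constant and yields the required $h^{1/2}|\log h|$ bound. Combining then gives $h\cdot|\log h|^{3/2}$ in the Besov case and $h^{1/2+s}\cdot|\log h|^{3/2}$ in the $L^2$ case, as stated.

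The principal obstacle is establishing the quantitative embedding $\widetilde H^{1/2-s}(\Omega)\hookrightarrow B^{1/2-s-\eps}_{2,1}(\Omega)$ with the sharp $\eps^{-1/2}$ dependence, which is not explicitly recorded in the excerpt. I would prove it by a $K$-functional manipulation analogous to the one at the end of the proof of Theorem \ref{T:L2-data}: writing $H^{1/2-s}$ and $B^{1/2-s-\eps}_{2,1}$ as interpolation spaces between $H^{-1}(\Omega)$ and $H^1_0(\Omega)$ (so that both exponents stay bounded away from $0$ and $1$ and the interpolation norms are robust), splitting the defining integral of the $B^{1/2-s-\eps}_{2,1}$ norm at a threshold $t=1$, applying Cauchy--Schwarz on $(0,1)$ against the $H^{1/2-s}$ norm, and using the boundedness of the $K$-functional for large $t$ on $(1,\infty)$. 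Tracking the two independent $\eps^{-1/2}$ factors and balancing them against the algebraic rate $h^{1/2-2\eps}$ is precisely what drives the exponent $3/2$ in $|\log h|^{3/2}$, as opposed to the $1/2$ that appears in the coarser $L^2$-duality bound of Proposition \ref{prop:Aubin-Nitsche}.
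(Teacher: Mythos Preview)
Your proposal is correct and follows essentially the same route as the paper: Aubin--Nitsche duality with $g\in\widetilde H^{1/2-s}(\Omega)$, the intermediate shift of Remark~\ref{R:intermediate} combined with the Besov-to-Sobolev embedding \eqref{eq:Besov-Sobolev-emb} to control $\|u_g-\Pi u_g\|_{\widetilde H^s(\Omega)}$, and the quantitative embedding $\widetilde H^{1/2-s}(\Omega)\hookrightarrow \dot B^{1/2-s-\eps}_{2,1}(\Omega)$ with constant $\eps^{-1/2}$, followed by the choice $\eps=|\log h|^{-1}$. The paper derives the last embedding by duality (reducing it to \eqref{eq:Besov-Sobolev-emb}) rather than by the direct $K$-functional splitting you sketch, but this is a cosmetic difference; your accounting of the two $\eps^{-1/2}$ factors and the resulting $|\log h|^{3/2}$ exponent matches the paper exactly.
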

\begin{proof}
We invoke the Sobolev estimate
\[
\|v\|_{\dot{B}^{-s+1/2-\eps}_{2,1}(\Omega)} \lesssim \eps^{-\frac12} \|v\|_{\wt{H}^{-s+1/2}(\Omega)}\quad\forall \, v\in \wt{H}^{-s+1/2}(\Omega),
\]
which follows by duality and an argument with the K-functional and spaces $X_0=H^{-1}(\Omega)$ and $X_1=H^1_0(\Omega)$ similar to the second part of the proof of Theorem \ref{T:L2-data} and the proof of Corollary \ref{C:Sobolev-estimates}.

We take $g \in \wt{H}^{-s+1/2}(\Omega)$ in \eqref{eq:Aubin-Nitsche} and combine \eqref{eq:regularity-SobBesov} with the preceding estimate to arrive at
\begin{equation*}
\|u_g-\Pi u_g\|_{\wt{H}^s(\Omega)} \lesssim h^{\frac12-2\eps} |u_g|_{\wt{H}^{s+\frac12-2\eps}(\Omega)}
\lesssim \frac{h^{\frac12-2\eps}}{\eps^{\frac12}}  \| g \|_{\dot{B}^{-s+1/2-\eps}_{2,1}(\Omega)}
\lesssim \frac{h^{\frac12-2\eps}}{\eps} \| g \|_{\wt{H}^{-s+1/2}(\Omega)}.
\end{equation*}  
This, in conjunction with either \eqref{eq:conv_Hs-Besov} or \eqref{eq:conv_Hs-L2} and $\eps=|\log h|^{-1}$, gives the desired bounds.
\end{proof}

On the other hand, by combining inverse inequalities and interpolation estimates, one can also derive convergence estimates on {\em higher-order} seminorms such as the $H^1(\Omega)$-seminorm \cite{BoCi19}. We remark that the restriction $s>1/2$ below is due to the fact that under such a condition one can guarantee that the solution $u$ is actually in $H^1(\Omega)$. The condition on $f$ is weaker than in \cite{BoCi19} because here we are exploiting Theorem \ref{T:Besov_regularity} (optimal shift property).

\begin{proposition}[$H^1$-error estimate]\label{prop:H1-convergence}
Let $\Omega$ be a bounded Lipschitz domain, $f \in B^{-s+1/2}_{2,1}(\Omega)$, and $s \in (1/2,1)$. If $h$ is sufficiently small, then we have
\begin{equation*}
\|u - u_h\|_{H^1(\Omega)} \le C h^{s-1/2} | \log h |^{1/2} \| f \|_{B^{-s+1/2}_{2,1}(\Omega)}.
\end{equation*}
\end{proposition}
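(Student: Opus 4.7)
The plan is to decompose the error in the standard way for finite element superconvergence-type estimates in a stronger norm, namely $u - u_h = (u - \Pi u) + (\Pi u - u_h)$, where $\Pi$ is the quasi-interpolant from Lemma \ref{L:interpolation-error}. Since $s > 1/2$, Corollary \ref{C:Sobolev-estimates} guarantees $u \in \widetilde H^{s+1/2-\eps}(\Omega)$, and $s + 1/2 - \eps > 1$ for $\eps$ small enough, so interpolation estimates in the $H^1$-norm are applicable.

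First, I would apply the standard piecewise linear interpolation error estimate in $H^1$ (patchwise, then summed over the quasi-uniform mesh) to obtain
\[
\|u - \Pi u\|_{H^1(\Omega)} \lesssim h^{s-1/2-\eps} \, \|u\|_{\widetilde H^{s+1/2-\eps}(\Omega)}
\lesssim \frac{h^{s-1/2-\eps}}{\sqrt{\eps}} \, \|f\|_{B^{-s+1/2}_{2,1}(\Omega)},
\]
where the last inequality uses \eqref{eq:regularity-SobBesov}. For the discrete part $\Pi u - u_h \in \mathbb V_h$, I would invoke the fractional inverse inequality
\[
\|v_h\|_{H^1(\Omega)} \lesssim h^{s-1} \, \|v_h\|_{\widetilde H^s(\Omega)} \quad \forall v_h \in \mathbb V_h,
\]
which is standard on quasi-uniform meshes for $s \in (0,1)$. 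Combining this with the triangle inequality gives
\[
\|\Pi u - u_h\|_{H^1(\Omega)} \lesssim h^{s-1} \Bigl( \|u - \Pi u\|_{\widetilde H^s(\Omega)} + \|u - u_h\|_{\widetilde H^s(\Omega)} \Bigr).
\]
The first term is controlled by interpolation in the energy norm (using Lemma \ref{L:interpolation-error} and localization \eqref{eq:localization-F-tilde}) as $\lesssim h^{1/2-\eps}\eps^{-1/2} \|f\|_{B^{-s+1/2}_{2,1}(\Omega)}$, while the second term is bounded by Theorem \ref{T:conv_linear_uniform} as $\lesssim h^{1/2}|\log h|^{1/2}\|f\|_{B^{1/2-s}_{2,1}(\Omega)}$.

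Assembling these contributions, each term scales like $h^{s-1/2}$ times either $\eps^{-1/2}h^{-\eps}$ or $|\log h|^{1/2}$. Choosing $\eps = |\log h|^{-1}$ balances $h^{-\eps}$ against $\eps^{-1/2}$ and yields the claimed bound $h^{s-1/2}|\log h|^{1/2}$. The main obstacle I anticipate is purely bookkeeping: ensuring that (i) the interpolation operator $\Pi$ can simultaneously deliver optimal estimates in the $H^1$ and $\widetilde H^s$ norms (Scott–Zhang works, since our mesh is quasi-uniform and we stay in the local interior regime where no Hardy-type correction is needed), and (ii) the inverse inequality can be quoted with a constant uniform in $s$ bounded away from $1/2$, so that the only $s$-dependence of the final constant comes through Theorem \ref{T:conv_linear_uniform} and Corollary \ref{C:Sobolev-estimates}.
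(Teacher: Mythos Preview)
Your proposal is correct and follows essentially the same approach as the paper: the decomposition $u-u_h=(u-\Pi u)+(\Pi u-u_h)$, the $H^1$-interpolation estimate combined with \eqref{eq:regularity-SobBesov}, the inverse inequality $\|v_h\|_{H^1(\Omega)}\lesssim h^{s-1}\|v_h\|_{\widetilde H^s(\Omega)}$ applied to $\Pi u-u_h$, the triangle inequality splitting into $\|u-\Pi u\|_{\widetilde H^s(\Omega)}+\|u-u_h\|_{\widetilde H^s(\Omega)}$, and the final choice $\eps=|\log h|^{-1}$ all match the paper's argument line by line.
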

\begin{proof}
Let $\eps \in (0,s-1/2)$. Combining the interpolation error estimates \eqref{eq:localization-F-tilde} and \eqref{eq:interpolation}  with the Sobolev regularity estimate \eqref{eq:regularity-SobBesov}, we obtain
\[
\begin{aligned}
\|u-\Pi u\|_{H^1(\Omega)} \lesssim \frac{h^{s-1/2-\eps}}{\sqrt{\eps}} \|f\|_{B^{-s+1/2}_{2,1}(\Omega)}.
\end{aligned}
\]
It remains to bound $\|\Pi u-u_h\|_{H^1(\Omega)}$. By using the standard inverse inequality 
\[
\| v_h \|_{H^1(\Omega)} \le C h^{s-1} \| v_h \|_{\wt{H}^s(\Omega)} \quad \forall v_h \in \mathbb{V}_h,
\]
and the triangle inequality, it follows 
\[
\|\Pi u-u_h\|_{H^1(\Omega)} \le C h^{s-1}\left( \| \Pi u-u\|_{\widetilde H^s(\Omega)} + \|u-u_h\|_{\widetilde H^s(\Omega)} \right) .
\]
Finally, we resort again to \eqref{eq:localization-F-tilde}, \eqref{eq:interpolation}, and the energy error estimate \eqref{eq:conv_Hs-Besov}, to deduce
\[
\|u-u_h\|_{H^1(\Omega)} \lesssim \frac{h^{s-1/2-\eps}}{\sqrt{\eps}} \|f\|_{B^{-s+1/2}_{2,1}(\Omega)}, \quad \forall \eps \in (0, s-1/2).
\]
The proof is concluded upon setting $\eps = | \log h |^{-1}$ in the estimate above.
\end{proof}

\subsubsection{\bf Graded meshes} \label{sec:graded}
%
Prior convergence rates on quasi-uniform meshes suffer from boundary pollution.
A natural remedy to improve upon them is to consider graded meshes adapted to the boundary behavior of the solution to problem \eqref{eq:Dirichlet}, and to exploit the regularity estimates from \Cref{T:weighted_regularity} or \Cref{T:higher-regularity-Lp} for their design and analysis. This is the objective of this section.

The construction of graded meshes \`a-la-Grisvard hinges on weighted Sobolev estimates \cite{Grisvard}. We let the parameter $h$ represent a local meshsize in the interior of $\Omega$, and assume that the family of meshes $\{\calT\}$ is shape-regular and admits a parameter $\mu\ge1$ such that for every $T \in \calT$,
\begin{equation} \label{eq:H}
 h_T \leq C(\sigma) \left\lbrace
 \begin{array}{rl}
   h^\mu & \mbox{if } T \cap \partial \Omega \neq \emptyset, \\
   h \, \dist(T,\pp \Omega)^{(\mu-1)/\mu} & \mbox{if } T \cap \partial \Omega = \emptyset,
 \end{array} \right.
\end{equation}
for $d\ge1$. This yields mesh cardinality (see \cite{Babuska:79, BoNoSa18})
\begin{equation} \label{eq:dofs}
\#\calT \approx \dim \mathbb{V}_h \approx
\left\lbrace
 \begin{array}{rl}
    h^{-d} & \mbox{if } \mu < \frac{d}{d-1}, \\
    h^{-d} | \log h | & \mbox{if } \mu = \frac{d}{d-1}, \\
    h^{(1-d)\mu} & \mbox{if } \mu > \frac{d}{d-1},
  \end{array} \right.
\end{equation}
for $d>1$; for $d=1$ the relation $\dim \mathbb{V}_h \approx h^{-1}$ is valid regardless of the value of $\mu$.
Although \eqref{eq:H} provides a sufficient grading condition, it does not guarantee the existence of such meshes, especially for complicated geometries. We will discuss a constructive approach below (cf. \Cref{alg:greedy}).

If $\mu \le \frac{d}{d-1}$, then the interior mesh size $h$ and $\#\calT$ satisfy the optimal relation $h \simeq \#\calT^{-1/d}$ (up to logarithmic factors if $\mu = \frac{d}{d-1}$). To derive optimal convergence rates for meshes satisfying \eqref{eq:H}, one needs to tune the parameter $\mu$; the optimal choice of $\mu$ depends on the dimension; we refer to \cite{BoLeNo21} for details. In two dimensions, the optimal choice is $\mu = 2$. We combine \eqref{eq:localization-F-tilde} with either \eqref{eq:weighted_interpolation} or \eqref{eq:interpolation}, depending on whether $S_T^2$ intersects $\partial\Omega$ or not, and Theorem \ref{T:weighted_regularity} (weighted Sobolev estimate), to derive the following result.

\begin{theorem}[energy error estimates on graded meshes] \label{T:conv_linear_graded}
Let $\Omega \subset \R^2$ be a bounded Lipschitz domain satisfying an exterior ball condition, and $u \in \tHs$ denote the solution to \eqref{eq:weak_linear} and $u_h \in \mathbb{V}_h$ denote the solution of the discrete problem \eqref{eq:weak_linear_discrete}, computed over a mesh $\calT$ satisfying \eqref{eq:H} with $\mu = 2$. If $f \in C^{1-s}(\overline{\Omega})$, then we have
\begin{equation} \label{eq:conv_Hs_graded}
\|u - u_h \|_{\tHs}  \le C(\Omega,s,\sigma)  
h |\log h| \|f\|_{C^{1-s}(\overline{\Omega})}.
\end{equation}
Equivalently, in terms of mesh cardinality $\#\calT$, the estimate above reads
\begin{equation*} \label{eq:conv_Hs_graded_N}
\|u - u_h \|_{\tHs}  \le C(\Omega,s,\sigma)  
(\#\calT)^{-\frac{1}{2}} |\log \#\calT|^{\frac{3}{2}} \|f\|_{C^{1-s}(\overline{\Omega})}.
\end{equation*}
\end{theorem}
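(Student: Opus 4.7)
The plan is to follow the standard Galerkin route adapted to the graded mesh. By the best-approximation property \eqref{eq:best_approximation_linear}, it suffices to bound $\|u - \Pi u\|_{\tHs}$ for a quasi-interpolant $\Pi$ (e.g., Scott-Zhang). I would then apply the localization estimate \eqref{eq:localization-F-tilde} to split the squared energy norm into elementwise contributions on the extended stars $\widetilde{S}_T^1$, plus scaled $L^2$ terms; the latter can be absorbed via Poincar\'e on each patch.

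Next I would partition $\calT$ into interior elements ($\widetilde{S}_T^2 \cap \partial\Omega = \emptyset$) and boundary elements, and treat them separately. For interior elements I would apply the weighted interpolation estimate \eqref{eq:weighted_interpolation} with $r = 1+s-2\eps$ and $\gamma = \tfrac12-\eps$, exactly as supplied by Theorem \ref{T:weighted_regularity}; the exponent $r-s-\gamma = \tfrac12-\eps$ together with $h_T \le Ch$ and finite overlap of extended stars will sum to $h^{1-2\eps}\|u\|_{\widetilde H^{1+s-2\eps}_{1/2-\eps}(\Omega)}^2$. Crucially, the grading $h_T \le C h \, \dist(T,\partial\Omega)^{1/2}$ from \eqref{eq:H} with $\mu=2$ is used to sharpen this bound: trading the factor $\dist(T,\partial\Omega)^{1-2\eps}$ against the absence of the weight in an unweighted interpolation estimate \eqref{eq:interpolation} yields the improved squared rate $h^{2-4\eps}$ upon summation.

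For boundary elements, the grading gives $h_T \le Ch^2$; combined with the standard unweighted estimate \eqref{eq:interpolation} (or, equivalently, with the H\"older regularity \eqref{eq:Holder-regularity} together with $u\equiv 0$ on $\partial\Omega$), the local contribution is of order $h_T^{2s}$ up to logarithmic factors from the marginal integrability of $|x-y|^{-d-2s}$. The number of such boundary elements is $\mathcal{O}(h^{-\mu(d-1)}) = \mathcal{O}(h^{-2})$ for $d=2$, $\mu=2$, so their total contribution is $\mathcal{O}(h^2 |\log h|)$, matching the interior one. Combining, taking square roots, and inserting the regularity bound $\|u\|_{\widetilde H^{1+s-2\eps}_{1/2-\eps}(\Omega)} \le C\eps^{-1}\|f\|_{C^{1-s}(\overline{\Omega})}$ from Theorem \ref{T:weighted_regularity} leaves
\[
\|u - \Pi u\|_{\tHs} \lesssim \frac{h^{1-2\eps}}{\eps}\|f\|_{C^{1-s}(\overline{\Omega})}.
\]
The final step selects $\eps = |\log h|^{-1}$, which makes $h^{-2\eps} = \mathcal{O}(1)$ and $\eps^{-1} = |\log h|$, giving the announced bound $h|\log h|\|f\|_{C^{1-s}(\overline{\Omega})}$.

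The hardest step will be the boundary contribution. Because the seminorm of $\tHs$ is nonlocal and the kernel $|x-y|^{-d-2s}$ is only logarithmically integrable against $s$-H\"older functions, extracting a clean $|\log h|$ rather than a worse $\log$ power requires careful bookkeeping on $\widetilde{S}_T^1$ when $T$ meets $\partial\Omega$; the exterior ball condition enters here to ensure \eqref{eq:Holder-regularity} is available. A secondary subtlety is the dependence on $s$ of constants (including the $\tfrac{1}{sh_T^{2s}}$ factor in \eqref{eq:localization-F-tilde} and the $1/\eps$ blow-up in Theorem \ref{T:weighted_regularity}); one must verify that the final constant $C(\Omega,s,\sigma)$ does not also introduce hidden logarithmic factors that would degrade the stated rate.
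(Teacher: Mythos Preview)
Your overall architecture matches the paper exactly: best approximation \eqref{eq:best_approximation_linear}, localization via \eqref{eq:localization-F-tilde}, a split into interior and boundary elements, the weighted regularity from Theorem~\ref{T:weighted_regularity}, and the final choice $\eps=|\log h|^{-1}$. The interior argument and the concluding bound $\|u-\Pi u\|_{\tHs}\lesssim \eps^{-1}h^{1-2\eps}\|f\|_{C^{1-s}(\overline\Omega)}$ are correct.

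The gap is in your treatment of boundary elements. You propose to use the \emph{unweighted} estimate \eqref{eq:interpolation} together with the H\"older bound \eqref{eq:Holder-regularity}, and you claim a local contribution of order $h_T^{2s}$ up to a logarithm. This does not work: \eqref{eq:interpolation} needs $u\in W^r_p(\widetilde S_T^2)$ with $r>s$, which $C^s$-regularity alone does not supply, and a direct estimate of the local $H^s$-seminorm from $u\in C^s$ only yields a marginally divergent integral with no clean power of $h_T$. Even granting your $h_T^{2s}$ heuristic, with $h_T\approx h^2$ and $\#\{\text{boundary }T\}\approx h^{-2}$ the sum is $h^{4s-2}|\log h|$, which is \emph{not} $\mathcal O(h^2|\log h|)$ unless $s\ge 1$.

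The paper does the opposite assignment: for boundary elements (those with $S_T^2\cap\partial\Omega\ne\emptyset$) it uses the \emph{weighted} estimate \eqref{eq:weighted_interpolation} with $r=1+s-2\eps$, $\gamma=\tfrac12-\eps$. This gives a local bound $h_T^{2(r-s-\gamma)}|u|_{H^r_\gamma(\widetilde S_T^2)}^2=h_T^{1-2\eps}|u|_{H^r_\gamma(\widetilde S_T^2)}^2$; since $h_T\lesssim h^\mu=h^2$ there, the prefactor is $h^{2-4\eps}$, and the local weighted seminorms sum (by finite overlap) to the global one from Theorem~\ref{T:weighted_regularity}. No element counting and no direct appeal to \eqref{eq:Holder-regularity} are needed at this step. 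With this correction your proof goes through verbatim.
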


Since the practical implementation of meshes satisfying \eqref{eq:H} might be problematic, \cite{BoNo21Constructive} proposes
a constructive algorithm based on the {\it bisection method}, which works as follows. Given a mesh $\calT$, we assume every simplex $T\in\calT$ has an edge $e(T)$ marked for refinement. To subdivide $T$ into two children  $T_1,T_2$ such that $T = T_1 \cup T_2$, one connects the midpoint of $e(T)$ with the vertices of $T$ that do not lie in $e(T)$. If every simplex sharing $e(T)$ has $e(T)$ marked for refinement, then the patch is {\em compatible} and the refinement does not propagate beyond it. Otherwise, at least one element in the patch has an edge other than $e(T)$ marked for refinement, and the refinement procedure must go outside the patch to maintain conformity (namely, we have a nonlocal step). Therefore, two natural questions arise:
\begin{enumerate}[$\bullet$]
\item {\it Completion}:
How many elements other than $T$ must be refined to keep the mesh conforming?  
  
\item {\it Termination}:   
Does this procedure terminate?
\end{enumerate}

To guarantee termination, a special labeling of the initial mesh $\calT_0$ is required
(a suitable choice of the edge $e(T)$ for each element $T\in\calT_0$).
Completion is rather tricky to assess and was done by
P. Binev, W. Dahmen and R. DeVore for $d=2$ \cite{BinevDahmenDeVore:2004}
and R. Stevenson for $d>2$ \cite{Stevenson:2008}; we refer to the
surveys \cite{NochettoSiebertVeeser:2009,NochettoVeeser:2012} for a rather complete
discussion.

Given the $j$-th refinement $\calT_j$ of $\calT_0$ and a subset $\calM_j\subset\calT_j$ of elements marked
for bisection, 
\[
\calT_{j+1} = \REFINE (\calT_j,\calM_j)
\]
is a procedure that
creates the smallest conforming refinement $\calT_{j+1}$ of $\calT_j$ upon bisecting all elements of
$\calM_j$ at least once and perhaps additional elements to keep conformity. We point out that
it is simple to construct counterexamples to the estimate
\[
\#\calT_{j+1} - \#\calT_j \le \Lambda \; \#\calM_j
\]
where $\Lambda$ is a universal constant independent of $j$; see \cite[Section 1.3]{NochettoVeeser:2012}.
However, this can be repaired upon considering the cumulative effect of a sequence of conforming
bisection meshes $\{\calT_j\}_{j=0}^k$ for any $k$. In fact, the following crucial estimate is valid \cite{BinevDahmenDeVore:2004,Stevenson:2008} (see also \cite{NochettoSiebertVeeser:2009,NochettoVeeser:2012})
\begin{equation}\label{eq:BDD}
\#\calT_k - \#\calT_0 \le \Lambda \; \sum_{j=0}^{k-1} \#\calM_j
\end{equation}

We propose a greedy algorithm to choose simplices for refinement \cite{BoNo21Constructive}. For $d=2$, the basic idea is to equidistribute the local $H^s$-interpolation errors, for which one assumes access to the quantities
\begin{equation} \label{eq:error-estimator}
E_T(u) = C h_T^t R_T(u), \qquad R_T(u) = |u|_{W^{s+1-\eps}_{1+\eps}(\widetilde S^2_T)}  ,
\end{equation}
with $C$ being a constant depending on the mesh shape-regularity and $t = 2 - \eps - \frac{2}{1+\eps} >0$. We point out that such regularity was stated in Theorem \ref{T:higher-regularity-Lp} (differentiability vs integrability).
Given a tolerance $\delta>0$ and a conforming mesh $\calT_0$ with suitable labeling, the following Algorithm \ref{alg:greedy} finds
a conforming refinement $\calT$ of $\calT_0$ by bisection
such that $E_T(u)\le\delta$ for all $T\in\calT$. 

%

\begin{algorithm}
	\caption{$\GREEDY (\calT_0,\delta)$}
	\label{alg:greedy}
	\begin{algorithmic}
	\State Let $\calT=\calT_0$.
		\While{ $\calM :=\{T\in\calT : \,E_T(u) > \delta\}\ne\emptyset$ }
		\State $\calT := \REFINE(\calT,\calM)$
		\EndWhile	
\State \Return($\calT$)
	\end{algorithmic}
\end{algorithm}

\begin{theorem}[quasi-optimal error estimate on bisection meshes]\label{T:greedy}
Let $\Omega \subset \R^2$ be a polygonal domain. If $u\in \widetilde W^{s+1-\eps}_{1+\eps}(\Omega)$ satisfies
\eqref{eq:higher-regularity} with $\beta=1-s$,  then
$\GREEDY$ terminates in finite steps and the resulting isotropic mesh $\calT$ satisfies
\begin{equation}\label{eq:error-est}
  \| u - u_h \|_{\tHs} \le C(\Omega,s) \big(\#\calT\big)^{-\frac{1}{2}}
  |\log \#\calT|^{3} \|f\|_{C^{1-s}(\overline{\Omega})}.
\end{equation}
\end{theorem}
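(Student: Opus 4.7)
The plan is to combine three ingredients: (i) the best-approximation property \eqref{eq:best_approximation_linear} of the Galerkin projection in the energy norm, (ii) the localization estimate \eqref{eq:localization-F-tilde} together with the local interpolation bound \eqref{eq:interpolation} of Lemma \ref{L:interpolation-error} applied with $(r,p)=(s+1-\eps,1+\eps)$, and (iii) the Binev--Dahmen--DeVore/Stevenson complexity bound \eqref{eq:BDD} for the bisection closure procedure $\REFINE$. Ingredients (i) and (ii), applied to a suitable quasi-interpolant $\Pi u$, yield
\begin{equation*}
\|u-u_h\|_{\tHs}^2 \;\lesssim\; \sum_{T\in\calT} E_T(u)^2,
\end{equation*}
with $t=\sob(W^{s+1-\eps}_{1+\eps})-\sob(H^s)=\eps(1-\eps)/(1+\eps)>0$, in agreement with \eqref{eq:error-estimator}. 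Since upon termination of $\GREEDY$ each $T\in\calT$ satisfies $E_T(u)\le\delta$, the energy error is controlled by $\delta^2\,\#\calT$, and the whole task reduces to estimating $\#\calT$ in terms of $\delta$ and a Sobolev norm of $u$.

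For the cardinality bound I would first invoke \eqref{eq:BDD} to reduce matters to a control of $\sum_j\#\calM_j$, and then decompose $\calM:=\bigcup_j\calM_j$ by the dyadic level $\ell$ of the element size, $h_T\simeq 2^{-\ell}h_0$ for $T\in\calM^\ell$. The marking criterion $E_T(u)>\delta$ rewrites for $T\in\calM^\ell$ as $|u|^{1+\eps}_{W^{s+1-\eps}_{1+\eps}(\widetilde S^2_T)}\gtrsim \delta^{1+\eps}\,2^{\ell t(1+\eps)}$. Since the patches $\widetilde S^2_T$ have uniformly bounded overlap among elements of a fixed level, summing over $T\in\calM^\ell$ yields
\begin{equation*}
\#\calM^\ell \;\lesssim\; \delta^{-(1+\eps)}\,2^{-\ell t(1+\eps)}\,|u|_{\wt W^{s+1-\eps}_{1+\eps}(\Omega)}^{1+\eps},
\end{equation*}
and the geometric series in $\ell$ contributes an additional factor $(1-2^{-t(1+\eps)})^{-1}\simeq \eps^{-1}$. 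Termination is obtained en route: since $u\in\wt W^{s+1-\eps}_{1+\eps}(\Omega)$ by Theorem \ref{T:higher-regularity-Lp}, only finitely many levels $\ell$ can contribute, so finitely many bisection loops suffice to drive every $E_T(u)$ below $\delta$.

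Combining the two steps and inserting the regularity bound $|u|_{\wt W^{s+1-\eps}_{1+\eps}(\Omega)}\lesssim \eps^{-3}\|f\|_{C^{1-s}(\overline\Omega)}$ from \eqref{eq:higher-regularity} gives $\#\calT \lesssim \delta^{-(1+\eps)}\,\eps^{-1-3(1+\eps)}\,\|f\|_{C^{1-s}(\overline\Omega)}^{1+\eps}$. I would then equate $\#\calT\simeq N$, solve for $\delta$, and choose $\eps\simeq|\log N|^{-1}$ so that the algebraic correction $N^{\eps/(1+\eps)}$ remains uniformly bounded; substituting back into $\|u-u_h\|_{\tHs}^2\lesssim \delta^2 N$ produces the advertised decay $N^{-1/2}$ with a polylogarithmic prefactor $|\log N|^{3}$. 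The main obstacle is the careful bookkeeping of the $\eps$-dependent constants: the $\eps^{-1}$ from the geometric summation and the $\eps^{-3}$ from Theorem \ref{T:higher-regularity-Lp} must be balanced against the algebraic improvement $N^{-1/(1+\eps)}$ coming from the optimal choice of $\delta$, and one must check that the choice $\eps\simeq|\log N|^{-1}$ precisely produces the exponent $3$ of $|\log N|$ asserted in \eqref{eq:error-est} rather than a larger power.
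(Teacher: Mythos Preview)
Your overall strategy matches the paper's: localize the energy error via \eqref{eq:localization-F-tilde} and \eqref{eq:interpolation}, control $\sum_j \#\calM_j$ through the complexity estimate \eqref{eq:BDD}, sort marked elements into dyadic size classes, and finally optimize $\eps\simeq|\log\#\calT|^{-1}$. The decomposition, the use of finite overlap of $\widetilde S^2_T$ within a size class, and the final balancing are all exactly as in the paper.

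There is one structural ingredient you are missing, and it is precisely the one that resolves the concern you raise at the end. You bound each size class $\calM^\ell$ solely through the marking inequality, obtaining $\#\calM^\ell\lesssim \delta^{-(1+\eps)}2^{-\ell t(1+\eps)}R(u)^{1+\eps}$, and then sum the resulting geometric series; since $t(1+\eps)=\eps(1-\eps)$, the series contributes the factor $(1-2^{-t(1+\eps)})^{-1}\simeq\eps^{-1}$, and your final bound carries an extra $\eps^{-1}$. With $\eps=|\log\#\calT|^{-1}$ this produces $|\log\#\calT|^{4}$, not $|\log\#\calT|^{3}$, so as written your argument does not reach the stated exponent. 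The paper avoids this by pairing the regularity-based bound with the trivial \emph{volume} bound $\#\calP_j\le |\Omega|\,2^{j}$, which controls the count of marked elements of \emph{large} size. One then splits the sum at the crossover index $j_0$ where the two bounds meet: for $j\le j_0$ the volume bound is a geometric series with ratio $2$ (dominated by its last term, no $\eps^{-1}$), and for $j>j_0$ one uses the regularity bound. The paper's sketch asserts that this two-sided counting yields $\#\calM\lesssim(\delta^{-1}R(u))^{1+\eps}$ without the extra $\eps^{-1}$, which after inserting $R(u)\lesssim\eps^{-3}\|f\|_{C^{1-s}}$ and choosing $\eps=|\log\#\calT|^{-1}$ gives exactly the power $3$. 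So the missing idea in your proposal is this second, purely geometric, packing bound and the crossover argument that goes with it.
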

\begin{proof}
We sketch the main steps and refer to \cite[Theorem 4.5]{BoNo21Constructive} for details. Finite termination is guaranteed by $t>0$. To facilitate counting, we split the set of all marked elements $\calM = \cup_{j=0}^{k-1} \calM_j$ into the disjoint sets $\calP_j$ of elements $T\in\calM$ with size $h_T := |T|^{1/d}$ satisfying
\[
2^{-(j+1)} <|T| \le 2^{-j}
\quad\Rightarrow\quad
2^{-(j+1)/d} < h_T \le 2^{-j/d}.
\]
Exploiting the definition of $\calP_j$ and that $E_T(u) > \delta$ for all $T\in\calP_j$ we deduce the key properties
\[
\#\calP_j \le |\Omega| 2^j,
\qquad
\#\calP_j \le C \big( \delta^{-1} R(u) \big)^{1+\eps} 2^{-jt(1+\eps)/d},
\qquad
R(u) = \|u\|_{\wt{W}^{s+1-\eps}_{1+\eps}(\Omega)}.
\]
These complementary bounds allow us to split $\sum_j\#\calP_j$ into $j\le j_0$ and $j>j_0$, where $j_0$ is the smallest index for which the second term is smaller than the first one. This simple trick minimizes the counting and yields $\#\calM = \sum_j\#\calP_j \lesssim \big( \delta^{-1} R(u)  \big)^{1+\eps}$. This in conjunction with \eqref{eq:BDD} gives
\[
\#\calT - \calT_0 \lesssim \big( \delta^{-1} R(u)  \big)^{1+\eps}
\quad\Rightarrow\quad
\delta \lesssim \big( \#\calT \big)^{-\frac{1}{1+\eps}} R(u),
\]
provided $\#\calT \ge 2 \#\calT_0$. Upon termination of $\GREEDY$ we have $E_T(u)\le\delta$ for all $T\in\calT$, whence
\[
\|u-\Pi_\calT u\|_{\wt{H}^s(\Omega)}^2 \lesssim \delta^2 \#\calT \lesssim \big( \#\calT  \big)^{-1+\frac{2\eps}{1+\eps}} R(u)^2
\]
with $\Pi_\calT$ a local quasi-interpolant.
In view of \eqref{eq:higher-regularity}, we see that $R(u) \lesssim \eps^{-3} \|f\|_{C^{1-s}(\Omega)}$ and
\[ 
\|u-\Pi_\calT u\|_{\wt{H}^s(\Omega)} \lesssim \big(\#\calT \big)^{-\frac12}
\frac{\big(\#\calT \big)^{\frac{\eps}{1+\eps}}}{\eps^3}
\|f\|_{C^{1-s}(\Omega)}.
\]
The desired estimate \eqref{eq:error-est} follows upon choosing $\eps=|\log \#\calT|^{-1}$ and applying \eqref{eq:best_approximation_linear}.
\end{proof}

\begin{remark}[practical estimator] \label{rmk:practical-estimator}
For computtaional purposes, the error estimator \eqref{eq:error-estimator} is not practical. One can replace it with the geometric quantity 
\begin{equation*} 
\mathcal{E}_T(u) = |T| \dist(x_T,\pp\Omega)^{-1} ;
\end{equation*}
we refer to \cite[Section 5]{BoNo21Constructive} for details. The subordinate $\GREEDY$ algorithm to the surrogate estimator $\mathcal{E}_T(u)$ exhibits similar convergence rates to the one in \Cref{T:conv_linear_graded}.
\end{remark}

\begin{remark}[convergence in dimensions $d\ne2$]
We point out that both Theorem \ref{T:conv_linear_graded} (energy error estimates on graded meshes) and Theorem \ref{T:greedy} (quasi-optimal error estimate on bisection meshes) can also be extended to dimensions $d \ne 2$, cf. \cite[Theorem 3.5]{BoLeNo21} and \cite[Theorem 4.5]{BoNo21Constructive}. In the former, an optimal choice of the mesh grading parameter turns out to provide convergence with order $2-s$ in $d=1$ and $\frac1{2(d-1)}$ in $d\ge 2$ (up to logarithmic terms), with respect to $\# \calT$. In the latter, \GREEDY \, delivers the same convergence rates with respect to $\#\calT$, although with a higher power in the logarithmic factor.
\end{remark}

We conclude this section with error estimates in weaker norms than the energy norm. It is important to realize that we pick up additional powers of the global meshsize $h$ rather than the local meshsize $h_T$. This is due to the fact that the duality argument is not completely local unless the meshsize changes slowly; see \cite[Section 0.8]{BrennerScott08} for details for $d=s=1$.

\begin{proposition}[error estimates in weaker norms]\label{P:weaker-norms}
Let $\Omega \subset \R^2$ be a bounded Lipschitz domain satisfying an exterior ball condition, and $u \in \tHs$ denote the solution to \eqref{eq:weak_linear} and $u_h \in \mathbb{V}_h$ denote the solution of the discrete problem \eqref{eq:weak_linear_discrete}, computed over a mesh $\calT$ satisfying \eqref{eq:H} with $\mu = 2$. If $f \in C^{1-s}(\overline{\Omega})$, $\alpha=\min\{s,\frac12\}$, then for $\kappa=\frac12$ if $s\ne\frac12$
and $\kappa=1$ if $s=\frac12$ we have
\begin{equation*} \label{eq:L2-_graded}
\|u - u_h \|_{L^2(\Omega)}  \le C(\Omega,s,\sigma)  
h^{1+\alpha} |\log h|^{1+\kappa} \|f\|_{C^{1-s}(\overline{\Omega})},
\end{equation*}
and if $s<\frac12$
\begin{equation*} \label{Hs-1/2-graded}
\|u - u_h \|_{H^{s-\frac12}}  \le C(\Omega,s,\sigma)  
h^{\frac32} |\log h|^2 \|f\|_{C^{1-s}(\overline{\Omega})}.
\end{equation*}
\end{proposition}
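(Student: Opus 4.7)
The plan is to perform an Aubin--Nitsche duality argument in the spirit of Propositions \ref{prop:Aubin-Nitsche} and \ref{prop:negativeSob}, but now combining the graded--mesh energy rate of Theorem \ref{T:conv_linear_graded} with the unweighted global Sobolev regularity of the dual solution supplied by Theorems \ref{T:Besov_regularity} and \ref{T:L2-data} and Corollary \ref{C:Sobolev-estimates}. For the $L^2$ bound, I fix $g\in L^2(\Omega)$, introduce the adjoint solution $u_g\in\wt{H}^s(\Omega)$ defined by $(v,u_g)_s=\langle v,g\rangle$ for all $v\in\wt{H}^s(\Omega)$, and use Galerkin orthogonality together with an arbitrary $\Pi u_g\in\mathbb{V}_h$ to write
\[
\|u-u_h\|_{L^2(\Omega)}
=\sup_{\|g\|_{L^2(\Omega)}=1}\langle u-u_h,g\rangle
\le\|u-u_h\|_{\wt{H}^s(\Omega)}\;\sup_{\|g\|_{L^2(\Omega)}=1}\|u_g-\Pi u_g\|_{\wt{H}^s(\Omega)}.
\]
Theorem \ref{T:conv_linear_graded} controls the first factor by $h|\log h|\,\|f\|_{C^{1-s}(\overline\Omega)}$, so the only remaining task is to estimate the second factor.

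Since $g$ is merely $L^2$, the hypotheses of Theorem \ref{T:weighted_regularity} are unavailable and no weighted estimate for $u_g$ compatible with the grading \eqref{eq:H} can be extracted. Instead, I rely on \eqref{eq:regularity-SobL2}--\eqref{eq:regularity-SobL2-s=1/2}, which yield $\|u_g\|_{\wt{H}^{s+\alpha-\eps}(\Omega)}\lesssim \eps^{-\kappa}\|g\|_{L^2(\Omega)}$ with $\alpha=\min\{s,\tfrac12\}$ and $\kappa$ as in the statement. Because $\calT$ is shape--regular with maximal element diameter $h$, combining the localization \eqref{eq:localization-F-tilde} with the unweighted local interpolation bound \eqref{eq:interpolation} and summing over $T$ gives $\|u_g-\Pi u_g\|_{\wt{H}^s(\Omega)}\lesssim h^{\alpha-\eps}\|u_g\|_{\wt{H}^{s+\alpha-\eps}(\Omega)}$. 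Choosing $\eps=|\log h|^{-1}$ absorbs the $h^{-\eps}$ factor into a constant and produces $\lesssim h^{\alpha}|\log h|^{\kappa}\|g\|_{L^2(\Omega)}$; multiplying by the energy rate then delivers the advertised $h^{1+\alpha}|\log h|^{1+\kappa}$ bound.

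For the $H^{s-\frac12}$ estimate, where $s<\tfrac12$, I repeat the argument with $g\in\wt{H}^{-s+\frac12}(\Omega)$, but now apply Theorem \ref{T:Besov_regularity} (or its intermediate variant in Remark \ref{R:intermediate}) to $u_g$, followed by the Besov--Sobolev embedding \eqref{eq:Besov-Sobolev-emb} and the dual bound $\|g\|_{\dot B^{-s+1/2-\eps}_{2,1}(\Omega)}\lesssim \eps^{-1/2}\|g\|_{\wt{H}^{-s+\frac12}(\Omega)}$ already used in the proof of Proposition \ref{prop:negativeSob}. Inserting these into the local interpolation estimate yields $\|u_g-\Pi u_g\|_{\wt{H}^s(\Omega)}\lesssim \eps^{-1}h^{1/2-2\eps}\|g\|_{\wt{H}^{-s+\frac12}(\Omega)}$; setting $\eps=|\log h|^{-1}$ and pairing with the primal rate $h|\log h|$ delivers the $h^{3/2}|\log h|^{2}$ bound.

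The main obstacle I anticipate is conceptual rather than technical: the mesh $\calT$ is tailored to the boundary behaviour of the \emph{primal} solution $u$ produced by the $C^{1-s}$ datum $f$, whereas the dual datum $g$ is only $L^2$ or $\wt{H}^{-s+\frac12}$ and in general admits no weighted regularity compatible with \eqref{eq:H}. Consequently, the duality step cannot cash in the grading for $u_g$, and one must rely on the unweighted global Sobolev regularity together with the crude bound $h_T\le h$. This is exactly why the additional powers gained by duality are powers of the \emph{global} meshsize $h$ rather than of the local $h_T$, in accordance with the comment attributed to \cite[Section 0.8]{BrennerScott08}.
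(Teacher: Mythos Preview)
Your proposal is correct and follows essentially the same approach as the paper: the paper's proof simply invokes the duality argument \eqref{eq:Aubin-Nitsche}, estimates the adjoint solution $u_g$ exactly as in the proofs of Propositions \ref{prop:Aubin-Nitsche} and \ref{prop:negativeSob} (using the unweighted Sobolev regularity from Corollary \ref{C:Sobolev-estimates} and the bound $h_T\le h$), and combines with the graded-mesh energy rate \eqref{eq:conv_Hs_graded}. Your remark that the grading cannot be exploited for $u_g$ because $g$ lacks weighted regularity, so that only the global $h$ is gained in the duality step, is precisely the observation the paper makes just before stating the proposition.
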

\begin{proof}
We simply carry out the duality argument \eqref{eq:Aubin-Nitsche} and estimate the adjoint solution $u_g\in\wt{H}^s(\Omega)$ as in the proofs of Proposition \ref{prop:Aubin-Nitsche} ($L^2$-error estimate) and Proposition \ref{prop:negativeSob} (error estimate in negative Sobolev spaces). This, together with \eqref{eq:conv_Hs_graded}, gives the assertions.
\end{proof}

\subsubsection{\bf Problems with finite horizon} \label{sec:finite-horizon-errors}
%
The conforming finite element discretization of the weak formulation \eqref{eq:weak-delta} with continuous piecewise linear functions $\mathbb{V}_h$ is similar to \eqref{eq:weak_linear_discrete}, namely
\begin{equation}\label{eq:discrete-delta}
u_h\in\mathbb{V}_h: \quad \scalardelta{u_h,v_h}_{\delta,s} = \langle f, v_h \rangle \quad\forall v_h \in \mathbb{V}_h.
\end{equation}
Since $\scalardelta{\cdot,\cdot}_{\delta,s}$ is a scalar product equivalent to $(\cdot,\cdot)_s$ in $\wt{H}^s(\Omega)$, Lax-Milgram guarantees the existence of a unique solution $u_h$. The following novel error estimates mimic those in Section \ref{S:quasi-uniform}.

\begin{theorem}[error estimates for quasi-uniform meshes]\label{T:error-finite-horizon}
Let $\Omega$ be a bounded Lipschitz domain and $\delta>0$ be the horizon.
Let $\phi \colon [0,\infty) \to [0,\infty)$ be a bounded function of class $C^\beta$ on a neighborhood of the origin, for some $\beta \in (0,1]$, and satisfy $\phi(0) > 0$. If $\alpha = \min\{s,\frac{\beta}{2}\}$, then the solutions $u \in \tHs$ of \eqref{eq:weak-delta} and $u_h\in\mathbb{V}_h$ of \eqref{eq:discrete-delta} satisfy the error estimates
\begin{gather*}
  \|u - u_h \|_{\tHs}  + h^{-\alpha} |\log h|^{-\kappa} \|u - u_h \|_{L^2(\Omega)} \le C_1 \,  h^{\frac12} |\log h|^{\frac12} \, \|f\|_{B^{1/2-s}_{2,1}(\Omega)},
  \\
  \|u - u_h \|_{\tHs}  + h^{-\alpha} |\log h|^{-\kappa} \|u - u_h \|_{L^2(\Omega)} \le C_2 \,  h^\alpha |\log h|^\kappa \, \|f\|_{L^2(\Omega)},
\end{gather*}
where $\kappa = \frac12$ if $s\ne\frac12$ and $\kappa=1$ if $s=\frac12$ provided $\beta=1$, whereas $\kappa=\frac12$ if $\beta<1$. Moreover, the constant $C_1=C_1(\Omega,d,\sigma,\delta)$ and $C_2=C_2(\Omega,d,\sigma,s,\delta)$ blows up as $s\to\frac12$ as $|1-2s|^{-\frac12}$ for $\beta=1$.
\end{theorem}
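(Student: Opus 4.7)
The plan is to mirror the argument of Theorem \ref{T:conv_linear_uniform} and Proposition \ref{prop:Aubin-Nitsche}, replacing the fractional Laplacian by the finite-horizon operator and using the regularity results of Corollary \ref{C:regularity-finite-horizon} and Remark \ref{R:Sobolev-delta} in place of Theorem \ref{T:Besov_regularity} and Corollary \ref{C:Sobolev-estimates}. The key preliminary observation is that, by \eqref{eq:FH-upper-bound} and \eqref{eq:FH-lower-bound}, the energy norm $\iii{\cdot}$ is equivalent to $\|\cdot\|_{\tHs}$ with constants independent of $s \in (0,1)$ (the upper bound uses $\|\phi\|_{L^\infty}$ and the lower bound blows up like $|1-2s|^{-1/2}$ only through the $C_{d,s}$ factor when $\beta=1$; when $\beta<1$ this scaling is absorbed by $\phi$ itself). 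Galerkin orthogonality for \eqref{eq:discrete-delta} then yields the energy best-approximation property $\iii{u-u_h} = \min_{v_h\in\mathbb{V}_h}\iii{u-v_h}$, which in combination with the norm equivalence reduces everything to estimating $\|u-\Pi u\|_{\tHs}$ for a quasi-interpolant $\Pi$.

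Next I would combine the localization estimate \eqref{eq:localization-F-tilde} with the local interpolation bound \eqref{eq:interpolation}, summed over $T \in \calT$, to get
\[
\|u - \Pi u\|_{\tHs} \lesssim h^{r-s}\, \|u\|_{\wt{H}^r(\Omega)}
\]
for $r \in (s,2]$, with the usual care at $s=1/2$ handled by the improved localization \eqref{eq:localization-F-tilde} (no Hardy inequality, hence no extra log). Setting $r = s + \beta/2 - \eps$ and inserting the Sobolev regularity estimate \eqref{eq:BesovSobolev-delta} from Remark \ref{R:Sobolev-delta} gives
\[
\|u-u_h\|_{\tHs} \lesssim \frac{h^{\beta/2 - \eps}}{\sqrt{\eps}} \|f\|_{B^{-s+\beta/2}_{2,1}(\Omega)};
\]
choosing $\eps = |\log h|^{-1}$ produces the first asserted bound with a logarithmic factor of order $\tfrac12$. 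For $L^2$ data one instead invokes \eqref{eq:SobolevL2-delta}, which introduces the extra factor $\eps^{-\kappa}$ with $\kappa=\tfrac12$ (or $\kappa=1$ at $s=\tfrac12$, $\beta=1$) and yields the second energy estimate after optimizing $\eps$.

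For the $L^2$ bound I would run an Aubin--Nitsche argument: since $\scalardelta{\cdot,\cdot}_{\delta,s}$ is symmetric, given $g \in L^2(\Omega)$ the adjoint problem $\scalardelta{v, w_g}_{\delta,s} = \langle v, g\rangle$ has a solution $w_g \in \tHs$ to which Remark \ref{R:Sobolev-delta} applies, giving $\|w_g - \Pi w_g\|_{\tHs} \lesssim h^{\alpha}|\log h|^{\kappa}\|g\|_{L^2(\Omega)}$ with $\alpha = \min\{s,\beta/2\}$. Writing
\[
\langle u-u_h, g\rangle = \scalardelta{u-u_h,\, w_g-\Pi w_g}_{\delta,s}
\le \iii{u-u_h}\, \iii{w_g-\Pi w_g}
\]
and taking the supremum over $g$ produces the $L^2$-bound $\|u-u_h\|_{L^2(\Omega)} \lesssim h^\alpha|\log h|^\kappa \|u-u_h\|_{\tHs}$, which combined with the two energy estimates above yields the two claimed inequalities simultaneously.

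The only substantive subtlety, and the step I expect to require the most care, is tracking how the constants depend on $s$ and $\beta$. The factor $|1-2s|^{-1/2}$ in $C_2$ appears only when $\beta=1$, exactly because the $L^2$-regularity estimate \eqref{eq:SobolevL2-delta} from Remark \ref{R:Sobolev-delta} inherits the Marchaud blow-up of the $\mathcal{F}_2$ bound only in that regime; for $\beta<1$ the exponent $\sigma \le \beta < 1$ stays uniformly away from $1$, so no such blow-up occurs and $\kappa=\tfrac12$ throughout. Verifying that the equivalence constant between $\iii{\cdot}$ and $\|\cdot\|_{\tHs}$ and the hidden constants in the interpolation/localization steps do not introduce additional $s$-dependence completes the argument.
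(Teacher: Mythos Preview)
Your proposal is correct and follows exactly the approach the paper takes: the paper's proof simply says to argue as in \Cref{T:conv_linear_uniform} and \Cref{prop:Aubin-Nitsche}, replacing the regularity input by \Cref{C:regularity-finite-horizon} and \Cref{R:Sobolev-delta}, which is precisely what you spell out. One small point: when you invoke \eqref{eq:BesovSobolev-delta} you obtain the energy rate $h^{\beta/2}$ with data in $B^{-s+\beta/2}_{2,1}(\Omega)$, whereas the stated first estimate reads $h^{1/2}$ with data in $B^{1/2-s}_{2,1}(\Omega)$; these agree when $\beta=1$, and for $\beta<1$ your version is the one directly supported by the available regularity.
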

\begin{proof}
Argue as in \Cref{T:conv_linear_uniform} (energy error estimates) and \Cref{prop:Aubin-Nitsche} ($L^2$-error estimate) but utilizing instead \Cref{C:regularity-finite-horizon} (optimal shift property for finite-horizon operators) and \Cref{R:Sobolev-delta} (Sobolev estimates for finite-horizon operators).
\end{proof}

\subsection{\bf Local error estimates}\label{S:local-error-estimates}
%
The reduced convergence rates in Theorem \ref{T:conv_linear_uniform} and Propositions \ref{prop:Aubin-Nitsche}  and \ref{prop:H1-convergence} are essentially due to the boundary behavior of solutions. A natural question is whether it is possible to obtain better convergence rates in the interior of the domain. Such a question has been recently addressed in \cite{BoLeNo21,Faustmann:20}. We discuss this next.

\subsubsection{\bf Caccioppoli estimate}\label{S:Caccioppoli}
%
This estimate, well-known for second-order PDEs, quantifies the property that solutions do not oscillate. Its derivation is particularly simple and revealing for harmonic functions. If $u\in H^1(\Omega)$ satisfies $\Delta u =0$ in the ball $D_R=D_R(0) \subset \R^d$ of radius $R$ centered at the origin, then
\[
\int_{D_{R/2}} |\nabla u|^2 \lesssim \frac{1}{R^2} \int_{D_R} u^2.
\]
To prove it, let $\eta \in C_0^\infty(D_R)$ be a cut-off function such that $\eta = 1$ in $D_R$, $\eta = 0$ in $D_{3R/4}^c$ and $|\nabla \eta| \lesssim R^{-1}$. Since $\int_{D_R} \nabla u \cdot \nabla v = 0$ for all $v \in H^1(D_R)$, taking $v=\eta^2u \in H^1(D_R)$ yields
\[
0 = \int_{D_R} \nabla u \cdot \nabla (\eta^2 u) = \int_{D_R} |\nabla (\eta u)|^2 - \int_{D_R} u^2 |\nabla \eta|^2,
\]
whence
\[
\int_{D_{R/2}} |\nabla u|^2 \le \int_{D_R} |\nabla (\eta u)|^2 = \int_{D_R} u^2 |\nabla\eta|^2 \lesssim R^{-2}\int_{D_R} u^2.
\]

This simple estimate extends to local $s$-harmonic functions \cite{Cozzi17}. In fact, if $u$ satisfies
$(u,v)_s = 0$ for all $v \in \wt{H}^s(D_R)$ and $\int_{D_R^c}\frac{u(x)}{|x|^{d+2s}} dx < \infty$, then
\[
|u|_{{H}^s(D_{R/2})}^2 \lesssim R^{-2s} \|u\|_{L^2(D_R)}^2 + R^{d+2s} \left(\int_{D_R^c}\frac{u(x)}{|x|^{d+2s}} dx\right)^2.
\]
We consider now sets $\Omega_0 \Subset \Omega_1 \Subset \Omega$ such that $0 <R < \dist(\Omega_0,\partial\Omega_1)$, and a finite covering of $\Omega_0$ with balls of radius $R/2$ centered at points in $\Omega_0$. The localization property \eqref{eq:localization} enables us to extend the previous Caccioppoli estimate as
\[
|u|_{{H}^s(\Omega_0)}^2 \lesssim R^{-2s} \|u\|_{L^2(\Omega_1)}^2 + R^{d+2s} \left(\int_{\Omega_1^c}\frac{u(x)}{|x|^{d+2s}} dx\right)^2,
\]
with a hidden constant depending on the covering cardinality. Indeed, for any ball $D_{R/2} = D_{R/2}(x_j)$ in the covering, we split the integral over $D_R^c$ and apply H\"older's inequality to deduce
\[ \begin{split}
\left(\int_{D_R^c}\frac{u(x)}{|x-x_j|^{d+2s}} dx\right)^2 & \le
2 \left(\int_{\Omega_1 \setminus D_R}\frac{u(x)}{|x-x_j|^{d+2s}} dx\right)^2 
+ 2 \left(\int_{\Omega_1^c}\frac{u(x)}{|x-x_j|^{d+2s}} dx\right)^2 \\
& \le \frac{2\omega_{d-1}}{d+4s} R^{-(d+4s)} \| u \|_{L^2(\Omega_1\setminus D_R)}^2 
+ 2  \left(\int_{\Omega_1^c}\frac{u(x)}{|x-x_j|^{d+2s}} dx\right)^2 .
\end{split} \]

\subsubsection{\bf Local energy estimates}\label{S:local-estimates}
%
Local estimates for FEMs go back to the seminal paper by J. Nitsche and A. Schatz \cite{NitscheJA_SchatzAH_1974a} for second order linear PDEs; see the survey \cite{Wahlbin91}. Such estimates are typically of the following form: the energy error in a subdomain $\Omega_0$ is bounded by the interpolation error on a larger subdomain $\Omega_1$ and a pollution (or slush) term that involves a lower-order norm of the error. This structure is instrumental to characterize the pollution effect in $\Omega_0$ due to singularities remote from $\Omega_0$. It is thus natural to wonder whether such a localization is actually possible for the nonlocal problems at hand and, as a consequence, whether the boundary singularity propagates inside the domain $\Omega$ or not. This question has been recently studied in \cite{BoLeNo21,Faustmann:20}.

We start with the estimates from \cite{BoLeNo21}, which measure the slush term with a global $L^2$-norm. Its proof is a nontrivial discrete version of the above Caccioppoli estimate.

\begin{theorem}[local estimates with $L^2$-slush term]\label{P:slush-L2}
Let $\Omega$ be a Lipschitz domain, $u\in\wt{H}^s(\Omega)$ be the solution of \eqref{eq:weak_linear}, and $u_h\in\mathbb{V}_h $ satisfy the local Galerkin orthogonality condition
%
\[
(u-u_h,v_h)_s = 0 \quad\forall \, v_h \in \mathbb{V}_h (D_R).
\]
If $\calT_h$ is a shape-regular graded mesh such that $16 h_T \le R$ for all $T\in\calT_h$, $T\subset D_R$, then for all $v_h \in \mathbb{V}_h$ we have
\[
|u-u_h|_{H^s(D_{R/2})} \lesssim \inf_{v_h\in\mathbb{V}_h} \Big( |u-v_h|_{H^s(D_{R})} +  R^{-s}\|u-v_h\|_{L^2(\Omega)} \Big)
+ R^{-s}\|u-u_h\|_{L^2(\Omega)}.
\]
\end{theorem}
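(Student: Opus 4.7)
The plan is to adapt the classical Nitsche--Schatz interior estimate to this nonlocal setting by combining a cutoff argument with a fractional Caccioppoli-type commutator identity; the latter will be responsible for the $L^2$-slush term. First I would fix an arbitrary $v_h \in \mathbb{V}_h$, set $\psi := u - u_h$, $w := u - v_h$, and $e_h := u_h - v_h \in \mathbb{V}_h$, and apply the triangle inequality
\[
|\psi|_{H^s(D_{R/2})} \le |w|_{H^s(D_R)} + |e_h|_{H^s(D_{R/2})}
\]
to reduce the problem to controlling $|e_h|_{H^s(D_{R/2})}$. Then I would introduce a cutoff $\eta\in C_0^\infty(D_{3R/4})$ with $\eta = 1$ on $D_{R/2}$ and $|\nabla^j\eta|\lesssim R^{-j}$ for $j\le 2$, so that it suffices to bound $|\eta e_h|_{H^s(\R^d)}$.

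The algebraic core is the identity
\[
(a-b)(\alpha^2 a - \beta^2 b) = (\alpha a - \beta b)^2 - ab(\alpha - \beta)^2,
\]
which, after division by $|x-y|^{d+2s}$ and integration with $a = e_h(x)$, $b = e_h(y)$, $\alpha = \eta(x)$, $\beta = \eta(y)$, yields (up to the normalization constant $C_{d,s}/2$)
\[
(e_h, \eta^2 e_h)_s = |\eta e_h|_{H^s(\R^d)}^2 - \iint \frac{e_h(x)\,e_h(y)\,(\eta(x)-\eta(y))^2}{|x-y|^{d+2s}}\,dx\,dy.
\]
I would bound the remainder by $CR^{-2s}\|e_h\|_{L^2(\Omega)}^2$ by splitting the $y$-integration into $D_R$ (where the Lipschitz bound $|\eta(x)-\eta(y)| \le R^{-1}|x-y|$ gives $\int (\eta(x)-\eta(y))^2|x-y|^{-d-2s}\,dx \lesssim R^{-2s}$) and $D_R^c$ (where $\eta(y) = 0$ forces $x\in D_{3R/4}$, so $|x-y|\gtrsim R$ and the kernel decay finishes the job). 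This step is precisely where the global $L^2(\Omega)$ slush term is born, mirroring the tail analysis of the continuous fractional Caccioppoli estimate sketched earlier.

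Next I would exploit the local Galerkin orthogonality: since $\supp(\eta^2 e_h)\subset D_{3R/4}$, the grading assumption $16 h_T\le R$ for $T\subset D_R$ guarantees that a Scott--Zhang quasi-interpolant $I_h(\eta^2 e_h)$ has support inside $D_R$, so that $I_h(\eta^2 e_h) \in \mathbb{V}_h(D_R)$ and $(\psi, I_h(\eta^2 e_h))_s = 0$. Writing
\[
(e_h, \eta^2 e_h)_s = (e_h,\,\eta^2 e_h - I_h(\eta^2 e_h))_s + (w,\,I_h(\eta^2 e_h))_s,
\]
I would apply Cauchy--Schwarz together with a super-approximation bound
\[
|\eta^2 e_h - I_h(\eta^2 e_h)|_{H^s(\R^d)} \lesssim R^{-s}\|e_h\|_{L^2(D_R)},
\]
which holds because $e_h$ is piecewise $\mathbb{P}_1$ so that only derivatives of $\eta^2$ contribute, combined with the localization identity \eqref{eq:localization-F-tilde}, an inverse inequality, and shape regularity of $\calT_h$ on $D_R$. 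Young's inequality then absorbs $|\eta e_h|_{H^s(\R^d)}^2$ on the left, and the splitting $\|e_h\|_{L^2(\Omega)}\le \|\psi\|_{L^2(\Omega)} + \|w\|_{L^2(\Omega)}$ followed by taking the infimum over $v_h\in\mathbb{V}_h$ yields the claim.

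The hard part will be the super-approximation estimate for the nonlocal $H^s(\R^d)$ seminorm: since that seminorm is global, element-wise interpolation bounds do not assemble straightforwardly, and pairs $(x,y)$ with $x$ near $\supp\eta$ and $y$ far away must be handled using kernel decay in order to produce exactly the $R^{-s}$ scaling required in the slush term, rather than a $\diam(\Omega)$-dependent constant. The tail splitting sketched in the second paragraph for the commutator remainder is the model I would use for this part of the argument.
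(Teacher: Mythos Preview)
Your approach matches the one the paper points to: the paper does not give a self-contained proof but records that the result is from \cite{BoLeNo21} and that ``its proof is a nontrivial discrete version of the above Caccioppoli estimate.'' The ingredients you list---the commutator identity $(a-b)(\alpha^2 a-\beta^2 b)=(\alpha a-\beta b)^2-ab(\alpha-\beta)^2$, local Galerkin orthogonality combined with Scott--Zhang super-approximation, and kernel-decay tail bounds---are exactly the right ones, and your super-approximation claim $|\eta^2 e_h-I_h(\eta^2 e_h)|_{H^s(\R^d)}\lesssim R^{-s}\|e_h\|_{L^2(D_R)}$ is correct.

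There is, however, a real gap in the absorption step. When you apply Cauchy--Schwarz to the term $(e_h,\,\eta^2 e_h-I_h(\eta^2 e_h))_s$, the $e_h$-factor that appears is \emph{not} $|\eta e_h|_{H^s(\R^d)}$. Because the bilinear form is nonlocal, localizing via the support of $\phi:=\eta^2 e_h-I_h(\eta^2 e_h)\subset D_{3R/4+Ch}$ still leaves you with the interior contribution $|e_h|_{H^s(D_\rho)}$ for some $\rho\in(3R/4,R)$, plus a tail controlled by $R^{-s}\|e_h\|_{L^2(\Omega)}$. Since $\eta\equiv1$ only on $D_{R/2}$, the quantity $|e_h|_{H^s(D_\rho)}$ dominates $|\eta e_h|_{H^s(\R^d)}$ and cannot be absorbed by Young's inequality in one shot. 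What you actually obtain is an inequality of the form
\[
|e_h|_{H^s(D_{R/2})}^2 \;\le\; C\epsilon\,|e_h|_{H^s(D_\rho)}^2 \;+\; C_\epsilon\big(\text{good terms}\big),
\]
and closing this requires an iteration over a finite chain of nested cutoffs $D_{R/2}\subset D_{\rho_1}\subset\cdots\subset D_{\rho_k}\subset D_R$ (or an equivalent hole-filling lemma). The hypothesis $16h_T\le R$ is precisely what guarantees that at each layer the interpolant spillover stays inside the next ball. This iteration is standard in Nitsche--Schatz arguments and is presumably what makes the proof ``nontrivial'' in the paper's wording; your sketch should include it rather than claim a single Young-inequality absorption.
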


Combining this estimate with the localization property \eqref{eq:localization} yields the following local estimates for subdomains:
\[
|u-u_h|_{H^s(\Omega_0)} \lesssim \inf_{v_h\in\mathbb{V}_h} \Big( |u-v_h|_{H^s(\Omega_1)} +  R^{-s}\|u-v_h\|_{L^2(\Omega)} \Big)
+ R^{-s}\|u-u_h\|_{L^2(\Omega)}.
\]
We are now in a position to compare the global error $\|u-u_h\|_{\tHs}$ and the local error $|u-u_h|_{H^s(\Omega_0)}$, where $\Omega_0$ is an interior subdomain of $\Omega$. For {\it quasi-uniform meshes} and $f \in B^{-s+1/2}_{2,1}(\Omega)$ or smoother, the interior estimates exhibit an improvement rate $h^{\min\{s,1/2\}}$ regardless of the regularity of $\Omega$. We summarize this in Table \ref{T:uniform-meshes}, which neglects logarithmic factors for clarity.

\begin{table}[h!] 
\centering
{\begin{tabular}{|c||c|c||c|c|}
\hline 
& \multicolumn{2}{|c||}{Interior rates} & \multicolumn{2}{|c|}{Global rates} \\ \cline{2-5}
& $\Omega$-smooth & $\Omega$-Lipschitz & $\Omega$-smooth & $\Omega$-Lipschitz \\ \hline
$s \le \frac{1}{2}$ & $h^{s+\frac{1}{2}}$ & $h^{s+\frac12}$  & $h^{\frac{1}{2}}$ & $h^{\frac12}$ \\
$s > \frac{1}{2}$ & $h$ & $h$  & $h^{\frac{1}{2}}$ & $h^{\frac{1}{2}}$ \\  
\hline
\end{tabular}}
\medskip
\caption{The interior estimates exhibit an improvement rate $h^{\min\{s,1/2\}}$ over global estimates for $f\in B^{-s+1/2}_{2,1}(\Omega)$ and qusi-uniform meshes regardless of the regularity of $\Omega$, according to Theorem \ref{P:slush-L2}.}
\label{T:uniform-meshes}
\end{table}

In two dimensions, for {\it graded meshes} satisfying the condition $h_T\approx h \, \dist(T,\partial\Omega)^{1/2}$, as discussed in \eqref{eq:H}, the interior estimates exhibit an improvement rate $h^{\min\{s,1-s\}}$ for $\Omega$ either smooth or Lipschitz with an exterior ball condition (e.b.c.). This is documented in Table \ref{T:graded-meshes}.
\begin{table}[h!] 
\centering
{\begin{tabular}{|c||c||c|}
\hline 
& \multicolumn{2}{|c|}{$\Omega$-smooth or Lipschitz e.b.c.} \\ \cline{2-3}
& { Interior rates } & {Global rates} \\ \hline
$s \le \frac{1}{2}$ & $h^{s+1}$ & $h$ \\
$s > \frac{1}{2}$ & $h^{2-s}$ & $h$ \\  
\hline
\end{tabular}}
\medskip 
\caption{The interior estimates exhibit an improvement rate $h^{\min\{s,1-s\}}$ over global estimates for $f\in C^{1-s}(\overline{\Omega})$ and graded meshes for $\Omega$ smooth or Lipschitz satisfying the exterior ball conditon (e.b.c), according to Theorem \ref{P:slush-L2}.}
\label{T:graded-meshes}
\end{table}

In contrast to \cite{BoLeNo21}, the estimates of \cite{Faustmann:20} express the slush term in the $H^{s-1/2}$-norm. This is of interest in case $s<1/2$, and is actually optimal, because the duality argument used to estimate the $H^{s-1/2}$-norm exploits the maximal regularity of the dual problem; see Proposition \ref{prop:negativeSob} (error estimate in negative Sobolev spaces). Another relevant difference between \cite{BoLeNo21} and \cite{Faustmann:20} regards mesh grading: there is no restriction in \cite{BoLeNo21}, which might in turn be of independent interest, whereas the assumption $h_{\textrm{max}}h_{\textrm{min}}^{-2}\lesssim 1$ is required in \cite{Faustmann:20} and agrees with Remark 3.3. The latter is consistent with the mesh grading \eqref{eq:H} with $\mu=2$, which is optimal for $d=2$. We now present a scaled variant of the local estimate from \cite[Theorem 2.3]{Faustmann:20}.

\begin{proposition}[local estimates with $H^{s-1/2}$-slush term]\label{P:slush-Hs-1/2}
Let $\Omega\subset\R^d$ be a domain that satisfies the shift property $\|u\|_{\wt{H}^{1/2+s-\eps}(\Omega)} \le C \|f\|_{H^{1/2-s-\eps}(\Omega)}$ with a constant $C=C(\Omega,d,s,\eps)$. On shape-regular meshes satisfying $h_{\textrm{max}}h_{\textrm{min}}^{-2}\lesssim 1$ and $h_{\textrm{max}}\le 10 R$, we have
\[
| u - u_h |_{H^s(\Omega_0)} \lesssim \left( \inf_{v_h \in \mathbb{V}_h} | u - v_h |_{H^s(\Omega_1)}
+ R^{-s} \|u-v_h\|_{L^2(\Omega_1)} \right) +  R^{-\frac12} \| u - u_h \|_{H^{s-1/2}(\Omega)}.
\]
Moreover, in case $u \in H^1(\Omega_1)$ we also have
\[
| u - u_h |_{H^1(\Omega_0)} \lesssim  \inf_{v_h \in \mathbb{V}_h} \left( | u - v_h |_{H^1(\Omega_1)}
+ \|u-v_h\|_{L^2(\Omega_1)} \right) +  R^{s-\frac32} \| u - u_h \|_{H^{s-1/2}(\Omega)}.
\]
\end{proposition}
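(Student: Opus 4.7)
The plan is to follow the Nitsche--Schatz strategy for interior regularity, adapted to the nonlocal bilinear form $(\cdot,\cdot)_s$. First, I would derive a local energy estimate that measures the slush in the $L^2(\Omega)$-norm --- essentially a scaled variant of \Cref{P:slush-L2} --- and then upgrade that slush from $L^2(\Omega)$ to $H^{s-1/2}(\Omega)$ via an Aubin--Nitsche duality argument that exploits the assumed shift property.

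For the $L^2$-slush version, fix a smooth cut-off $\eta$ with $\eta \equiv 1$ on $\Omega_0$, $\mbox{supp}\,\eta \subset \Omega_1$, and $|\nabla^k\eta|\lesssim R^{-k}$ for $k\le 2$. Starting from the local Galerkin orthogonality $(u-u_h,v_h)_s=0$ for $v_h\in\mathbb{V}_h$ supported in the relevant subdomain, test with a suitable quasi-interpolant of $\eta^2(u-v_h)$, where $v_h$ is the best $H^s$-approximation to $u$ in $\mathbb{V}_h$. Expanding the double integral in $(\cdot,\cdot)_s$ generates commutator terms involving $\eta(x)-\eta(y)$; by the regularity of $\eta$ these are bounded after Cauchy--Schwarz by $R^{-2s}\|\cdot\|_{L^2(\Omega_1)}^2$, while the far-field integral over $\Omega_1^c$ is controlled by the fractional Caccioppoli estimate of \Cref{S:Caccioppoli}, yielding the contribution $R^{-2s}\|u-u_h\|_{L^2(\Omega)}^2$. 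A superapproximation property of the nodal quasi-interpolation applied to $\eta^2(\cdot)$, together with iteration across a finite nest of subdomains, absorbs the remaining low-order terms and produces the bound analogous to \Cref{P:slush-L2}, properly scaled by $R$.

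To upgrade the slush, given $g\in \wt H^{1/2-s}(\Omega)$ let $w\in\wt H^s(\Omega)$ solve the adjoint problem $(v,w)_s=\langle v,g\rangle$. The assumed shift property gives $w\in\wt H^{s+1/2-\eps}(\Omega)$ with the constant scaling as $\eps^{-1/2}$, so the quasi-interpolation error obeys $\|w-\Pi_h w\|_{\wt H^s(\Omega)}\lesssim h_{\min}^{1/2-\eps}\eps^{-1/2}\|g\|_{\wt H^{1/2-s}(\Omega)}$, with $h_{\min}$ appearing because the mesh is generally graded. Pairing $e=u-u_h$ against $g$ via Galerkin orthogonality and choosing $\eps=|\log h_{\min}|^{-1}$ yield
\[
\|e\|_{L^2(\Omega)}\lesssim h_{\min}^{1/2}|\log h_{\min}|\,\|e\|_{H^{s-1/2}(\Omega)} + (\text{best approximation of } u \text{ in } \wt H^s).
\]
The grading hypothesis $h_{\max}h_{\min}^{-2}\lesssim 1$ lets us trade $h_{\min}^{1/2}$ for $h_{\max}^{1/2}$ up to a constant, and since $h_{\max}\le 10R$ this yields the factor $R^{-1/2}$ in front of $\|e\|_{H^{s-1/2}(\Omega)}$ once this bound is inserted into the $L^2$-slush estimate. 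The main obstacle will be coordinating the $R$-scaling of the commutators with the mesh grading so that all constants remain independent of $R$: the quadratic relation $h_{\max}\lesssim h_{\min}^2$ is precisely what makes the $L^2$-to-$H^{s-1/2}$ gain uniform across elements of disparate sizes.

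For the $H^1$-variant, decompose $u-u_h = (u-v_h)+(v_h-u_h)$ on a slightly inflated set $\Omega_0\Subset\Omega_0'\Subset\Omega_1$ with $v_h$ a local quasi-interpolant. The term $|u-v_h|_{H^1(\Omega_0)}$ is controlled directly by the standing regularity $u\in H^1(\Omega_1)$, while for the discrete part apply the local inverse inequality $\|w_h\|_{H^1(\Omega_0)}\lesssim h_{\min}^{s-1}\|w_h\|_{H^s(\Omega_0')}$ and invoke the already proven $H^s$-local estimate on $\Omega_0'$. Combining the factor $h_{\min}^{s-1}$ with the $R^{-1/2}$ from the upgraded slush and, via $h_{\min}\gtrsim h_{\max}^{1/2}$, converting $h_{\min}^{s-1}$ into $R^{(s-1)/2}$ type scaling, yields the announced factor $R^{s-3/2}$.
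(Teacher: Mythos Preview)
The paper does not prove this proposition; it merely cites \cite[Theorem~2.3]{Faustmann:20} and records the statement. Your overall strategy---a Nitsche--Schatz local energy estimate followed by a duality argument to weaken the slush norm---is indeed the approach of that reference, so in broad outline you are on the right track.

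However, your duality step as written is backwards. Taking $g\in\wt H^{1/2-s}(\Omega)$ and pairing $e$ against $g$ via global Galerkin orthogonality yields
\[
\langle e,g\rangle=(e,w-\Pi_h w)_s\le\|e\|_{\wt H^s(\Omega)}\|w-\Pi_h w\|_{\wt H^s(\Omega)}\lesssim h_{\max}^{1/2-\eps}\|e\|_{\wt H^s(\Omega)}\|g\|_{\wt H^{1/2-s}(\Omega)},
\]
which bounds $\|e\|_{H^{s-1/2}(\Omega)}$ by $h^{1/2}\|e\|_{\wt H^s(\Omega)}$---the \emph{opposite} direction from your displayed claim $\|e\|_{L^2(\Omega)}\lesssim h_{\min}^{1/2}\|e\|_{H^{s-1/2}(\Omega)}+\cdots$. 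For $s<1/2$ the space $H^{s-1/2}$ is weaker than $L^2$, so no such global inequality can hold with a small prefactor unless the ``best approximation'' term on the right is the \emph{global} $\wt H^s$ error, which would defeat the locality of the estimate. (Also, a global interpolation bound on a graded mesh involves $h_{\max}$, not $h_{\min}$.)

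What is actually needed is a \emph{localized} duality step: take $g$ supported in an intermediate set $\Omega'\Subset\Omega_1$, and show that
\[
\|e\|_{L^2(\Omega')}\lesssim\text{(local approximation on }\Omega_1\text{)}+h^{1/2}|e|_{H^s(\Omega_1)}+\|e\|_{H^{s-1/2}(\Omega)},
\]
by splitting the adjoint solution $w$ into a part concentrated near $\Omega'$ and a far-field tail that the smooth kernel at distance $\gtrsim R$ allows you to control in the weak norm. One then iterates this against the $H^s$-local estimate across nested subdomains to absorb the intermediate $|e|_{H^s(\Omega_1)}$ term. The grading hypothesis $h_{\max}h_{\min}^{-2}\lesssim1$ enters in the superapproximation estimate for $\Pi_h(\eta^2\,\cdot\,)$, where cut-off derivatives of order $R^{-1}$ must be traded against local mesh sizes; your explanation of its role via ``trading $h_{\min}^{1/2}$ for $h_{\max}^{1/2}$'' misidentifies where it is used. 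The $H^1$-variant then follows by the inverse-inequality argument you describe, but with the \emph{local} mesh size on $\Omega_0$ (which for boundary-graded meshes is comparable to $h_{\max}$, not $h_{\min}$), so your scaling chain $h_{\min}^{s-1}\to R^{(s-1)/2}$ does not close as written.
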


We observe that the above shift property is a consequence of Remark \ref{R:intermediate} (intermediate estimate) with $r=\frac12-2\eps$ for $\Omega$ Lipschitz. We also point out that, ignoring logatitmic factors, the interior rates improve by a power $h^{\frac12-s}$ for $s<\frac12$, namely they become $h$ for quasi-uniform meshes instead of $h^{s+\frac12}$ and $h^{\frac32}$ instead of $h^{1+s}$ for graded meshes with respect to Tables \ref{T:uniform-meshes} and \ref{T:graded-meshes}.

\subsection{Computational examples}
%
We now present two numerical experiments that explore further two important points discussed earlier, namely that the 
boundary layer \eqref{eq:boundary_behavior} is generic irrespective of the forcing $f$ and domain regularity
and the performance of $\GREEDY$ established in \Cref{T:greedy} (quasi-optimal error estimate on bisection meshes).

\begin{example}[boundary layer effect]\label{ex:boundary-layer}
We asserted that the boundary behavior of the special solution \eqref{eq:getoor} with $f=1$ on the unit sphere is generic. We now probe this assertion in the square $\Omega = (-1, 1)^2$ with excentric right-hand side compactly supported in $\Omega$
\begin{equation}\label{eq:ex-linear-f}
f(x_1, x_2) = -(r^2 - (x_1 - a)^2 - x_2^2)_+
\end{equation}
\begin{figure}[!htb]
	\begin{center}
		\includegraphics[width=0.45\linewidth]{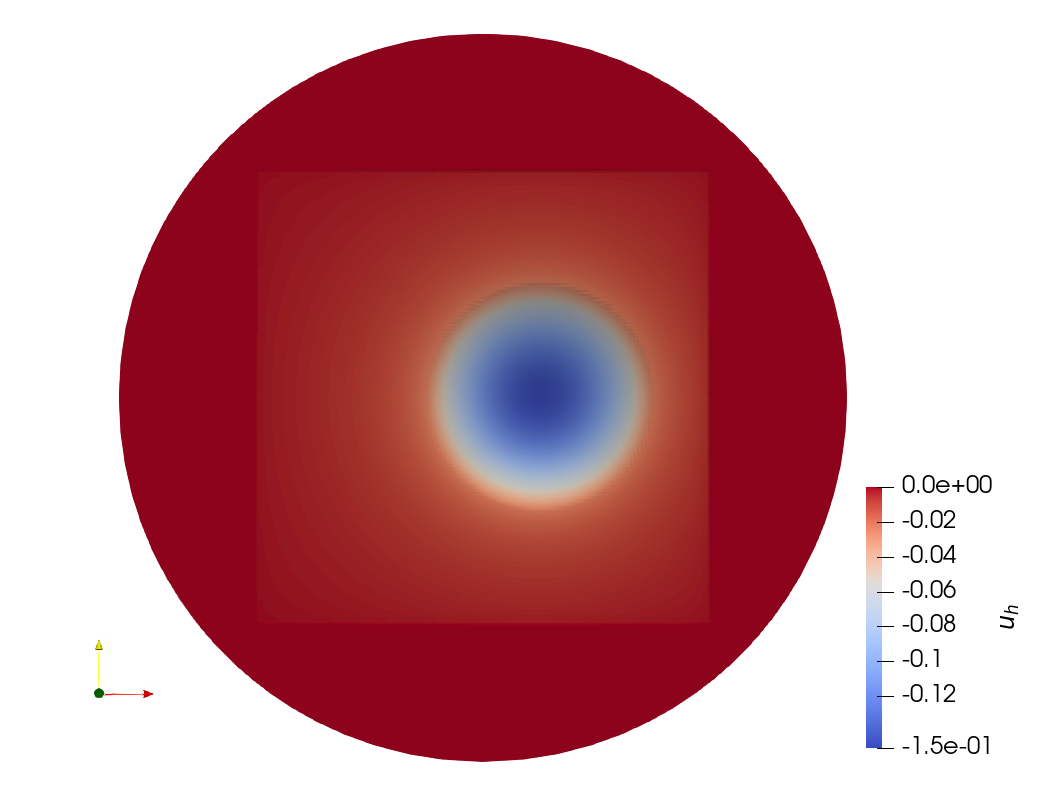}
		\includegraphics[width=0.45\linewidth]{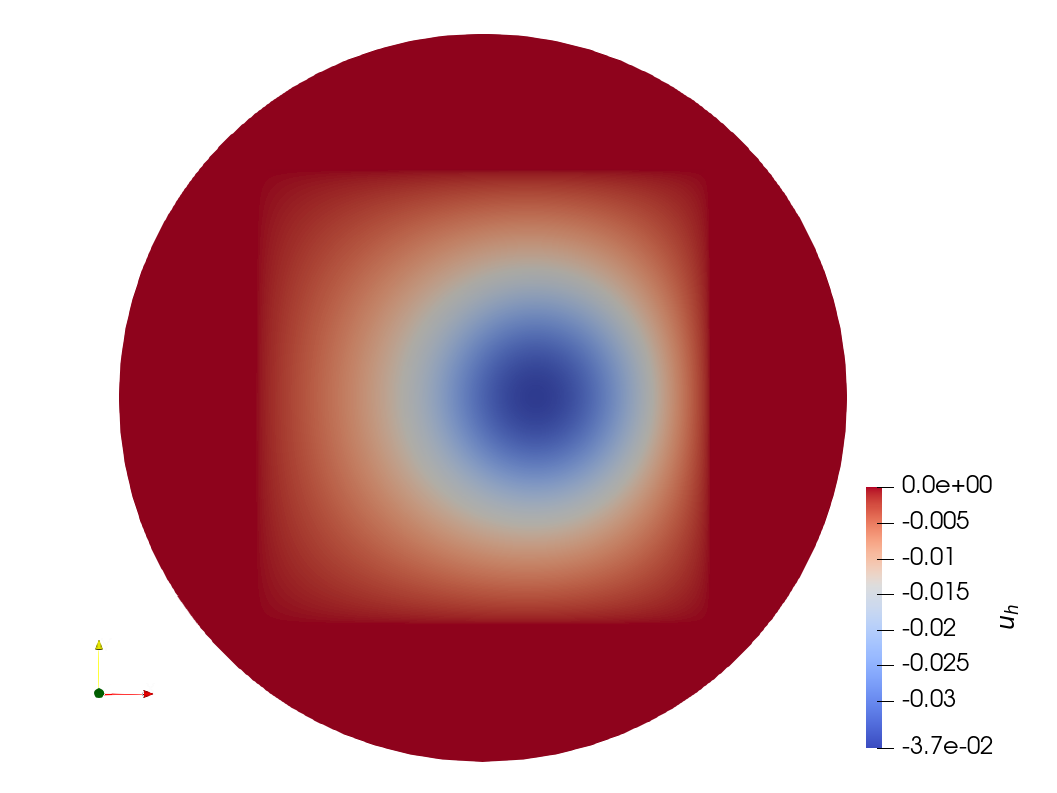} 
	\end{center}
	\caption{Numerical solutions of Example \ref{ex:boundary-layer} for $s = 0.2$ (left panel) and for $s = 0.8$ (right panel). The former is much less diffusive than the latter.} 
	\label{fig:frac-Lap-2d-linear}
\end{figure}
with $a = r = 0.25$. In \Cref{fig:frac-Lap-2d-linear}, we plot the numerical solutions for $s=0.2, 0.8$ on a uniform mesh with size $h=2^{-6}$. To numerically study the boundary behavior, we assume the solution can be approximated by 
\[
	u(x) \approx \dist(x, \partial \Omega)^{\alpha(s)},
\]
where $\alpha(s)$ is to be determined, and consider mesh points near the boundary point $(-1,0)$ along the slice $\{x_2 = 0\}$. We use numerical solutions to fit the power $\alpha(s)$ and report the results we obtain in \Cref{table:frac-Lap-2d-linear-power}.  Even though $f$ vanishes in a neighborhood of $\partial\Omega$, we observe a good agreement of $\alpha(s)$ with the boundary behavior \eqref{eq:boundary_behavior}.
        
	\begin{table}[htbp]
		\centering\scalebox{0.95}{
			\begin{tabular}{|c |c| c| c| c| c| c| c| c| c| c|}
				\hline 
				Value of $s$ & 0.1 & 0.2 & 0.3 & 0.4 & 0.5 & 0.6 & 0.7 & 0.8 & 0.9 \\ \hline
				Value of $\alpha(s)$ &  0.118 & 0.216 & 0.315 & 0.414 & 0.513 & 0.612 &  0.709 &  0.807 & 0.903 \\ \hline  				
			\end{tabular}
		}
                \vskip0.2cm
		\caption{Example \ref{ex:boundary-layer}: Exponents for different values of $s$. We observe the boundary behavior \eqref{eq:boundary_behavior}, which is intrinsic to the integral fractional Laplacian. }
		\label{table:frac-Lap-2d-linear-power}
	\end{table}	
\end{example}

\begin{example}[quasi-optimal convergence with $\GREEDY$ algorithm] \label{ex:linear-greedy}
We present an example from \cite{BoNo21Constructive} to illustrate \Cref{T:greedy} (quasi-optimal error estimate on bisection meshes). We solve \eqref{eq:Dirichlet} on the $L$-shaped domain $\Omega = (-1,1)^2 \setminus ([0,1) \times (-1,0])$ with $f=1$ and $s \in \{ 0.25, 0.5, 0.75\}$, using the finite element setting from \Cref{sec:FE} and the MATLAB code from \cite{ABB} to assemble the resulting stiffness matrices. We construct a family of bisection grids by employing an adaptive mesh refinement algorithm with a greedy marking strategy based on the package provided in \cite{Funken:11}. As an error estimator, we use the surrogate quantity $\mathcal{E}_T(u)$ described in \Cref{rmk:practical-estimator} (practical estimator).

\begin{figure}[htbp]
\begin{center}
 \includegraphics[width=0.34\linewidth]{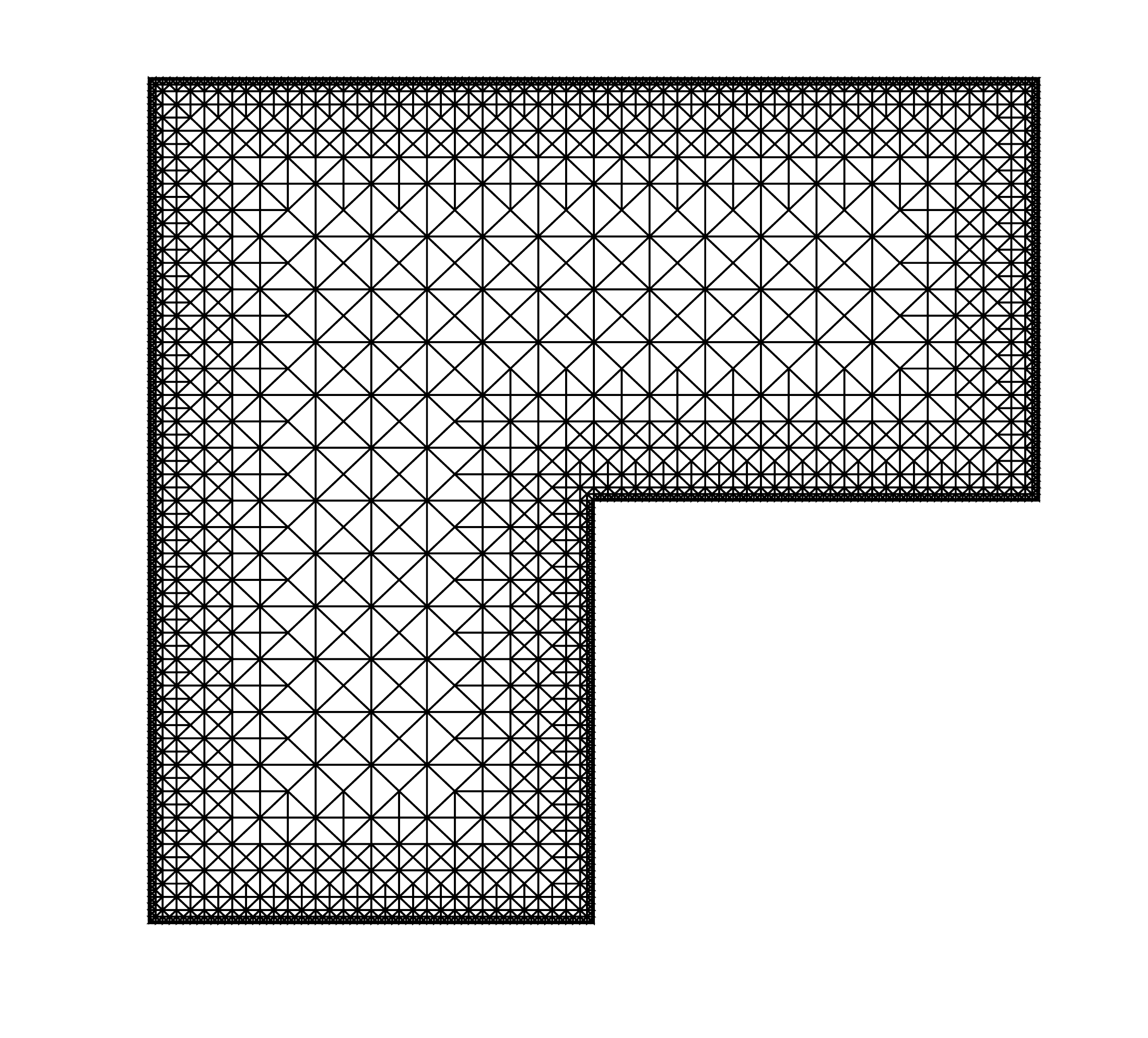} \hspace{-.5cm}
 \includegraphics[width=0.34\linewidth]{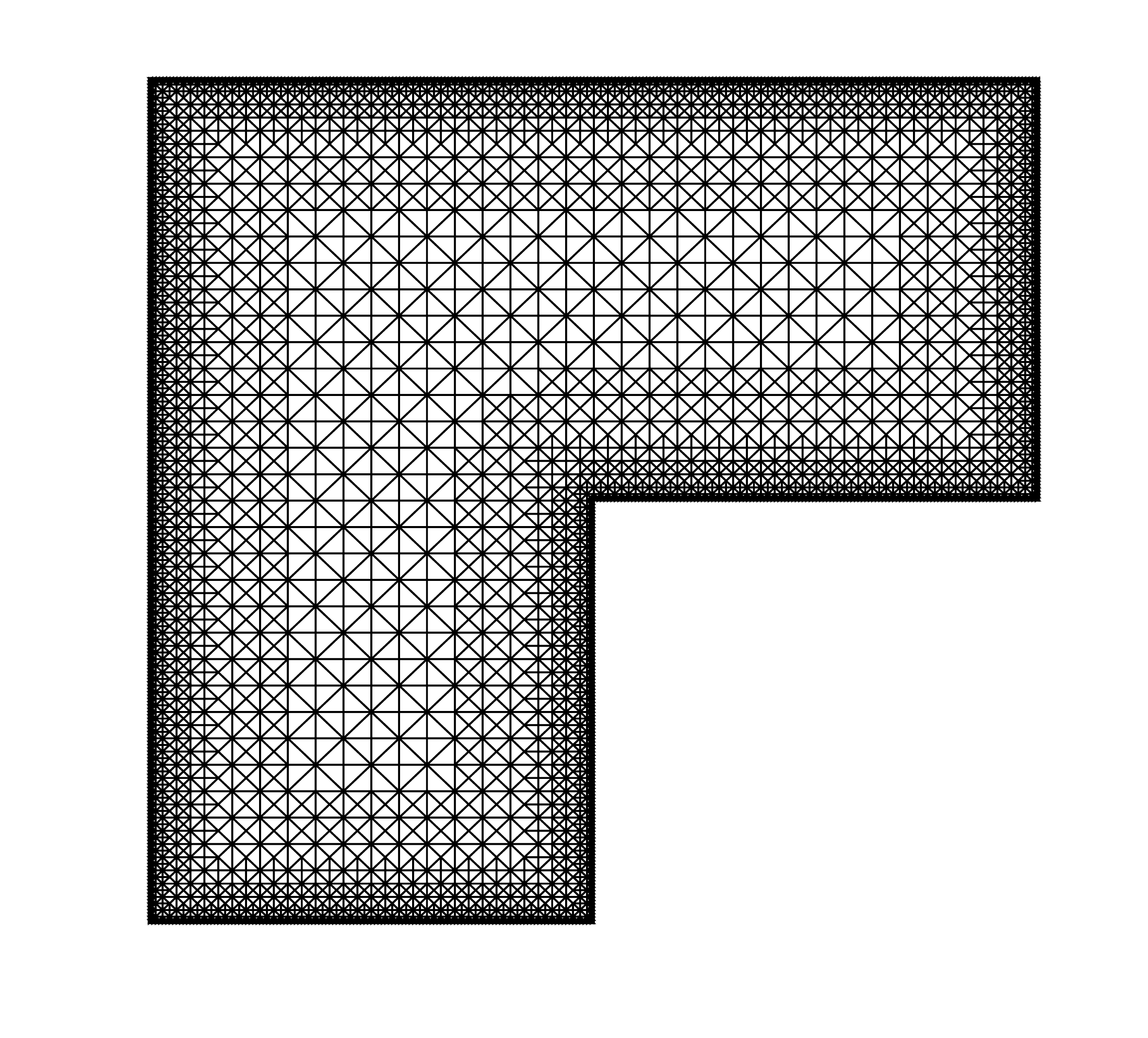}\hspace{-.3cm}
 \includegraphics[width=0.34\linewidth]{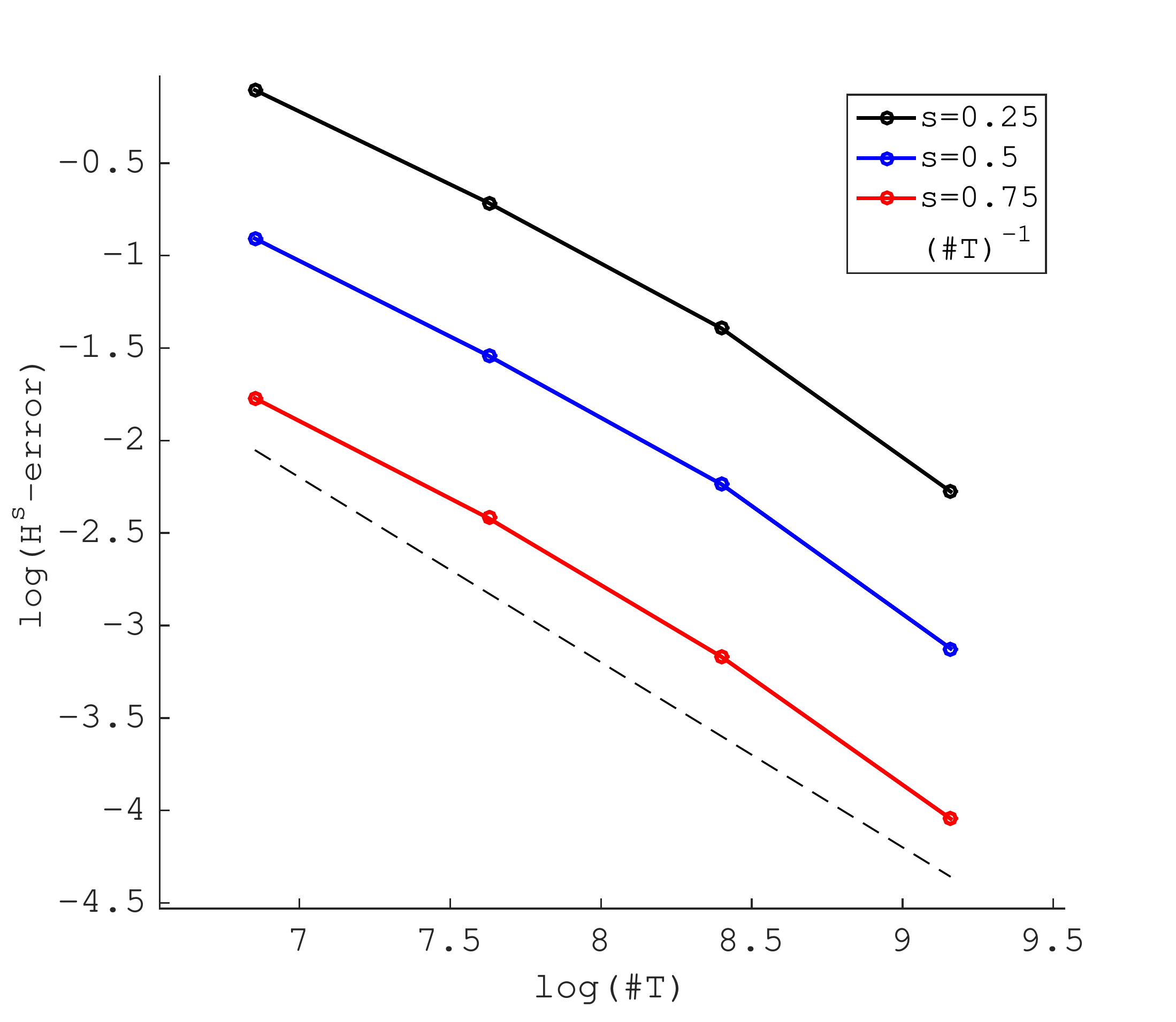}  
\end{center}
\vspace{-0.5cm}
\caption{$\GREEDY$ with the surrogate estimator from \Cref{rmk:practical-estimator} (practical estimator). Left and center: graded bisection meshes with 9504 and 15118 elements, respectively. Right: errors in the $\tHs$-norm for $s \in \{0.25, 0.5, 0.75\}$ and $f = 1$. Computational rates for the $L$-shaped domain are consistent with the expected theoretical rate $\big(\#\calT\big)^{-\frac12}|\log \#\calT|^3$ for solutions $u \in \wt{W}^{s+1-\eps}_{1+\eps}(\Omega)$ (Theorem \ref{T:greedy}).}
\label{fig:numerical}
\end{figure}

 We run the $\GREEDY$ algorithm with tolerance $\delta_k = 2^{-k} \cdot 10^{-2}$, $k = 2, \ldots 5$ in order to construct meshes $\calT_2, \ldots \calT_5$ and examine the error decay $\|u-\Pi_{\calT_k}u\|_{\wt{H}^s(\Omega)}$ in terms of $\#\calT_k$. We emphasize that the marking strategy $\mathcal{E}_T(u) > \delta_k$ is independent of $s$ and insensitive to the presence of reentrant corners. This is reflected in \Cref{fig:numerical}, whose left and middle panels depict meshes with $\#\calT = 9504$ elements and $\#\calT = 15118$ elements, respectively.

To compute the error $\|u-\Pi_{\calT_k}u\|_{\wt{H}^s(\Omega)}$, we resort to a solution on a highly refined mesh because of the lack of a closed analytical expression for the solution $u$ of \eqref{eq:Dirichlet} in this setting. The right panel in \Cref{fig:numerical} exhibits our computational orders of convergence. They show a good agreement with the expected log-linear rate from Theorem \ref{T:greedy}, even though $\Omega$ does not satisfy the sufficient condition (exterior ball condition) leading to \eqref{eq:Holder-regularity}.
\end{example}

\subsection{Nonconforming FEM based on Dunford-Taylor representation}

A nonconforming finite element method for the approximation of \eqref{eq:Dirichlet} has been proposed in \cite{BoLePa17}. The method is based on the following representation formula for the $\tHs$-inner product:
\begin{equation} \label{eq:Dunford-Taylor}
(u,v)_s = \frac{2\sin(s \pi)}{\pi} \int_0^\infty t^{-1-2s} \langle u + w(u,t) , v \rangle \; dt \quad \forall u,v \in \tHs.
\end{equation}
Above, $w = w(u,t) \in H^1(\Rd)$ is the solution to the {\em local problem} $w - t^2 \Delta w = -u \mbox{ in } \Rd$.

By using this representation, \cite{BoLePa17} proposes a three-step numerical method.
\begin{enumerate}[$\bullet$]
\item {\em Sinc quadrature:} the change of variables $t = e^{-y/2}$ leads to
\[ (u,v)_s = \frac{\sin(s\pi)}{\pi} \int_{-\infty}^\infty e^{sy}  \langle u + w(u,t(y)) , v \rangle \; dy.
\]
Therefore, given $N>0$, one can consider the sinc quadrature approximation
\[
(u,v)_s \simeq \frac{\sin(s\pi)}{N \pi} \sum_{\ell = -N}^{N} e^{sy_\ell}
\langle u + w(u,t(y_\ell)), v \rangle.
\]

\item {\em Domain truncation:} in principle, for any $t >0$ the function $w(u,t)$ can be supported in the whole space $\Rd$ even if $u$ is supported in $\Omega$. Reference \cite{BoLePa17} proposes to truncate the local problems
to a family of balls $B^M(t)$, that contain $\Omega$ and whose radius depends on either $t$ and a certain parameter $M>0$.

\item {\em Finite element approximation:} one can apply a standard finite element discretization of the local problems on $B^M(t)$. This discretization requires meshes that fit $\Omega$ and $B^M(t) \setminus \Omega$ exactly; furthermore, in order to add the contribution of the local problems, the mesh on $\Omega$ needs to remain fixed. We denote by $\mathbb{V}_h$ and $\mathbb{V}_h^M$ the discrete spaces on $\Omega$ and $B^M(t)$, respectively, and given $\psi \in L^2(\Rd)$, $t>0$ and $M>0$, we set $w_h^M = w_h^M(\psi, t)$ to be the unique function in $\mathbb{V}_h^M$ such that
\[
\int_{B^M(t)} w_h^M v_h + t^2 \nabla w_h^M \cdot \nabla v_h  \; dx = - \int_{B^M(t)} \psi v_h  \; dx \quad \forall \, v_h \in \mathbb{V}_h^M.
\]
\end{enumerate}

By combining the three steps outline above, we arrive at the discrete bilinear form
\[
a_\mathcal{T}^{N,M}(u_h, v_h) := \frac{\sin(s\pi)}{N \pi} \sum_{\ell =-
  N}^{N} e^{sy_\ell} \langle 
  u_h + w_h^M(u_h,t(y_\ell)), v 
  \rangle \quad\forall \, u_h, v_h \in \mathbb{V}_h.
\]

Strang’s Lemma implies that the $\tHs$-error between $u$ and $u_h$ is bounded by an approximation error and the sum of the consistency errors from the three steps outlined above. Thus, we have \cite[Theorem 7.7]{BoLePa17}
\[
\| u - u_h \|_{\tHs} \le C \left( e^{-c\sqrt{N}} + e^{-cM} + h^{\beta-s} |\log h| \right) \| u \|_{\widetilde{H}^\beta(\Omega)}, 
\]
where $\beta \in (s,3/2)$. The regularity estimate \Cref{T:Besov_regularity} indicates that for $f \in B^{-s+1/2}_{2,1}(\Omega)$ we have $\beta = s + 1/2 - \eps$. Taking $M = \mathcal{O}(|\log h|)$ and $N = \mathcal{O}(|\log h|^2)$ gives an error bound with order $h^{1/2} |\log h|$. This is comparable with the rate obtained in \Cref{T:conv_linear_uniform} for quasi-uniform meshes. To the best of the authors' knowledge, implementation of this nonconforming approach over graded meshes, while feasible in theory, has not been addressed yet.

Finally, we comment that formula \eqref{eq:Dunford-Taylor} has also served as the starting point for a spectral Galerkin method \cite{Sheng:20} for problems posed in $\Rd$.

\subsection{Other numerical approaches}
In recent years, there has been substantial progress in the implementation of discretization schemes for problems involving the integral fractional Laplacian \eqref{eq:def_Laps} and other nonlocal operators on bounded domains. Here, we briefly comment on some approaches that are not of finite element type. We refer to \cite{DElia20,Lischke_et_al18} for further discussion on these and related approaches.

\medskip

\noindent{\bf Finite difference methods.} Reference \cite{huang2014numerical} proposes a method that combines finite differences
with numerical quadrature, obtains a discrete convolution operator and studies the convergence of such a method. However, the finite difference algorithm is only implemented in $d=1$ dimension, and the convergence analysis requires solutions to be of class $C^4$ up to the domain boundary. More recent finite difference implementations are able to deal with higher-dimensional problems (cf. \cite{duo2018novel,duo2019accurate,Minden:20}, for example). An interesting two-scale finite difference method on finite-element type meshes has been proposed in \cite{Han:21}. The convergence analysis in that work is based on weighted H\"older regularity estimates like \eqref{eq:holder} and the use of suitable discrete barriers, thereby avoiding unrealistic solution regularity assumptions.

\medskip

\noindent{\bf Fourier methods.} The Fourier representation of the fractional Laplacian offers some opportunities for the discretization of such an operator. In particular, for periodic functions such a representation can be exploited to develop spectral approximation schemes. Interesting applications of this approach include phase-field modeling \cite{Ainsworth:17} and image processing \cite{Antil:17}.

In contrast, if one truncates the domain on which the Fourier
transformation is performed, one instead obtains the fractional Laplacian of a function that was periodically extended outside the truncation domain; such an approach, in conjunction with the use of sinc basis functions, has been exploited in \cite{Antil:21} to approximate the Dirichlet problem for the integral fractional Laplacian.

\section{Quasilinear problems}\label{sec:quasilinear}

Let $\Omega \subset \Rd$ be an open, bounded set with Lipschitz boundary, and functions $f \colon \Omega \to \R$, $g \colon \Omega^c \to \R$ be given. In this section, we discuss some problems that can be succinctly expressed as follows: find $u \colon \Rd \to \R$ that coincides with $g$ on $\Omega^c$ and such that
\begin{equation} \label{eq:quasilinear-weak}
a_u(u,v) = \langle f,v \rangle
\end{equation}
for every $v$ sufficiently smooth and vanishing on $\Omega^c$. Here, the form $a_u(\cdot, \cdot)$ is bilinear albeit it depends on the solution one is seeking,
\begin{equation} \label{E:def-a}
a_u(w,v) := \iint_{Q_{\Omega}} \widetilde{G} \left(\frac{u(x)-u(y)}{|x-y|^t}\right) \frac{(w(x)-w(y))(v(x)-v(y))}{|x-y|^{d+2\sigma}}dx dy.
\end{equation}
Naturally, to determine the problem one needs to specify the values of $\sigma \in (0,1)$, $t \in \{\sigma, 1\}$, and the nonlinearity $\widetilde{G}$ above. In the case $\widetilde{G}$ is a constant function, problem \eqref{eq:quasilinear-weak} reduces to \eqref{eq:weak_linear}. Here, we shall be concerned with two examples:
\begin{enumerate}[$\bullet$]
\item {\em Fractional mean curvature.} If we set $\sigma = s + 1/2$ for some $s \in (0,1/2)$, $t = 1$, and
\[
\widetilde{G}(\rho) = C_{d,s} \int_0^1 (1+ \rho^2 r^2)^{-(d+1+2s)/2} dr,
\]
then problem \eqref{eq:quasilinear-weak} corresponds to finding a function $u$ that coincides with $g$ on $\Omega^c$ and whose subgraph possesses certain {\em fractional mean curvature} equal to $f$ on the cylinder $\Omega \times \R$. In case $f \equiv 0$, the subgraph of $u$ is an {\em $s$-minimal set.} The normalization constant $C_{d,s} := \frac{1-2s}{|D_1(0)|}$ guarantees that, in the limit $s \to \frac12^-$, one recovers the classical mean curvature operator \cite[Lemmas 5.9 and 5.11]{BoLiNo19analysis}. Because in this work we are not concerned with such a limit, we shall omit the constant in the following.

\smallskip
\item {\em Fractional $p$-Laplacian.} Setting $\sigma = t = s$ for some $s \in (0,1)$ and $\widetilde{G}(\rho) = \frac{C_{d,s,p}}{2} |\rho|^{p-2}$ for some $p \in (1,\infty)$, we recover the Dirichlet problem for the $(p,s)$-Laplace operator,
\[
(-\Delta)^s_p v(x) := C_{d,s,p} \int_{\Rd} \frac{|v(x)-v(y)|^{p-2}(v(x)-v(y))}{|x-y|^{d+sp}} \, dy.
\]
The integral above needs to be understood in the principal value sense in case $sp \ge 1$ and we choose the normalizing constant $C_{d,s,p}$ as
\begin{equation}\label{eq:Cdsp}
C_{d,s,p} = \frac{s(1-s) p \; \Gamma(\frac{ps + d}{2}) \; 2^{2s-2}}{\pi^{\frac{d-1}{2}} \Gamma(\frac{(p-2)s + 3}{2}) \Gamma(2-s)}.
\end{equation}
We remark that this choice is somewhat arbitrary. Nevertheless, in case $p=2$ it gives rise to the integral fractional Laplacian \eqref{eq:def_Laps}; moreover,
for every smooth function $v \in C_c^{\infty}(\Rd)$ we recover the asymptotic behaviors \cite{BourBrezMiro2001another,Mazya_BBM,delTeso:21}
\[
\lim_{s \to 1^-}(-\Delta)^s_p v = -\nabla \cdot (|\nabla v|^{p-2} \nabla v), \quad
\lim_{s \to 0^+}  (-\Delta)^s_p v = |v|^{p-2} v.
\]
\end{enumerate}

This section is organized as follows. We first review some theory regarding the fractional mean curvature, including the variational problem we aim to solve, and some properties of nonlocal minimal graphs. Afterwards, we focus on theoretical aspects related to the fractional $p$-Laplacian, including the regularity of solutions to the Dirichlet problem for such an operator on Lipschitz domains. Then, we propose a common discretization technique for both operators, and study the convergence of the discrete method. The section concludes with some numerical experiments.

\subsection{Minimal graphs}

In this section, we describe the notion of fractional perimeter and some results regarding fractional minimal sets. Our focus shall be on {\em minimal graphs} on $\R^d$, that arise as minimal sets on cylinders in $\R^{d+1}$ when the exterior data is a subgraph.

\subsubsection{\bf Fractional perimeter and minimal sets}
Next, we briefly review the notions of $s$-perimeter and $s$-minimal sets.

\begin{definition}[$s$-perimeter] \label{def:s-perimeter}
Given a domain $B \subset \R^{d+1}$ and $s \in (0, 1/2)$, the $s$-perimeter of a set $A \subset \R^{d+1}$
in $B$ is defined as
\begin{equation}\label{E:NMS-Energy}
P_s(A;B) := \frac{1}{2} \iint_{Q_B} \frac{|\chi_A(x) - \chi_A(y)|}{|x-y|^{d+2s}} \, dy dx,
\end{equation}
where $Q_B = (\R^{d+1} \times \R^{d+1}) \setminus (B^c \times B^c)$ and $B^c = \R^{d+1} \setminus B$.
\end{definition}

Formally, definition \eqref{E:NMS-Energy} coincides with
\begin{equation*}
    P_s(A,B) = \frac{1}{2} \left(|\chi_A|_{W^{2s}_1(\R^{d+1})} - |\chi_A|_{W^{2s}_1(B^c)} \right),
\end{equation*}
and thus we recover the expression \eqref{eq:s-perimeter}.

The sets that minimize the $s$-fractional perimeter among those that coincide with them outside $B$ are deemed as {\em $s$-minimal sets} in $B$. The notion of $s$-minimality involves the behavior of sets in the whole space $\R^{d+1}$, in contrast with the classical (local) minimal sets, that are characterized by their behavior in $\overline{B}$.

\begin{definition}[$s$-minimal set] \label{def:minimal-set}
A set $A$ is $s$-minimal in a open set $B \subset \R^{d+1}$ if $P_s(A,B)$ is finite and
$P_s(A, B) \leq P_s(A', B)$ among all measurable sets $A' \subset \R^{d+1}$ such that
$A' \setminus B = A \setminus B$. 
\end{definition}

Given an open set $B$ and a fixed set $A_0$, the Dirichlet or Plateau problem for nonlocal minimal surfaces aims to find a $s$-minimal set $A$ such that $A \setminus B = A_0 \setminus B$. For a bounded Lipschitz domain $B$ the existence of solutions to the Plateau problem is established in \cite{CaRoSa10}.

\subsubsection{\bf Formulation of minimal graphs}
We focus on $s$-minimal sets on a cylinder $B = \Omega \times \R$, where $\Omega$ is a bounded and sufficiently smooth domain in $\R^d$, with exterior data being a subgraph,
\begin{equation*}
A_0 = \left\{ (x', x_{d+1}) \colon x_{d+1} < g(x'), \; x' \in \Rd \setminus \Omega \right\},
\end{equation*}
for some given function $g: \Rd \setminus \Omega \to \R$. We highlight some key features of this problem, and refer to \cite{BoLiNo19analysis} for a detailed discussion.

\begin{enumerate}[$\bullet$]
\item Definition \ref{def:minimal-set} is not appropriate for our problem, because every set $A$ that is a subgraph on $\R^{d+1} \setminus B$ has infinite $s$-perimeter in $B$. To remedy this issue, one needs to look for  {\em locally} $s$-minimal sets \cite{Lomb16Approx}, namely, sets that are $s$-minimal in every bounded open subset compactly supported in $B$.

\item In our setting, there exists a unique locally $s$-minimal set, which turns out to be the subgraph of a certain function $u$ (see \cite{DipiSavinVald16Graph, Lombardini-thesis}). We can therefore restrict our minimization problem to the set of subgraphs of functions that coincide with $g$ on $\Omega^c$.

\item In case $g$ is a bounded function, we can replace the infinite cylinder $B = \Omega \times \R$ by a truncated cylinder $B_M = \Omega \times (-M,M)$, where $M>0$ depends on $g$ \cite[Proposition 2.5]{Lombardini-thesis}.

\item Let $A$ be the subgraph of a certain function $v$ that coincides with $g$ on $\Omega^c$ and $B_M$ as above. The fractional $s$ perimeter of $A$ in $B_M$ can be expressed as
\[
P_s(A,B_M) = I_s[v] + C(M,d,s,\Omega,g),
\]
where $I_s$ is a suitable nonlocal energy functional, see \eqref{E:NMS-Energy-Graph} below \cite[Proposition 4.2.8]{Lombardini-thesis}, \cite[Proposition 2.3]{BoLiNo19analysis}.
\end{enumerate}

With the considerations above, we can express the Plateau problem for nonlocal minimal graphs as follows \cite{BoLiNo19analysis}:
find a function $u: \Rd \to \R$, with the constraint $u = g$ in ${\Omega}^c$, such that it minimizes the strictly convex energy
\begin{equation}\label{E:NMS-Energy-Graph}
I_s[u] := \iint_{Q_{\Omega}} F_s\left(\frac{u(x)-u(y)}{|x-y|}\right) \frac{1}{|x-y|^{d+2s-1}} \;dxdy,
\end{equation}
where $F_s$ is defined as 
\begin{equation*} \label{E:def_Fs}
F_s(\rho) := \int_0^\rho \frac{\rho-r}{\left( 1+r^2\right)^{(d+1+2s)/2}} dr.
\end{equation*}

Let $s \in (0,1/2)$ and $g \in L^\infty(\Omega^c)$ be given. 
We consider the space
\begin{equation*}
\mathbb{V}^g := \{ v \colon \Rd \to \R \; \colon \; v\big|_\Omega \in W^{2s}_1(\Omega), \ v = g \text{ in } {\Omega}^c \}, 
\end{equation*}
equipped with the norm
\[
\| v \|_{\mathbb{V}^g } := \| v \|_{L^1(\Omega)} + | v |_{\mathbb{V}^g },
\]
where
\[
| v |_{\mathbb{V}^g} := \iint_{Q_{\Omega}}  \frac{|v(x)-v(y)|}{|x-y|^{d+2s}} dxdy , 
\]
and we recall $Q_{\Omega} = (\mathbb{R}^d\times\mathbb{R}^d) \setminus (\Omega^c\times\Omega^c)$. The seminorm in $\mathbb{V}^g$ does not take into account interactions over $\Omega^c \times \Omega^c$, because these are fixed for the class of functions we consider; therefore, we do not need to assume $g$ to be a function in the class $W^{2s}_1(\Omega^c)$. In particular, $g$ may not decay at infinity.
In case $g$ is the zero function, the space $\mathbb{V}^g$ coincides with the standard zero-extension Sobolev space $\widetilde{W}^{2s}_1(\Omega)$; for consistency of notation, we denote such a space by $\mathbb{V}^0$.

Next, given $u \in \mathbb{V}^g$, we consider the bilinear form $a_u \colon \mathbb{V}^g \times \mathbb{V}^0 \to \mathbb{R}$,
\begin{equation} \label{E:def-a-NMS}
a_u(w,v) := \iint_{Q_{\Omega}} \widetilde{G}\left(\frac{u(x)-u(y)}{|x-y|}\right) \frac{(w(x)-w(y))(v(x)-v(y))}{|x-y|^{d+1+2s}}dx dy, 
\end{equation}
with $\wt{G}(\rho)=\rho^{-1} F'_s(\rho)$ given by
\begin{equation} \label{eq:Gts}
\widetilde{G}(\rho) := \int_0^1 (1+ \rho^2 r^2)^{-(d+1+2s)/2} dr.
\end{equation}
Clearly, the weight $\widetilde{G}$ satisfies $0 < \widetilde{G}(\rho) \le 1$ for all $\rho \in \R$.
We emphasize the behavior $\widetilde{G}(\rho) \to 0$ as $|\rho| \to \infty$, that implies that the weight in \eqref{E:def-a} degenerates whenever the difference quotient $\frac{|u(x)-u(y)|}{|x-y|}$ blows up. 
We also point out to the exponent $d+1+2s$ in \eqref{E:def-a-NMS} that indicates that, whenever the weight $\widetilde{G}(\rho)$ does not degenerate, the form $a_u$ behaves as the one arising in a fractional diffusion problem of order $s+1/2$ in $\mathbb{R}^d$.

The weak formulation of the fractional minimal graph problem can be obtained in a standard fashion, namely by the taking first variation of $I_s[u]$ in \eqref{E:NMS-Energy-Graph}. As described in \cite{BoLiNo19analysis}, such a  formulation reads: find $u \in \mathbb{V}^g$ satisfying
\begin{equation}\label{E:WeakForm-NMS-Graph}
a_u(u,v) = 0 \quad \forall v \in \mathbb{V}^0.
\end{equation}

\subsubsection{\bf Regularity and stickiness} \label{sec:regularity-stickiness}
An outstanding feature of the fractional Plateau problem is the emergence of {\em stickiness phenomena.} In the setting of this paper, this means that the minimizer may be discontinuous across $\pp\Omega$. As shown by Dipierro, Savin and Valdinoci \cite{DipiSavinVald19nonlocal}, stickiness is indeed the generic behavior of nonlocal minimal graphs in case $\Omega \subset \R$. 

Let us illustrate this phenomenon with a quantitative example proposed in \cite[Theorem 1.2]{DipiSavinVald17} and studied numerically in \cite{BoLiNo21}. We solve \eqref{E:WeakForm-NMS-Graph} on a fixed mesh for $\Omega = (-1,1) \subset \R$ and $g(x) = M\, \textrm{sign}(x)$, where $M > 0$. Reference \cite{DipiSavinVald17} proves that, for every $s \in (0,1/2)$, there exists $M>0$ such that the corresponding solution $u^M$ is discontinuous across $\partial \Omega$,
and that there exists an optimal constant $c_0$ such that
\begin{equation} \label{eq:bdry-behavior-1d}
\sup_{x \in \Omega} u^M(x) < c_0 M^{\frac{1+2s}{2+2s}}, \quad
\inf_{x \in \Omega} u^M(x) > -c_0 M^{\frac{1+2s}{2+2s}}.
\end{equation}
The left panel in Figure \ref{fig:stickiness_thm2} shows the computed solutions with $M = 16$ and  $s=0.1,0.25,0.4$. Because in all cases the discrete solutions $u_h$ are monotonically increasing in $\Omega$, we regard the value of $u^M_h(x_1)$ as an approximation of $\sup_{x \in \Omega} u^M(x)$, where $x_1$ is the free node closest to $1$. The right panel in Figure \ref{fig:stickiness_thm2} shows how $u^M_h(x_1)$ varies with respect to $M$ for different values of $s$. 

\begin{figure}[!htb]
	\begin{center}
		\includegraphics[width=0.45\linewidth]{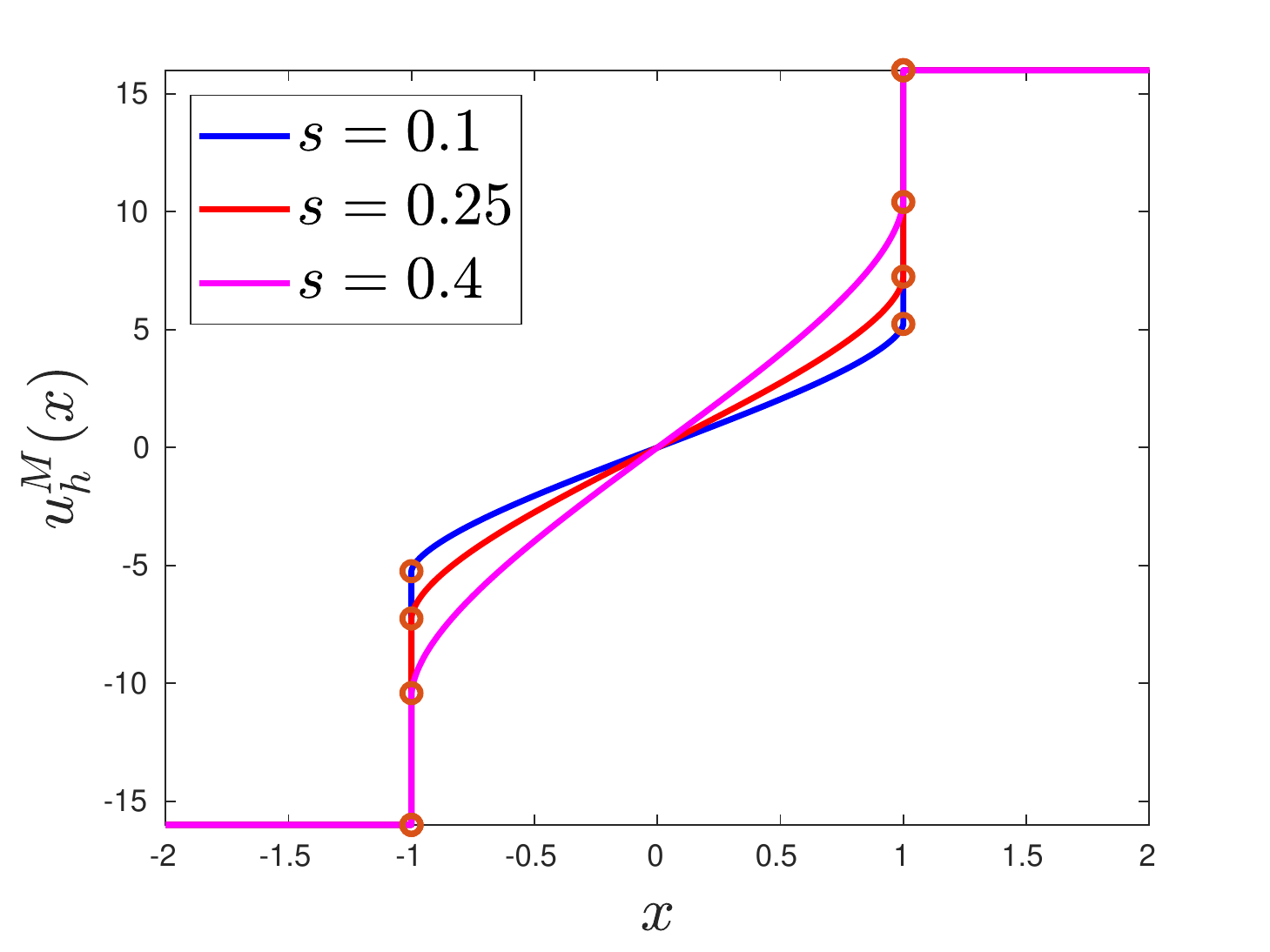} \hspace{-0.7cm}
		\includegraphics[width=0.45\linewidth]{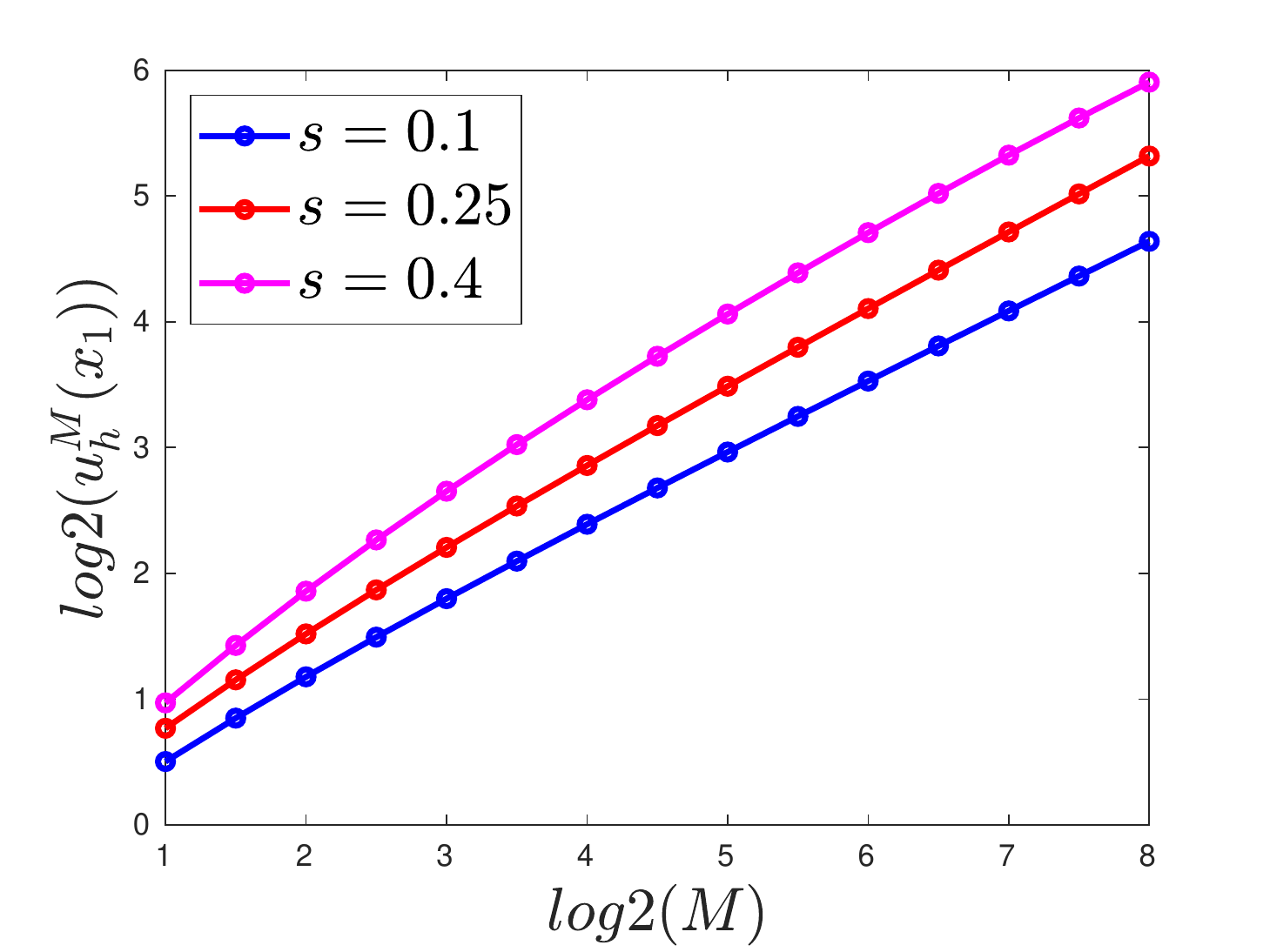} 
		\caption{
Stickiness in 1d. The left panel displays the finite element solutions $u^M_h$ for $M = 16$ and $s\in \{0,1, 0.25, 0.4\}$. The right panel shows the value of $u^M_h(x_1)$ as a function of $M$ for $s \in \{0.1, 0.25, 0.4\}$, which is expected to behave according to \eqref{eq:bdry-behavior-1d}.
}\label{fig:stickiness_thm2}
	\end{center}
\end{figure}

For $s=0.1$ and $s=0.25$ the slopes of the curves are slightly larger than the theoretical rate $M^{\frac{1+2s}{2+2s}}$ whenever $M$ is small. However, as $M$ increases, we see a good agreement with theory. Comparing results for $M=2^{7.5}$ and $M=2^8$, we observe approximate rates $0.553$ for $s=0.1$ and $0.602$ for $s=0.25$, where the expected rates are $6/11 \approx 0.545$ and $3/5 = 0.600$, respectively. However, the situation is different for $s=0.4$: the plotted curve does not correspond to a flat line, and the last two nodes plotted, with $M=2^{7.5}$ and $M=2^8$, show a relative slope of about $0.57$, which is off the expected rate $9/14 \approx 0.643$. This issue is related to poor boundary resolution, and can be corrected by refining the mesh accordingly (cf. \cite[Table 1]{BoLiNo21}).

Even though stickiness is the typical behavior of fractional minimal graphs, an interesting phenomenon arises when such graphs happen to be continuous at some point on the boundary of the domain.
In the case $\Omega \subset \R^2$, reference \cite{DipiSavinVald19boundary} proves that, at any boundary points at which stickiness does not happen, the tangent planes of the traces from the interior must coincide with those of the exterior datum. Such a hard geometric constraint is in sharp contrast with the case of classical minimal graphs. We illustrate this behavior with the computational results \cite{BoLiNo21} of an experiment first proposed in \cite{DipiSavinVald19boundary}. We consider $\Omega = (0,1) \times (-1,1)$ and the Dirichlet datum
\begin{equation} \label{eq:def-g-NMS-rigidity}
g(x, y) = \gamma \left( \chi_{(-1,-a) \times (0,1)}(x,y) - \chi_{(-1,-a) \times (-1,0)}(x,y) \right)
\end{equation}
where $a \in [0,1]$ and $\gamma > 0$ are parameters to be chosen. 
Figure \ref{fig:2d-stick1-uh} (left panel) displays a numerical solution $u_h$ corresponding to $s=0.25$ and $a = 1/8$. The right panel in Figure \ref{fig:2d-stick1-uh} exhibits slices of the computed solution at $x=2^{-3}, 2^{-6}$, and $2^{-9}$. The flattening of the curves as $x \to 0^+$ is apparent. 

\begin{figure}[!htb]
\includegraphics[width=0.4\linewidth]{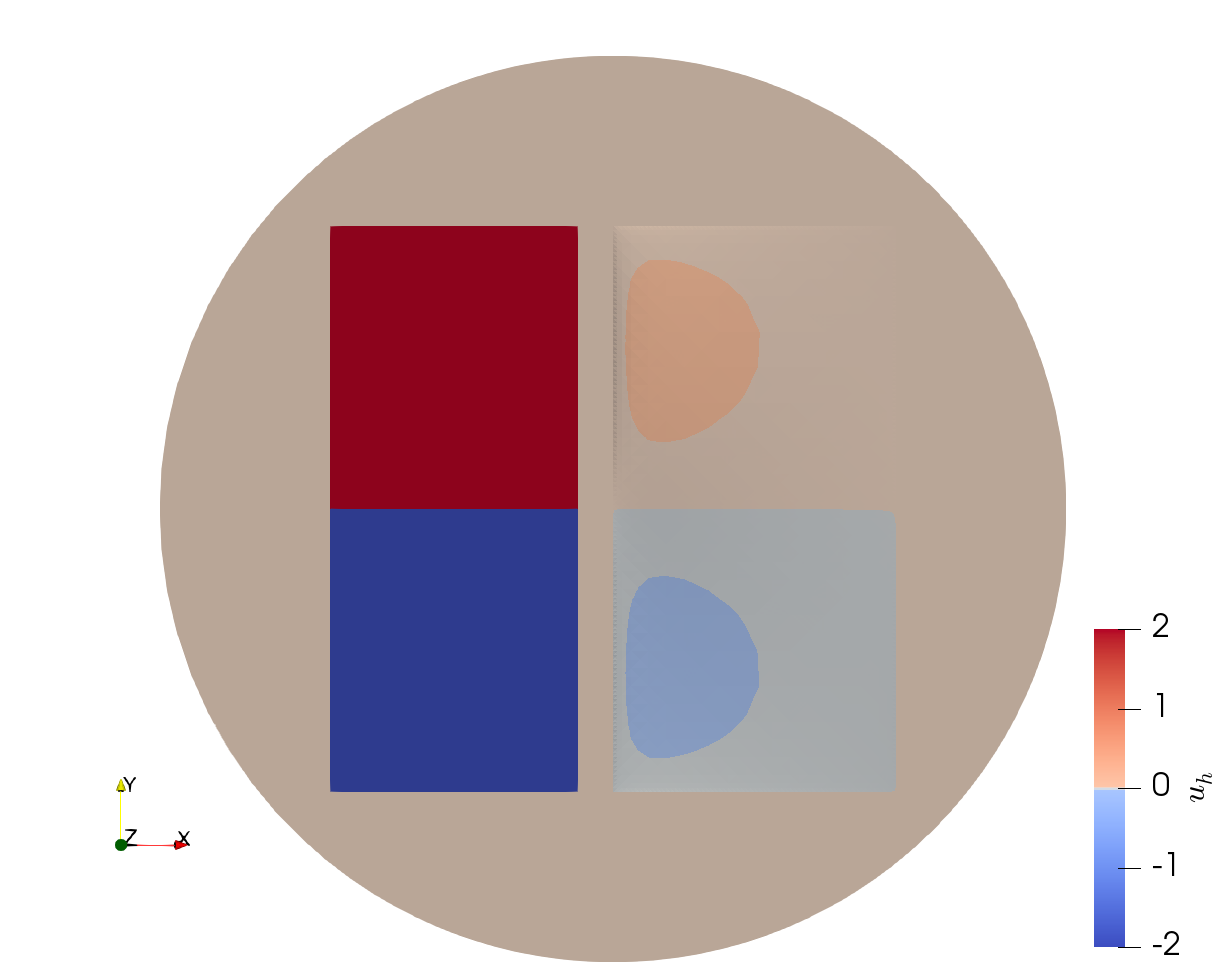}
\includegraphics[width=0.5\linewidth]{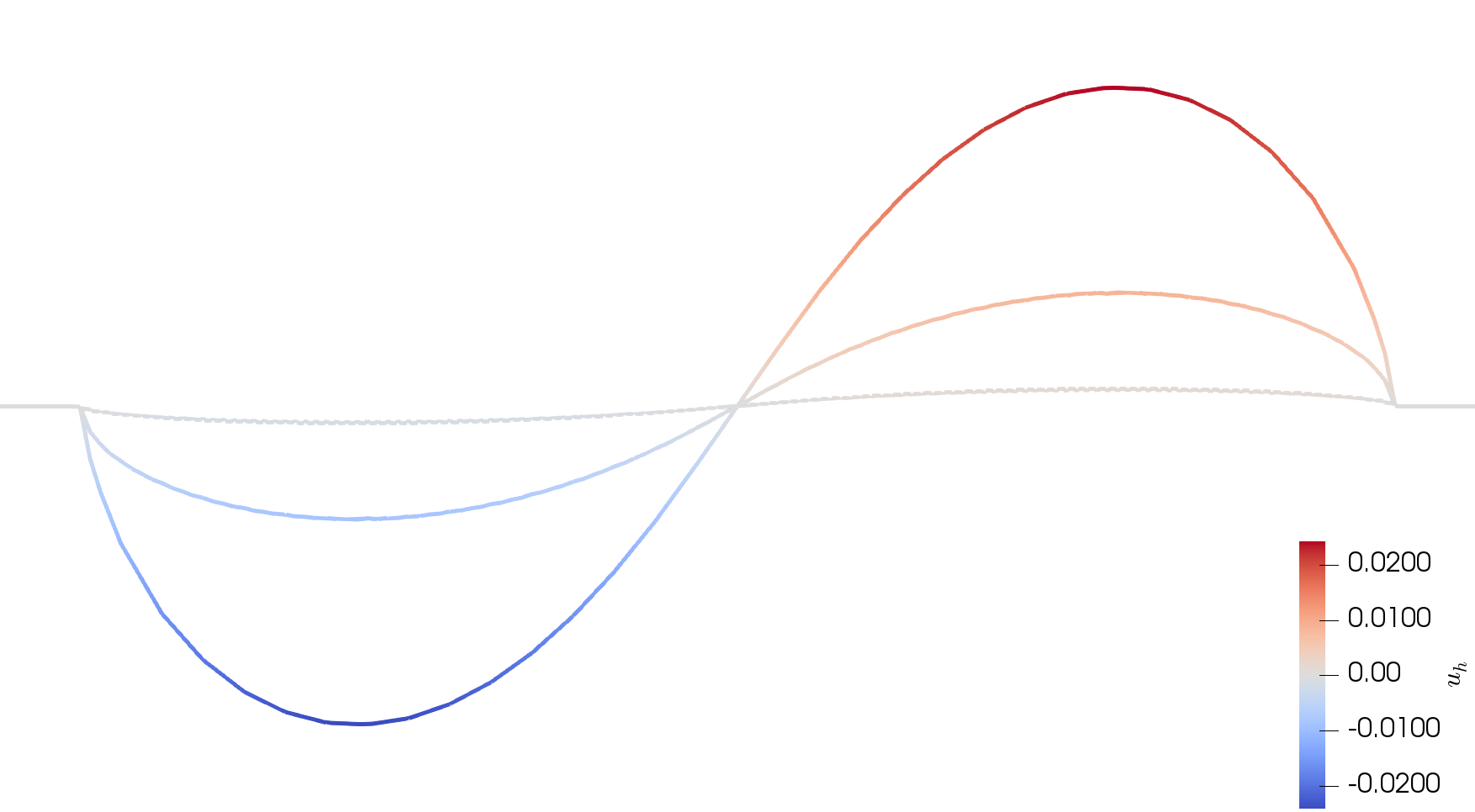}
	\caption{\small Plot of the finite element solutions to a fractional graph Plateau problem with $g$ given as in \eqref{eq:def-g-NMS-rigidity} and $a = 1/8$, $s=0.25$. Left panel: top view of the solution. Right panel: slices at $x=2^{-3}, 2^{-6}$ and $2^{-9}$. The fractional minimal graph flattens as $x \to 0^+$, in agreement with the fact that for such a minimizer being continuous at some point $x\in \pp\Omega$ implies having continuous tangential derivatives at such a point.
}
	\label{fig:2d-stick1-uh}
\end{figure}

In spite of their rich boundary behavior, minimal graphs are smooth in the interior of the domain. The following theorem is stated in \cite[Theorem 1.1]{CabreCozzi2017gradient}, where an estimate for the gradient of the minimal function is derived. Once such an estimate is obtained, the claim follows by the arguments from \cite{Barrios2014bootstrap} and \cite{Figalli2017regularity}.

\begin{theorem}[interior smoothness of graph nonlocal minimal sets] \label{thm:smoothness}
Let $A \subset \mathbb{R}^{d+1}$ be a locally $s$-minimal set in the cylinder $B = \Omega \times \mathbb{R}$, given by the subgraph of a measurable function $u$ that is bounded in an open set $\Lambda \supset \Omega$.
Then, $u \in C^\infty (\Omega)$.
\end{theorem}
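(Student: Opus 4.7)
The plan is to combine a pointwise gradient estimate for the nonlocal mean curvature equation with a bootstrap argument tailored to the quasilinear structure of \eqref{E:WeakForm-NMS-Graph}. Since $A$ is a locally $s$-minimal set in $B$ given by the subgraph of $u$, the function $u$ is a minimizer of the energy $I_s$ on every compactly contained subdomain of $\Omega$, and hence satisfies the Euler-Lagrange equation
\[
\int_{\Rd} \widetilde{G}\Big(\tfrac{u(x)-u(y)}{|x-y|}\Big) \, \frac{u(x)-u(y)}{|x-y|^{d+1+2s}}\,dy = 0 \quad \text{for a.e.\ } x\in\Omega,
\]
with $\widetilde{G}$ as in \eqref{eq:Gts}. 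Thus the problem reduces to an interior regularity question for this nonlocal quasilinear equation, with the standing assumption that $u\in L^\infty(\Lambda)$.

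First, I would invoke the interior gradient estimate of Cabr\'e--Cozzi: for every subdomain $\Omega'\Subset\Omega$, one has $\|\nabla u\|_{L^\infty(\Omega')} \le C$, with $C$ depending only on $d,s,\mathrm{dist}(\Omega',\partial\Omega)$, and $\|u\|_{L^\infty(\Lambda)}$. This is the key a priori bound and also the hardest input: its proof relies on a nonlocal analogue of the Bombieri--De Giorgi--Miranda gradient estimate obtained by exploiting the minimality of the subgraph against suitable vertical perturbations together with a De Giorgi-type iteration for the fractional setting.

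Once $u$ is locally Lipschitz, the difference quotient $\rho(x,y) := \frac{u(x)-u(y)}{|x-y|}$ is bounded on $\Omega'\times\Omega'$, so the weight $\widetilde{G}(\rho(x,y))$ is uniformly bounded from above and, more importantly, uniformly bounded \emph{away from zero} on this set. Consequently, on $\Omega'$ the equation behaves as a linear nonlocal operator of order $2s+1\in(1,2)$ with a bounded, measurable, uniformly elliptic kernel. I would then apply the bootstrap scheme of Barrios, Figalli and Valdinoci: the Cacciopoli-type energy estimate in Section \ref{S:Caccioppoli} combined with the Lipschitz regularity upgrades $u$ to $C^{1,\alpha}(\Omega'')$ for any $\Omega''\Subset\Omega'$ and some $\alpha\in(0,1)$. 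This step uses H\"older regularity theory for nonlocal linear equations with rough kernels applied to incremental quotients of $u$.

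Finally, the passage from $C^{1,\alpha}$ to $C^\infty$ follows from the Schauder theory for nonlocal equations developed by Figalli--Valdinoci. The idea is to view $\widetilde{G}(\rho)$ as a smooth, non-degenerate nonlinearity of $\rho$ on the compact set where $\rho$ lives (recall $\widetilde{G}\in C^\infty$ and $\widetilde{G}'\ne 0$ for $\rho$ bounded), differentiate the equation along an arbitrary direction $e$ to obtain a linearized nonlocal equation of order $2s+1$ for $\partial_e u$ with $C^\alpha$ coefficients, and apply nonlocal Schauder estimates. Iterating, each derivative gains $\alpha$ in H\"older regularity up to the integer threshold, and a standard induction yields $u\in C^{k,\alpha}(\Omega''')$ for every $k\in\mathbb{N}$ and every $\Omega'''\Subset\Omega$, hence $u\in C^\infty(\Omega)$. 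The main obstacle in the overall argument is the first step: without the Cabr\'e--Cozzi gradient bound the quasilinear weight $\widetilde{G}(\rho)$ may degenerate as $|\rho|\to\infty$, and neither Schauder theory nor the bootstrap for smooth kernels is available.
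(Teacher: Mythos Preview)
Your proposal is correct and follows exactly the route the paper indicates: the paper does not prove this theorem but simply attributes the gradient estimate to Cabr\'e--Cozzi and states that the $C^\infty$ conclusion then follows from the bootstrap arguments of Barrios--Figalli--Valdinoci and the regularity theory of Figalli--Valdinoci, which is precisely the three-step scheme you outline. One small quibble: the Caccioppoli estimate in Section~\ref{S:Caccioppoli} concerns the linear fractional Laplacian rather than the quasilinear graph operator, so it is not quite the right citation for the energy estimate in your second step, but the overall strategy matches.
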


\subsection{Fractional $(p, s)$-Laplacians}
In this section, we introduce the fractional $(p, s)$-Laplacian problem and discuss the regularity of solutions. Given $s \in (0,1)$, $p \in (1, \infty)$ and a bounded domain $\Omega \subset \Rd$, we introduce the Sobolev space of functions vanishing on $\Omega^c$,
\[
\widetilde{W}^s_p(\Omega) := \big\{ v \in W^s_p(\mathbb{R}^d) \colon \supp v \subset \overline{\Omega} \big\}
\]
with the norm
\begin{equation*} \label{eq:Gagliardo-seminorm}
\| v \|_{\widetilde{W}^s_p(\Omega)} := |v|_{W^s_p(\Rd)} =  \left(\frac{C_{d,s,p}}{2} \iint_{Q_{\Omega}} \frac{|v(x)-v(y)|^p}{|x-y|^{d+s p}} \, dx \, dy \right)^{1/p},
\end{equation*}
where $C_{d,s,p}$ is given by \eqref{eq:Cdsp}.

\subsubsection{\bf Problem formulation.}
For a given $f \colon \Omega \to \R$, we consider the Dirichlet problem for the fractional $(p, s)$-Laplacian
\begin{equation}\label{eq:Dirichlet-p}
\left\lbrace\begin{array}{rl}
(-\Delta)^s_p u = f &\mbox{ in }\Omega, \\
u = 0 &\mbox{ in }\Omega^c.
\end{array} \right.
\end{equation}
The solution $u$ to \eqref{eq:Dirichlet-p} minimizes the strictly convex energy
\begin{equation}\label{eq:frac-pLap-energy}
I_{s,p}[v] := \frac1p \| v \|_{\widetilde{W}^s_p(\Omega)}^p - \int_{\Omega} f v
= \frac{C_{d,s,p}}{2p} \iint_{Q_{\Omega}} \frac{|v(x) - v(y)|^p}{|x-y|^{d+sp}} dxdy - \int_{\Omega} f v
\end{equation}
among functions in $\widetilde{W}^s_p(\Omega)$. From now on, we shall assume that $f \in W^{-s}_{p'}(\Omega) := [\widetilde{W}^s_p(\Omega)]'$, so that we can guarantee the well-posedness of this minimization problem in $\widetilde{W}^s_p(\Omega)$.
Taking the first variation $\frac{\delta I_{s,p}[u]}{\delta u}(v)$ of \eqref{eq:frac-pLap-energy} in the direction $v\in \widetilde{W}^s_p(\Omega)$, and setting it to zero, such a minimization problem
can also be written as the weak formulation: find $u \in \widetilde{W}^s_p(\Omega)$ such that
\begin{equation}\label{eq:frac-pLap-weak}
a_u(u, v) = \langle f , v \rangle \quad \forall v \in \widetilde{W}^s_p(\Omega),
\end{equation}
where $\langle \cdot, \cdot \rangle$ denotes the duality pairing between $W^{-s}_{p'}(\Omega)$ and $\widetilde{W}^s_p(\Omega)$ and 
\begin{equation} \label{E:def-a-pLap}
a_u(w, v) := \frac{C_{d,s,p}}{2} \iint_{Q_{\Omega}}  \frac{|u(x)-u(y)|^{p-2} (w(x) - w(y)) (v(x)-v(y))}{|x-y|^{d+sp}} \, dx \, dy.
\end{equation}

Setting $v = u$ in \eqref{eq:frac-pLap-weak} leads to the stability estimate
\begin{equation} \label{eq:stability}
\|u\|_{\widetilde{W}^s_p(\Omega)} \lesssim \| f \|_{W^{-s}_{p'}(\Omega)}^{\frac1{p-1}}.
\end{equation}

Next, let us recall the following auxiliary identities, that follow by 
\cite[Lemmas 5.1--5.4]{Glowinski:75}: for all $a,b \in \R$, we have
\begin{equation*} \label{eq:aux-ineq}
\left| |a|^{p-2} a - |b|^{p-2} b \right| \le 
\left\lbrace \begin{array}{rl}
C |a-b|^{p-1} & \mbox{if } 1 < p \le 2, \\
C |a-b| (|a|+|b|)^{p-2} & \mbox{if } 2 \le p < \infty,
\end{array} \right.
\end{equation*}
and
\begin{equation} \label{eq:aux-ineq2}
\left( |a|^{p-2} a - |b|^{p-2} b \right) (a - b) \ge 
\left\lbrace \begin{array}{rl}
\alpha |a-b|^2 (|a|+|b|)^{p-2} & \mbox{if } 1 < p \le 2, \\
\alpha  |a-b|^p & \mbox{if } 2 \le p < \infty.
\end{array} \right.
\end{equation}
The constants $C$ and $\alpha$ above only depend on $p$.
The inequalities \eqref{eq:aux-ineq2} immediately give rise to the following monotonicity estimates for the fractional $(p,s)$-Laplacian.

\begin{lemma}[monotonicity]\label{lem:pLap-mono}
If $2 \le p < \infty$, there exists $\alpha > 0$ such that 
\begin{equation}\label{eq:monotonicity-pge2}
\langle (-\Delta)^s_p u - (-\Delta)^s_p v, u - v \rangle = a_u(u, u-v) - a_v(v, u-v) \ge \alpha \| u - v\|_{\widetilde{W}^s_p(\Omega)}^p \quad \forall u,v \in \widetilde W^s_p(\Omega).
\end{equation}
In the case $1< p < 2$, we have
\begin{equation}\label{eq:monotonicity-ple2}
\begin{aligned}
\langle (-\Delta)^s_p u - (-\Delta)^s_p v, u - v \rangle &= a_u(u, u-v) - a_v(v, u-v) \\
&\ge \alpha \|u - v|_{\widetilde{W}^s_p(\Omega)}^2 \left( \|u\|_{\widetilde{W}^s_p(\Omega)} + \|v\|_{\widetilde{W}^s_p(\Omega)} \right)^{p-2} \quad \forall u,v \in \widetilde W^s_p(\Omega).
\end{aligned}
\end{equation}	
\end{lemma}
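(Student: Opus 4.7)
The natural starting point is to insert the definition \eqref{E:def-a-pLap} of $a_u$ into the duality pairing and use the pointwise inequalities \eqref{eq:aux-ineq2}. Abbreviating $a := u(x) - u(y)$ and $b := v(x) - v(y)$, so that $a-b = (u-v)(x) - (u-v)(y)$, one has
\begin{equation*}
\langle (-\Delta)^s_p u - (-\Delta)^s_p v, u - v \rangle = \frac{C_{d,s,p}}{2} \iint_{Q_\Omega} \frac{\big(|a|^{p-2}a - |b|^{p-2}b\big)(a-b)}{|x-y|^{d+sp}} \, dx \, dy.
\end{equation*}
For $p \ge 2$, the second estimate in \eqref{eq:aux-ineq2} applied pointwise to the integrand gives $(|a|^{p-2}a - |b|^{p-2}b)(a-b) \ge \alpha |a-b|^p$, and integrating yields \eqref{eq:monotonicity-pge2} immediately, with the correct normalization coming from the factor $C_{d,s,p}/2$ already present in the definition of $\|\cdot\|_{\widetilde W^s_p(\Omega)}$.

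The case $1 < p < 2$ is the genuinely interesting one. The first estimate in \eqref{eq:aux-ineq2} gives the pointwise lower bound $(|a|^{p-2}a - |b|^{p-2}b)(a-b) \ge \alpha |a-b|^2 (|a|+|b|)^{p-2}$, which upon integration produces a weighted quantity with a degenerate weight $(|a|+|b|)^{p-2}$ (note $p-2<0$). To recover the full Gagliardo seminorm of $u-v$ from this degenerate expression, I would exploit the factorization
\begin{equation*}
|a-b|^p = \big[|a-b|^2(|a|+|b|)^{p-2}\big]^{p/2} \cdot \big[(|a|+|b|)^{p}\big]^{(2-p)/2},
\end{equation*}
which is verified by checking that the combined exponent of $(|a|+|b|)$ is $(p-2)\tfrac{p}{2} + p\tfrac{2-p}{2} = 0$. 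Splitting the kernel accordingly as $|x-y|^{-(d+sp)} = |x-y|^{-(d+sp)p/2} \cdot |x-y|^{-(d+sp)(2-p)/2}$ and applying H\"older's inequality with conjugate exponents $2/p$ and $2/(2-p)$ leads to
\begin{equation*}
\iint_{Q_\Omega} \frac{|a-b|^p}{|x-y|^{d+sp}} \, dxdy \le \left(\iint_{Q_\Omega} \frac{|a-b|^2(|a|+|b|)^{p-2}}{|x-y|^{d+sp}}\, dxdy\right)^{p/2} \left(\iint_{Q_\Omega} \frac{(|a|+|b|)^p}{|x-y|^{d+sp}}\, dxdy\right)^{(2-p)/2}.
\end{equation*}
By Minkowski's inequality, the last factor is controlled by $\big(\|u\|_{\widetilde W^s_p(\Omega)} + \|v\|_{\widetilde W^s_p(\Omega)}\big)^{p(2-p)/2}$ up to a constant depending only on $C_{d,s,p}$, and the first factor is controlled by the lower bound already derived for $\langle (-\Delta)^s_p u - (-\Delta)^s_p v, u-v\rangle$. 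Rearranging this chain of inequalities (dividing by the positive quantity $(\|u\|+\|v\|)^{p(2-p)/2}$ and raising to the $2/p$ power) reconstructs $\|u-v\|_{\widetilde W^s_p(\Omega)}^2$ on the left and yields \eqref{eq:monotonicity-ple2}.

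The case $p\ge 2$ presents no obstacle beyond substituting a pointwise inequality and integrating. The main technical difficulty lies in the sub-quadratic regime $1<p<2$, where the pointwise monotonicity inequality is intrinsically degenerate: the weight $(|a|+|b|)^{p-2}$ becomes small precisely where the difference quotients of $u$ and $v$ are large. The H\"older/Minkowski argument above is exactly the device that converts this degenerate weighted estimate into a clean bilateral inequality between $\widetilde W^s_p$-seminorms; keeping track of constants (so that a single $\alpha=\alpha(p)$ suffices) is the only bookkeeping subtlety.
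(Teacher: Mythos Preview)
Your proof is correct and follows the same route as the paper, which simply states that the lemma follows ``immediately'' from the pointwise inequalities \eqref{eq:aux-ineq2} and gives no further detail. For $p\ge 2$ your argument is indeed a direct substitution and integration, exactly as the paper intends; for $1<p<2$ you have supplied the H\"older--Minkowski step needed to pass from the degenerate weighted integral $\iint |a-b|^2(|a|+|b|)^{p-2}|x-y|^{-d-sp}\,dxdy$ to the clean form $\|u-v\|_{\widetilde W^s_p(\Omega)}^2(\|u\|_{\widetilde W^s_p(\Omega)}+\|v\|_{\widetilde W^s_p(\Omega)})^{p-2}$, a standard maneuver that the paper leaves implicit.
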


For $p \ge 2$ the operator $(-\Delta)^s_p$ is $p$-coercive but for $1<p<2$ the operator is $2$-coercive on bounded sets of $\widetilde W^s_p(\Omega)$. Namely, if $u,v \in \widetilde W^s_p(\Omega)$ satisfy $\|u\|_{\widetilde W^s_p(\Omega)}, \|v\|_{\widetilde W^s_p(\Omega)} \le C$, then \eqref{eq:monotonicity-ple2} gives
\[
C^{2-p} \langle (-\Delta)^s_p u - (-\Delta)^s_p v, u - v \rangle \gtrsim  \|u - v\|_{\widetilde{W}^s_p(\Omega)}^2.
\]

We can now assess the continuity properties of the solution operator $f \mapsto u_f$ of \eqref{eq:frac-pLap-weak}. If we assume $f \in W^{-s}_{p'}(\Omega)$ with $\| f \|_{W^{-s}_{p'}(\Omega)} = K$, then $\|u\|_{\widetilde W^s_p(\Omega)} \le K^{\frac{1}{p-1}}$ by \eqref{eq:stability}. Therefore, the operator $f \mapsto u_f$ defined on
\[
\overline B_K := \{ f \in W^{-s}_{p'}(\Omega) \colon \| f \|_{W^{-s}_{p'}(\Omega)} \le K \} \mapsto \widetilde W^s_p(\Omega)
\]
happens to be Lipschitz continuous on $\overline B_K$ provided $1 < p < 2$, namely
\begin{equation*} \label{eq:local-Lip-solution-op}
\|u_f - u_g \|_{\widetilde{W}^s_p(\Omega)} \le K^{\frac{2-p}{p-1}} \| f - g \|_{W^{-s}_{p'}(\Omega)} \quad \forall f,g \in \overline B_K.
\end{equation*}
In contrast, for $p \ge 2$, the solution map is H\"older continuous on $W^{-s}_{p'}(\Omega)$ in view of\eqref{eq:monotonicity-pge2},
\[
\|u_f - u_g \|_{\widetilde{W}^s_p(\Omega)} \le \alpha^{-\frac{1}{p-1}} \| f - g \|_{W^{-s}_{p'}(\Omega)}^{\frac{1}{p-1}}.
\]

\subsubsection{\bf Besov regularity.} \label{sec:Besov-regularity-p}
Using the coercivity of the energy functional $I_{s,p}$, it is possible to proceed in the same way as in \Cref{sec:Besov-regularity} to obtain Besov regularity estimates provided the bounded domain $\Omega$ has a Lipschitz boundary. As a consequence of \Cref{lem:pLap-mono}, the following inequalities hold for the solution $u$ of \eqref{eq:frac-pLap-weak}:
\begin{alignat}{2}
	\|v - u\|_{\widetilde{W}^s_p(\Omega)}^p &\lesssim I_{s,p}[v] - I_{s,p}[u] \quad && \mbox{ if } p \ge 2, \label{eq:energy-coercive-p>2} \\
	\left( \|u\|_{\widetilde{W}^s_p(\Omega)} + \|v\|_{\widetilde{W}^s_p(\Omega)} \right)^{p-2} \|v - u\|_{\widetilde{W}^s_p(\Omega)}^2 & \lesssim I_{s,p}[v] - I_{s,p}[u] \quad && \mbox{ if } 1 <p < 2 \label{eq:energy-coercive-p<2}.
\end{alignat}        

In our proof of Besov regularity theory, these inequalities play the same role as \eqref{eq:stationarity} does in the linear case. We write $I_{s,p}[v] = \calF_p(v) - \calF_1(v)$, where
\begin{equation*}\label{eq:pLap}
\calF_1(v) := \int_\Omega fv, \quad
\calF_p(v) := \frac1p \| v \|_{\wt{W}^s_p(\Omega)}^p \quad\forall \, v \in \wt{W}^s_p(\Omega),
\end{equation*}
and consider the localized translation operator $T_h$ defined in \eqref{eq:translation}. 
Then, for $\sigma \in (0,1]$ and $t \in (-1,1+\sigma)$, we have the following modifications to \eqref{eq:regularity-F1} and  \eqref{eq:regularity-F2} \cite{BoLiNo22}:
\begin{align*}
\sup_{h\in D}
  \frac{|\calF_1(T_hv) - \calF_1(v)|}{|h|^\sigma}
  & \lesssim \|f\|_{B_{p',1}^{t}(D_{2\rho}(x_0) \cap\Omega)} |v|_{B^{\sigma-t}_{p,\infty}(D_{2\rho}(x_0))}
  \qquad\forall v \in \dot{B}^{\sigma-t}_{p,\infty}(D_{2\rho}(x_0));
\\
\sup_{h\in D}
\frac{|\calF_p(T_hv) - \calF_p(v)|}{|h|^\sigma}
& \lesssim \iint_{Q_{D_{2\rho}(x_0)}} \frac{|v(x)-v(y)|^p}{|x-y|^{d+sp}} dydx  \qquad \forall v \in \wt{W}^s_p(\Omega).
\end{align*}
Above, $D=\calC_\rho(\vn(x_0),\theta)$ is a cone of admissible directions (cf. \eqref{eq:admissible-cone}), so that we have $T_h v \in \wt{W}^s_p(\Omega)$ for all $v \in \wt{W}^s_p(\Omega)$ and $h \in D$.

Arguing similarly to the proof of \Cref{T:Besov_regularity}, in  \cite{BoLiNo22} we obtain the following result.
\begin{theorem}[shift property for the fractional $(p, s)$-Laplacian]\label{T:Besov_regularity_pLap}
	Let $p \in (1,\infty), r \in (0, 1]$, $\Omega$ be a bounded Lipschitz domain, and $u$ be the solution to the fractional $(p, s)$-Laplace equation \eqref{eq:frac-pLap-weak}.
	
	If $p \ge 2$ and $f \in B^{-s+\frac{r}{p'}}_{p',1}(\Omega)$, then we have $u \in \dot B^{s+\frac{r}{p}}_{p,\infty}(\Omega)$ with 
	\begin{equation*} \label{eq:max-reg-p-big}
	\| u \|_{\dot B^{s+\frac{r}{p}}_{p,\infty}(\Omega)} \le C(\Omega,d,p) \| f \|_{B^{-s+\frac{r}{p'}}_{p',1}(\Omega)}^{\frac1{p-1}}.
	\end{equation*}
	
	If $p < 2$ and $f \in B^{-s+\frac{r}{2}}_{p',1}(\Omega)$, then we have $u \in \dot B^{s + \frac{r}{2}}_{p,\infty}(\Omega)$ with 
	\begin{equation*} \label{eq:max-reg-p-small}
	\| u \|_{\dot B^{s + \frac{r}{2}}_{p,\infty}(\Omega)} \le C(\Omega,d,p) \|f\|_{W^{-s}_{p'}(\Omega)}^{\frac{2-p}{p-1}} \| f \|_{B^{-s+\frac{r}{2}}_{p',1}(\Omega)}.
	\end{equation*}
\end{theorem}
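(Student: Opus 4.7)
The plan is to mirror the proof of \Cref{T:Besov_regularity}, replacing the linear stationarity relation \eqref{eq:stationarity} by the energy coercivity inequalities \eqref{eq:energy-coercive-p>2}--\eqref{eq:energy-coercive-p<2}. First I would localize the translation to an admissible direction $h \in D = \calC_\rho(\vn(x_0), \theta)$ as in \eqref{eq:translation}--\eqref{eq:admissible-cone}, decompose $I_{s,p}[v] = \calF_p(v) - \calF_1(v)$, and combine \Cref{lem:pLap-mono} with the modified functional estimates for $\calF_1$ and $\calF_p$ stated above to produce
\[
c_p(u) \|T_h u - u\|^m_{\wt{W}^s_p(\Omega)} \lesssim \big(\calF_p(T_h u) - \calF_p(u) \big) - \big(\calF_1(T_h u) - \calF_1(u)\big),
\]
where $(m, c_p(u)) = (p, 1)$ when $p \ge 2$ and $(m, c_p(u)) = \big(2, (\|u\|_{\wt{W}^s_p(\Omega)} + \|T_h u\|_{\wt{W}^s_p(\Omega)})^{p-2} \big)$ when $1 < p < 2$; in the latter case the stability bound \eqref{eq:stability} gives $c_p(u)^{-1} \lesssim \|f\|_{W^{-s}_{p'}(\Omega)}^{(2-p)/(p-1)}$.

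Next, I would divide by $|h|^\sigma$ with $\sigma \in (0, 1]$, take supremum over $h \in D$, and invoke the reiteration of Besov seminorms together with the localization property \eqref{eq:localization} over a covering of $\{x : \dist(x, \Omega) < \rho/2\}$ by balls $D_\rho(x_j)$ to obtain the global recursion formula, valid for every admissible $t \in (-1, 1+\sigma)$:
\begin{gather*}
|u|^p_{\dot B^{s+\sigma/p}_{p,\infty}(\Omega)} \lesssim \|f\|_{B^t_{p',1}(\Omega)} |u|_{\dot B^{\sigma-t}_{p,\infty}(\Omega)} + \|u\|^p_{\wt{W}^s_p(\Omega)} \qquad (p \ge 2), \\
|u|^2_{\dot B^{s+\sigma/2}_{p,\infty}(\Omega)} \lesssim \|f\|^{(2-p)/(p-1)}_{W^{-s}_{p'}(\Omega)} \Big(\|f\|_{B^t_{p',1}(\Omega)} |u|_{\dot B^{\sigma-t}_{p,\infty}(\Omega)} + \|u\|^p_{\wt{W}^s_p(\Omega)} \Big) \qquad (1 < p < 2).
\end{gather*}
Starting from $\sigma_0 = s$ (granted by \eqref{eq:stability}) and fixing the Besov exponent for $f$ by setting $t = -s + r/p'$ for $p \ge 2$ and $t = -s + r/2$ for $1 < p < 2$, I would choose $\sigma = \sigma_k - t$ at each iteration step. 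A direct computation shows the one-step contractions $\sigma^* - \sigma_{k+1} = (\sigma^* - \sigma_k)/m$ with $\sigma^* = s + r/m$, where $m = p$ if $p \ge 2$ and $m = 2$ otherwise, yielding $\sigma_k = s + (r/m)(1 - m^{-k}) \to s + r/m$. The admissibility constraints $\sigma \in (0, 1]$ and $t \in (-1, 1+\sigma)$ are straightforward to check from $r \in (0, 1]$ and $s \in (0, 1)$.

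The remaining step is to propagate the inductive bound $|u|_{\dot B^{\sigma_k}_{p,\infty}(\Omega)} \le \Lambda_k \calN(f)$, where $\calN(f) = \|f\|^{1/(p-1)}_{B^{-s+r/p'}_{p',1}(\Omega)}$ for $p \ge 2$ and $\calN(f) = \|f\|^{(2-p)/(p-1)}_{W^{-s}_{p'}(\Omega)} \|f\|_{B^{-s+r/2}_{p',1}(\Omega)}$ for $1 < p < 2$. Plugging the inductive hypothesis into the recursion and employing the embedding $\|f\|_{W^{-s}_{p'}(\Omega)} \lesssim \|f\|_{B^{-s+r/p'}_{p',1}(\Omega)}$ (and its $p<2$ counterpart) gives an inequality of the form $\Lambda_{k+1}^m \le C_1 \Lambda_k + C_2$, from which uniform boundedness of $\{\Lambda_k\}$ follows exactly as in the proof of \Cref{T:Besov_regularity}; passing to $k \to \infty$ yields the two stated estimates. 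I expect the main technical obstacle to be establishing the refined control of $\calF_p$ under $T_h$ with the optimal scaling in $|h|^\sigma$: in the linear setting the bilinear argument combined with operator interpolation handles $\calF_1$ cleanly, but for $\calF_p(v) = p^{-1}\|v\|^p_{\wt{W}^s_p(\Omega)}$ the $p$-th power structure forces a more delicate multilinear argument, and the additional $\|f\|^{(2-p)/(p-1)}_{W^{-s}_{p'}(\Omega)}$ prefactor in the $1 < p < 2$ regime must be tracked carefully throughout the iteration.
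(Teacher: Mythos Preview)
Your proposal is correct and follows essentially the same route the paper indicates: replace the linear stationarity identity \eqref{eq:stationarity} by the coercivity inequalities \eqref{eq:energy-coercive-p>2}--\eqref{eq:energy-coercive-p<2}, invoke the $L^p$ analogues of the functional bounds for $\calF_1$ and $\calF_p$ (already stated in the paper), and iterate as in \Cref{T:Besov_regularity} with the fixed point $\sigma^* = s + r/m$ and contraction ratio $1/m$ for $m \in \{p,2\}$. One small slip: to make the right-hand side of your recursion land on $\dot B^{\sigma_k}_{p,\infty}$ you need $\sigma = \sigma_k + t$ rather than $\sigma = \sigma_k - t$; with this correction your fixed-point and contraction computations are exactly the ones you wrote, and the bound $\|T_h u\|_{\wt W^s_p(\Omega)} \lesssim \|u\|_{\wt W^s_p(\Omega)}$ needed for the $p<2$ prefactor follows directly from the stated $\calF_p$ estimate.
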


By combining \Cref{T:Besov_regularity_pLap} with the embedding $\dot{B}^\sigma_{p,\infty}(\Omega) \subset \wt{W}^{\sigma-\eps}_p(\Omega)$ 
\[
\|v\|_{\wt{W}^{\sigma-\eps}_p(\Omega)} \lesssim \eps^{-1/p} \|v\|_{\dot{B}^\sigma_{p,\infty}(\Omega)},
\]
valid for $\sigma \in (0,2)$ and $\eps > 0$ such that $\sigma-\eps$ is not an integer\footnote{This estimate follows by the same steps as \eqref{eq:Besov-Sobolev-emb}, with the caveat that $\dot B^{1}_{p,p}(\Omega) = \wt{W}^{1}_p(\Omega)$ only if $p = 2$ \cite[\S7.67]{AdamsFournier:2003}.} , we obtain the Sobolev estimates for the fractional $(p, s)$-Laplacian for  $r \in (0, 1]$
\begin{alignat}{2}
&\| u \|_{\wt{W}^{s+\frac{r}{p} - \eps}_p(\Omega)} \le \frac{C(\Omega,d,p)}{\eps^{1/p}} \| f \|_{B^{-s+\frac{r}{p'}}_{p',1}(\Omega)}^{\frac1{p-1}} && \qquad \text{ if }  p > 2, \; f \in B^{-s+\frac{r}{p'}}_{p',1}(\Omega), 
\label{eq:sob-regularity-pLap-p>2}
\\
& \| u \|_{\wt{W}^{s+\frac{r}{2} - \eps}_p(\Omega)} \le \frac{C(\Omega,d,p)}{\eps^{1/p}} \|f\|_{W^{-s}_{p'}(\Omega)}^{\frac{2-p}{p-1}} \| f \|_{B^{-s+\frac{r}{2}}_{p',1}(\Omega)} && \qquad \text{ if }  1 < p < 2, \; f \in B^{-s+\frac{r}{2}}_{p',1}(\Omega).
\label{eq:sob-regularity-pLap-p<2}
\end{alignat}

These estimates are valid up to the boundary of the domain and, similarly to the linear case, indicate that the upper bound on regularity is due to boundary behavior.
In the superquadratic case $p\ge 2$, reference \cite{MR3558212} derives the following higher-order interior regularity provided $f \in W^s_{p'}(\Omega)$:
\[ \begin{split}
\mbox{if } s \le \frac{p-1}{p+1}, & \quad \mbox{then } u \in \bigcap_{\eps >0} W^{s \frac{p+1}{p-1} - \eps}_{p, loc}(\Omega), \\
\mbox{if } s > \frac{p-1}{p+1}, & \quad \mbox{then } u \in W^1_{p,loc}(\Omega) \mbox{ and } \nabla u \in \bigcap_{\eps >0} W^{s \frac{p+1}{p} - \frac{p-1}{p} - \eps}_{p, loc}(\Omega).
\end{split}\]

\subsubsection{\bf H\"older regularity.} \label{sec:Holder-regularity-p}

In recent years, there has been a significant progress in the study of the H\"older regularity of solutions to \eqref{eq:Dirichlet-p}. We refer to \cite{MR3631069} for a thorough discussion and references on the topic, and here we briefly mention some results regarding boundary regularity. If the data $f$ is pointwise bounded and the domain $\Omega$ satisfies regularity assumptions similar to \Cref{sec:Holder-regularity} but stronger than \Cref{sec:Besov-regularity-p}, a remarkable result follows.

\begin{theorem}(global H\"older regularity for fractional $(p, s)$-Laplacians) \label{thm:Holder-reg-p}
If $\Omega \subset \Rd$ satisfies the exterior ball condition, then there exists $\alpha \in (0, s]$ depending only on $d, p$ and $s$ such that all solutions to the fractional $(p,s)$-Laplace problem \eqref{eq:frac-pLap-weak} satisfy
\[
\Vert u \Vert_{C^{\alpha}(\overline{\Omega})} \le C(\Omega, d, p, s) \Vert f \Vert_{L^{\infty}(\Omega)}^{1/(p-1)}.
\]
In addition, for $p \ge 2$, the above estimate holds for $\alpha = s$. 
\end{theorem}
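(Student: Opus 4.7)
The plan is to combine three ingredients from the nonlocal regularity theory for $(-\Delta)^s_p$: a global $L^\infty$ bound, interior H\"older regularity for nonlocal operators \`a la De Giorgi--Nash--Moser, and boundary barrier arguments enabled by the exterior ball condition. First, I would establish the $L^\infty$ bound $\|u\|_{L^\infty(\Omega)} \le C \|f\|_{L^\infty(\Omega)}^{1/(p-1)}$ either via a standard De Giorgi truncation on $\wt{W}^s_p(\Omega)$ or by comparison with the explicit torsion solution on a ball $B \supset \Omega$ (this last step uses the weak comparison principle for $(-\Delta)^s_p$, which follows from the monotonicity in Lemma \ref{lem:pLap-mono}).

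Second, I would invoke interior H\"older regularity for solutions to $(-\Delta)^s_p u = f$ with $f \in L^\infty$: solutions belong to $C^\beta_{\rm loc}(\Omega)$ for some $\beta = \beta(d,p,s) \in (0,s]$, with the bound $[u]_{C^\beta(K)} \lesssim \|u\|_{L^\infty(\Rd)} + \|f\|_{L^\infty(\Omega)}^{1/(p-1)}$ on any compact set $K \Subset \Omega$ (with constants depending on $\dist(K,\partial\Omega)$). This is the nonlocal Di~Giorgi--Nash--Moser theory built from nonlocal Caccioppoli inequalities and logarithmic/energy estimates (Di~Castro--Kuusi--Palatucci and successive refinements). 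Rescaling this interior estimate around a point $x \in \Omega$ at scale $r = \tfrac12 d(x,\partial\Omega)$ and comparing it with a boundary bound $|u(y)| \le C d(y,\partial\Omega)^\alpha$ will yield the global H\"older norm by a covering/annular decomposition.

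Third, and most critically, I would control $u$ up to the boundary using barriers built from the exterior ball condition. At each $x_0 \in \partial\Omega$ there is a ball $B_R(z_0) \subset \Omega^c$ tangent to $\partial\Omega$ at $x_0$; on such a configuration one computes $(-\Delta)^s_p w$ for the power-type candidate $w(x) = \bigl[(|x-z_0|^2 - R^2)_+\bigr]^\alpha$ (or equivalently a multiple of $d(x,\partial\Omega)^\alpha$) and checks that, for an appropriate $\alpha = \alpha(d,p,s) \in (0,s]$, the resulting $(-\Delta)^s_p w$ is pointwise bounded in a neighborhood of $x_0$. The weak comparison principle then upgrades this to $|u(x)| \le C\,d(x,\partial\Omega)^\alpha$ near the boundary. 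In the superquadratic case $p\ge 2$, the analogous asymptotic computation shows that $w(x) = d(x,\partial\Omega)^s$ itself is an admissible barrier (mirroring the linear $p=2$ behavior \eqref{eq:boundary_behavior} and the Getoor-type profile \eqref{eq:getoor}), which is what allows the improvement to $\alpha = s$.

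I expect the main obstacle to be the barrier computation for general $p \in (1,\infty)$: verifying that the candidate power-type profile is a (super)solution of $(-\Delta)^s_p$ up to a bounded remainder is delicate because of the nonlinear weight $|u(x)-u(y)|^{p-2}$, which couples the integration kernel to the profile itself. Unlike the linear case, no Fourier or Bessel-potential representation is available, and one must instead estimate the singular integral by splitting into scales relative to $d(x,\partial\Omega)$ and exploiting the explicit one-sided geometry given by the exterior ball. The choice of the admissible $\alpha$, and the fact that $\alpha = s$ is attained precisely for $p\ge 2$, comes out of this asymptotic balance between the main singular contribution near $x$ and the nonlocal tail, and is the place where the condition $p\ge 2$ enters in an essential way.
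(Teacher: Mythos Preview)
The paper does not prove this theorem: immediately after the statement it simply records that ``The theorem above follows from \cite[Theorem 1.1]{Iannizzotto:14} and \cite[Theorem 1.4]{MR3861716}.'' Your proposal is therefore considerably more detailed than what the paper offers, and in fact correctly reconstructs the strategy of those cited works: an $L^\infty$ bound (via comparison or truncation), interior De~Giorgi--Nash--Moser regularity for the nonlocal operator, and the boundary barrier argument of Iannizzotto--Mosconi--Squassina based on the exterior ball condition. The paper even reinforces your barrier picture by noting (citing \cite[Lemma 3.1]{Iannizzotto:14}) that $u(x)=x_+^s$ is $(p,s)$-harmonic on the half-line, which is exactly the model computation underlying your third step.

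One small caveat: the sharpening to $\alpha=s$ for $p\ge 2$ is attributed to \cite[Theorem 1.4]{MR3861716}, and that result does not follow from the barrier alone. The barrier gives the boundary decay $|u(x)|\lesssim d(x,\partial\Omega)^s$, but passing from this to genuine $C^s$ regularity up to the boundary requires matching the interior oscillation estimate at scale $r\sim d(x,\partial\Omega)$ with the boundary bound, and for this one needs an interior H\"older exponent at least $s$. In \cite{MR3861716} this is obtained via a finer iteration specific to the superquadratic case. Your sketch implicitly assumes the interior exponent $\beta$ is at least $s$, which is precisely the nontrivial input from that second reference.
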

The theorem above follows from \cite[Theorem 1.1]{Iannizzotto:14} and \cite[Theorem 1.4]{MR3861716}. In the superquadratic case, the boundary behavior can be refined \cite{MR4109087}: the function $x \mapsto u(x)/d(x,\partial\Omega)^s$ is of class $C^\alpha$ up to the boundary of the domain for some $\alpha \in (0,s]$.

\Cref{thm:Holder-reg-p} can be regarded as an extension of \eqref{eq:Holder-regularity} to the nonlinear problem with $p \neq 2$.
In fact, for $f \in L^\infty(\Omega)$, the boundary behavior of solutions is the same as in the linear problem independently of the value of $p \in (1,\infty)$:
\cite[Lemma 3.1]{Iannizzotto:14} shows that $u(x) = x_+^s$ satisfies
\[
(-\Delta)^s_p u(x) = 0
\]
for $x > 0$. To illustrate the boundary behavior
\begin{equation}\label{eq:u-boundary}
|u(x)| \lesssim \dist(x, \partial \Omega)^s,
\end{equation}
proved in \cite[Theorem 4.4]{Iannizzotto:14} for bounded domains $\Omega$ satisfying the exterior ball condition,
in \Cref{fig:frac-ps-Lap-1d-multi-p-zoom} we present some numerical experiments for $\Omega = (-0.5, 0.5) \subset \R$ and $f = 1$ and different choices of $p$.
We plot the computed solution $u_h$ near the boundary,
and observe a stable boundary behavior for different $p$, in agreement with \eqref{eq:u-boundary}.

\begin{figure}[!htb]
	\begin{center}
		\includegraphics[width=0.45\linewidth]{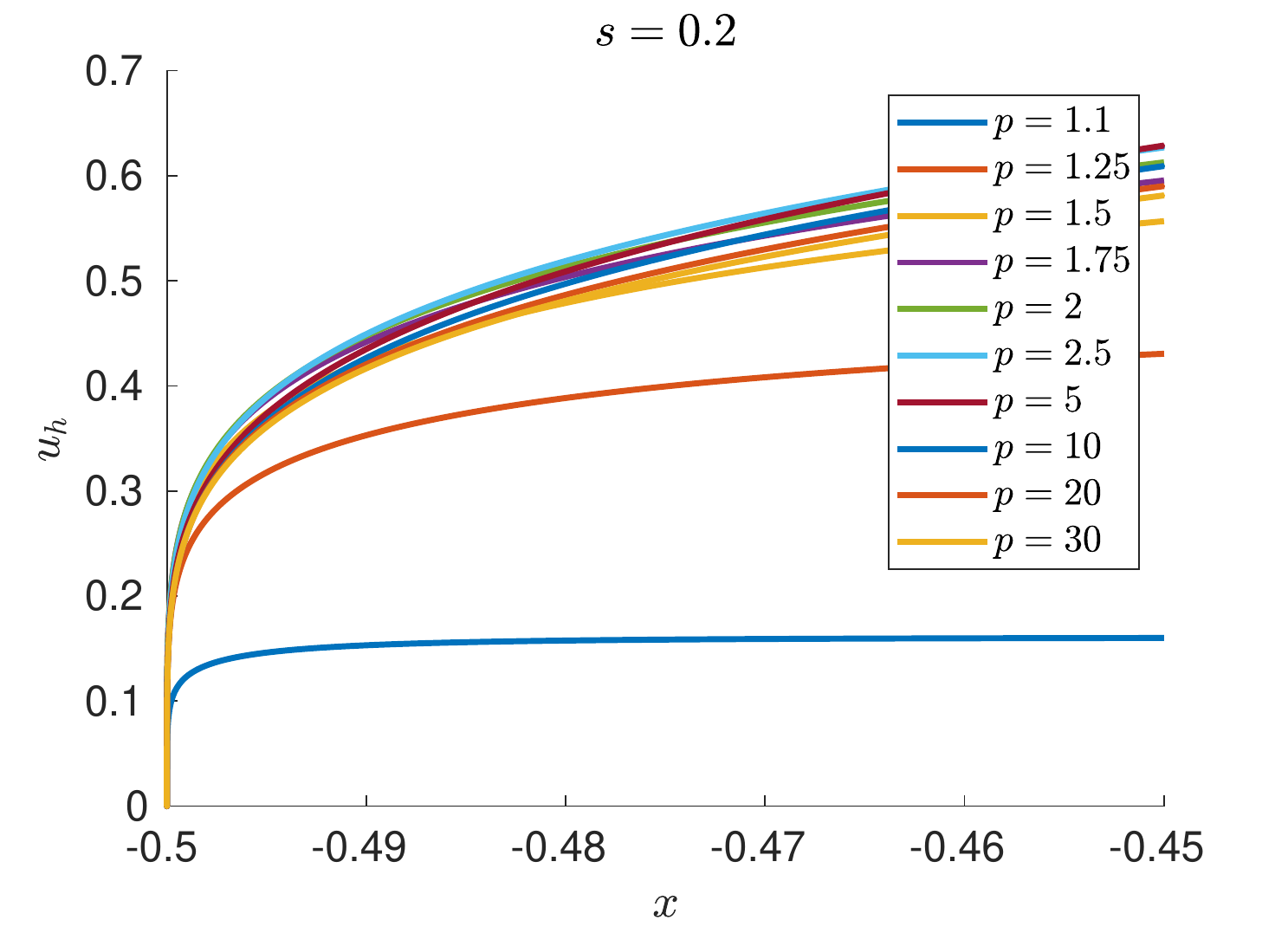}
		\includegraphics[width=0.45\linewidth]{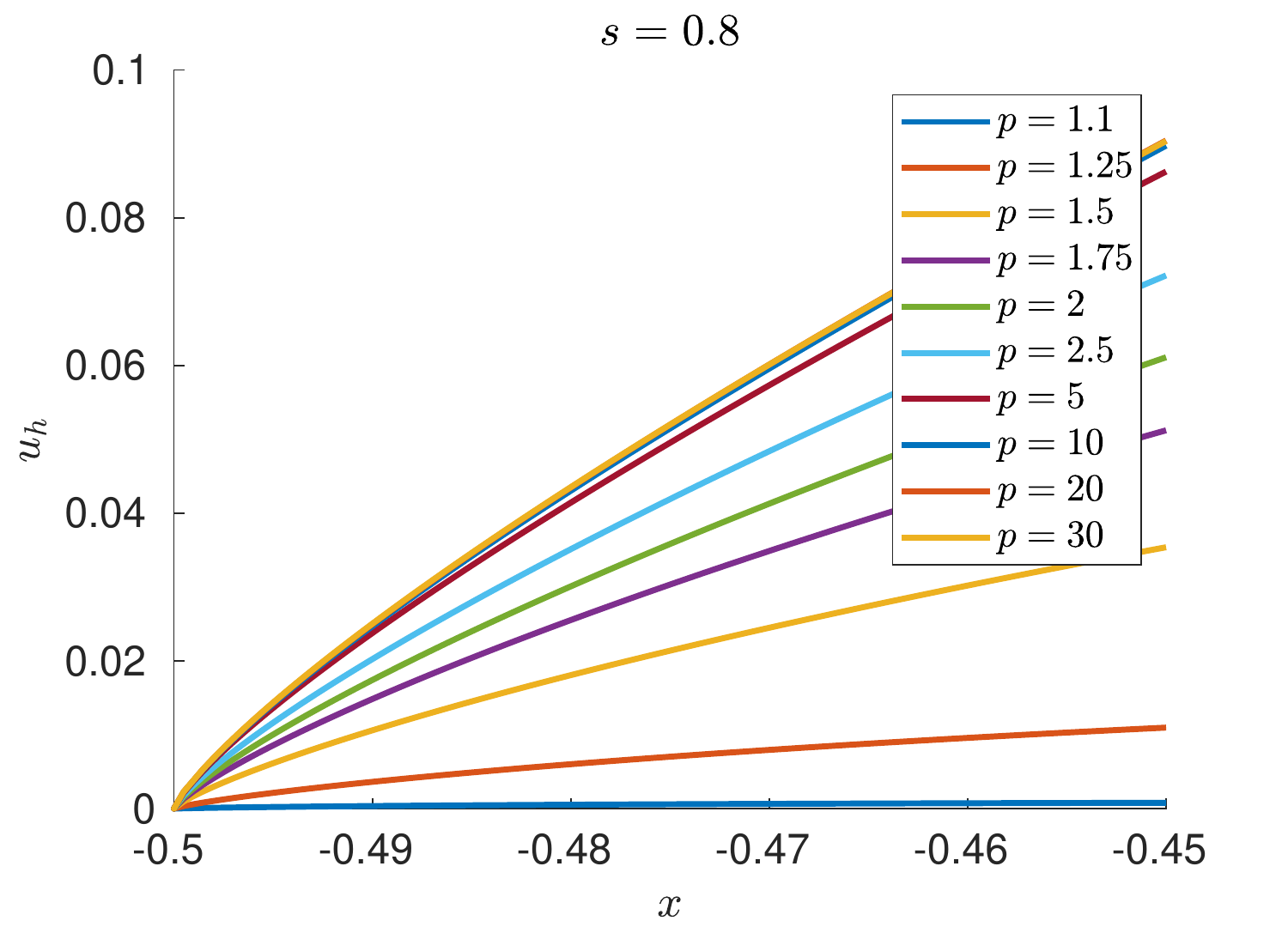} 
	\end{center}
	\caption{Boundary behavior of numerical solutions in $\Omega = (-0.5, 0.5)$ for $s = 0.2, 0.8$ and $f = 1$. }
	\label{fig:frac-ps-Lap-1d-multi-p-zoom}
\end{figure}

Unlike the linear problem with $p = 2$, even if the right hand side $f$ is smooth, in general one cannot expect the solution $u$ to be smooth in the interior of $\Omega$. 
In the superquadratic case $p\ge 2$, \cite[Theorem 1.4]{MR3861716} obtains the following interior H\"older regularity 
	\[
	u \in \bigcap_{\eps >0} C^{\theta - \eps}_{loc}(\Omega) \; \text{ with } \; \theta = \min\left\{\frac{1}{p-1}\left(sp - \frac{d}{q}\right), 1 \right\},
	\]
	provided that $f \in L^q_{\text{loc}}(\Omega)$ for $q \ge 1, q > d/(sp)$. As illustrated by \cite[Examples 1.5 and 1.6]{MR3861716}, this regularity is almost sharp when $s \le (p-1)/p + d/(pq)$.

Finally, we point out that in case $f$ is a Borel measure with finite total mass, reference \cite{MR3339179} proves the existence of the so-called SOLA (Solutions Obtained as Limits of Approximations) provided $p > 2 - s/d$. Additionally, in case $f$ belongs to the Lorentz space $L_{\text{loc}}^{\frac{d}{sp},\frac1{p-1}}(\Omega)$ with $sp < d$, \cite[Corollary 1.2]{MR3339179} proves the sharper regularity $u \in C(\Omega)$.

\subsection{Finite element discretization and implementation}
We discuss the finite element approximation of fractional minimal graphs \eqref{E:WeakForm-NMS-Graph}--\eqref{E:def-a-NMS} (or prescribed fractional mean curvature problems, that correspond to a non zero right-hand side) and of fractional $p$-Laplacians \eqref{eq:frac-pLap-weak}--\eqref{E:def-a-pLap}.

An important point to address in the implementation of nonlocal operators with infinite interaction range is the treatment of boundary conditions, which are prescribed on the unbounded set $\Omega^c$. For homogeneous conditions, one can perform tricks as discussed in Section \ref{sec:FE}; in general, we resort to computational domains containing $\Omega$ and a suitable truncation of the exterior data. Given $H>0$, we let $\Omega_H$ be a bounded open domain with $\Omega \subset\Omega_H$ and $d(x, \overline\Omega) \simeq H$ for all $x \in \partial \Omega_H$.  
We fix a cutoff function $\eta_H \in C^\infty(\Omega^c)$ satisfying
\begin{equation*} \label{eq:cutoff}
0\le \eta_H \le 1, \quad  \text{supp}(\eta_H)\subset \overline{\Omega}_{H+1} \setminus \Omega,  \quad \eta_H(x)=1 \quad \mbox{in} \quad \Omega_{H} \setminus \Omega  .
\end{equation*}
We replace $g$ by $g_H := g \eta_H$ as Dirichlet condition in problem \eqref{E:WeakForm-NMS-Graph}. Here we shall not discuss the effect of this boundary truncation, but refer to \cite{BoLiNo21} for a detailed study; naturally, one requires $H \to \infty$ as $h \to 0$ in order to guarantee convergence. From now on, we assume there exists a bounded set $\Lambda$ such that $\text{supp}(g)\subset \Lambda \subset \Omega_H$, where $\Omega_H$ is the computational domain.

To impose the condition $u = g$ in $\Omega^c$ at the discrete level, we introduce an exterior interpolation operator
\begin{equation*} \label{eq:ex-Clement}
\Pi_h^c g := \sum_{\x_i \in \mathcal{N}_h^c} (\Pi_h^{\x_i} g) (\x_i) \; \varphi_i,
\end{equation*}
where $\Pi_h^{\x_i} g$ is the $L^2$-projection of  $g\big|_{S_i \cap \Omega^c}$ onto $\mathcal{P}_1(S_i \cap \Omega^c)$. Thus, $\Pi_h^c g (\x_i)$ coincides with the standard Cl\'ement interpolation of $g$ on $\x_i$ for all nodes $\x_i$ such that $S_i \subset \Rd \setminus \overline{\Omega}$. On the other hand, for nodes $\x_i \in\partial\Omega$, $\Pi_h^c$ only averages over the elements in $S_i$ that lie in $\Omega^c$.

In the same fashion as in Section \ref{sec:FE}, we use discrete spaces consisting in piecewise linear, continuous functions over $\calT$,
\[
\mathbb{V}_h :=  \{ v \in C(\Omega_H) \colon v|_T \in \mathcal{P}_1 \; \forall T \in \calT \}.
\]
It is clear that these spaces depend on the computational domain size parameter $H>0$; for easiness of notation and because we assume $H$ is fixed (and sufficiently large so that $\Lambda \subset \Omega_H$), we shall omit such a dependence.
To account for the exterior data, we define the discrete counterpart of $\mathbb{V}^g$,
\[
\mathbb{V}_h^g := \{ v \in \mathbb{V}_h \colon \ v|_{\Omega_H \setminus \Omega} = \Pi_h^c g\}.
\]
Additionally, we denote by $\mathbb{V}_h^0$ the corresponding space in case $g \equiv 0$.
The discrete counterpart to \eqref{eq:quasilinear-weak} reads: find $u_h \in \mathbb{V}^g_h$ such that
\begin{equation}\label{E:WeakForm-discrete}
a_{u_h}(u_h, v_h) =  \langle f,v_h \rangle \quad \mbox{for all } v_h \in \mathbb{V}^0_h. 
\end{equation}

To solve this nonlinear discrete problem, we use the damped Newton scheme described in \Cref{alg:Damped-Newton}.
\begin{algorithm}
	\caption{Damped Newton Algorithm}
	\label{alg:Damped-Newton}
	\begin{algorithmic} 
		\State Select an arbitrary initial $u_h^0 \in \mathbb{V}^g_h$ and let $k = 0$. Choose 
		a small number $\text{Tol} > 0$.
		\While{ $ \Vert \{ a_{u_h^k}(u_h^k, \varphi_i) \}_{i=1}^m \Vert_{l^2} > \text{Tol}$ }
		\State Find $w_h^k \in \mathbb{V}^0_h$ such that
		\begin{equation}\label{E:damped-Newton-wh}
		\frac{\delta a_{u_h^k}(u_h^k, v_h)}{\delta u_h^k}(w_h^k)
		= -a_{u_h^k}(u_h^k, v_h), \qquad \forall v_h \in \mathbb{V}^0_h.
		\end{equation}
		\State Determine the minimum $n \in \mathbb{N}$ such that
		$u_h^{k,n} := u_h^k + 2^{-n} w_h^k$ satisfies
		\begin{equation*}
		\Vert \{ a_{u_h^k}(u_h^{k,n}, \varphi_i) \}_{i=1}^m \Vert_{l^2} \leq (1 - 2^{-n-1})
		\Vert \{ a_{u_h^k}(u_h^k, \varphi_i) \}_{i=1}^m \Vert_{l^2}
		\end{equation*}
		\State Let $u_h^{k+1} = u_h^{k,n} $ and $k = k+1$.
		\EndWhile
	\end{algorithmic}
\end{algorithm}
We exploit that discrete functions $v_h\in\mathbb{V}_h$ are Lipschitz and rewrite 
\[
a_{u_h}(u_h, v_h) = \iint_{Q_{\Omega}} G\Big(\frac{u_h(x)-u_h(y)}{|x-y|} \Big) \frac{v_h(x)-v_h(y)}{|x-y|^{d+2\sigma-1}}dxdy,
\]
where $G(r) := r \wt{G}(r)$ and $\wt{G}(r)$ is given in \eqref{E:def-a} with $t = 1$. We now compute the first variation
$\frac{\delta a_{u_h}(u_h, v_h)}{\delta u_h}(w_h)$ of $a_{u_h}(u_h, v_h)$ with respect to $u_h$
in the direction $w_h\in \mathbb{V}^0_h$,
or equivalently the second variation of $I_{s,p}[u_h]$ in the directions $v_h,w_h\in \mathbb{V}^0_h$, to get
\begin{equation*}\label{eq:2nd-variation}
 \frac{\delta a_{u_h}(u_h,v_h)}{\delta u_h}(w_h) = \iint_{Q_{\Omega}} \psi \left( \frac{u_h(x)-u_h(y)}{|x-y|} \right) \frac{(w_h(x)-w_h(y))(v_h(x)-v_h(y))}{|x-y|^{d+2\sigma}}dxdy,
\end{equation*}
where $\psi(r):=G'(r)$. For the nonlocal minimal graph problem, we have
\begin{equation*}\label{eq:2nd-variation-nmg}
  \psi(r) = (1+r^2)^{-(d+1+2s)/2}, \quad \sigma = s + \frac12.
\end{equation*}
In contrast, for the fractional $(p, s)$-Laplace equation, we let
\begin{equation*}\label{eq:2nd-variation-pLap}
  \psi(r) = \frac{(p-1)C_{d,s,p}}{2} \, |r|^{p-2}, \quad \sigma = 1 + \frac{(s-1)p}{2},
\end{equation*}
but realize that $\psi(0)$ is not well-defined whenever $p \in (1, 2)$. To overcome this issue in such a case, we introduce a small parameter $\eps >0$ and regularize the energy in \eqref{eq:frac-pLap-energy}, namely
\begin{equation*}\label{eq:frac-pLap-energy-reg}
\wt{I}_{s,p}[v] := \frac{C_{d,s,p}}{2p} \iint_{Q_{\Omega}} 
\left( \frac{|v(x) - v(y)|^2}{|x-y|^2} + \eps^2 \right)^{p/2}
\frac{dxdy}{|x-y|^{d+(s-1)p}}  - \int_{\Omega} f v.
\end{equation*}
Moreover, the discrete minimizer $u_h\in\mathbb{V}_h^g$ of $\wt{I}_{s,p}$ satisfies
\begin{equation}\label{eq:WeakForm-discrete-reg}
\wt{a}_{u_h}(u_h, v_h) =  \langle f,v_h \rangle \quad \mbox{for all } v_h \in \mathbb{V}^0_h,
\end{equation}
with
\[
\wt{a}_{u_h}(u_h, v_h) =  \iint_{Q_{\Omega}} 
G_\eps \Big( \frac{u_h(x) - u_h(y)}{|x-y|} \Big)
\frac{v_h(x) - v_h(y))}{|x-y|^{d+1 + (s-1)p}} dx dy,
\quad
G_\eps(r) = \frac{C_{d,s,p}}{2} \big( r^2 + \eps^2  \big)^{\frac{p}{2}-1} r.
\]
We then apply \Cref{alg:Damped-Newton} to find the discrete solution to the regularized problem \eqref{eq:WeakForm-discrete-reg}. Since
\[
\frac{\delta \wt{a}_{u_h}(u_h,v_h)}{\delta u_h}(w_h) = \iint_{Q_{\Omega}} \psi \left( \frac{u_h(x)-u_h(y)}{|x-y|} \right) \frac{(w_h(x)-w_h(y))(v_h(x)-v_h(y))}{|x-y|^{d+2+(s-1)p}}dxdy,
\]
with $\psi (r) = G'_\eps(r)$ given by
\[
\psi_\eps(r) = \frac{C_{d,s,p}}{2} \big((p-1)r^2 + \eps^2\big)\big(r^2 + \eps^2\big)^{\frac{p}{2} - 2},
\quad p \in (1,2),
\]
we deduce that $\frac{\delta \wt{a}_{u_h}(u_h,v_h)}{\delta u_h}(w_h)$ is well-defined for all $u_h \in \mathbb{V}_h^g$, $v_h,w_h \in \mathbb{V}_h^0$.
  
At each step of \Cref{alg:Damped-Newton}, problem \eqref{E:damped-Newton-wh} boils down to solving a linear system $\vK^k \vW^k = \vF^k$. The matrix $\vK^k = (\vK^k_{ij})$, given by
\[
\vK^k_{ij} = \iint_{Q_{\Omega}} \psi \left(\frac{u_h^k(x)-u_h^k(y)}{|x-y|}\right) \frac{(\varphi_i(x)-\varphi_i(y))(\varphi_j(x)-\varphi_j(y))}{|x-y|^{d+2\sigma}}dxdy ,
\]
is the stiffness matrix for a weighted linear problem of order $\sigma < 1$: in the nonlocal minimal graph problem we have $\sigma = s + 1/2$, while $\sigma = 1 - (1-s) p/2$ for the fractional $(p, s)$-Laplacian.
We compute this matrix and the right hand side vector $\vF^k_i = -a_{u_h^k}(u_h^k, \varphi_i)$ in a similar way as in the linear problem discussed in \Cref{sec:FE}.

\subsection{Convergence}
We now discuss the convergence of the finite element solutions to \eqref{E:WeakForm-discrete} towards the solution of the continuous problem \eqref{eq:quasilinear-weak} as the mesh size $h$ tends to zero. We shall follow different strategies for the fractional mean curvature problem and the Dirichlet problem for the fractional $p$-Laplacian.

\subsubsection{\bf Minimal graphs}
To prove the convergence of the finite element scheme, the approach in \cite{BoLiNo19analysis} consists of proving that the discrete energy is consistent and using a compactness argument.

\begin{theorem}[convergence for the nonlocal minimal graph problem]  \label{thm:consistency}
	Let $s \in (0,1/2)$, $\Omega$ be a bounded Lipschitz domain, and $g$ be uniformly bounded and satisfy $\text{supp}(g) \subset \Lambda$ for some bounded set $\Lambda$. Let $u$ and $u_h$ be the solutions to \eqref{E:WeakForm-NMS-Graph} and \eqref{E:WeakForm-discrete}, respectively. Then, it holds that
	\[
	\lim_{h \to 0} I_s[u_h] = I_s[u] \quad
	\mbox{ and } \quad
	\lim_{h \to 0} \| u - u_h \|_{W^{2r}_1(\Omega)} = 0 \quad \forall r \in [0,s).
	\]
\end{theorem}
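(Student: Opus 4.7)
The plan is to mimic a $\Gamma$-convergence argument: combine a consistency upper bound via interpolation, an \emph{a priori} uniform bound yielding compactness in $W^{2r}_1(\Omega)$ for $r<s$, lower semicontinuity of $I_s$ on the limit, and strict convexity of $I_s$ to identify the limit with the continuous minimizer $u$.

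First I would prove a uniform energy bound $I_s[u_h]\le C$ by discrete minimality: comparing $u_h$ against a Scott--Zhang interpolant of the zero-extension of $g$ (admissible thanks to $g\in L^\infty$ with compact support) bounds $I_s[u_h]$ independently of $h$. Since $F_s(\rho)\simeq\rho^2$ for $|\rho|\ll 1$ and $F_s(\rho)\simeq|\rho|$ for $|\rho|\gg 1$, this translates into a uniform $\mathbb{V}^g$-bound on $\{u_h\}$, hence precompactness in $W^{2r}_1(\Omega)$ for every $r\in[0,s)$. For the $\Gamma$-$\limsup$, I would build $I_h u_\delta \in \mathbb{V}_h^g$ matching $\Pi_h^c g$ outside $\Omega$ and converging to a density approximation $u_\delta$ of $u$ in $W^{2s}_1(\Omega)$, with $I_s[u_\delta]\le I_s[u]+\delta$. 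Dominated convergence applied to the continuous integrand $F_s$ (quadratic at $0$, linear at infinity, against the integrable kernel $|x-y|^{-(d+2s-1)}$), combined with minimality of $u_h$, yields $\limsup_h I_s[u_h]\le I_s[u_\delta]\le I_s[u]+\delta$, and then $\delta\to 0$.

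For the $\Gamma$-$\liminf$, extract a subsequence $u_{h_k}\to u^\ast$ in $W^{2r}_1(\Omega)$; because $\Pi_h^c g\to g$ a.e.\ on $\Omega^c$, one verifies $u^\ast\in \mathbb{V}^g$. Convexity of $F_s$ combined with a.e.\ convergence of the difference quotients and Fatou's lemma yields $I_s[u^\ast]\le\liminf_k I_s[u_{h_k}]$. Joining this with the $\limsup$ bound forces $I_s[u^\ast]\le I_s[u]$, and strict convexity of $I_s$ on $\mathbb{V}^g$ implies $u^\ast=u$ and $\lim_k I_s[u_{h_k}]=I_s[u]$. Since the limit is independent of the extracted subsequence, the whole family converges in $W^{2r}_1(\Omega)$ and the energies converge, proving both claims.

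The main obstacle is the lower-semicontinuity step: the domain $Q_\Omega=(\R^d\times\R^d)\setminus(\Omega^c\times\Omega^c)$ is unbounded and $\{u_h\}$ converges only in a strictly weaker norm than the energy norm. I would split $Q_\Omega$ into a near-diagonal region, where pointwise convergence of difference quotients together with convexity yield lower semicontinuity via Fatou, and a far-field region, where integrability of the kernel against the uniformly bounded tail data gives dominated convergence and a uniform contribution that vanishes as the far-field radius tends to infinity. Carefully matching $\Pi_h^c g$ to the trace of $u^\ast$ at $\partial\Omega$ closes the argument.
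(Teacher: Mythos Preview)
Your outline is correct and matches what the paper does: the paper does not give a detailed proof but attributes the result to \cite{BoLiNo19analysis}, describing the method as ``proving that the discrete energy is consistent and using a compactness argument,'' which is precisely your $\Gamma$-convergence scheme (energy consistency for the $\limsup$, compactness plus lower semicontinuity for the $\liminf$, strict convexity of $I_s$ to identify the limit).

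One technical point deserves attention in your write-up. The implication ``$I_s[u_h]\le C$ translates into a uniform $\mathbb{V}^g$-bound'' is not immediate: because $F_s(\rho)\simeq\rho^2$ near zero and the integration region $Q_\Omega$ is unbounded, the energy alone does not dominate $\iint_{Q_\Omega}|u_h(x)-u_h(y)|\,|x-y|^{-d-2s}$ in the far field. The missing ingredient is a uniform $L^\infty$-bound $\|u_h\|_{L^\infty(\Omega)}\le\|g\|_{L^\infty(\Omega^c)}$, which follows from a discrete truncation argument (truncating $u_h$ at level $\pm\|g\|_{L^\infty}$ yields a competitor in $\mathbb{V}_h^g$ with no larger energy, and strict convexity forces equality). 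With that bound in hand, the far-field contribution to $|u_h|_{\mathbb{V}^g}$ is controlled by $\|g\|_{L^\infty}$ and the energy bound handles the near-diagonal part, closing the compactness step as you intend.
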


The theorem above has the important feature of guaranteeing convergence without any regularity assumption on the solution. However, it does not offer any convergence rates.  We now show estimates for a geometric notion of error that mimics the one analyzed in \cite{FierroVeeser03} for the classical Plateau problem (see also \cite{BaMoNo04, DeDzEl05}). In the local setting, such a notion of error is given by
\begin{equation*}\label{eq:def-e}
\begin{aligned}
e^2(u,u_h) & :=\int_{\Omega} \ \Big| \widehat{\nu}(\nabla u) - \widehat{\nu}(\nabla u_h) \Big|^2 \;\frac{Q(\nabla u) + Q(\nabla u_h)}{2} \ dx , \\
& = \int_{\Omega} \ \Big( \widehat{\nu}(\nabla u) - \widehat{\nu}(\nabla u_h) \Big) \cdot \ \Big( \nabla (u-u_h), 0 \Big) dx ,
\end{aligned}
\end{equation*}
where $Q(\pmb{a}) = \sqrt{1+|\pmb{a}|^2}$, $\widehat{\nu}(\pmb{a}) = \frac{(\pmb{a},-1)}{Q(\pmb{a})}$.
Because $\widehat{\nu}(\nabla u)$ is the normal unit vector to the graph of $u$, the quantity $e(u,u_h)$ is a weighted $L^2$-discrepancy between the normal vectors. 
For the nonlocal minimal graph problem, we introduced in \cite{BoLiNo19analysis} the fractional counterpart $e_s$ of $e$
\begin{equation*}\label{eq:def-es} 
\begin{aligned}
	e_s(u,u_h) := \left( \widetilde C_{d,s} \iint_{Q_{\Omega}} \Big( G_s\left(d_u(x,y)\right) - G_s\left(d_{u_h}(x,y)\right) \Big) \frac{d_{u-u_h}(x,y)}{|x-y|^{d-1+2s}} dxdy \right)^{1/2} ,
	\end{aligned}
\end{equation*}
where $G_s(\rho) = F_s'(\rho)$, the constant $\widetilde C_{d,s} = \frac{1 - 2s}{\alpha_{d}}$, $\alpha_{d}$ is the volume of the $d$-dimensional unit ball and $d_v$ is the difference quotient of the function $v$, 
\begin{equation*}\label{eq:def-d_u}
d_v(x,y) := \frac{v(x)-v(y)}{|x-y|}.
\end{equation*}
In \cite{BoLiNo19analysis}, we showed this novel quantity $e_s(u,u_h)$ to be connected with a notion of nonlocal normal vector, and established its asymptotic behavior as $s\to 1/2^-$.

\begin{theorem}[asymptotics of $e_s$] \label{Thm:asymptotics-es}
	For all $u,v \in H^1_0(\Lambda)$, we have
	\[
	\lim_{s \to {\frac{1}{2}}^-} e_s(u,v) = e(u,v).
	\]
\end{theorem}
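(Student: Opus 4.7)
The plan is to combine a Bourgain--Brezis--Mironescu (BBM) type localization with a spherical-integration identity connecting $G_{1/2}$ to the Euclidean normal map $\widehat\nu$. I proceed in four steps.

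\textbf{Step 1 (Reduction to smooth approximations).} Since $G_s'(\rho) = (1+\rho^2)^{-(d+1+2s)/2} \in (0,1]$ and $G_s(0)=0$, the mean value theorem gives $|G_s(a)-G_s(b)|\le|a-b|$. Combined with the elementary identity $d_u-d_v = d_{u-v}$, this yields the uniform-in-$s$ majorization
\begin{equation*}
e_s^2(u,v) \le \widetilde{C}_{d,s}\iint_{Q_\Omega}\frac{|(u-v)(x)-(u-v)(y)|^2}{|x-y|^{d+1+2s}}\,dx\,dy,
\end{equation*}
whose right-hand side is bounded by $C\,\|\nabla(u-v)\|_{L^2(\Lambda)}^2$ uniformly for $s$ close to $\tfrac12$, by the standard $(1-2s)$-weighted fractional estimates underpinning the BBM theorem. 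Together with the (simple) continuity of $e(\cdot,\cdot)$ on $H_0^1(\Lambda)\times H_0^1(\Lambda)$, a diagonal approximation reduces the claim to $u,v\in C_c^\infty(\Lambda)$.

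\textbf{Step 2 (Polar coordinates and pointwise limit).} For smooth $u,v$, switch to polar coordinates $y=x+rh$ with $r>0$, $h\in S^{d-1}$, so $dy=r^{d-1}\,dr\,dh$, and write
\begin{equation*}
e_s^2(u,v) = \widetilde{C}_{d,s}\int_{\R^d}\int_{S^{d-1}}\int_0^\infty \Phi_s(x,h,r)\,\chi_{Q_\Omega}(x,x+rh)\,\frac{dr}{r^{2s}}\,dh\,dx,
\end{equation*}
where $\Phi_s(x,h,r):=\bigl(G_s(Du)-G_s(Dv)\bigr)\,D(u-v)$ with $Dw:=(w(x)-w(x+rh))/r$. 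Smoothness of $u,v$ gives $Dw\to -\nabla w(x)\cdot h$ as $r\to 0^+$, and oddness of $G_s$ yields
\begin{equation*}
\Phi_s(x,h,0^+) = \bigl(G_s(\nabla u(x)\cdot h)-G_s(\nabla v(x)\cdot h)\bigr)\,(\nabla u-\nabla v)(x)\cdot h.
\end{equation*}
Note that for $x\in\Omega$ the cutoff $\chi_{Q_\Omega}$ equals $1$ for all $r>0$, whereas for $x\in\Omega^c$ it vanishes on $\bigl(0,\dist(x,\Omega)\bigr)$.

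\textbf{Step 3 (BBM localization).} Because $\widetilde{C}_{d,s}=(1-2s)/\alpha_d$, the measure $\widetilde{C}_{d,s}\,r^{-2s}\,dr$ acts as an approximate Dirac mass at $r=0^+$: for any continuous bounded $g$ on $[0,R]$,
\begin{equation*}
\lim_{s\to\tfrac12^-}(1-2s)\int_0^R g(r)\,r^{-2s}\,dr = g(0^+),
\end{equation*}
since $(1-2s)\int_0^R r^{-2s}\,dr = R^{1-2s}\to 1$ and the remainder is handled by splitting near $r=0$. Applying this pointwise in $(x,h)$ gives $\alpha_d^{-1}\Phi_{1/2}(x,h,0^+)$ for $x\in\Omega$. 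For $x\in\Omega^c$, the lower integration limit $r_0:=\dist(x,\Omega)>0$ forces
\begin{equation*}
(1-2s)\int_{r_0}^{R'} |\Phi_s|\,r^{-2s}\,dr \lesssim R'^{1-2s}-r_0^{1-2s}\;\to\;0.
\end{equation*}
Dominated convergence, justified by Step 1 and the compact support of $u,v$, then yields
\begin{equation*}
\lim_{s\to\tfrac12^-}e_s^2(u,v) = \frac{1}{\alpha_d}\int_\Omega\int_{S^{d-1}}\bigl(G_{1/2}(\nabla u\cdot h)-G_{1/2}(\nabla v\cdot h)\bigr)(\nabla u-\nabla v)\cdot h\,dh\,dx.
\end{equation*}

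\textbf{Step 4 (Spherical identity).} The proof concludes by verifying
\begin{equation*}
\frac{1}{\alpha_d}\int_{S^{d-1}} G_{1/2}(\vec a\cdot h)\,(\vec b\cdot h)\,dh = \frac{\vec a\cdot \vec b}{Q(\vec a)} \qquad \forall\,\vec a,\vec b\in\R^d.
\end{equation*}
Since $G_{1/2}$ is odd, the integrand is symmetric under $h\mapsto-h$. Rotating $\vec a$ onto $|\vec a|e_1$ reduces this to a one-dimensional computation that is dispatched by an integration by parts using $G_{1/2}'(\rho)=(1+\rho^2)^{-(d+2)/2}$, together with the standard identity $\int_0^{2\pi}(1+t^2\cos^2\theta)^{-1}\,d\theta=2\pi/\sqrt{1+t^2}$ (and its higher-dimensional analogue). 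Applying this identity with $(\vec a,\vec b)=(\nabla u,\nabla(u-v))$ and $(\vec a,\vec b)=(\nabla v,\nabla(u-v))$ and subtracting yields exactly the equivalent form of $e^2(u,v)$ recorded in the excerpt, closing the proof.

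\textbf{Main obstacle.} The heart of the argument is Step 3: the cutoff $\chi_{Q_\Omega}$ produces a boundary layer near $\partial\Omega$ where pairs $(x,y)$ straddle the boundary, and one must simultaneously localize the BBM limit for $x\in\Omega$ and show that the interfacial contribution from $x\in\Omega^c$ close to $\partial\Omega$ produces only $o(1)$. The estimate above controls it, but combining this with dominated convergence in $x$ near $\partial\Omega$ requires careful uniform bounds as $r_0(x)\to 0^+$. A secondary (but purely computational) subtlety is the dimension-independent verification of the spherical identity in Step 4.
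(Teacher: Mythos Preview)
The paper does not actually prove this theorem: it merely states it and refers to \cite{BoLiNo19analysis} for the argument (see the sentence preceding the statement: ``In \cite{BoLiNo19analysis}, we showed \ldots\ and established its asymptotic behavior as $s\to 1/2^-$''). So there is no in-paper proof to compare against. That said, your four-step outline---density reduction, polar coordinates, BBM-type concentration of $(1-2s)r^{-2s}\,dr$ at $r=0^+$, and the spherical identity linking $G_{1/2}$ to $\widehat\nu$---is exactly the natural route and is almost certainly the one taken in the cited reference (which in turn cites Lemmas 5.9 and 5.11 there for the limiting constant, i.e.\ your Step~4).

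Your argument is correct in structure. Two places deserve a bit more care than you give them. First, in Step~3 the dominated-convergence step in the $x$-variable needs an explicit majorant: for $x\in\Omega$ one uses $|\Phi_s(x,h,r)|\le \min\bigl(\|\nabla(u-v)\|_\infty^2,\,C r^{-2}\bigr)$ to make the $r$-integral uniformly bounded in $s$, while for $x\in\Omega^c$ far from $\Lambda$ one needs the decay $|\Phi_s|\lesssim |u(y)-v(y)|^2/|x-y|^2$ to get integrability of the outer integral over $\R^d$ (your bound $R'^{1-2s}-r_0^{1-2s}$ tends to $0$ pointwise but is only bounded, not small, as $r_0\to 0$, so the boundary-layer remark you flag is real but harmless once you integrate in $x$). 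Second, in Step~4 the identity
\[
\frac{1}{\alpha_d}\int_{S^{d-1}} G_{1/2}(\vec a\cdot h)\,(\vec b\cdot h)\,dh = \frac{\vec a\cdot \vec b}{Q(\vec a)}
\]
is correct (and checks out in $d=1,2$ by direct computation), but your sketch via integration by parts and the $d=2$ formula is not quite a proof for general $d$; the clean route is to differentiate in $a$ and reduce to $\int_{S^{d-1}} h_1^2(1+a^2h_1^2)^{-(d+2)/2}\,dh = \alpha_d(1+a^2)^{-3/2}$, which follows from differentiating the known identity $\int_{S^{d-1}}(1+a^2h_1^2)^{-d/2}\,dh = |S^{d-1}|/\sqrt{1+a^2}$ with respect to $a^2$.
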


A simple Galerkin orthogonality-type argument allows us to derive an error estimate for $e_s(u,u_h)$ without additional regularity (cf. \cite[Theorem 5.1]{BoLiNo19analysis}).

\begin{theorem}[geometric error]\label{thm:geometric-error}
	Under the same hypothesis as in \Cref{thm:consistency}, it holds that
	\begin{equation} \label{eq:geometric-error} \begin{aligned}
	e_s(u,u_h) &\le C (d,s) \, \inf_{v_h \in \mathbb{V}_h^g}  \left( \iint_{Q_{\Omega}} \frac{|(u-v_h)(x)-(u-v_h)(y)|}{|x-y|^{d+2s}} dxdy \right) ^{1/2}.
	\end{aligned} \end{equation}
\end{theorem}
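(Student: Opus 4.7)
The plan is to recast $e_s(u,u_h)^2$ as a difference of the bilinear forms $a_u(u,\cdot) - a_{u_h}(u_h,\cdot)$ evaluated on the discretization error $u-u_h$, then exploit Galerkin orthogonality by splitting this error suitably, and close with a crude but scale-consistent bound on the nonlinearity $G_s$. The underlying observation is that $u(x)-u(y) = |x-y|\,d_u(x,y)$, so the weak form \eqref{E:WeakForm-NMS-Graph} can be rewritten compactly as
\[
a_u(u,v) = \iint_{Q_\Omega} G_s\big(d_u(x,y)\big) \, \frac{d_v(x,y)}{|x-y|^{d-1+2s}}\, dx\,dy,
\]
since $\rho\,\widetilde{G}(\rho) = G_s(\rho) = F_s'(\rho)$ by the definitions preceding \eqref{E:def-a-NMS}. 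This rewriting turns the definition of $e_s(u,u_h)^2$ into the identity
\[
e_s(u,u_h)^2 = \widetilde{C}_{d,s} \Big[ a_u(u,u-u_h) - a_{u_h}(u_h,u-u_h) \Big].
\]

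The key step is then a splitting trick. For arbitrary $v_h \in \mathbb{V}_h^g$, I would write $u-u_h = (u-v_h) + (v_h-u_h)$. Because $u_h, v_h \in \mathbb{V}_h^g$ coincide outside $\Omega$ with $\Pi_h^c g$, the difference $v_h - u_h$ vanishes in $\Omega^c$ and therefore belongs to $\mathbb{V}_h^0 \subset \mathbb{V}^0$; both \eqref{E:WeakForm-NMS-Graph} and \eqref{E:WeakForm-discrete} (with zero right-hand side) then annihilate the contribution of $v_h - u_h$ in each of the two forms. What remains is
\[
e_s(u,u_h)^2 = \widetilde{C}_{d,s} \iint_{Q_\Omega} \big(G_s(d_u) - G_s(d_{u_h})\big) \frac{d_{u-v_h}(x,y)}{|x-y|^{d-1+2s}} \, dx\, dy.
\]
Since $G_s(\rho) = \int_0^\rho (1+r^2)^{-(d+1+2s)/2}\,dr$ is uniformly bounded on $\R$ with $\|G_s\|_{L^\infty(\R)} \le C(d,s) < \infty$ for every $s>0$, the pointwise bound $|G_s(d_u) - G_s(d_{u_h})| \le 2C(d,s)$ together with the algebraic identity $d_{u-v_h}/|x-y|^{d-1+2s} = \big((u-v_h)(x) - (u-v_h)(y)\big)/|x-y|^{d+2s}$ delivers \eqref{eq:geometric-error} upon taking the infimum over $v_h \in \mathbb{V}_h^g$.

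Once the right reformulation of the bilinear form is in place, the rest is essentially a one-line Galerkin orthogonality argument, so no genuine obstacle is anticipated; the only point that requires care is verifying that $v_h - u_h$ is an admissible test function in both the continuous and discrete weak formulations, which is precisely why the infimum is taken over $\mathbb{V}_h^g$ rather than over all of $\mathbb{V}^g$. Conceptually, this argument is the fractional analogue of the estimate in \cite{FierroVeeser03} for classical minimal graphs: the role played there by the uniform boundedness of the normal vector $\widehat{\nu}$ is played here by the uniform boundedness of $G_s$, and no regularity of $u$ beyond $u \in \mathbb{V}^g$ is required since the right-hand side of \eqref{eq:geometric-error} is already the natural seminorm in which nonlocal minimal graphs are sought.
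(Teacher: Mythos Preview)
Your proof is correct and follows exactly the Galerkin orthogonality-type argument that the paper indicates (the paper does not spell out the proof here but refers to \cite[Theorem 5.1]{BoLiNo19analysis} and describes it as ``a simple Galerkin orthogonality-type argument''). The key identity $e_s(u,u_h)^2 = \widetilde{C}_{d,s}\big[a_u(u,u-u_h)-a_{u_h}(u_h,u-u_h)\big]$, the splitting $u-u_h=(u-v_h)+(v_h-u_h)$ with $v_h-u_h\in\mathbb{V}_h^0$, and the uniform bound on $G_s$ are precisely the ingredients; your care in noting that the infimum must be over $\mathbb{V}_h^g$ so that $v_h-u_h$ is admissible in both weak formulations is exactly the point.
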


Therefore, to obtain convergence rates with respect to $e_s(u,u_h)$, it suffices to prove interpolation estimates for the nonlocal minimizer in a sort of $W^{2s}_1$-norm. Although minimal graphs are expected to be discontinuous across the boundary, we still expect that $u \in BV(\Lambda)$ in general. Under this further regularity assumption, the error estimate \eqref{eq:geometric-error}  leads to
\[
e_s(u,u_h) \le C(d,s) \, h^{1/2-s} |u|^{1/2}_{BV(\Lambda)}.
\]

\subsubsection{\bf Fractional $(p, s)$-Laplacian}

To prove the convergence of the finite element solution $u_h$ in \eqref{E:WeakForm-discrete} to the continuous solution $u$ in \eqref{eq:frac-pLap-weak}, one needs to control $\Vert u - u_h \Vert_{\wt{W}^s_p(\Omega)}$ in terms of the best approximation error $\inf_{v_h \in \mathbb{V}_h} \| u-v_h \|_{\wt{W}^s_p(\Omega)}$. Following the idea of \cite{Chow89} for the classical $p$-Laplacian, in \cite{BoLiNo22} we obtain such a result for its fractional counterpart.

\begin{theorem}[error bounds for the fractional $(p, s)$-Laplacian]
	\label{thm:FE-error-Chow}
	Let $u$ and $u_h$ be the respective solutions of the continuous and discrete Dirichlet problems for the fractional $(p, s)$-Laplacian \eqref{eq:frac-pLap-weak} and \eqref{E:WeakForm-discrete}. If $p \in (1,2]$, then there holds
	\begin{equation*} \label{eq:error_smallp}
	\|u - u_h \|_{\wt{W}^s_p(\Omega)} \le C \inf_{v_h \in \mathbb{V}_h} \| u-v_h \|_{\wt{W}^s_p(\Omega)}^{\frac{p}2}.
	\end{equation*}
	On the other hand, if $p \in [2,\infty)$, then the following alternative error bound is valid
	\begin{equation*} \label{eq:error_bigp}
	\|u - u_h \|_{\wt{W}^s_p(\Omega)} \le C \inf_{v_h \in \mathbb{V}_h} \| u-v_h \|_{\wt{W}^s_p(\Omega)}^{\frac2p}.
	\end{equation*}
\end{theorem}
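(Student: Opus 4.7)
I would follow the strategy of Chow for the classical $p$-Laplacian, carried out in the nonlocal setting through a weighted seminorm tailored to $u$ and $u_h$. Writing $d_w(x,y) := w(x)-w(y)$, $K(x,y) := \tfrac{C_{d,s,p}}{2}|x-y|^{-d-sp}$, and $\phi(x,y) := |d_u|^{p-2}d_u - |d_{u_h}|^{p-2}d_{u_h}$, I would introduce
\[
\|v\|_*^2 := \iint_{Q_\Omega} |d_v(x,y)|^2 \big(|d_u(x,y)|+|d_{u_h}(x,y)|\big)^{p-2} K(x,y)\, dx\, dy.
\]
Two standing facts will be used throughout: first, the stability bound \eqref{eq:stability} together with its discrete counterpart (obtained by testing the discrete equation \eqref{E:WeakForm-discrete} with $v_h=u_h$) yield $\|u\|_{\wt W^s_p(\Omega)} + \|u_h\|_{\wt W^s_p(\Omega)} \le C$; second, Galerkin orthogonality gives $\iint \phi\, d_{u-u_h}\, K\, dx\, dy = \iint \phi\, d_{u-v_h}\, K\, dx\, dy$ for every $v_h \in \mathbb{V}_h$ with $u-v_h \in \wt W^s_p(\Omega)$.

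\textbf{Case $p \ge 2$.} Starting from the sharp monotonicity $(|a|^{p-2}a-|b|^{p-2}b)(a-b) \ge \alpha |a-b|^2(|a|+|b|)^{p-2}$, which refines \eqref{eq:monotonicity-pge2}, I obtain $\alpha\|u-u_h\|_*^2 \le \iint \phi\, d_{u-u_h}\, K$. By Galerkin orthogonality, the pointwise continuity estimate $|\phi| \le C|d_{u-u_h}|(|d_u|+|d_{u_h}|)^{p-2}$, and Cauchy--Schwarz,
\[
\|u-u_h\|_*^2 \lesssim \|u-u_h\|_* \|u-v_h\|_*,
\]
so $\|u-u_h\|_* \lesssim \|u-v_h\|_*$. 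A H\"older inequality with exponents $(p/2,\, p/(p-2))$ together with stability give $\|u-v_h\|_* \lesssim \|u-v_h\|_{\wt W^s_p(\Omega)}$, while the triangle bound $|d_{u-u_h}| \le |d_u|+|d_{u_h}|$ combined with $p-2 \ge 0$ yields $\|u-u_h\|_{\wt W^s_p(\Omega)}^{p/2} \le \|u-u_h\|_*$. Chaining these three inequalities produces the exponent $2/p$.

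\textbf{Case $1<p\le 2$.} Here the plan is to exploit the dual form $|\phi(x,y)|^{p'} \le C\, \phi(x,y)\, d_{u-u_h}(x,y)$, with $p'=p/(p-1)$, which is equivalent to \eqref{eq:monotonicity-ple2} and follows by applying the $p'\ge 2$ monotonicity to $|d_u|^{p-2}d_u$ and $|d_{u_h}|^{p-2}d_{u_h}$. Integrating, using Galerkin orthogonality and H\"older,
\[
\|\phi\|_{L^{p'}(K)}^{p'} \lesssim \iint \phi\, d_{u-v_h}\, K \le \|\phi\|_{L^{p'}(K)} \|u-v_h\|_{\wt W^s_p(\Omega)},
\]
so $\|\phi\|_{L^{p'}(K)} \lesssim \|u-v_h\|_{\wt W^s_p(\Omega)}^{p-1}$. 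Substituting this into \eqref{eq:monotonicity-ple2} controls $\|u-u_h\|_*^2 \lesssim \|u-v_h\|_{\wt W^s_p(\Omega)}^p$. A final H\"older inequality with exponents $(2/p,\, 2/(2-p))$ combined with stability provides $\|u-u_h\|_{\wt W^s_p(\Omega)} \lesssim \|u-u_h\|_*$, closing the argument with exponent $p/2$.

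\textbf{The hard part} in both regimes is routing the argument through the $(u,u_h)$-dependent seminorm $\|\cdot\|_*$ rather than working directly in $\wt W^s_p(\Omega)$. A na\"ive three-factor H\"older bound on $\iint \phi\, d_{u-v_h}\, K$ only delivers the suboptimal exponents $1/(p-1)$ (for $p\ge 2$) and $1/(3-p)$ (for $p\le 2$); the improvement to $2/p$ and $p/2$ rests on the asymmetric way the weight $(|d_u|+|d_{u_h}|)^{p-2}$ is absorbed---by a pointwise triangle inequality on the $u-u_h$ factor, and by an integral H\"older bound (using stability) on the $u-v_h$ factor. Correctly orienting these two steps so that the $\wt W^s_p$-bound on $u$ and $u_h$ is available in the direction it is needed is the delicate point.
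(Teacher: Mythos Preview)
Your proposal is correct and follows precisely the route the paper indicates: the paper does not give a detailed proof here but explicitly defers to Chow's strategy for the classical $p$-Laplacian, adapted to the nonlocal setting, and your argument via the weighted quasi-norm $\|\cdot\|_*$, Galerkin orthogonality, and the pointwise monotonicity/continuity inequalities is exactly that adaptation. One minor remark: for $p\ge 2$ the inequality $(|a|^{p-2}a-|b|^{p-2}b)(a-b)\ge \alpha|a-b|^2(|a|+|b|)^{p-2}$ is not literally a refinement of \eqref{eq:monotonicity-pge2} (neither implies the other pointwise), but it is a standard companion estimate from the same Glowinski--Marrocco family and is what is needed here.
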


Combining \Cref{thm:FE-error-Chow} with the local interpolation estimates from \Cref{L:interpolation-error} and the Sobolev regularity estimates \eqref{eq:sob-regularity-pLap-p>2}--\eqref{eq:sob-regularity-pLap-p<2}, the following convergence rates are derived in \cite{BoLiNo22}.

\begin{theorem}[convergence rates] \label{thm:FE-error-pLap-rate}
  Let $p > 1$, $\Omega$ be a bounded Lipschitz domain, and let $u$ and $u_h$ be the solutions of the continuous and discrete fractional $(p, s)$-Laplacian problems \eqref{eq:frac-pLap-weak} and \eqref{E:WeakForm-discrete}, respectively. Let $q = \max \{ 1 / p', 1 /2 \}$ and assume that $f \in B^{-s+rq}_{p',1}(\Omega)$ for some $r \in (0,1]$. If $p \in (1,2]$, then there holds
\begin{equation*} \label{eq:error_smallp_rate}
\|u - u_h \|_{\wt{W}^s_p(\Omega)} \le C h^{\frac{pr}4 } | \log h |^{\frac12} \|f\|_{W^{-s}_{p'}(\Omega)}^{\frac{2-p}{p-1}} \| f \|_{B^{-s+\frac{r}{2}}_{p',1}(\Omega)}.
\end{equation*}
On the other hand, if $p \in [2,\infty)$, then the following alternative error bound is valid
\begin{equation*} \label{eq:error_bigp_rate}
\|u - u_h \|_{\wt{W}^s_p(\Omega)} \le  C h^{\frac{2r}{p^2}} | \log h |^{\frac{2}{p^2}} \| f \|_{B^{-s+\frac{r}{p'}}_{p',1}(\Omega)}^{\frac1{p-1}}.
\end{equation*}
\end{theorem}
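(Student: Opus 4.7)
The plan is to combine the error bounds of Theorem \ref{thm:FE-error-Chow} with the Sobolev regularity estimates \eqref{eq:sob-regularity-pLap-p>2}--\eqref{eq:sob-regularity-pLap-p<2} and a $W^s_p$-version of the local interpolation estimates from Lemma \ref{L:interpolation-error}. Since the latter is formulated in the $L^2$-based fractional scale, my first preparatory step is to establish its $W^s_p$-analogue: for a quasi-interpolant $\Pi\colon W^s_p(\Omega)\to\mathbb{V}_h$ (for instance a Scott-Zhang operator adapted to vanish on $\Omega^c$), one has
\[
\|v-\Pi v\|_{\wt W^s_p(\Omega)}^p \;\lesssim\; \sum_{T\in\calT} h_T^{p\ell}\,|v|_{W^{s+\ell}_p(\wt S_T^2)}^p
\qquad \text{for } \ell\in(0,1],\ s+\ell\notin\mathbb{N}.
\]
This follows from a Faermann-type localization of the $W^s_p$-seminorm analogous to \eqref{eq:localization-F-tilde} together with a local Deny–Lions bound on each patch; the $L^2$-slush term on each element is absorbed via a patch Poincar\'e inequality because $v-\Pi v$ has vanishing integral.

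For the subquadratic case $p\in(1,2]$, I would start from the Chow-type bound of Theorem \ref{thm:FE-error-Chow}, pick $v_h=\Pi u$, and apply the interpolation estimate with $\ell=r/2-\eps$ for $\eps>0$ small. This yields
\[
\|u-u_h\|_{\wt W^s_p(\Omega)} \;\le\; C\, h^{(r/2-\eps)p/2}\, \|u\|_{\wt W^{s+r/2-\eps}_p(\Omega)}^{p/2}.
\]
Inserting the regularity estimate \eqref{eq:sob-regularity-pLap-p<2} provides a factor $\eps^{-1/p}$ together with the data norms on the right-hand side; the choice $\eps=|\log h|^{-1}$ absorbs the $h^{-p\eps/2}$ into a harmless multiplicative constant $e^{p/2}$ and transforms $(\eps^{-1/p})^{p/2}=\eps^{-1/2}$ into the factor $|\log h|^{1/2}$, which yields the announced rate $h^{pr/4}|\log h|^{1/2}$.

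For the superquadratic case $p\in[2,\infty)$, the same plan applies but now with the alternative Chow-type bound $\|u-u_h\|_{\wt W^s_p(\Omega)}\le C(\inf_{v_h}\|u-v_h\|_{\wt W^s_p(\Omega)})^{2/p}$, the interpolation estimate with $\ell=r/p-\eps$, and regularity \eqref{eq:sob-regularity-pLap-p>2}. The algebraic part of the rate is now $h^{2(r/p-\eps)/p}=h^{2r/p^2-2\eps/p}$, and the logarithmic factor $|\log h|^{2/p^2}$ arises from $(\eps^{-1/p})^{2/p}$ together with the optimal choice $\eps=|\log h|^{-1}$.

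The main obstacle will be the first preparatory step. Unlike the $L^2$-case, there is no straightforward Plancherel-like identity for the $W^s_p$-seminorm, so extending the Faermann localization \eqref{eq:localization-F-tilde} to $p\ne 2$ requires a careful tails estimate near $\partial\Omega$ where the zero extension interacts with the integration over $Q_\Omega$; the enlarged stars $\wt S_T^1$ introduced in \eqref{eq:localization-F-tilde} play an essential role here, as does the avoidance of the Hardy inequality in the critical range $s=1/2$. A secondary difficulty is bookkeeping the $\eps$-dependence uniformly through the interpolation, the regularity bound, and the Chow-type inequality, since the $p/2$ (resp.~$2/p$) power amplifies the $\eps^{-1/p}$ factor and must be reconciled with the logarithmic prefactor that appears in the final estimate.
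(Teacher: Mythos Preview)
Your proposal is correct and follows essentially the same approach the paper outlines: combine the Chow-type best-approximation bounds of Theorem~\ref{thm:FE-error-Chow} with a $W^s_p$-analogue of the localization/interpolation estimates of Lemma~\ref{L:interpolation-error} and the Sobolev regularity \eqref{eq:sob-regularity-pLap-p>2}--\eqref{eq:sob-regularity-pLap-p<2}, then optimize in $\eps$ via $\eps=|\log h|^{-1}$. Your bookkeeping of the powers and logarithmic factors is accurate, and your explicit identification of the need for a $W^s_p$-Faermann localization (which the paper only implicitly invokes by pointing to Lemma~\ref{L:interpolation-error} and deferring details to \cite{BoLiNo22}) is on point.
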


\subsection{Numerical experiments}
We conclude by presenting some numerical experiments for the prescribed fractional mean curvature problem and the Dirichlet problem for the fractional $(p, s)$-Laplacian. Regarding nonlocal minimal graphs, we recall the two examples we provided in \Cref{sec:regularity-stickiness}, concretely in Figures \ref{fig:stickiness_thm2} and \ref{fig:2d-stick1-uh}. We refer to \cite{BoLiNo19analysis, BoLiNo21} for several additional experiments that illustrate qualitative features of minimal graphs and explore computational aspects of the problem, such as conditioning and the effect of data truncation.

\begin{figure}[h!]
	\begin{center}
		\begin{tabular}{c c} 
			\includegraphics[width=0.42\linewidth]{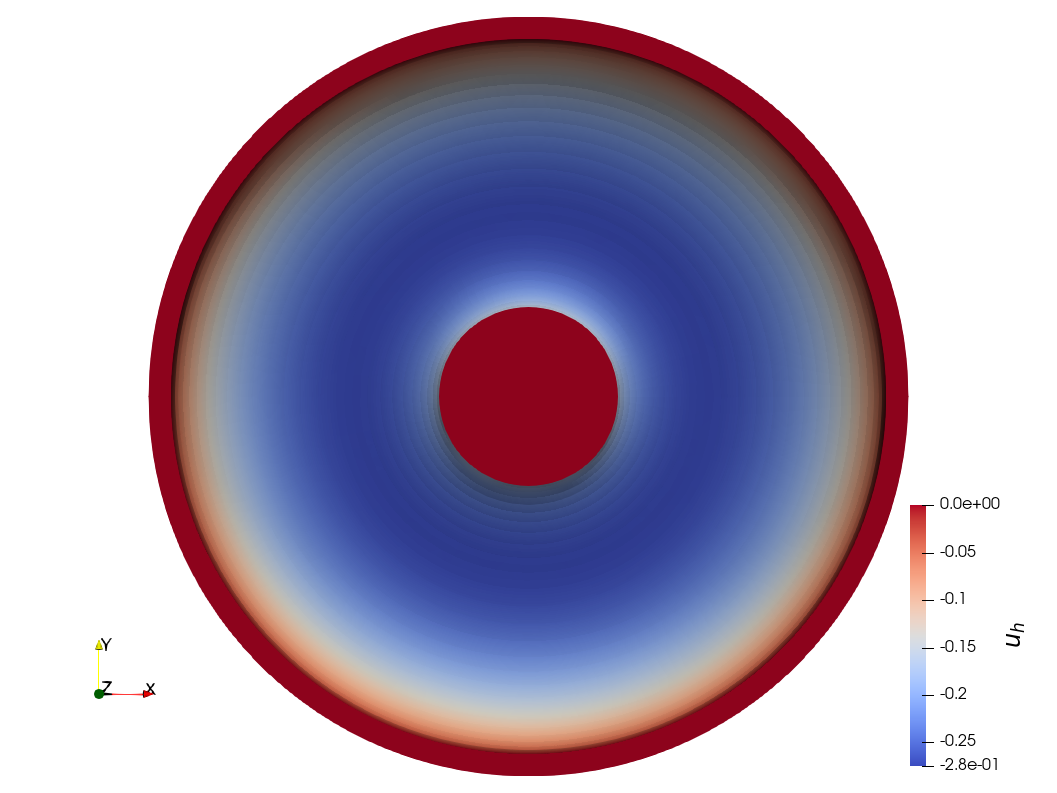}& 
			\includegraphics[width=0.42\linewidth]{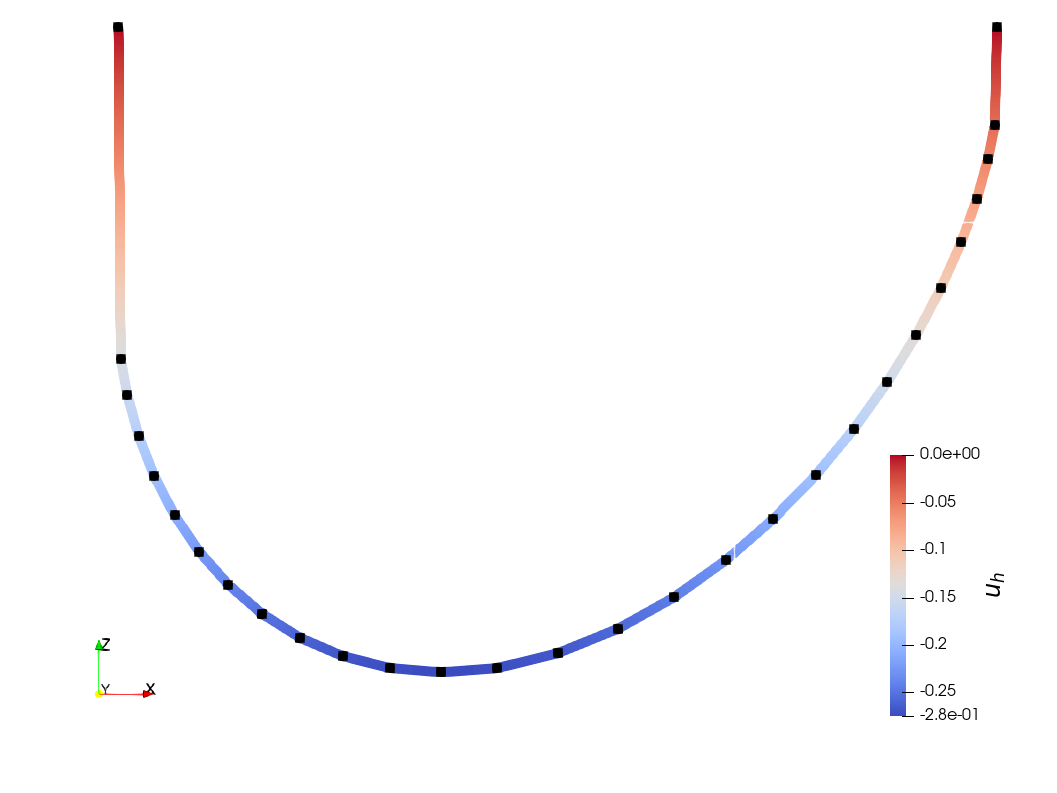}\\
			\includegraphics[width=0.42\linewidth]{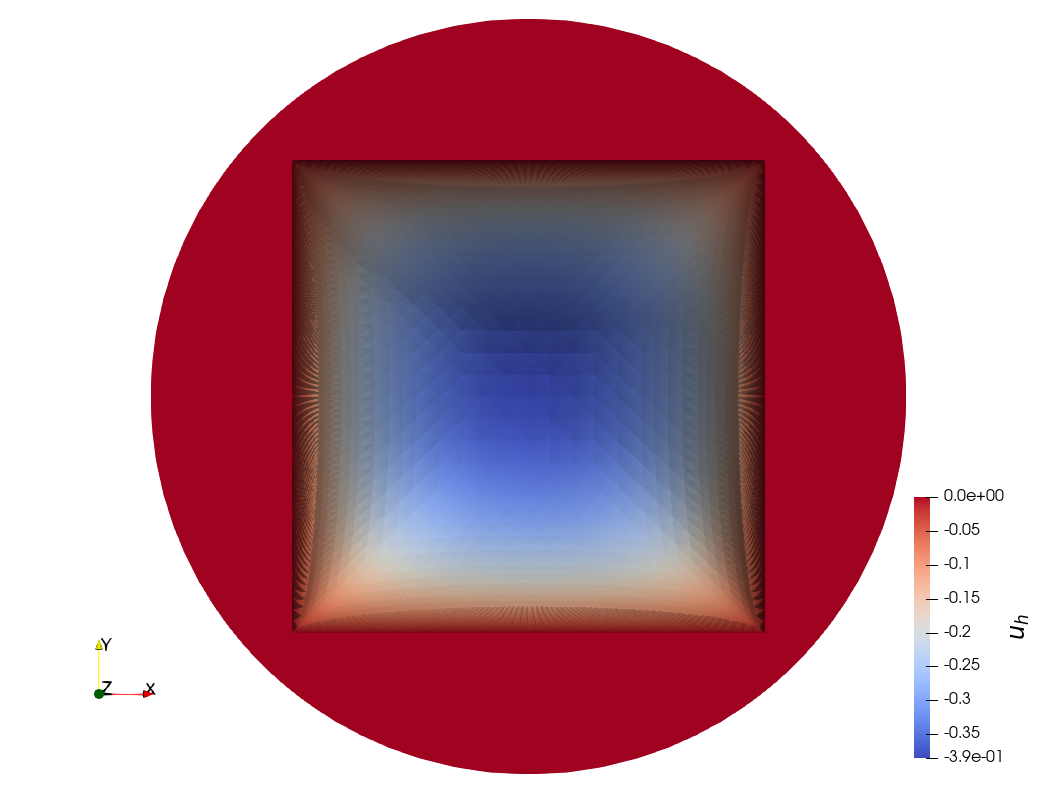} 
			&
			\includegraphics[width=0.42\linewidth]{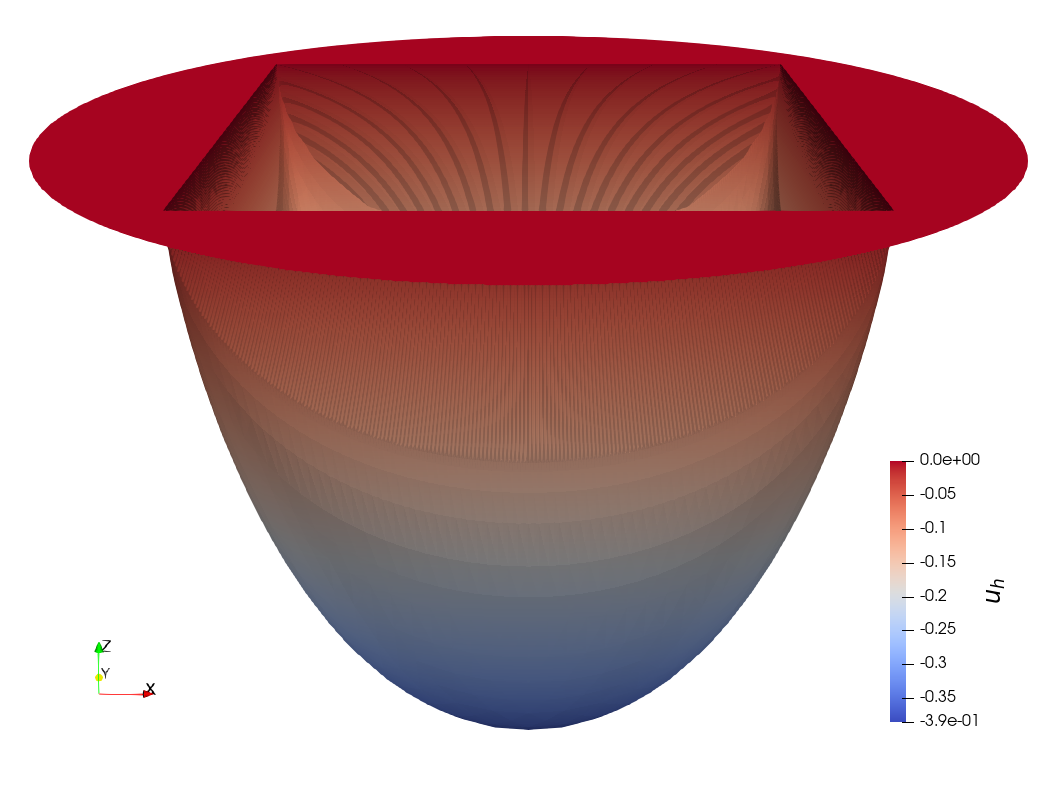} \\
			\includegraphics[width=0.42\linewidth]{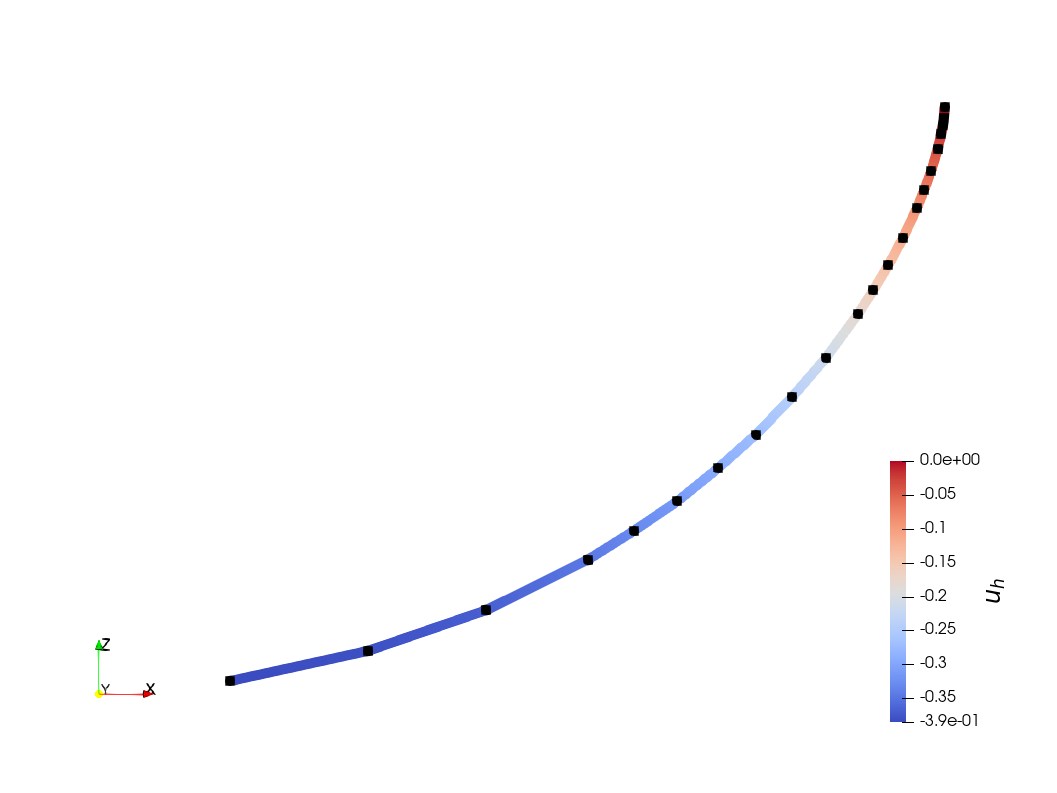} 	&	\includegraphics[width=0.42\linewidth]{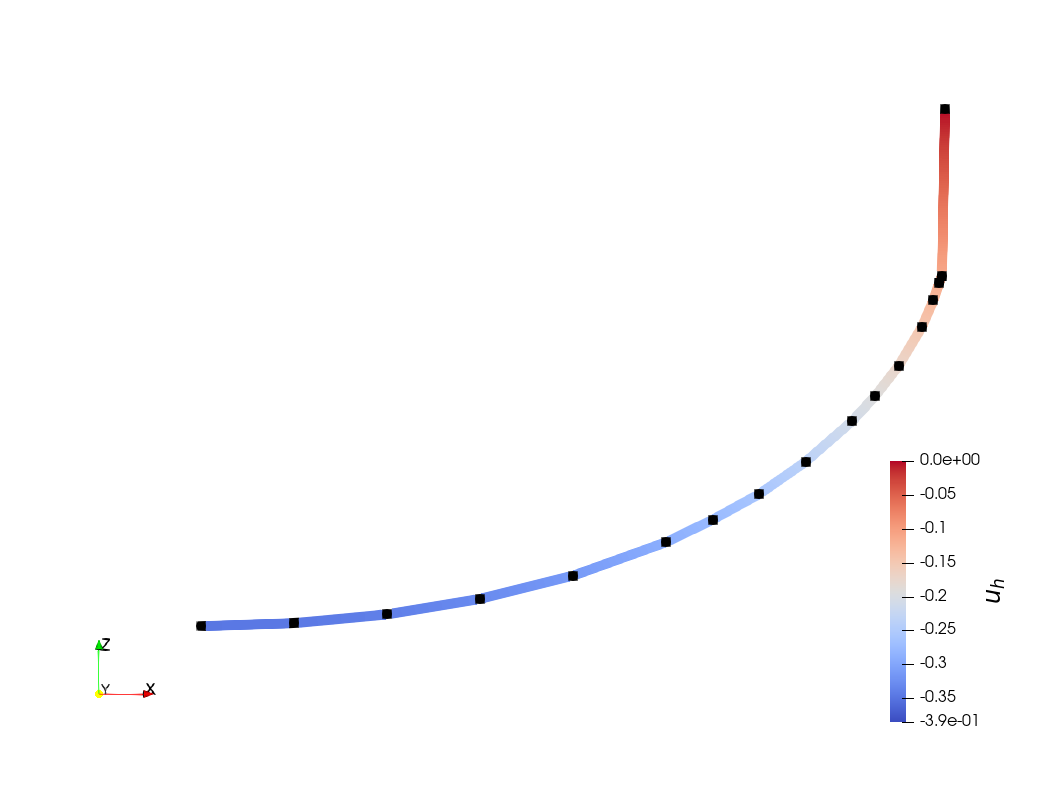}
		\end{tabular}
	\end{center}
	\caption{ Functions with prescribed fractional mean curvature $f = -1$ in $\Omega$ that vanish in $\Omega^c$. The domain $\Omega$ is either an annulus (top row) or a square (middle and bottom row). The plot in the top-right panel corresponds to a radial slice ($y = 0, \,\,  0.25 \le x \le 1$) of the annulus, while the ones in the bottom-left and bottom-right show parts of slices along the diagonal ($0 \le y = x \le 1$) and perpendicular to an edge of the square ($y = 0.5, 0 \le x \le 1$), respectively. The discrete solutions exhibit a much steeper gradient near the concave portions of the boundary than near the convex ones, and they seem to be continuous in the corners of the square.}
	\label{fig:curvature_square}
\end{figure}
\begin{figure}
	\begin{center}
		\begin{tabular}{c c} 
			\includegraphics[width=0.42\linewidth]{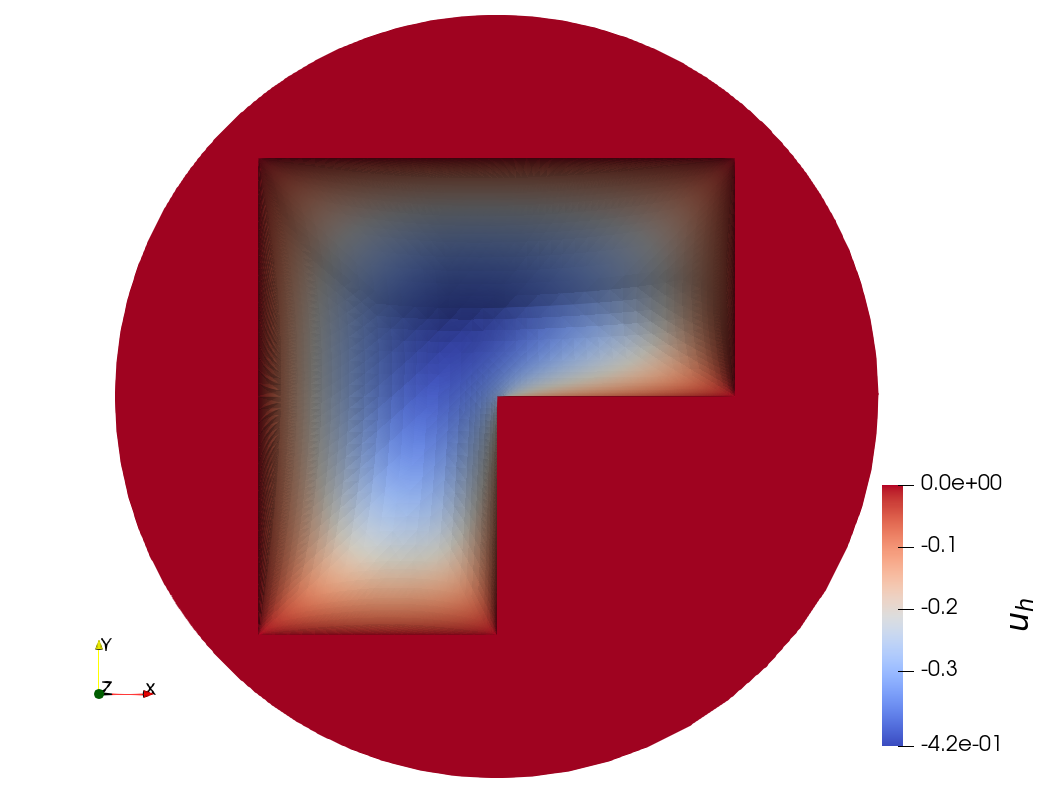}& 
			\includegraphics[width=0.42\linewidth]{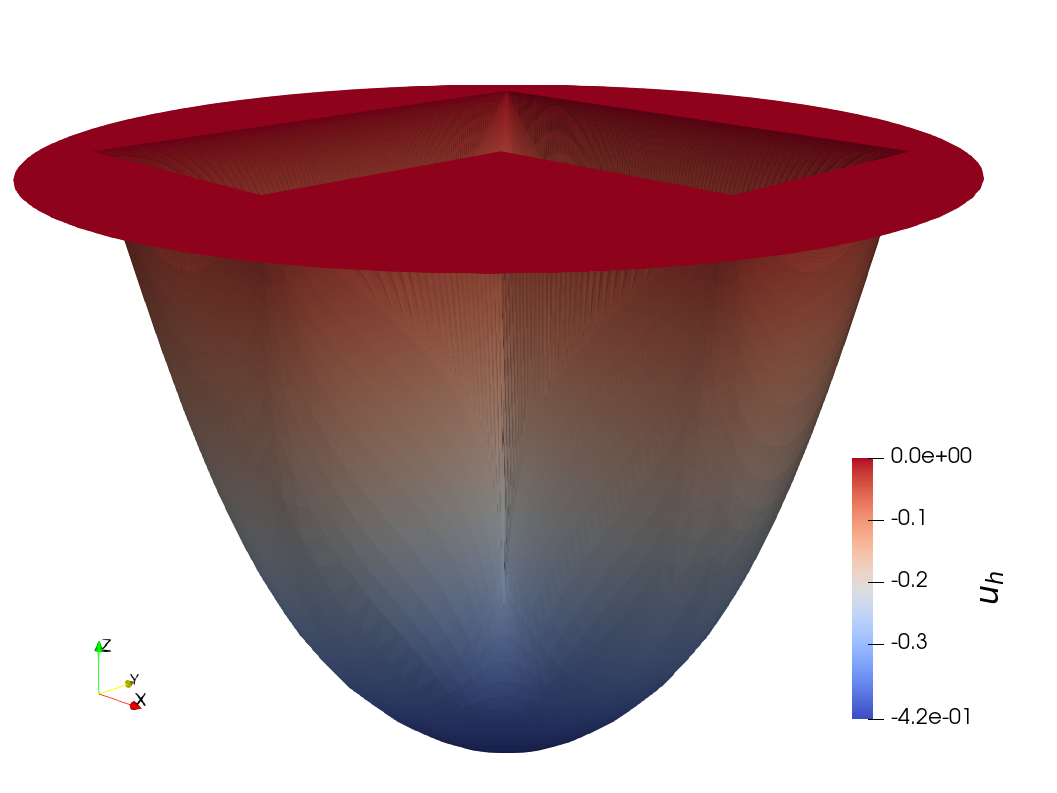}\\
			\includegraphics[width=0.42\linewidth]{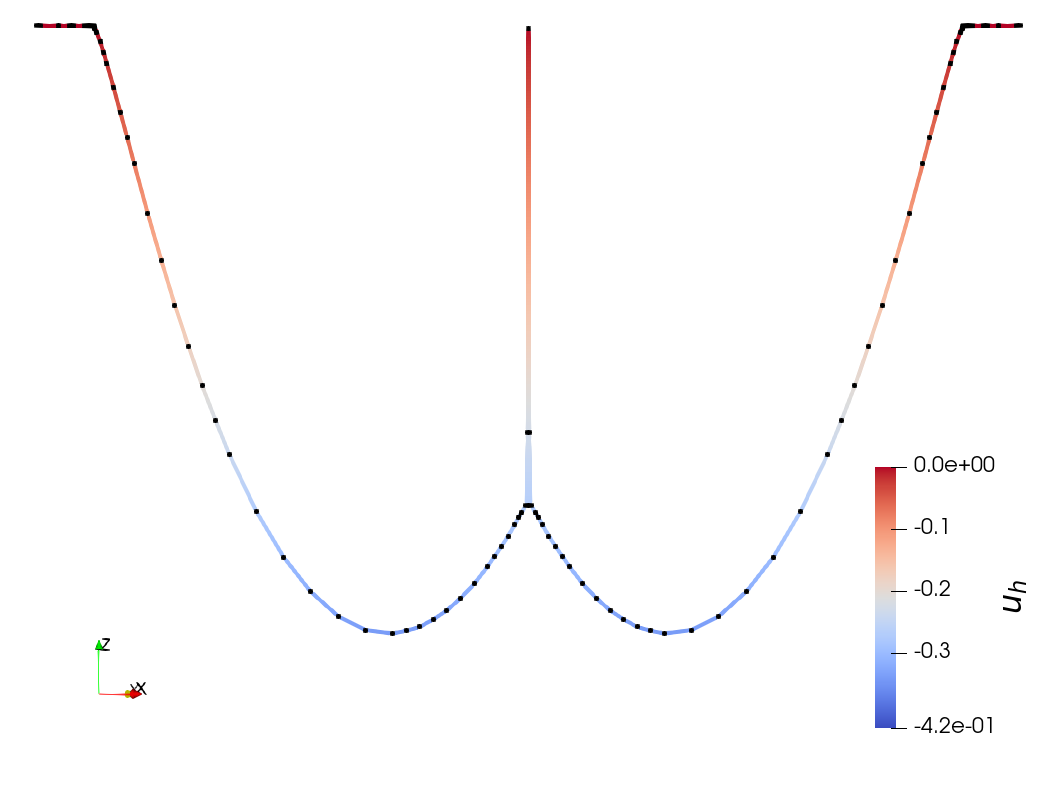} 
			&
			\includegraphics[width=0.42\linewidth]{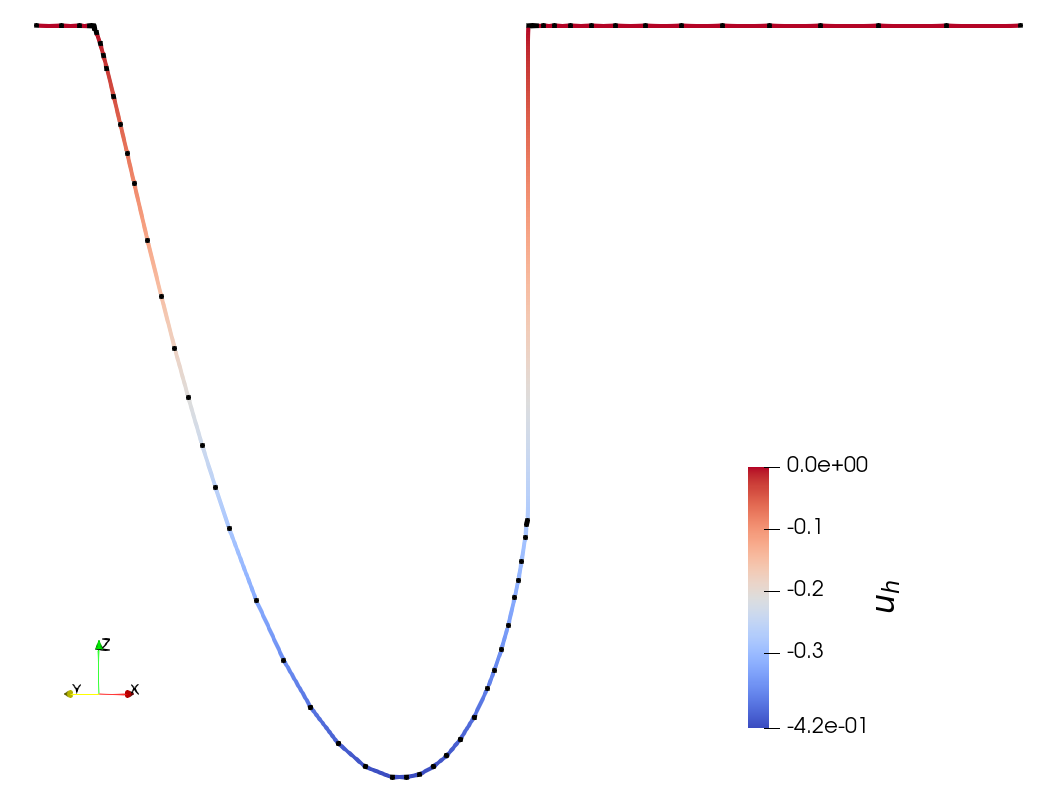} \\
			\includegraphics[width=0.42\linewidth]{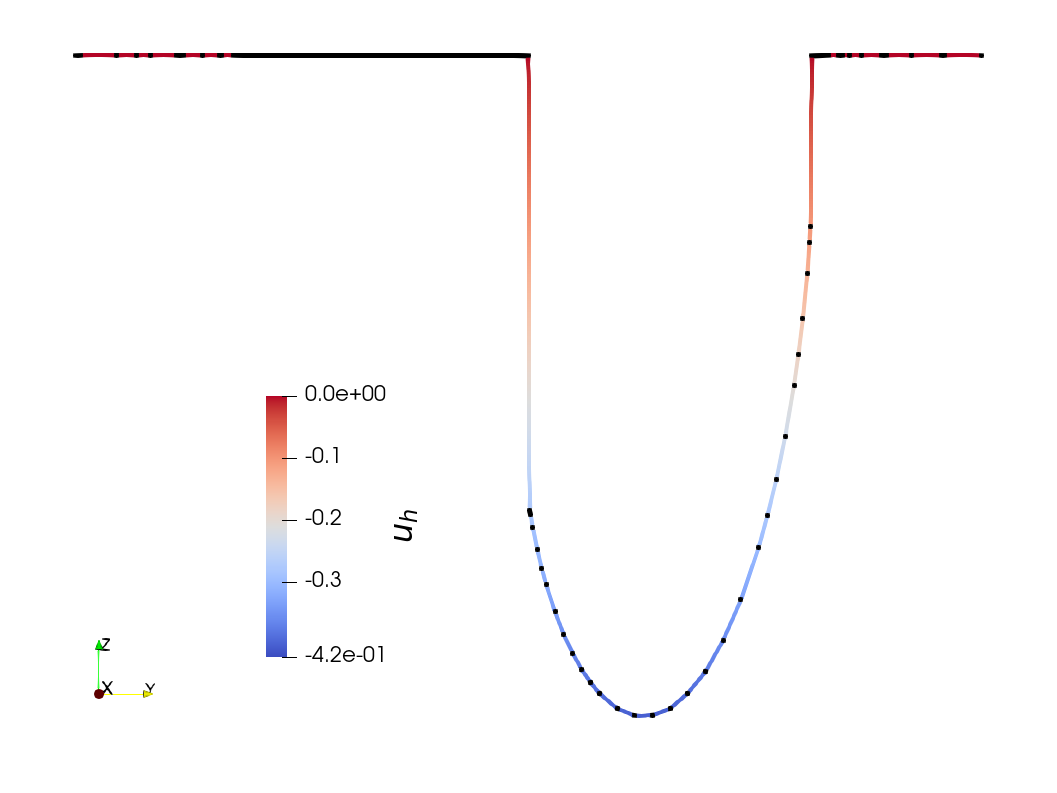} 	&	\includegraphics[width=0.42\linewidth]{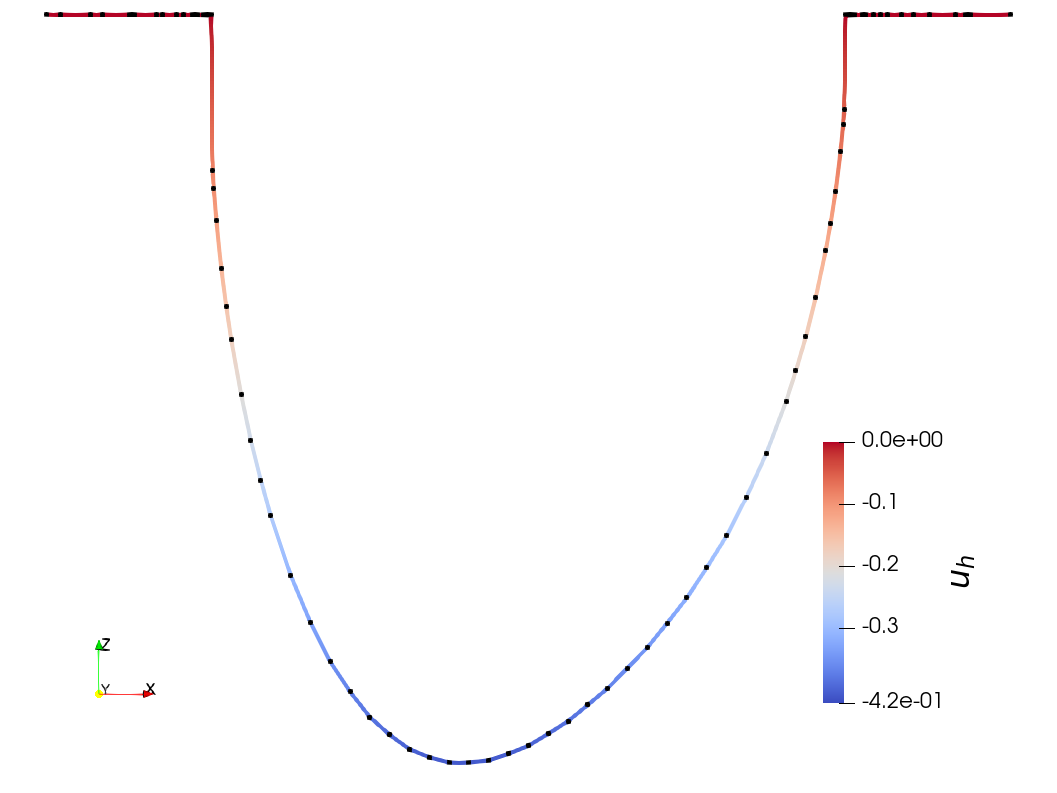}
		\end{tabular}
	\end{center}
	\caption{Stickiness on the L-shaped domain $\Omega = (-1,1)^2 \setminus (0,1)\times (-1,0)$ with prescribed fractional mean curvature $f=-1$ in $\Omega$ and zero Dirichlet condition in $\Omega^c$. The plots in the center row correspond to slices along $y = x$ and $y = -x$, respectively, while the ones in the bottom are slices along $x = 0$ or $y = 0.5$ respectively. We observe evidence of stickiness at the reentrant corner, while such a phenomenon is absent at the convex corners.
} 
	\label{fig:curvature_Lshape}
\end{figure}

\begin{example}[effect of boundary curvature on nonlocal minimal graphs]
We present examples of graphs with prescribed nonlocal mean curvature in three two-dimensional domains $\Omega$
with qualitatively different $\partial\Omega$ and examine the impact on stickiness.
We fix data $g = 0$ and $f = -1$ in \eqref{eq:quasilinear-weak}, and solve \eqref{E:def-a-NMS}--\eqref{eq:Gts}. We first consider the annulus $\Omega = B(0,1) \setminus B(0,1/4)$ and $s=0.25$. The top row in \Cref{fig:curvature_square} depicts a top view of the discrete solution $u_h$ and a radial slice of it. We observe that the discrete solution is about three times stickier in the inner boundary than in the outer one. The middle and bottom row in \Cref{fig:curvature_square} display different views of the solution in the square $\Omega = (-1,1)^2$ for $s = 0.01$. Near the boundary of the domain $\Omega$, we observe a steep slope in the middle of the edges; however, stickiness is not observed at the convex corners of $\Omega$.
We finally investigate stickiness at the boundary of the L-shaped domain $\Omega = (-1,1)^2 \setminus (0,1)\times (-1,0)$ with $s = 0.25$. We observe in \Cref{fig:curvature_Lshape} that stickiness is most pronounced at the reentrant corner but is again absent at the convex corners of $\Omega$.
\end{example}

The previous example indicates that there is a connection between the jump of solutions across the boundary $\partial \Omega$ and the curvature of $\partial \Omega$. Stickiness is stronger at the concave portions of the boundary than at the convex ones. Moreover, we find no numerical evidence of stickiness at convex corners. We refer to \cite[\S 7.6.3]{BoLiNo21} for an heuristic explanation supporting these observations.

Next, we perform some experiments involving the fractional $(p,s)$-Laplacian $(-\Delta)_p^s$.

\begin{example}[dependence of $(-\Delta)_p^s$ with respect to $s$ and $p$]
Let $\Omega = (-0.5, 0.5) \subset \R$, and set $f = 1$ in \eqref{eq:frac-pLap-weak}, with the form given by \eqref{E:def-a-pLap}. We use uniform grids with mesh size $h = 2^{-11}$. In \Cref{fig:frac-ps-Lap-1d-multi-s}, we exhibit numerical solutions for different choices of $s$ and $p$. We observe that the boundary behavior depends on $s$ but is independent of $p$. 
\begin{figure}[!htb]
	\begin{center}
		\includegraphics[width=0.45\linewidth]{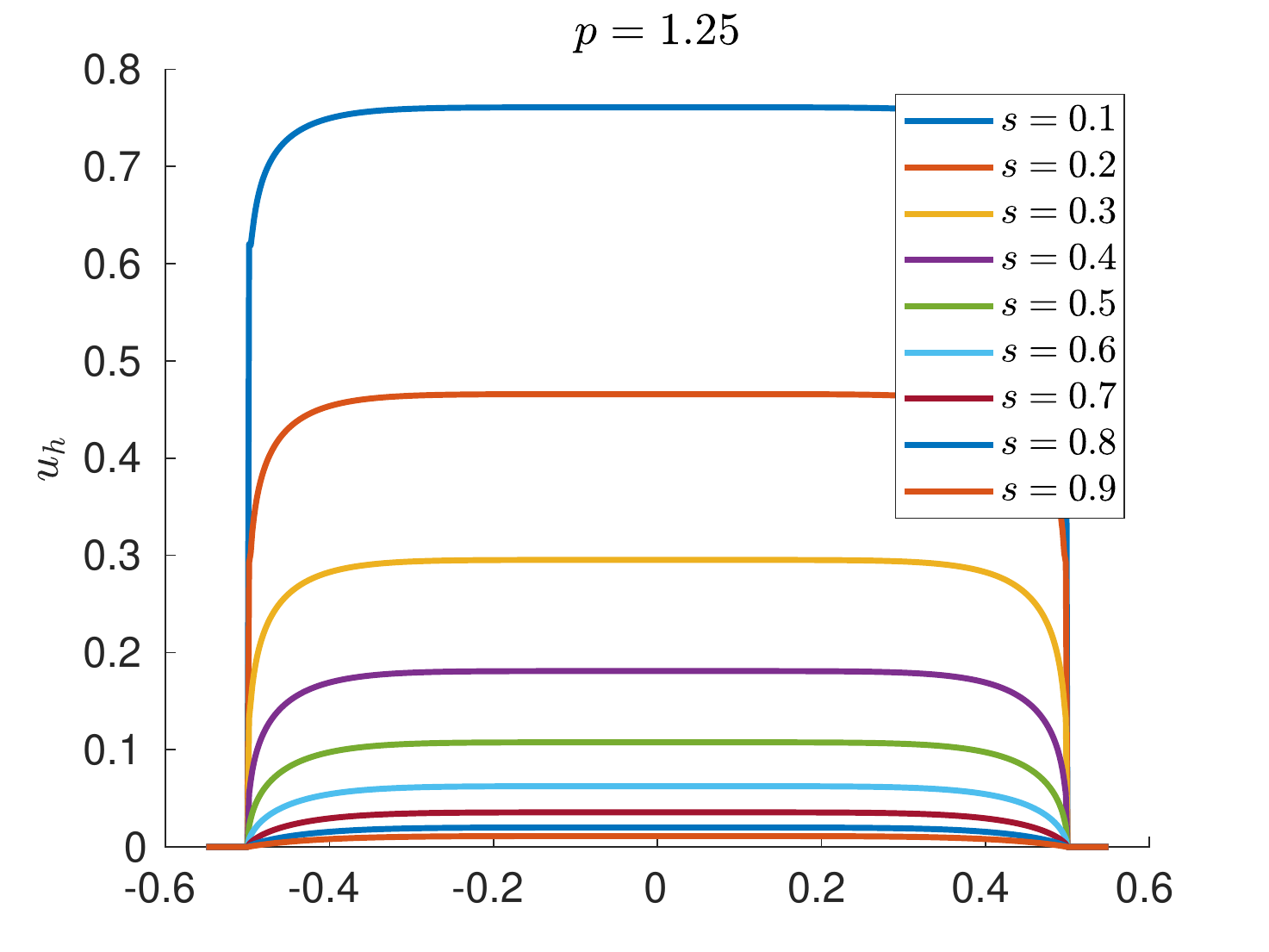}
		\includegraphics[width=0.45\linewidth]{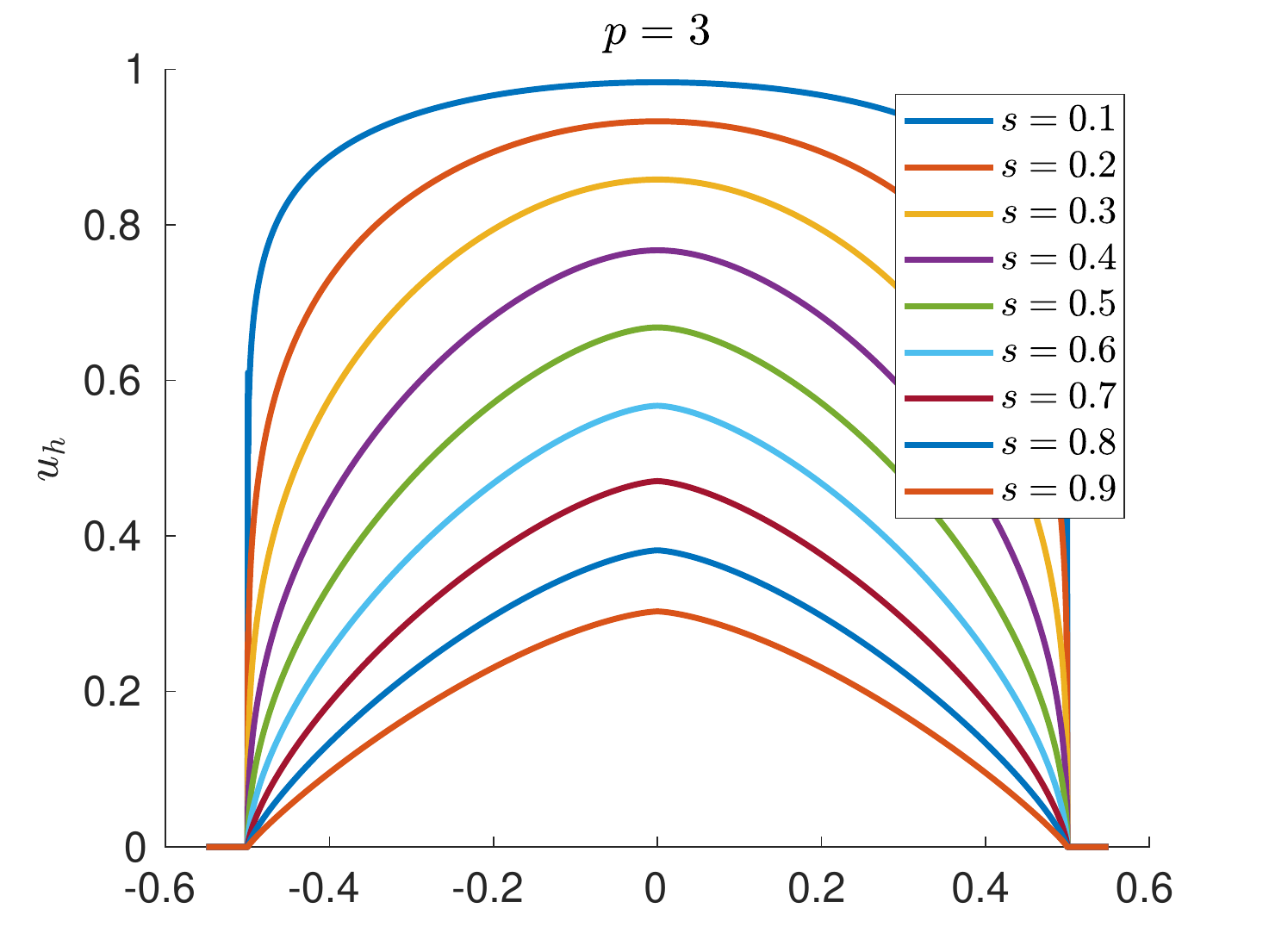} \\
		\includegraphics[width=0.45\linewidth]{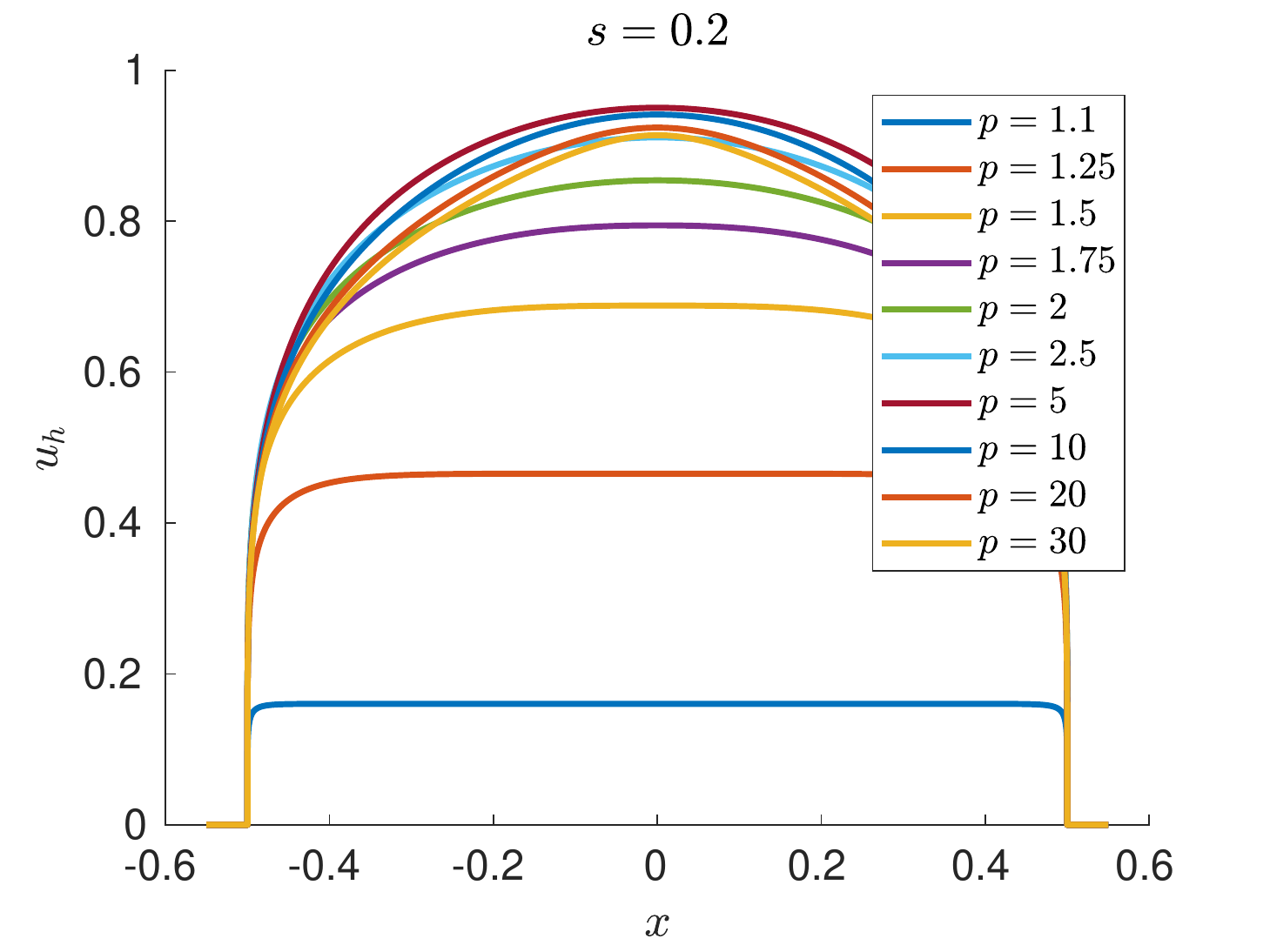}
		\includegraphics[width=0.45\linewidth]{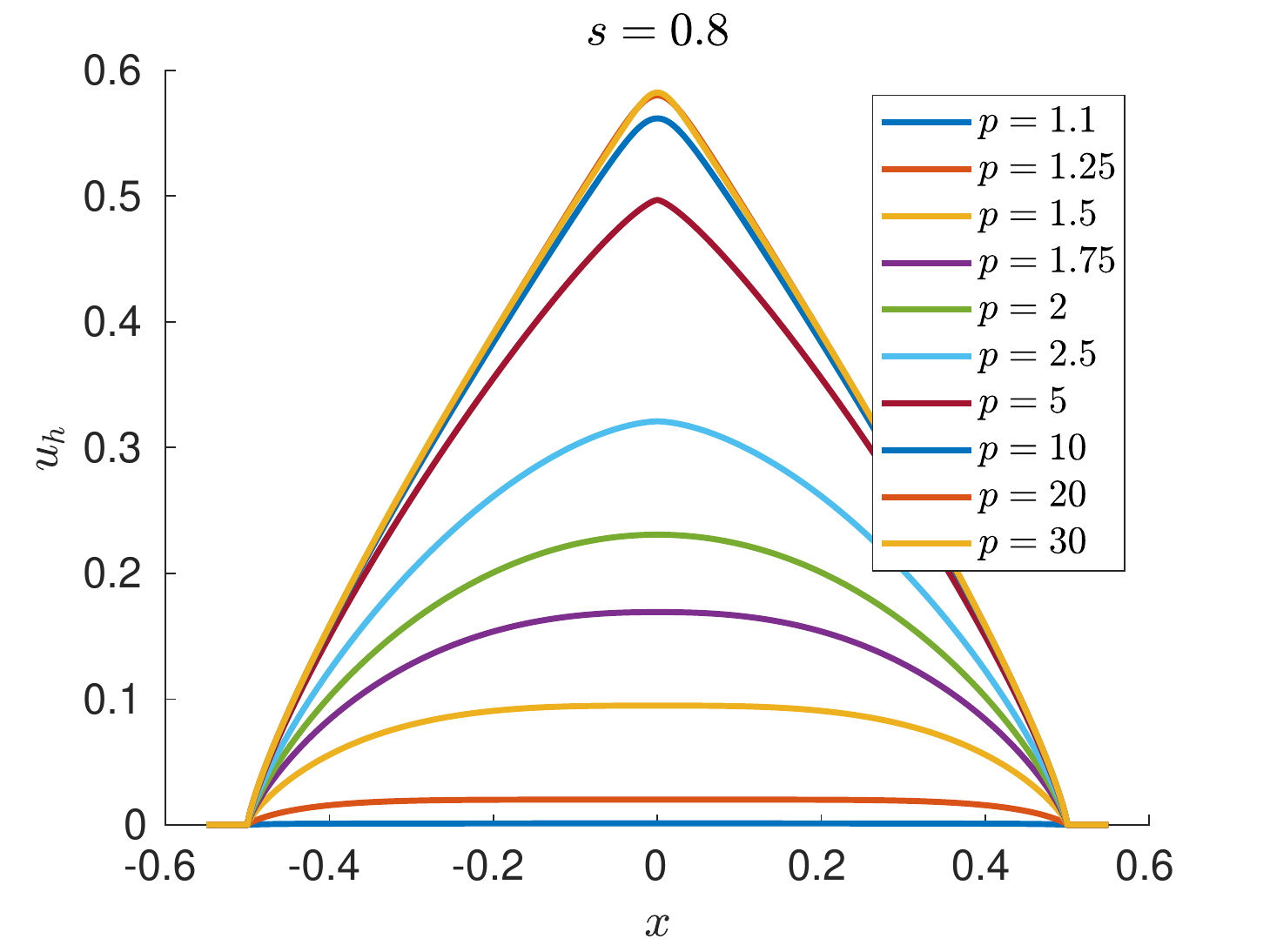}
	\end{center}
	\caption{Numerical solutions of fractional $(p,s)$-Laplacians in $\Omega = (-0.5, 0.5)$ for $f = 1$. The boundary behavior of solutions depends on $s$ but not on $p$.} 
	\label{fig:frac-ps-Lap-1d-multi-s}
\end{figure}

We also measure the convergence rates in the $\widetilde{W}^s_p(\Omega)$ norm on uniform meshes and meshes graded according to \eqref{eq:H} with $d=1$ and grading parameter $\mu = p(2-s)$. We now justify the choice of $\mu$ by the expected boundary behavior
\[
u(x) \approx d(x,\partial\Omega)^s,
\]
and corresponding second derivative of $u$, obtained heuristically and similarly to \eqref{eq:RL-derivative},
\[
|u '' (x)| \approx d(x,\partial\Omega)^{s-2}.
\]
Defining weighted Sobolev spaces $W^2_{p,\kappa}(\Omega)$ in the spirit of \eqref{eq:weighted_sobolev} (modifying the integrability index accordingly from $2$ to $p$), we would then expect to have 
\[
u \in W^2_{p,\alpha}(\Omega) \quad \mbox{if } \alpha > 2 - s - \frac1p.
\]
Revisiting \Cref{L:interpolation-error} with this regularity and constructing meshes according to \eqref{eq:H}, we immediately find that the optimal (in terms of $h$) interpolation estimate
\[
\| u - \Pi u \|_{\wt{W}^s_p(\Omega)} \le C h^{2-s} |u|_{ W^2_{p,\alpha}(\Omega)}
\]
holds provided $\mu \ge p(2-s)$. By \eqref{eq:dofs} with $d=1$, we know already that $\dim \mathbb{V}_h \approx h^{-1}$ regardless of the value of $\mu$, and we therefore expect the interpolation error to be of order $2-s$ in such a case. In contrast, if $d>1$ mesh shape-regularity limits the optimal range of $\mu$ in \eqref{eq:H} and one cannot expect to recover optimal interpolation error estimates unless some additional conditions on $p$ and $s$ are satisfied.

Table \ref{table:frac-ps-Lap-1d-rates-mu2-s} records convergence rates on uniform and graded meshes for $p=1.25$, $p=3$, and a range of values of $s$. On uniform meshes, the energy-norm error decays with order approximately $h^{1/p}$. This rate is consistent with the interpolation error corresponding to the regularity of solutions derived in \Cref{T:Besov_regularity_pLap} for $p\ge2$, but suggests that our result for $p<2$ is non-optimal in the sense that the best regularity one can expect given the boundary behavior \eqref{eq:RL-derivative} is $W^{s + 1/p - \eps}_p(\Omega)$. Moreover, the rates in \Cref{table:frac-ps-Lap-1d-rates-mu2-s} are better than the convergence rate predicted by Theorem \ref{thm:FE-error-pLap-rate} for a general $f \in B^{-s+q}_{p',1}(\Omega)$, $q = \max\{1/p', 1/2\}$.
In contrast, on graded meshes we observe convergence with order $h^{2-s}$ as expected unless for $p =3$ and $s$ close to $1$. Let us provide a possible explanation for this issue. In the local case ($s=1$), the solution corresponding to $f = 1$ is
\[
u(x) = C \left(1 - \left| 2x \right|^{\frac{p}{p-1}} \right)_+,
\]
which is locally of class $W^2_p$ only if $\frac{p}{p-1} + \frac1p \ge 2$. Thus, for the local problem, if $p \ge (3+\sqrt{5})/2 \approx 2.62$ we do not expect to recover optimal convergence rates in the interior unless some adaptive refinement near the origin is performed. In the fractional-order case, we expect a similar behavior to occur when $s$ is close to $1$, and this may justify the reduced rates for $p=3$ and $s > 0.5$ in \Cref{table:frac-ps-Lap-1d-rates-mu2-s}. 

\begin{table}[htbp]
\centering\scalebox{0.95}{
	\begin{tabular}{c |c| c| c| c| c| c| c| c| c| c|} \cline{2-11}
		&	Value of $s$ & 0.1 & 0.2 & 0.3 & 0.4 & 0.5 & 0.6 & 0.7 & 0.8 & 0.9 \\ \hline
		\multicolumn{1}{|c|}{\multirow{2}{*}{Uniform $\Th$}}	&		$p = 1.25$& 0.796 & 0.777 & 0.780 & 0.788 & 0.797 & 0.807 & 0.821 & 0.85 &  0.898 \\ \cline{2-11}
		\multicolumn{1}{|c|}{}	&	$p = 3$ & 0.337 & 0.334  & 0.332 & 0.332  &  0.333 & 0.333  & 0.334 & 0.335 & 0.339  \\  \hline  
		\multicolumn{1}{|c|}{\multirow{2}{*}{Graded $\Th$}}	&		$p = 1.25$ & 1.848 & 1.747  & 1.634 & 1.523  & 1.417 & 1.313 & 1.216 & 1.128 & 1.053  \\  \cline{2-11}
		\multicolumn{1}{|c|}{}	&	$p = 3$ & 1.880 & 1.790  & 1.673 & 1.579  & 1.475  & 1.306 & 1.141 & 1.011 & 0.906  \\  \hline  				
	\end{tabular}
}
\bigskip
\caption{Convergence rates $\alpha$ for $\Vert u - u_h \Vert_{\wt{W}^s_p(\Omega)}$ for $p = 1.25, 3$ and different $s$ on uniform mesh and graded mesh with $\mu = p(2-s)$.}
\label{table:frac-ps-Lap-1d-rates-mu2-s}
\end{table}
\end{example}

Furthermore, for this specific example, we modify the mesh grading \eqref{eq:H} as follows:
\begin{equation}\label{eq:H-special}
h_T \leq C(\sigma) \left\lbrace
\begin{array}{rl}
h^\mu & \mbox{if } T \cap \left\{\pp \Omega, 0 \right\} \neq \emptyset, \\
h \, \dist(T, \left\{\pp \Omega, 0\right\})^{(\mu-1)/\mu} & \mbox{if } T \cap\left\{\pp \Omega, 0 \right\} = \emptyset.
\end{array} \right.
\end{equation}
Above, $\mu \ge 1$ is a parameter and a simple calculation yields the mesh cardinality 
\begin{equation*}
\#\calT \approx \dim \mathbb{V}_h \approx h^{-1}
\end{equation*}
because of $d = 1$. Let us comment that the grading near the origin $x = 0$ seems to be an overkill because the expected singularity there is weaker than the one near the boundary $\pp \Omega$. In Table \ref{table:frac-ps-Lap-1d-rates-mu2-s-inner}, we report the convergence rates for $p = 1.25, 3$ on meshes satisfying \eqref{eq:H-special} with $\mu = p(2-s)$. The rates for $p = 3$ and $s$ close to $1$ are now in good agreement with the predicted rate $2 - s$, supporting our claim about the singularity of solutions near $x = 0$ in this example.

\begin{table}[htbp]
	\centering\scalebox{0.95}{
		\begin{tabular}{c |c| c| c| c| c| c| c| c| c| c|} \cline{2-11}
			&	Value of $s$ & 0.1 & 0.2 & 0.3 & 0.4 & 0.5 & 0.6 & 0.7 & 0.8 & 0.9 \\ \hline 
			\multicolumn{1}{|c|}{\multirow{2}{*}{Graded $\Th$}}	&		$p = 1.25$ & 1.846 & 1.735  & 1.623 & 1.514  & 1.411 & 1.308 & 1.213 & 1.125 & 1.052 \\  \cline{2-11}
			\multicolumn{1}{|c|}{}	&	$p = 3$ & 1.895 & 1.762  & 1.653 & 1.549  & 1.446  & 1.345 & 1.250 & 1.160 & 1.085  \\  \hline  				
		\end{tabular}
	}
	\bigskip
	\caption{Convergence rates $\alpha$ for $\Vert u - u_h \Vert_{\wt{W}^s_p(\Omega)}$ for $p = 1.25, 3$ and different $s$ on graded mesh with $\mu = p(2-s)$ and mesh grading satisfying \eqref{eq:H-special}. We observe an improvement with respect to \Cref{table:frac-ps-Lap-1d-rates-mu2-s} for $p = 3$ and $s > 0.5$.}
	\label{table:frac-ps-Lap-1d-rates-mu2-s-inner}
\end{table}

Our last experiment involves the fractional $(p,s)$-Laplacian on an $L$-shaped domain in two-dimensional space.

\begin{example}[$(p,s)$-Laplacian on $L$-shaped domain]
	Let $\Omega = (-0.5, 0.5)^2 \setminus [(0, 0.5) \times (-0.5, 0)] \subset \R^2$ and $f = -1$. \Cref{fig:frac-ps-Lap-2d-Lshape} depicts numerical solutions for $s = 0.2, 0.8$ and $p = 1.25, p = 3$. For a fixed $p$, we observe a much stronger boundary behavior for $s=0.2$ than for $s=0.8$, while for a fixed $s$ the solutions converge to the distance to boundary to the power $s$ as $p \to \infty$.	
\end{example}

\begin{figure}[!htb]
	\begin{center}
		\includegraphics[width=0.45\linewidth]{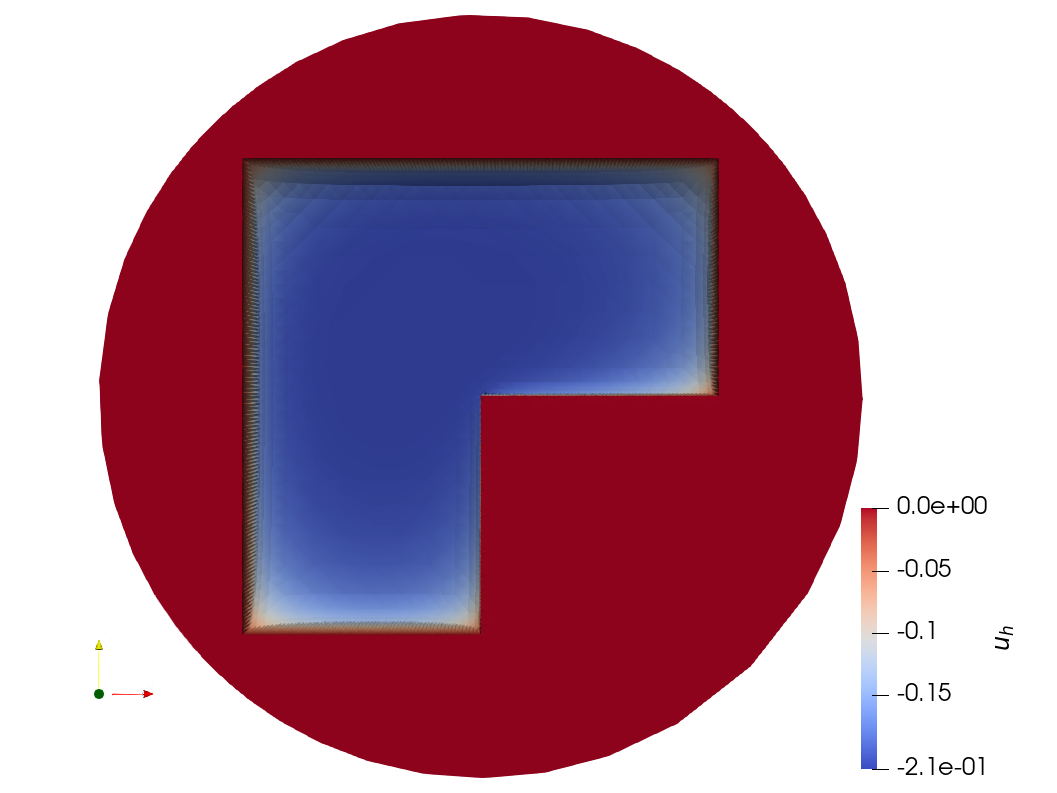}
		\includegraphics[width=0.45\linewidth]{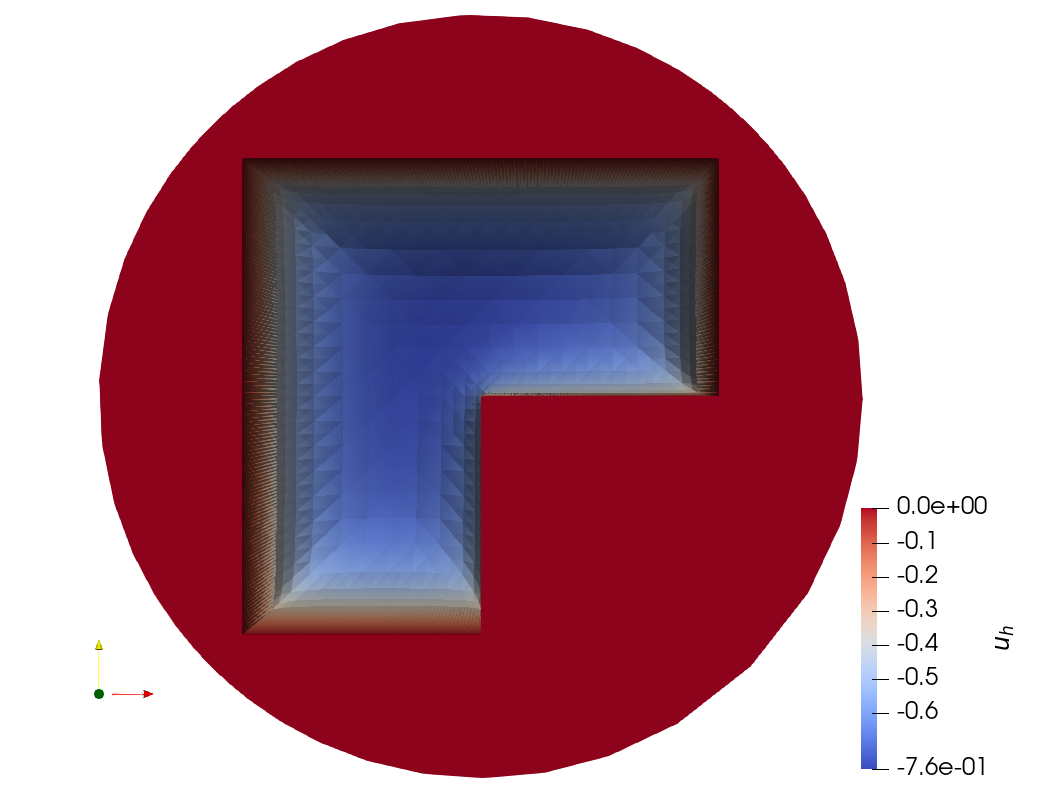} \\
		\includegraphics[width=0.45\linewidth]{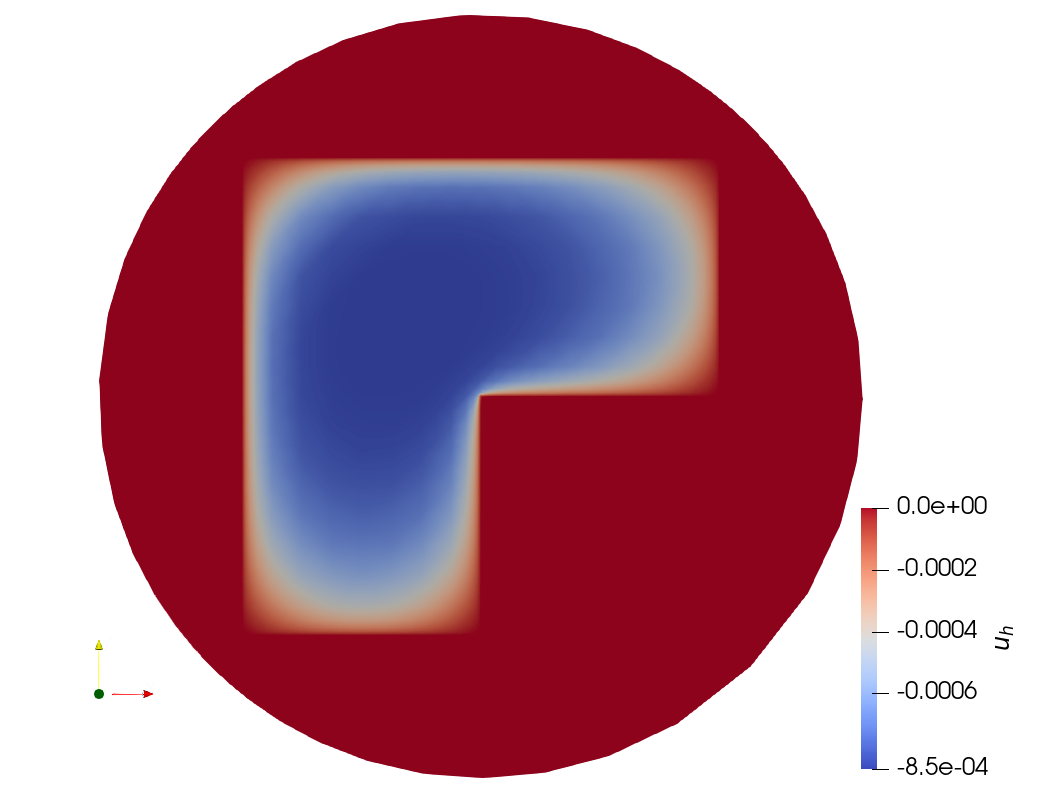}
		\includegraphics[width=0.45\linewidth]{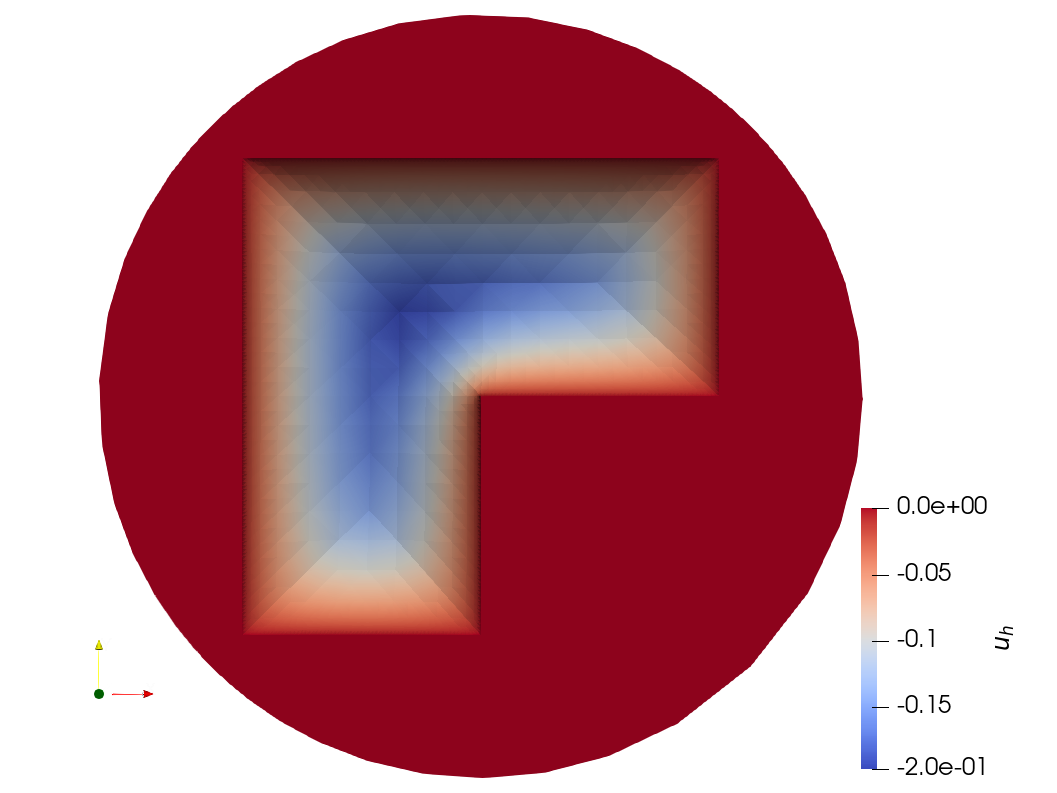}
	\end{center}
	\caption{Numerical solutions of fractional $(p,s)$-Laplacians in $\Omega = (-0.5, 0.5)^2 \setminus [(0, 0.5) \times (-0.5, 0)] \subset \R^2$ for $f = -1$. Top left: $p = 1.25, s = 0.2$; top right: $p = 3, s = 0.2$; bottom left: $p = 1.25, s = 0.8$; bottom right: $p = 3, s = 0.8$. 
	We observe a stronger boundary behavior as $s \searrow 0$, while the solution approaches $d(\cdot,\partial\Omega)^s$ as $p \nearrow \infty$.
	} 
	\label{fig:frac-ps-Lap-2d-Lshape}
\end{figure}

\bibliography{fractional}{}
\bibliographystyle{plain}

\end{document}